\newcommand{\R}{\mathbb{R}}
\newcommand{\cR}{\mathcal{R}}
\newcommand{\N}{\mathbb{N}}
\newcommand{\Z}{\mathbb{Z}}
\newcommand{\C}{\mathcal{C}}
\newcommand{\cN}{\mathcal{N}}
\newcommand{\cP}{\mathcal{P}}
\newcommand{\U}{\mathcal{U}}
\newcommand{\loc}{\mathrm{loc}}
\newcommand{\case}[1]{\subsubsection*{#1}} 
\newcommand{\step}[1]{\par\medskip\par\noindent\textit{#1}} 
\newcommand{\supp}{\operatorname{supp}}
\newcommand{\sgn}{\operatorname{sgn}}
\renewcommand{\Re}{\operatorname{Re}}
\def\Xint#1{\mathchoice
{\XXint\displaystyle\textstyle{#1}}
{\XXint\textstyle\scriptstyle{#1}}
{\XXint\scriptstyle\scriptscriptstyle{#1}}
{\XXint\scriptscriptstyle\scriptscriptstyle{#1}}
\!\int}
\def\XXint#1#2#3{{\setbox0=\hbox{$#1{#2#3}{\int}$}
\vcenter{\hbox{$#2#3$}}\kern-0.5\wd0}}
\def\dashint{\Xint-}
\renewcommand{\phi}{\varphi}
\renewcommand{\epsilon}{\varepsilon}
\theoremstyle{plain}
\newtheorem{theorem}{Theorem}[section]
\newtheorem{lemma}[theorem]{Lemma}
\newtheorem{proposition}[theorem]{Proposition}
\newtheorem{corollary}[theorem]{Corollary}
\theoremstyle{definition}
\theoremstyle{remark}
\newtheorem{remark}[theorem]{Remark}
\numberwithin{equation}{section}
\title[Free boundary in the Signorini problem]{Higher regularity of the free boundary in the elliptic Signorini problem}
\author{Herbert Koch}
\address{Mathematisches Institut, Universit\"{a}t Bonn, Endenicher Allee 64,  53115 Bonn,
Germany}
\email{koch@math.uni-bonn.de}
\author{Arshak Petrosyan}
\address{Department of Mathematics, Purdue University, West Lafayette,
  IN 47907}
\email{arshak@math.purdue.edu}
\author{Wenhui Shi}
\address{Mathematisches Institut, Universit\"{a}t Bonn, Endenicher Allee 62,  53115 Bonn, Germany}
\email{wenhui.shi@hcm.uni-bonn.de}
\keywords{Signorini problem, thin obstacle problem, thin free
  boundary, higher regularity, smoothness, real analyticity, Almgren's frequency, partial
  hodograph-Legendre transform, subelliptic equations,
  Baouendi-Grushin operator}
\subjclass[2010]{Primary 35R35, 35H20}
\thanks{A.P. is partially supported by NSF grant
  DMS-1101139}
\thanks{W.S. is partially supported by NSF grant
  DMS-1101139 and the Hausdorff Center for Mathematics} 
\begin{document}

\begin{abstract}
In this paper we study the higher regularity of the free boundary for
the elliptic Signorini problem. By using a partial hodograph-Legendre
transformation we show that the regular part of the
free boundary is real analytic. The first complication in the study is
the invertibility of the hodograph transform (which is only
$C^{0,1/2}$) which can be overcome by studying the precise asymptotic
behavior of
the solutions near regular free boundary points. The second and main
complication in the study is
that the equation satisfied by the Legendre transform is
degenerate. However, the equation has a subelliptic structure and can
be viewed as a perturbation of the Baouendi-Grushin operator. By
using the $L^p$ theory available for that operator, we can bootstrap
the regularity of the Legendre transform up to real analyticity, which
implies the real analyticity of the free boundary.

\end{abstract}

\maketitle

\section{Introduction}
\label{sec:introduction}
\subsection{Problem set-up and known results}
Let $B_R$ be the Euclidean ball in $\R^n$ ($n\geq 3$) centered at the
origin with radius $R>0$. Let $B_R^+=B_R\cap \{x_n>0\}$,
$B'_R=B_R\cap\{x_n=0\}$ and $(\partial B_R)^+=\partial
B_R\cap\{x_n>0\}$. Consider local minimizers of the Dirichlet functional
$$ J(v)=\int_{B_R^+}|\nabla v|^2dx$$
over the closed convex set
$$\mathcal{K}=\{v\in W^{1,2}(B_R^+):  v\geq 0 \text{ on } B'_R \},$$
i.e.\ functions $u\in \mathcal{K}$ which satisfy
$$J(u)\leq J(v), \text{ for any } v\in \mathcal{K} \text{ with } v-u=0\text{ on } (\partial B_R)^+.$$
This problem is known as the
\emph{(boundary) thin obstacle problem} or the \emph{(elliptic) Signorini
  problem}. 
It was shown in \cite{AC} that the local minimizers $u$ are of class
$C^{1,1/2}_{\text{loc}}(B_R^+\cup B'_R)$. Besides, $u$ will satisfy 
\begin{align}
\Delta u =0 &\quad \text{ in } B_R^+,\label{eq:u-harm}\\
u\geq 0,\quad -\partial _{x_n}u\geq 0,\quad u\cdot \partial _{x_n}u=0&\quad \text{ on } B'_R.\label{eq:compl-cond}
\end{align}
The boundary condition \eqref{eq:compl-cond} is known as the
\emph{complementarity \emph{or} Signorini boundary condition}. One of the main
features of the problem is that the following sets are apriori unknown:
\begin{alignat*}{2}
\Lambda_u &=\{x\in B'_R:u(x)=0\}&\quad&\emph{coincidence set}\\
\Omega_u& =\{x\in B'_R: u(x)>0\}&\quad&\emph{positivity set}\\
\Gamma_u &=\partial _{B'_R} \Omega_u,&\quad&\emph{free boundary}
\end{alignat*}
where by $\partial _{B'_R}$ we understand the boundary in the relative topology of
$B'_R$. 
The free boundary $\Gamma_u$ sometimes is said to be \emph{thin}, to
indicate that it is (expected to be) of codimension two. One of the
most interesting questions in this problem is the study of the structure and the regularity of
the free boundary $\Gamma_u$. 

To put our results in a proper perspective, below we give a brief
overview of some of the known results in the literature. The proofs can be found in
\cites{ACS,CSS,GP} and in Chapter~9 of \cite{AHN}.

We start by noting that we can extend solutions $u$ of the Signorini problem
\eqref{eq:u-harm}--\eqref{eq:compl-cond} to the entire ball $B_R$ in
two different ways: either by even symmetry in $x_n$ variable or by
odd symmetry. The even extension will be harmonic in $B_R\setminus
\Lambda_u$, while the odd extension will be so in $B_R\setminus
\overline{\Omega_u}$. In a sense, those two extensions can be viewed as two
different branches of a two-valued harmonic function. This gives a
heuristic explanation for the monotonicity of Almgren's \emph{frequency function}
$$
N_{x_0}(u,r)=\frac{r\int_{B_r^+(x_0)}|\nabla u|^2}{\int_{\partial
    B_r(x_0)^+} u^2},
$$
which goes back to Almgren's study of multi-valued harmonic functions
\cite{Alm}. In particular, the limiting value 
$$
\kappa(x_0)=N_{x_0}(0_+,u)
$$
for $x_0\in\Gamma_u$ turns out to be a very effective tool in
classifying free boundary points.
By using the monotonicity of the frequency $N_{x_0}$, it can be shown that the
rescalings 
\begin{equation}\label{eq:blowups}
\tilde u_{r,x_0}(x)=\dfrac{u(x_0+rx)}{\left(\dashint_{\partial
      B_r^+(x_0)}u^2\right)^{1/2}}. 
\end{equation}
converge, over subsequences $r=r_j\to 0_+$, to solutions $\tilde u_0$ of the
Signorini problem \eqref{eq:u-harm}--\eqref{eq:compl-cond} in
$\R^n_+$. Such limits are known as \emph{blowups} of $u$ at
$x_0$. Moreover, it can be shown that such blowups will be homogeneous
of degree $\kappa(x_0)$, regardless of the sequence $r_j\to 0_+$.
It is readily seen from the the definition that the mapping
$x_0\mapsto \kappa(x_0)$ is upper semicontinuous on
$\Gamma_u$. Furthermore, it can be shown that $\kappa(x_0)\geq 3/2$ for every
$x_0\in\Gamma_u$ and, more precisely, that the following alternative
holds:
$$
\kappa(x_0)=3/2\quad\text{or}\quad
\kappa(x_0)\geq 2.
$$ 
This brings us to the notion of a regular point.
A point $x_0\in \Gamma_u$ is called \emph{regular} if $\kappa(x_0)=3/2$. By
classifying all possible homogeneous solutions of homogeneity $3/2$, the
above definition is equivalent to saying that the blowups of $u$ at
$x_0$ have the form
\begin{equation*}
\tilde u_0(x)=c_n \Re(x_{n-1}+ix_n)^{3/2},
\end{equation*}
after a possible rotation of coordinate axes in $\R^{n-1}$. 

In what follows, we will
denote by $\cR_u$ the set of regular free boundary points, and call it the
\emph{regular set} of $u$:
$$
\cR_u=\{x_0\in\Gamma_u:\kappa(x_0)=3/2\}.
$$
The upper semicontinuity of $\kappa$, and the gap of values between
$3/2$ and $2$ implies that $\cR_u$ is a relatively open subset of
$\Gamma_u$. Besides, it is known that $\cR_u$ is 
locally a $C^{1,\alpha}$ regular $(n-2)$-dimensional
surface. 
In this paper, we are interested in the higher regularity of $\cR_u$. Since the codimension of the free boundary $\Gamma_u$ is
two, this question is meaningful only when $n\geq 3$. In fact, in dimension
$n=2$ the complete characterization of the coincidence set and the
free boundary was already found by Lewy \cite{Le}: $\Lambda_u$ is a
locally finite union of closed intervals.

\subsection{Notations} We will use fairly standard
notations in this paper.  By $\R^n$ we denote the $n$-dimensional
Euclidean space of points $x=(x_1,\ldots,x_n)$, $x_i\in\R$,
$i=1,\ldots, n$. For any $x\in\R^n$ we denote
$x'=(x_1,\ldots,x_{n-1})$ and $x''=(x_1,\ldots,x_{n-2})$. We also
identify $x'$ with $(x',0)$, thereby effectively embedding $\R^{n-1}$ into
$\R^n$. Similarly, we identify $x''$ with $(x'',0)$ and $(x'',0,0)$.

For $r>0$,
\begin{align*}
B_r(x_0)&=\{x\in \R^n: |x-x_0|<r\},\\
B_r^+(x_0)&=B_r(x_0)\cap \{x_n>0\},\\
B'_r(x_0)& =B_r(x_0)\cap \{x_n=0\},\\
B''_r(x_0)& =B'_r(x_0)\cap \{x_{n-1}=0\}.
\end{align*}
If $x_0$ is the origin, we will simply write $B_r$, $B_r^+$, $B'_r$ and $B''_r$.
Let $d(E,F):=\inf_{x\in E, y\in F}|x-y|$ be the Euclidean distance between two sets $E, F\subset \R^n$.

\subsection{Assumptions}
\label{sec:assumptions}
 In this paper we are interested in local properties of the solutions
and their free boundaries only near regular points and therefore,
without loss of generality, we make the following assumptions.

We will assume that $u$ solves the Signorini
problem \eqref{eq:u-harm}--\eqref{eq:compl-cond} in $B_2^+$ and that all free boundary points in $B_2'$ are regular, i.e.
\begin{equation}
\Gamma_u=\cR_u.
\end{equation}
Furthermore, we will assume that there exists $f\in
C^{1,\alpha}(\overline {B_2''})$ 
with
\begin{equation}\label{eq:f}
f(0)=|\nabla _{x''}f(0)|=0,
\end{equation}
such that
\begin{align}
\Gamma _u& =\{(x'',x_{n-1})\in B'_2: x_{n-1}=f(x'')\},\\
\Lambda _u&=\{(x'',x_{n-1})\in B'_2: x_{n-1}\leq f(x'')\}.
\end{align}
Next we assume $u\in C^{1,1/2}(B_2^+\cup B'_2)$ and that
\begin{align}
\partial_{x_{n-1}}u&>0\quad \text{ in } B_2^+\cup \Omega _u;\label{eq:non22}\\
\partial_{x_n}u&<0\quad \text{ in } B_2^+\cup (\Lambda_u\setminus\Gamma_u) ,
\quad \partial_{x_n} u=0\quad \text{on } \Omega_u\cup\Gamma_u.\label{eq:non33}
\end{align}
Moreover, we will also assume the following nondegeneracy property for
directional derivatives in a cone of tangential directions: for any $\eta>0$, there exist $r_{\eta}>0$ and $c_{\eta}>0$ such that
\begin{equation}\label{eq:non1}
\partial_\tau u(x)\geq c_{\eta}d(x,\Lambda_u)\quad\text{for $x\in
  \overline{B_{r_{\eta}}^+(x_0)}$ and
  $\tau\in C'_{\eta}(\nu'_{x_0})$, $|\tau|=1$,}
\end{equation}
for any $x_0\in \Gamma_u\cap B_1'$, where $\nu'_{x_0}$ is the unit normal in $\R^{n-1}$ to $\Gamma_u$ at $x_0$
outward to $\Lambda_u$ and
$$
C'_{\eta}(e_0)=\{x'\in \R^{n-1}: x'\cdot e_0\geq \eta |x'|\},
$$
for a unit vector $e_0\in\R^{n-1}$. 

We explicitly remark that if $u$ is a solution to the Signorini
problem, then the assumptions \eqref{eq:f}-\eqref{eq:non1} hold at any
regular free boundary point after a possible translation, rotation and
rescaling of $u$ (see e.g. \cite{CSS}, \cite{AHN}).

\subsection{Main results} Following the approach of Kinderlehrer and Nirenberg
\cite{KN} in the classical obstacle problem, we will use the partial
hodograph-Legendre transformation method to improve on the known
regularity of the free boundary.
The idea is to straighten the free boundary and then apply the
boundary regularity of the solution to the transformed elliptic
PDE\@. This works relatively simply for the classical obstacle problem,
and allows to prove $C^\infty$ regularity and even the real analyticity of
the free boundary.

In the Signorini problem, the free boundary $\Gamma_u$ is of
codimension two, and in order to straighten both $\Gamma_u$ and $\Lambda_u$
we need to make a partial hodograph transform in two variables. Namely, for $u$ satisfying the assumptions in
Section~\ref{sec:assumptions}, consider the transformation
\begin{equation}\label{eq:part-hod}
T:x\mapsto y=(x'', \partial_{x_{n-1}}u, \partial_{x_n}u),\quad x\in B_1^+\cup B_1'.
\end{equation}
Consider also the associated partial Legendre transform of $u$ given
by
\begin{equation}\label{eq:part-Leg}
v(y)=u(x)-x_{n-1}y_{n-1}-x_ny_n.
\end{equation}
Formally,
the inverse of $T$ is given by
$$
T^{-1}: y\mapsto (y'',-\partial_{y_{n-1}}v, -\partial_{y_n}v)
$$
and we can recover the free boundary in the following way:
$$
f(y'')=-\partial_{y_{n-1}}v(y'',0,0).
$$
However, we note that the mapping $T$ is only $C^{0,1/2}$ regular and even the
local invertibility of such mapping is rather nontrivial. Besides,
even if one has a local invertibility of $T$, the function $v$ will
satisfy a degenerate elliptic equation, and apriori it is not clear
if the equation will have enough structure to be useful.

Concerning the first complication, we make a careful asymptotic
analysis based the precise knowledge of the blowups, and this does
allow to establish the
local invertibility of $T$.

\begin{theorem}\label{thm:T-invert} Let $u$ be a solution of the Signorini problem
  \eqref{eq:u-harm}--\eqref{eq:compl-cond} in $B_2^+$, satisfying the
  assumptions in Section~\ref{sec:assumptions}. Then, there exists a
  small $\rho=\rho_u>0$  such that the partial hodograph
  transformation $T$ in \eqref{eq:part-hod} is injective in $B_\rho^+$. 
\end{theorem}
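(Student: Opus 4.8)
The plan is to prove injectivity of $T$ on a small half-ball by exploiting the precise asymptotics of $u$ and its derivatives near the regular free boundary point $0\in\Gamma_u$. Since $T(x)=(x'',\partial_{x_{n-1}}u,\partial_{x_n}u)$ keeps the $x''$ coordinates unchanged, it suffices to show that for each fixed $x''$ the two-variable map $(x_{n-1},x_n)\mapsto(\partial_{x_{n-1}}u,\partial_{x_n}u)$ is injective on the corresponding two-dimensional half-disk, and moreover that the injectivity is uniform in $x''$ near $0$. The natural model is the blowup $\tilde u_0(x)=c_n\Re(x_{n-1}+ix_n)^{3/2}$, for which a direct computation (in the slit-plane coordinates $x_{n-1}+ix_n=re^{i\theta}$, $\theta\in(-\pi,\pi)$) shows that the gradient map $(\partial_{x_{n-1}},\partial_{x_n})\tilde u_0$ is a genuine homeomorphism of the upper half-disk onto a neighborhood of $0$ in the image: the two components scale like $r^{1/2}$ and the angular dependence is monotone in the right sense. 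So the strategy is a perturbation argument: show $u$ is close enough to (a rescaled) $\tilde u_0$, together with its first derivatives, in a quantitative $C^{1,1/2}$ (or better, weighted) sense near $0$, so that the injectivity of the model map persists.

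First I would recall the asymptotic expansion: by the assumptions of Section~\ref{sec:assumptions} combined with the classification of $3/2$-homogeneous blowups and the $C^{1,\alpha}$ regularity of $\cR_u$, one has, after the normalizations already assumed, an expansion of the form $u(x)=c_n\Re(x_{n-1}-f(x'')+ix_n)^{3/2}+o(|x|^{3/2})$ with a matching estimate for $\nabla u$, uniformly for base points on $\Gamma_u\cap B_1'$ (this is essentially where the nondegeneracy \eqref{eq:non1} and the sign conditions \eqref{eq:non22}--\eqref{eq:non33} enter). Using \eqref{eq:non1} I would also record the lower bound $\partial_{x_{n-1}}u(x)\gtrsim d(x,\Lambda_u)^{1/2}$ in a tangential cone, which prevents the two points that $T$ might identify from both lying deep inside the contact set direction. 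Then, to prove injectivity, suppose $T(x^1)=T(x^2)$ with $x^1\neq x^2$ in $B_\rho^+$; necessarily $(x^1)''=(x^2)''=:x''$, so I reduce to the plane. On the segment joining $x^1$ and $x^2$ (which lies in the closed half-plane $\{x_n\ge0\}$) I would like to apply a mean value / Hopf-type argument to the pair $(\partial_{x_{n-1}}u,\partial_{x_n}u)$; the difficulty is that $D^2u$ blows up like $d(x,\Lambda_u)^{-1/2}$, so a naive Jacobian-nonvanishing argument fails.

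The key step, and the main obstacle, is to handle this degeneracy. I would do this by splitting into cases according to where $x^1,x^2$ sit relative to $\Lambda_u$ and to each other, and in each case use the appropriate structure: (i) if both points are at comparable distance $\sim t$ from $\Lambda_u$ and $|x^1-x^2|\ll t$, rescale by $t$ so that $u_t(z)=t^{-3/2}u(x''+tz)$ is $C^{1,1/2}$-close to the model with nonvanishing gradient Jacobian on the relevant region, giving a contradiction from the inverse function theorem applied to $u_t$; (ii) if $|x^1-x^2|\gtrsim \max(d(x^1,\Lambda_u),d(x^2,\Lambda_u))$, then at least one point, say $x^1$, is "close to the contact set" on the scale of their separation, and I compare the values of $\partial_{x_{n-1}}u$ at the two points directly using the asymptotics: $\partial_{x_{n-1}}u(x^1)$ is then much smaller than $\partial_{x_{n-1}}u(x^2)$ (by the homogeneity $r^{1/2}$ scaling and the angular profile of the model, which vanishes exactly on $\Lambda_u$), contradicting $T(x^1)=T(x^2)$. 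Throughout, the estimates must be made uniform as $x''\to0$ along $\Gamma_u$; this is guaranteed by the uniform-in-$x_0$ form of \eqref{eq:non1} and of the asymptotic expansion. Choosing $\rho=\rho_u$ small enough that the closeness-to-model estimates hold on $B_\rho^+$ then completes the proof.
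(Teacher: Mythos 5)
Your overall strategy is the one the paper uses: two points with the same image under $T$ must share the same $x''$, and injectivity should then follow by comparing $(\partial_{x_{n-1}}u,\partial_{x_n}u)$ against the $3/2$-homogeneous blowup model after a suitable rescaling. But your case analysis has a hole in exactly the critical regime. Cases (i) and (ii) do not cover (or do not correctly handle) the situation where $d(x^1,\Lambda_u)\sim d(x^2,\Lambda_u)\sim |x^1-x^2|$. Your case (i) (inverse function theorem after rescaling) needs $|x^1-x^2|\ll t$. Your case (ii) mechanism --- that one of the two values of $\partial_{x_{n-1}}u$ is much smaller than the other --- is simply false there: for the model $u_0=\Re(x_{n-1}+ix_n)^{3/2}$, the points at polar angles $\theta=0$ and $\theta=\pi/2$ on the circle of radius $r$ are both at distance $r$ from $\Lambda_0$, are separated by $r\sqrt2$, and have $\partial_{x_{n-1}}u_0$ equal to $\tfrac32 r^{1/2}$ and $\tfrac32 r^{1/2}/\sqrt2$ respectively --- comparable, not disparate. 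What the gradient-magnitude asymptotics $|(\partial_{x_{n-1}}u,\partial_{x_n}u)(x)|^2\approx \tfrac94C_0^2\,d(x,\Lambda_u)$ actually buys you is only that $d_1$ and $d_2$ are \emph{comparable}; it cannot separate two points at comparable distance. The missing step is the paper's: rescale by $d_1=\max(d_1,d_2)$ so that both points land in the fixed compact annulus $A^{1/2,1}$ away from the singularity, and invoke injectivity of the rescaled hodograph map there, which follows from its uniform $C^1$-convergence to the blowup's hodograph map. The paper makes this last step clean by composing $T$ with $\psi(y)=(y'',y_{n-1}^2-y_n^2,-2y_{n-1}y_n)$, after which the model map becomes the nondegenerate \emph{linear} map $x\mapsto(x'',\tfrac94C_0x_{n-1},\tfrac94C_0x_n)$, so small $C^1$ perturbations are automatically injective on the annulus. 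Without $\psi$ you must instead quantify the injectivity modulus of the square-root-type model map on $A^{1/2,1}$; that is doable but you have not done it.

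A second, more structural issue: the ``uniform asymptotic expansion'' $u(x)=c_n\Re(x_{n-1}-f(x'')+ix_n)^{3/2}+o(|x|^{3/2})$ that your whole argument rests on is not among the standing assumptions of Section~\ref{sec:assumptions} and does not follow from the classification of blowups alone. It is precisely the uniqueness of the $3/2$-homogeneous blowup (with a well-defined, continuously varying coefficient $C_{x_0}$, not a dimensional constant $c_n$) together with uniformity of the convergence over base points, and establishing this is the content of all of Section~\ref{sec:nond-prop}: a Hopf-type barrier to upgrade \eqref{eq:non1} to $\partial_\tau u\gtrsim d^{1/2}$, the Weiss-type monotonicity formula to get homogeneity of blowups, and a new Monneau-type monotonicity formula at regular points to pin down the coefficient independently of the subsequence. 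Citing this expansion as a known input leaves out the larger half of the proof.
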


Then via an asymptotic analysis  of $v$ at the straightened free
boundary points, we observe that the fully nonlinear degenerate
elliptic equation for $v$ has a subelliptic structure, which can be
viewed as a perturbation of the Baouendi-Grushin operator. Then using
the $L^p$ theory for the Baouendi-Grushin operator and a bootstrapping
argument, we obtain the smoothness and even the real analyticity of $v$.

\begin{theorem}\label{thm:fb-regul} Let $u$ be as in
  Theorem~\ref{thm:T-invert} and $v$ be given by
  \eqref{eq:part-Leg}. Then exists
  $\delta=\delta_u>0$ such that the mapping $y''\mapsto
  \partial_{y_{n-2}}v(y'',0,0)$ is real analytic on $B''_{\delta}$. In
  particular, $\Gamma_u=\cR_u$ is locally an analytic surface.
\end{theorem}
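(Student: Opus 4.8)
The plan is to follow the Kinderlehrer–Nirenberg philosophy but adapted to the degenerate, codimension-two setting. First I would establish the precise asymptotic behavior of $u$ near a regular free boundary point $x_0\in\Gamma_u$: after normalizing, $u(x)=a\,\Re(x_{n-1}-f(x'')+ix_n)^{3/2}+$ lower order, with the error controlled in suitable H\"older norms. This asymptotic, together with the nondegeneracy hypotheses \eqref{eq:non22}--\eqref{eq:non1}, is exactly what drives the injectivity statement of Theorem~\ref{thm:T-invert}, so I would take that theorem as given and conclude that $T$ is a homeomorphism from $B_\rho^+\cup B_\rho'$ onto a neighborhood of the origin in the image, mapping $\Gamma_u$ to $B''_\delta\times\{(0,0)\}$ and $\Lambda_u$ to $\{y_n=0,\ y_{n-1}\le 0\}$ (up to the normalization built into the asymptotics). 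The Legendre function $v$ from \eqref{eq:part-Leg} is then well-defined on the image domain, and differentiating \eqref{eq:part-Leg} shows $\partial_{y_{n-1}}v=-x_{n-1}$, $\partial_{y_n}v=-x_n$, so that $T^{-1}(y)=(y'',-\partial_{y_{n-1}}v,-\partial_{y_n}v)$ as stated and $f(y'')=-\partial_{y_{n-1}}v(y'',0,0)$.

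Next I would derive the PDE satisfied by $v$. Writing the harmonicity $\Delta u=0$ in the new variables produces a fully nonlinear second-order equation in $v$ whose coefficients are rational functions of $D^2v$; using the $3/2$-homogeneous model $v_0$ corresponding to $u_0=c_n\Re(x_{n-1}+ix_n)^{3/2}$ as the reference profile, one checks that the linearization at $v_0$ is, after an affine change of the $(y_{n-1},y_n)$ variables, precisely the Baouendi–Grushin operator $\mathcal{L}=\Delta_{y''}+ (y_{n-1}^2+y_n^2)^{?}\dots$ — more precisely the operator $y_{n-1}\partial_{y_{n-1}}^2+\partial_{y_n}^2+\dots$ that degenerates along $\{y_{n-1}=0\}$, with the degeneracy governed by the $\sqrt{\,}$ vanishing of the gradient of $u$. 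So $v$ solves $\mathcal{L}v = F(y,Dv,D^2v)$ where $F$ vanishes to high enough order at $v_0$ to be treated perturbatively, together with a Neumann-type boundary condition on $\{y_n=0\}$ coming from the Signorini condition \eqref{eq:compl-cond}: $\partial_{y_n}v=0$ on $\{y_{n-1}>0\}$ and $y_n=0$ (the image of $\Omega_u$) — i.e., a mixed Dirichlet–Neumann problem on the characteristic hyperplane.

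Then comes the bootstrap. I would first establish an a priori H\"older modulus for $v$ and its relevant derivatives using the asymptotic expansion of $u$ (this is where knowing the blowup precisely, not just up to homogeneity, is essential: it gives the initial regularity $v - v_0 \in C^{1,\alpha}_{\mathcal{L}}$ in the intrinsic Grushin metric). With that initial regularity, the right-hand side $F(y,Dv,D^2v)$ lies in an appropriate $L^p$ or intrinsic H\"older space, and invoking the $L^p$ / Schauder theory for the Baouendi–Grushin operator with the mixed boundary condition — interior and boundary estimates in the intrinsic metric $\rho((y'',y_{n-1},y_n),(z'',z_{n-1},z_n))$ — gains two derivatives and improves $v$ to $C^{2,\alpha}_{\mathcal{L}}$, hence $F$ improves, and so on. Iterating gives $v\in C^\infty_{\mathcal{L}}$, in particular $v$ smooth in the $y''$ variables up to $\{y_{n-1}=y_n=0\}$, so $y''\mapsto\partial_{y_{n-2}}v(y'',0,0)$ is $C^\infty$. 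Finally, to upgrade $C^\infty$ to real analyticity I would run the same scheme quantitatively: either track the growth of the constants in the Schauder iteration to get analytic bounds $\|D^k v\|\le M^{k+1}k!$, or reduce to analytic hypoellipticity of the Baouendi–Grushin operator together with an analytic implicit-function-type argument on the nonlinearity $F$ (which is analytic in its arguments away from the degenerate set). Either route yields real analyticity of $y''\mapsto\partial_{y_{n-1}}v(y'',0,0)=-f(y'')$, hence of $\Gamma_u=\cR_u$.

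The main obstacle is the second step combined with the first part of the third: correctly identifying the degenerate equation for $v$ and matching its linearization to the Baouendi–Grushin operator in the right coordinates, and then setting up the function spaces (intrinsic H\"older/Sobolev spaces adapted to the Grushin metric, with the mixed Dirichlet–Neumann boundary condition) so that the nonlinear term $F$ is genuinely a lower-order perturbation and the $L^p$ estimates close the bootstrap. The degeneracy is exactly at the straightened free boundary, so the boundary estimates for $\mathcal{L}$ must be robust there; getting the initial regularity of $v$ sharp enough to enter this machinery — which is why Theorem~\ref{thm:T-invert} and the precise asymptotics of $u$ are prerequisites — is the delicate point.
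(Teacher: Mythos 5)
Your overall strategy (hodograph--Legendre transform, degenerate fully nonlinear PDE for $v$, Baouendi--Grushin-type linearization, $L^p$/Schauder bootstrap, quantitative derivative bounds for analyticity) is the paper's strategy. But two concrete points in your outline are wrong or missing, and both are load-bearing.

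First, the identification of the degenerate model operator. The linearization $F_{ij}(D^2v)\partial_{ij}$ at the straightened free boundary is \emph{not} of the form $y_{n-1}\partial_{y_{n-1}}^2+\partial_{y_n}^2+\cdots$ degenerating on the hyperplane $\{y_{n-1}=0\}$. It is, up to a constant factor and a continuous perturbation, $\partial_{y_{n-1}y_{n-1}}+\partial_{y_ny_n}+a^2(y_{n-1}^2+y_n^2)\Delta_{y''}$: uniformly elliptic in the last two variables and degenerate in the \emph{tangential} $y''$ directions, with the degeneracy located exactly on the codimension-two set $\mathcal{P}=T(\Gamma_u)=\{y_{n-1}=y_n=0\}$, not on a hyperplane. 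Everything quantitative downstream — the non-isotropic dilations $\delta_r y=(r^2y'',ry_{n-1},ry_n)$, the homogeneous dimension entering the Sobolev embeddings for the spaces $M^{k,p}$, the weights $(y_{n-1}^2+y_n^2)^{1/2}$ used to control the cubic nonlinearity $\sum_i\det V^i$ in the bootstrap — depends on getting this structure right, so this is not a cosmetic slip.

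Second, the boundary-value-problem setup. $T$ maps $B_\rho^+\cup B_\rho'$ into the \emph{quarter} space $\{y_{n-1}\ge 0,\ y_n\le 0\}$ (with $T(\Omega_u)$ and $T(\Lambda_u)$ on the two faces), so the "mixed Dirichlet--Neumann problem on the characteristic hyperplane" you propose is not the problem one actually faces: one would face a quarter-space problem with conditions on two faces meeting along the degenerate edge $\mathcal{P}$, for which no off-the-shelf estimates exist. The paper sidesteps this entirely by extending $u$ via even and odd reflection in $x_n$ and gluing four copies of the domain into a manifold $\mathcal{M}$ on which $T$ becomes a homeomorphism onto a full neighborhood $\mathcal{U}$ of the origin; then $v$ (odd in $y_{n-1}$, even in $y_n$) solves the fully nonlinear equation in $\mathcal{U}\setminus\mathcal{P}$, the free boundary becomes an \emph{interior} singular set, and only interior estimates for the perturbed Grushin operator are needed. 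Without this reflection/gluing step your bootstrap does not close. The remaining divergences — $L^p$ theory with difference quotients versus intrinsic Schauder spaces, and Kato-style tracking of constants (giving $\|\partial^\alpha v\|\le R^{-|\alpha|}|\alpha|^{|\alpha|}$, i.e.\ Gevrey $G^1$) versus an appeal to analytic hypoellipticity — are matters of implementation; the paper uses the former in each case, and the latter would not apply directly to the nonlinear equation anyway.
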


\subsection{Related problems} 
The Signorini problem is just one example of a problem with
thin free boundaries. Many problems with thin free boundaries arise
when studying problems for the fractional Laplacian and using the
Caffarelli-Silvestre extension \cite{CS} to localize the
problem at the expense of adding an extra dimension (which makes the
free boundary ``thin''). Thus, the Signorini problem can be viewed as an obstacle
problem for half-Laplacian, see \cite{CSS}. We hope that the methods in
this paper can be used to study the higher regularity of the free
boundary in many such problems. 

A different approach to the study of the higher regularity of thin
free boundaries is being developed by De Silva and Savin
\cites{DS1,DS2,DS3}. In particular, in \cite{DS3} they
prove the $C^\infty$ regularity of $C^{2,\alpha}$ free boundaries in
the thin analogue of Alt-Caffarelli minimization problem \cite{CRS}. Their method is based on Schauder-type
estimates rather than hodograph-Legendre transform used in this
paper. 

At about the same time as we completed this work, De Silva and Savin \cite{DS5}
extended their approach to include the Signorini problem, as well as
lowered the initial regularity assumption on the free boundary to
$C^{1,\alpha}$. The main ingredient in their proof is an interesting
new higher order boundary Harnack principle, applicable to regular, as
well as slit domains (see also \cite{DS4}).

\subsection{Outline of the paper} The paper is organized as follows:

- In Section~\ref{sec:nond-prop} we study the so-called
$3/2$-homogeneous blowups of the solutions near regular points. This
is achieved by a combination a boundary Hopf-type principle in
domains with $C^{1,\alpha}$ slits, as well as Weiss- and Monneau-type
monotonicity formulas.

- In Section~\ref{sec:hodograph} we introduce the partial hodograph
transformation and show it is a homeomorphism in a neighborhood of
the regular free boundary points. This is achieved by using the
precise behavior of the solutions near regular free boundary points
established in Section~\ref{sec:nond-prop}.

- In Section~\ref{sec:legendre-funct-nonl} we consider the
corresponding Legendre transform $v$ and show some basic regularity of
$v$ inherited from $u$. 

- In Section~\ref{sec:smoothness} we
study the fully nonlinear PDE satisfied by $v$, which is the transformed PDE of
$u$. We show the linearization of the PDE is a perturbation of the
Baouendi-Grushin operator. Using the $L^p$ estimates available for
this operator, and a bootstrapping argument, we obtain the smoothness
of $v$, which in turn implies the smoothness of the free boundary.

- In Section~\ref{sec:analyticity}, we give more careful estimates on
the derivatives of $v$ which imply that $v$, and consequently
$\Gamma_u$, is real analytic.

\section{$3/2$-homogeneous blowups of solutions}
\label{sec:nond-prop}

\subsection{A stronger nondegeneracy property}
We start our study by establishing a stronger nondegeneracy property
for the tangential derivatives $\partial_\tau u$ than the one given in
\eqref{eq:non1}. Namely, we want to improve the lower bound in
\eqref{eq:non1} to a multiple of $d(x,\Lambda_u)^{1/2}$. To achieve
this, we construct a barrier function by using the
$C^{1,\alpha}$-regularity of $\Gamma_u$, to obtain a result that can
be viewed as a version of the boundary Hopf lemma for the domains of
the type $B_1\setminus \Lambda_u$.   

To proceed, we introduce some notations. For $\alpha\in (0,1)$, let
\begin{align*}
&\hat f_\alpha:[0,\infty)\rightarrow \R, \quad \hat f_\alpha(r)=r^{1+\alpha}\\
&\hat{\Lambda}_\alpha =B_1'\cap\{(x'', x_{n-1}):x_{n-1}\leq \hat
f_\alpha(|x''|)\},\quad  \hat D_\alpha = B_1\setminus \hat{\Lambda}_\alpha,
\end{align*}

\begin{lemma}\label{lem:hopf-barrier} There exists a
  continuous function $\hat h_\alpha$ on $B_1$ and a small $\rho=\rho(n,\alpha)>0$
  such that
\begin{alignat}{2}
 &\hat h_\alpha(0)=0,&\quad&\label{eq:hopf1}\\
 &\hat h_\alpha\leq  0&&\text{on } B'_\rho \cap \hat{\Lambda}_\alpha, \label{eq:hopf3}\\
 &\hat h_\alpha\leq 0&&\text{in }\cN_{\rho^4}(\hat \Lambda_\alpha)\cap\partial B_\rho, \quad \cN_{r}(E)=\{x:d(x,E)<r\}\label{eq:h3}\\
 &\Delta \hat h_\alpha\geq 0&&\text{in } \hat D_\alpha,\label{eq:h-subharm}
\end{alignat} 
and
\begin{equation}
\lim_{x_{n-1}\rightarrow
  0^+}\frac{\hat h_\alpha(0,x_{n-1},0)}{(x_{n-1})^{1/2}}=1.
\label{eq:hopf2}
\end{equation}
\end{lemma}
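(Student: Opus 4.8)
The plan is to build $\hat h_\alpha$ explicitly as the real part of a fractional power of a complex variable adapted to the slit $\hat\Lambda_\alpha$, plus a lower-order correction that absorbs the curvature of the slit's boundary $\{x_{n-1}=\hat f_\alpha(|x''|)\}$. The model function is $H_0(x)=\Re(x_{n-1}+i x_n)^{1/2}$ on $\R^n\setminus\{x_n=0,\ x_{n-1}\le 0\}$, which is harmonic, vanishes on $\{x_n=0,\ x_{n-1}\le 0\}$, is positive elsewhere near that set, and satisfies $H_0(0,x_{n-1},0)=x_{n-1}^{1/2}$ for $x_{n-1}>0$, giving \eqref{eq:hopf2} exactly. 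The issue is that $H_0$ vanishes on the \emph{flat} half-hyperplane $\{x_{n-1}\le 0\}$, not on $\hat\Lambda_\alpha=\{x_{n-1}\le|x''|^{1+\alpha}\}$, so $H_0$ is slightly positive on part of $\hat\Lambda_\alpha$ near the curved edge. First I would shift: replace $x_{n-1}$ by $x_{n-1}-|x''|^{1+\alpha}$ in the construction, i.e. work with the ``curved'' complex variable $z=(x_{n-1}-|x''|^{1+\alpha})+i x_n$ and set $h_1(x)=\Re z^{1/2}$. Then $h_1\le 0$ on $\hat\Lambda_\alpha$ (in fact $h_1=0$ on its interior thin part and $h_1<0$ to the left), $h_1(0,x_{n-1},0)=x_{n-1}^{1/2}(1+o(1))$, but $h_1$ is no longer harmonic — computing $\Delta h_1$ produces error terms involving derivatives of $|x''|^{1+\alpha}$, which are of size $|x''|^{\alpha-1}$ and worse, and with an unfavorable sign in general.

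The key step, and the main obstacle, is therefore controlling the Laplacian after the curved shift and restoring subharmonicity \eqref{eq:h-subharm} in $\hat D_\alpha$. I would compute $\Delta h_1$ in the coordinates $s=x_{n-1}-|x''|^{1+\alpha}$, $t=x_n$, $x''$; since $\Re z^{1/2}$ is a function of $(s,t)$ alone, the chain rule gives $\Delta h_1 = (\partial_s^2+\partial_t^2)\Re z^{1/2}\cdot(1+|\nabla_{x''}|x''|^{1+\alpha}|^2) + (\partial_s\Re z^{1/2})\cdot(-\Delta_{x''}|x''|^{1+\alpha})$, and the first bracket vanishes because $\Re z^{1/2}$ is harmonic in $(s,t)$. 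So $\Delta h_1 = -(\Delta_{x''}|x''|^{1+\alpha})\,\partial_s\Re z^{1/2}$. Here $\Delta_{x''}|x''|^{1+\alpha}=(1+\alpha)(n-3+1+\alpha)|x''|^{\alpha-1}=(1+\alpha)(n-2+\alpha)|x''|^{\alpha-1}\ge 0$, while $\partial_s\Re z^{1/2}=\tfrac12\Re z^{-1/2}$ changes sign (it is positive for $s>0$, roughly, and negative in part of the region $s<0$). The worst region is where we need $\Delta h_1\ge 0$ but $\partial_s\Re z^{1/2}>0$, forcing $\Delta h_1<0$. To fix this I would add a genuinely superharmonic-in-the-bad-direction correction: since near the relevant scale $|z|\sim |x''|^{1+\alpha}$ we have $|\partial_s\Re z^{1/2}|\lesssim |x''|^{-(1+\alpha)/2}$, the defect is of order $|x''|^{\alpha-1-(1+\alpha)/2}=|x''|^{(\alpha-3)/2}$, and I would kill it by subtracting a multiple of $|x''|^{(1+\alpha)/2}$ (or a smooth positive function of $|x''|$ comparable to it), choosing the constant large depending on $n,\alpha$; one checks $\Delta_{x''}|x''|^{(1+\alpha)/2}$ is a negative multiple of $|x''|^{(\alpha-3)/2}$, exactly matching the defect, so $h_\alpha := h_1 - C|x''|^{(1+\alpha)/2}$ is subharmonic in $\hat D_\alpha$ for $C=C(n,\alpha)$ large. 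The subtracted term is $o(x_{n-1}^{1/2})$ along the $x_{n-1}$-axis since $(1+\alpha)/2>1/2$, so \eqref{eq:hopf2} is preserved, and it is $\le 0$ everywhere, so it only helps \eqref{eq:hopf3}.

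The remaining steps are routine. For \eqref{eq:hopf1}, $h_1(0)=0$ and the correction vanishes at $0$. For \eqref{eq:hopf3}, on $\hat\Lambda_\alpha\cap B'_\rho$ we have $x_n=0$ and $s=x_{n-1}-|x''|^{1+\alpha}\le 0$, so $z=s\le 0$ and $\Re z^{1/2}=0$ (choosing the branch with a cut along the negative reals, $\Re z^{1/2}\to 0$ as $z\to s<0$); together with the nonpositive correction this gives $h_\alpha\le 0$ there, after checking the branch is chosen so that $h_1\le 0$ throughout, which holds for $\rho$ small since $|x''|^{1+\alpha}$ dominates the flat slit geometry on that scale. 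For \eqref{eq:h3}, on $\cN_{\rho^4}(\hat\Lambda_\alpha)\cap\partial B_\rho$ one has $x_n$ small ($|x_n|<\rho^4$) and $x_{n-1}\lesssim |x''|^{1+\alpha}+\rho^4$ relative to $|x''|\sim\rho$, so $s\lesssim \rho^4 \ll |x_n|$-scale is not quite it — rather $|z|\lesssim \rho^4$ with $\Re z \le 0$ up to $O(\rho^4)$, hence $h_1 \lesssim \rho^2$ while the correction $-C|x''|^{(1+\alpha)/2}\sim -C\rho^{(1+\alpha)/2}$ with $(1+\alpha)/2<2$ dominates, giving $h_\alpha\le 0$; one adjusts the $\rho^4$ in \eqref{eq:h3} versus the power $(1+\alpha)/2$ so that the correction wins. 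Finally continuity of $\hat h_\alpha$ on $B_1$ is clear since $\Re z^{1/2}$ extends continuously across the cut (it is $0$ there from both sides up to the branch point) and $|x''|^{(1+\alpha)/2}$ is continuous. One then sets $\hat h_\alpha$ to be this function (extended by its natural formula, continuous up to the slit), and $\rho(n,\alpha)$ small enough for all the above inequalities to hold simultaneously.
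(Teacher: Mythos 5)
Your approach fails at the key step, the subharmonicity \eqref{eq:h-subharm}. First, the chain rule is misapplied: writing $F(s,t)=\Re(s+it)^{1/2}$, $s=x_{n-1}-|x''|^{1+\alpha}$, $t=x_n$, one gets
$\Delta h_1=F_{ss}\,|\nabla_{x''}|x''|^{1+\alpha}|^{2}-F_s\,\Delta_{x''}|x''|^{1+\alpha}$
(only the $s$-second derivative picks up the factor $|\nabla_{x''}\phi|^{2}$, so harmonicity of $F$ in $(s,t)$ does not kill the first term). More seriously, both terms are \emph{unbounded} in $\hat D_\alpha$: $F_s=\tfrac12\Re z^{-1/2}$ and $F_{ss}=-\tfrac14\Re z^{-3/2}$ blow up as $z\to0$, and in particular on the set $\{x_n=0,\ x_{n-1}>|x''|^{1+\alpha}\}\subset\hat D_\alpha$ (where $z=s>0$) one has $\Delta h_1\sim -\tfrac12 s^{-1/2}(1+\alpha)(n-2+\alpha)|x''|^{\alpha-1}-\tfrac14 s^{-3/2}(1+\alpha)^2|x''|^{2\alpha}\to-\infty$ as $s\to0^+$ with $|x''|$ fixed. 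A correction depending only on $x''$ has Laplacian bounded on compact subsets of $\{x''\neq0\}$ and cannot absorb this; your estimate of the defect by $|x''|^{(\alpha-3)/2}$ is valid only at the single scale $|z|\sim|x''|^{1+\alpha}$, not throughout $\hat D_\alpha$. (Two further problems: $\Delta_{x''}|x''|^{(1+\alpha)/2}$ is a \emph{negative} multiple of $|x''|^{(\alpha-3)/2}$ only when $n=3$; for $n\geq4$ it is positive, so subtracting $C|x''|^{(1+\alpha)/2}$ makes matters worse. And \eqref{eq:h3} fails at points of $\partial B_\rho\cap\cN_{\rho^4}(\hat\Lambda_\alpha)$ with $x''=0$ and $x_{n-1}\approx-\rho$: there $h_1\approx|x_n|/(2\sqrt\rho)>0$ while your correction vanishes.)

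The paper avoids all of this by \emph{not} bending the slit. It keeps the exactly harmonic $U(x)=\Re(x_{n-1}+ix_n)^{1/2}$ (harmonic in $B_1$ minus the flat slit, hence in $\hat D_\alpha$) and instead repairs the sign on the curved set $\hat\Lambda_\alpha$ by adding $\hat g_\alpha(U)$ with $\hat g_\alpha(r)=\bigl(2+\tfrac{2(2n-3)}{\alpha}\bigr)r^{1+\alpha}$ and subtracting $2\hat f_\alpha(|x|^{1/2})=2|x|^{(1+\alpha)/2}$. On $\hat\Lambda_\alpha$ one has $U=(x_{n-1}^+)^{1/2}\leq|x''|^{(1+\alpha)/2}$, so the subtracted term dominates and gives \eqref{eq:hopf3}; since it is a function of $|x|$ (not $|x''|$) it is $\geq2\rho^{(1+\alpha)/2}$ on all of $\partial B_\rho$, which gives \eqref{eq:h3} even near $x''=0$; and both correction terms have Laplacians comparable to $(x_{n-1}^2+x_n^2)^{-1/2}$ with matching structure, so that $\Delta\hat g_\alpha(U)-2\Delta\hat f_\alpha(|x|^{1/2})\geq0$ follows from $U\leq|x|^{1/2}$ and the monotonicity of $r\mapsto\hat f''_\alpha(r)+(2n-3)\hat f'_\alpha(r)/r$. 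If you want to salvage your route, you would need a correction whose Laplacian is comparable to $-\Delta h_1$ near the whole edge $\{z=0\}$, which essentially forces you back to corrections built from $U$ or $|x|$ as in the paper.
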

\begin{proof}
Inspired by the construction in \cite{LN}, we will show that the following
function satisfies the conditions of 
the lemma:
\begin{align*}
\hat h_\alpha (x)&=U(x)+\hat g_\alpha(U(x))-2\hat f_\alpha(|x|^{1/2}),
\intertext{where} 
U(x)&=\Re(x_{n-1}+ix_n)^{1/2}, \\
\hat g_\alpha(r)&=2\hat f_\alpha(r)+2(2n-3)r\int_0^r\frac{\hat f_\alpha(s)}{s^2}ds\\&=\left(2+\frac{2(2n-3)}{\alpha}\right)r^{1+\alpha}, \quad r\in [0,1).
\end{align*}
We next verify each of the properties \eqref{eq:hopf1}--\eqref{eq:hopf2}  in the lemma.

First of all, \eqref{eq:hopf1} is immediate.

Next, since
$U(x',0)=(x_{n-1}^+)^{1/2}$, we have 
\begin{equation}\label{eq:h2}
\hat h_\alpha(x',0)=(x_{n-1}^+)^{1/2}+\hat g_\alpha((x_{n-1}^+)^{1/2})-2\hat f_\alpha(|x'|^{1/2})\quad \text{on } B'_1.
\end{equation}
Therefore, on $B'_1\cap\hat \Lambda_\alpha$ one has
\begin{align*}
\hat h_\alpha(x',0)&\leq|x''|^{(\alpha+1)/2}+\left(2+\frac{2(2n-3)}{\alpha}\right)|x''|^{(\alpha +1)^2/2}-2|x''|^{(\alpha+1)/2}\\
& = -|x''|^{(\alpha+1)/2}\left(1-\left(2+\frac{2(2n-3)}{\alpha}\right)|x''|^{\alpha+1}\right).
\end{align*} 
Hence there exists $\rho=\rho(n,\alpha)>0$ such that $\hat h_\alpha\leq 0$ on
$B'_\rho\cap \hat\Lambda_\alpha$. This implies \eqref{eq:hopf3}. 

Further, to show \eqref{eq:h3}, notice that
$$
0\leq U\leq(\rho^4+\rho^{1+\alpha})^{1/2}\quad\text{in }\cN_{\rho^4}(\hat \Lambda_\alpha)\cap\partial B_\rho,
$$
which yields
$$
\hat h_\alpha\leq
(\rho^4+\rho^{1+\alpha})^{1/2}+\left(2+\frac{2(2n-3)}{\alpha}\right)(\rho^4+\rho^{1+\alpha})^{(1+\alpha)/2}-2\rho^{(1+\alpha)/2}\leq
0,
$$
for small $\rho$, as claimed.

Next, we show \eqref{eq:h-subharm}. It is easy to check that $\hat g_\alpha$ and $U$ satisfy
\begin{align}
\hat g''_\alpha(r)&=2\left(\hat f''_\alpha(r)+(2n-3)\frac{\hat f'_\alpha(r)}{r}\right); \label{eq:g1}\\
\Delta U&=0,\quad  \text{ in } B_1\setminus \{(x'',x_{n-1},0):x_{n-1}\leq 0\}\supseteq \hat D_\alpha;\label{eq:u1}\\
|\nabla U|^2&=\frac{1}{4}(x_{n-1}^2+x_n^2)^{-1/2}, \quad \text{ in } \hat D_\alpha\notag
\end{align}
Thus a direct computation shows that for $x\in \hat D_\alpha$
\begin{align}
\Delta \hat g_\alpha(U)&=\hat g''_\alpha(U)|\nabla U|^2+\hat g'_\alpha(U)\Delta U=\frac{1}{4}\hat g''_\alpha(U)(x_{n-1}^2+x_n^2)^{-1/2};\label{eq:g2}\\
\Delta \hat f_\alpha(|x|^{1/2})&=\frac{1}{4}\hat f''_\alpha(|x|^{1/2})|x|^{-1}+\frac{2n-3}{4}\hat f'_\alpha(|x|^{1/2})|x|^{-3/2},\notag\\
&=\frac{1}{4}\left(\hat f''_\alpha(|x|^{1/2})+(2n-3)\frac{\hat f'_\alpha(|x|^{1/2})}{|x|^{1/2}}\right)|x|^{-1},\notag\\
&\leq \frac{1}{4}\left(\hat f''_\alpha(|x|^{1/2})+(2n-3)\frac{\hat f'_\alpha(|x|^{1/2})}{|x|^{1/2}}\right)(x_{n-1}^2+x_n^2)^{-1/2}.\label{eq:f1}
\end{align}
Combining \eqref{eq:g1}--\eqref{eq:f1}, we obtain that for $x\in \hat D_\alpha$
\begin{multline}\label{eq:h1}
(x_{n-1}^2+x_n^2)^{1/2}\Delta \hat h_\alpha(x)= (x_{n-1}^2+x_n^2)^{1/2}[\Delta U + \Delta \hat g_\alpha(U)-2\Delta \hat f_\alpha(|x|^{1/2})]
\\\geq\frac{1}{2}\left(\hat f''_\alpha(U)+(2n-3)\frac{\hat f'_\alpha(U)}{U}\right)-\frac{1}{2}\left(\hat f''_\alpha(|x|^{1/2})+(2n-3)\frac{\hat f'_\alpha(|x|^{1/2})}{|x|^{1/2}}\right).
\end{multline}
Since $U(x)\leq |x|^{1/2}$  and
$\hat f''_\alpha(r)+(2n-3)\hat f'_\alpha(r)/r=(1+\alpha)(2n-3+\alpha)r^{\alpha-1}$ is
decreasing on $(0,\infty)$, then by \eqref{eq:h1} we have $\Delta
\hat h_\alpha\geq 0$ in $\hat D_\alpha$.  This shows \eqref{eq:h-subharm}.

Finally, by \eqref{eq:h2}, we have
$$
\lim_{x_{n-1}\rightarrow 0^+}
\frac{\hat h_\alpha(0,x_{n-1},0)}{(x_{n-1})^{1/2}}=1.
$$
This completes the proof of the lemma.
\end{proof}

Using the function constructed in Lemma~\ref{lem:hopf-barrier} as a
barrier, we have the improvement of the nondegeneracy for
nonnegative harmonic functions in $\hat D_\alpha$.

\begin{lemma}\label{cor:hopf1}
Let $w$ be a nonnegative superharmonic function in $\hat D_\alpha$, $w\in
C^{0,1/2}(B_1)$ and $w=0$ on $\hat \Lambda_\alpha$. Moreover, suppose that
$w$ satisfies 
\begin{equation}\label{eq:non2}
w(x)\geq c_0 d(x,\hat \Lambda_\alpha),\quad x\in B_1.
\end{equation}
Then there exists $\rho=\rho(n,\alpha,c_0)>0$ and $\epsilon_0=\epsilon_0(n,\alpha, c_0)>0$ such that
\begin{equation}\label{eq:non3}
w(0,x_{n-1},0)\geq \epsilon _0 (x_{n-1})^{1/2}, \quad 0<x_{n-1}<\rho.
\end{equation}
\end{lemma}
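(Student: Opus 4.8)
The plan is to use the function $\hat h_\alpha$ from Lemma~\ref{lem:hopf-barrier} as a subsolution (barrier from below) and compare it with a suitable multiple of $w$ on the domain $\hat D_\alpha \cap B_\rho$. The comparison principle will then transfer the lower bound on $\hat h_\alpha$ along the $x_{n-1}$-axis, namely $\hat h_\alpha(0,x_{n-1},0)\sim (x_{n-1})^{1/2}$ from \eqref{eq:hopf2}, to $w$, giving \eqref{eq:non3}. The key point is that $\hat h_\alpha$ is subharmonic in $\hat D_\alpha$ by \eqref{eq:h-subharm}, while $w$ is superharmonic there, so $w - \epsilon \hat h_\alpha$ is superharmonic and we only need to check the sign on the boundary of the comparison domain.

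More precisely, I would fix $\rho = \rho(n,\alpha)$ as in Lemma~\ref{lem:hopf-barrier} (possibly shrinking it further) and work on the open set $\D := \hat D_\alpha \cap B_\rho = B_\rho \setminus \hat\Lambda_\alpha$. Its boundary splits into three pieces: (i) the part lying on the slit $\hat\Lambda_\alpha$ (more precisely on $\overline{\hat\Lambda_\alpha}\cap \overline{B_\rho}$), where $w = 0$ and $\hat h_\alpha \le 0$ by \eqref{eq:hopf3}; (ii) the portion of the spherical boundary $\partial B_\rho$ that is close to $\hat\Lambda_\alpha$, i.e. within $\cN_{\rho^4}(\hat\Lambda_\alpha)$, where $\hat h_\alpha \le 0$ by \eqref{eq:h3} and $w \ge 0$; and (iii) the portion of $\partial B_\rho$ that is at distance $\ge \rho^4$ from $\hat\Lambda_\alpha$, where the nondegeneracy hypothesis \eqref{eq:non2} gives $w \ge c_0\, d(x,\hat\Lambda_\alpha) \ge c_0 \rho^4$, while $\hat h_\alpha$ is bounded on $\overline{B_\rho}$ by some constant $M = M(n,\alpha)$ (it is continuous on $B_1$). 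Choosing $\epsilon_0 := \min\{1, c_0 \rho^4 / M\}$, we get $w \ge \epsilon_0 \hat h_\alpha$ on all of $\partial \D$. Since $w - \epsilon_0 \hat h_\alpha$ is superharmonic in $\D$ and continuous up to $\overline{\D}$ (using $w\in C^{0,1/2}(B_1)$ and continuity of $\hat h_\alpha$), the minimum principle yields $w \ge \epsilon_0 \hat h_\alpha$ throughout $\D$. Restricting to the segment $(0,x_{n-1},0)$ with $0 < x_{n-1} < \rho$ and using \eqref{eq:hopf2} — which gives $\hat h_\alpha(0,x_{n-1},0) \ge \tfrac12 (x_{n-1})^{1/2}$ for $x_{n-1}$ small, after possibly shrinking $\rho$ once more — we conclude $w(0,x_{n-1},0) \ge \tfrac{\epsilon_0}{2}(x_{n-1})^{1/2}$, which is \eqref{eq:non3} after renaming $\epsilon_0$.

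The one point requiring a little care is the application of the minimum principle on $\D$: the domain $\D = B_\rho \setminus \hat\Lambda_\alpha$ is a ball with a (closed, codimension-two-ish but here codimension-one-when-viewed-in-$\R^n$, since $\hat\Lambda_\alpha \subset B_1'$) slit removed, so it is not smooth and one must justify that a bounded superharmonic function on $\D$, continuous on $\overline{\D}$ and nonnegative on $\partial\D$, is nonnegative in $\D$. This follows because $\hat\Lambda_\alpha$ has zero harmonic capacity contributions that do not spoil the minimum principle — more concretely, one can invoke the standard fact that the minimum principle for superharmonic functions holds on any bounded open set as long as one knows $\liminf_{x\to\xi} (w - \epsilon_0\hat h_\alpha)(x) \ge 0$ for every $\xi \in \partial\D$, which is exactly what the boundary analysis above (cases (i)–(iii)) provides, since both functions extend continuously to $\overline{B_\rho}$. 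I expect this — correctly setting up the comparison domain and its boundary decomposition, and verifying the sign in each of the three boundary pieces (especially matching the $\rho^4$ scale in \eqref{eq:h3} with the nondegeneracy bound in case (iii)) — to be the main, though quite routine, obstacle; the rest is a direct translation of the barrier's properties.
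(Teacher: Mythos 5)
Your proposal is correct and follows essentially the same route as the paper: compare $w$ with $\epsilon_0\hat h_\alpha$ on $\hat D_\alpha\cap B_\rho$, verify the sign of $w-\epsilon_0\hat h_\alpha$ on the three boundary pieces exactly as you describe (slit, near-slit spherical cap via \eqref{eq:h3}, far spherical part via \eqref{eq:non2}), and conclude by the minimum principle and the behavior of $\hat h_\alpha$ on the $x_{n-1}$-axis. The only cosmetic difference is that the paper reads off $\hat h_\alpha(0,x_{n-1},0)\ge (x_{n-1})^{1/2}$ directly from the explicit formula \eqref{eq:h2} rather than from the limit \eqref{eq:hopf2}, which avoids the extra factor $1/2$; both are fine.
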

\begin{proof} 
Let $h$ and $\rho$ be as in Lemma~\ref{lem:hopf-barrier}. Then by
\eqref{eq:non2} and the continuity of $h$, there exists $\epsilon_0=\epsilon_0(\rho, c_0)>0$ such that 
\begin{equation*}
w(x)-\epsilon_0\hat h_\alpha(x)\geq 0, \quad x\in\partial B_\rho\setminus \cN_{\rho^4}(\hat \Lambda_\alpha),
\end{equation*}
which combined with \eqref{eq:hopf3}--\eqref{eq:h3} gives that 
$$
w-\epsilon_0\hat h_\alpha\geq 0\quad\text{on }\partial(\hat D_\alpha\cap B_\rho).
$$
Then, by the maximum principle
$$
w-\epsilon_0\hat h_\alpha\geq 0\quad\text{on }\hat D_\alpha\cap B_\rho.
$$
In particular,
\begin{align*}
w(0,x_{n-1},0)&\geq \epsilon_0 (x_{n-1})^{1/2}+\left(2+\frac{2(2n-3)}{\alpha}\right)(x_{n-1})^{(1+\alpha)/2}-2(x_{n-1})^{(1+\alpha)/2}\\
&\geq\epsilon_0 (x_{n-1})^{1/2},
\end{align*}
for $0<x_{n-1}<\rho$ with $\rho=\rho(n,\alpha)>0$ small.
\end{proof}

From Lemma~\ref{cor:hopf1}, we obtain the following nondegeneracy
property for the tangential derivatives of the solution to the
elliptic Signorini problem. 

\begin{proposition}\label{prop:hopf}
Let $u$ be a solution to the elliptic Signorini problem in $B_2^+$
satisfying the assumptions in Section~\ref{sec:assumptions}. Then for each $x_0\in \Gamma_u\cap B_1'$ and $\eta>0$, there exist
$\rho,\ \epsilon_0>0$ depending on $n, \alpha, c_\eta, \|f\|_{C^{1,\alpha}(B''_2)}$, such that 
\begin{equation}\label{eq:sighopf}
\partial _\tau u(x_0+t\nu'_{x_0})\geq \epsilon_0 t^{1/2}, \quad t\in (0,\rho),\quad \tau\in C'_{\eta}(\nu'_{x_0}).
\end{equation}
\end{proposition}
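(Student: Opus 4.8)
The plan is to deduce Proposition~\ref{prop:hopf} from Lemma~\ref{cor:hopf1} by finding, for each tangential direction $\tau\in C'_\eta(\nu'_{x_0})$, a function $w$ of the form $w=\partial_\tau u$ (suitably localized and rescaled near $x_0$) that fits into the hypotheses of that lemma. First I would fix $x_0\in\Gamma_u\cap B_1'$ and normalize: after a translation taking $x_0$ to the origin and a rotation in $\R^{n-1}$ making $\nu'_{x_0}=e_{n-1}$, the free boundary $\Gamma_u$ becomes the graph $x_{n-1}=f(x'')$ with $f\in C^{1,\alpha}$ and $f(0)=|\nabla_{x''}f(0)|=0$, so that $\Lambda_u=\{x_{n-1}\le f(x'')\}$ is, near the origin, trapped between two paraboloid-type regions of the form $x_{n-1}\le \pm C|x''|^{1+\alpha}$ (using $\|f\|_{C^{1,\alpha}}$ to control the deviation from the tangent plane). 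Rescaling $x\mapsto x/r$ for a suitable small $r$ and absorbing constants, I can arrange that the (rescaled) coincidence set contains $\hat\Lambda_\alpha$ and is contained in a comparable set; what we actually need is only that $w=0$ on the graph region and $w$ is superharmonic off it, so I would work in a domain $D$ with $\hat\Lambda_\alpha\subseteq B_1\setminus D\subseteq$ (the rescaled $\Lambda_u$-side).

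The second step is to check the three structural hypotheses of Lemma~\ref{cor:hopf1} for $w=\partial_\tau u$. Superharmonicity: by \eqref{eq:u-harm} $u$ is harmonic in $B_2^+$, hence $\partial_\tau u$ is harmonic there; after even reflection in $x_n$, and using that on $\Omega_u\cup\Gamma_u$ one has $u>0$ and $\partial_{x_n}u=0$ while on $\Lambda_u$ one has $u=0$, the even extension of $u$ is harmonic across $\Omega_u$ and the even extension of $\partial_{x_n}u$ vanishes on $\Omega_u$; the relevant point is that a tangential derivative $\partial_\tau u$ of the (even extension of the) solution is harmonic in $B_1\setminus\Lambda_u=\hat D_\alpha$-type domain, so in particular superharmonic there — this is standard and I would cite the facts recorded in Section~\ref{sec:assumptions} and \cite{CSS},\cite{AHN}. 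Vanishing on $\Lambda_u$: since $u\equiv 0$ on $\Lambda_u\subset B_1'$ and $\tau$ is a direction tangent to the hyperplane $\{x_n=0\}$, $\partial_\tau u=0$ on $\Lambda_u$. The regularity $w\in C^{0,1/2}(B_1)$ is exactly the $C^{1,1/2}$ regularity of $u$ from \cite{AC}. Finally, the linear nondegeneracy $w(x)\ge c_0\,d(x,\Lambda_u)$ in a neighborhood of $x_0$ is precisely assumption \eqref{eq:non1}: for $\tau\in C'_\eta(\nu'_{x_0})$ and $x\in\overline{B^+_{r_\eta}(x_0)}$ we have $\partial_\tau u(x)\ge c_\eta\,d(x,\Lambda_u)$, which transfers under the rescaling to \eqref{eq:non2} with $c_0$ comparable to $c_\eta$ (up to the scaling factor $r^{1/2}$ coming from the $C^{0,1/2}$-scaling of $w$).

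With these verifications in place, Lemma~\ref{cor:hopf1} applies with parameters $n,\alpha,c_0$ (hence, after undoing the normalization, with constants depending on $n,\alpha,c_\eta,\|f\|_{C^{1,\alpha}(B''_2)}$) and yields, along the normal ray, $w(0,x_{n-1},0)\ge\epsilon_0 x_{n-1}^{1/2}$ for $0<x_{n-1}<\rho$. Translating back, $\partial_\tau u(x_0+t\nu'_{x_0})\ge \epsilon_0 t^{1/2}$ for $t\in(0,\rho)$, which is \eqref{eq:sighopf}. One technical point to handle carefully is the dependence of $\rho$ and $\epsilon_0$ on $\tau$: a priori $c_\eta$ depends only on the cone aperture $\eta$ (by \eqref{eq:non1}) and not on the particular $\tau\in C'_\eta(\nu'_{x_0})$, and the barrier construction in Lemma~\ref{lem:hopf-barrier} depends only on $n,\alpha$, so the resulting $\rho,\epsilon_0$ are uniform over the cone; I would make this uniformity explicit. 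A second technical point is that $\rho$ and the comparison of $\Lambda_u$ with $\hat\Lambda_\alpha$ must be chosen uniformly over $x_0\in\Gamma_u\cap B_1'$, which is possible since $\|f\|_{C^{1,\alpha}(B''_2)}$ bounds, uniformly in $x_0$, the deviation of $\Gamma_u$ from its tangent plane at $x_0$.

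The main obstacle, in my view, is the bookkeeping around the rescaling: matching the coincidence set $\Lambda_u$ — a $C^{1,\alpha}$ graph, not exactly $\hat\Lambda_\alpha$ — to the idealized set $\hat\Lambda_\alpha$ used in the barrier lemma, while keeping track of how the $C^{0,1/2}$-homogeneity of $w=\partial_\tau u$ interacts with the scaling (so that the constant $c_0$ in \eqref{eq:non2} does not degenerate as $r\to0$), and ensuring all constants depend only on the stated quantities and are uniform over $x_0$ and over $\tau$ in the cone. The PDE and monotonicity inputs are all already available; the real work is this geometric/scaling normalization, together with the (routine but slightly delicate) verification that $\partial_\tau u$ is superharmonic in the slit domain after the even reflection.
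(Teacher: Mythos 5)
Your strategy is the paper's: normalize at $x_0$, compare $\Lambda_u$ with the model set $\hat\Lambda_\alpha$ using the $C^{1,\alpha}$ bound on $f$, and apply Lemma~\ref{cor:hopf1} to $w=\partial_\tau u$. But the containment you set up goes the wrong way, and as written that step would fail. You claim that after rescaling ``the coincidence set contains $\hat\Lambda_\alpha$'' and propose $\hat\Lambda_\alpha\subseteq B_1\setminus D\subseteq\Lambda_u$. First, this cannot be arranged: in coordinates centered at $x_0$ with $\nu'_{x_0}=e_{n-1}$, the $C^{1,\alpha}$ graph bound traps $\Lambda_u$ between $\{x_{n-1}\le -M|x''|^{1+\alpha}\}$ and $\{x_{n-1}\le +M|x''|^{1+\alpha}\}$, and the lower set bulges the opposite way from $\hat\Lambda_\alpha$, so no rescaling makes $\Lambda_u\supseteq\hat\Lambda_\alpha$. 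Second, and more importantly, if $\hat\Lambda_\alpha\subsetneq\Lambda_u$ then $\hat D_\alpha\not\subseteq B_1\setminus\Lambda_u$, and $\partial_\tau u$ is \emph{not} superharmonic on all of $\hat D_\alpha$: the distributional Laplacian of the even extension of $\partial_\tau u$ equals $2\,\partial_\tau\partial_{x_n}u$ times surface measure on $\Lambda_u$, which has no sign, so the comparison with $\hat h_\alpha$ via the maximum principle in $\hat D_\alpha\cap B_\rho$ breaks down; moreover $\hat h_\alpha$ is only known to be $\le 0$ on $\hat\Lambda_\alpha$ (property \eqref{eq:hopf3}), not on a larger set $\Lambda_u$ where $w$ vanishes.

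The correct direction, which is what the paper uses, is $\Lambda_u\cap B_1(x_0)\subseteq\tilde\Lambda_{x_0,M}:=\{x_{n-1}-(x_0)_{n-1}\le M|x''-(x_0)''|^{1+\alpha}\}$ with $M=\|f\|_{C^{1,\alpha}(B_2'')}$; a scaling normalizes $M=1$, so that $\hat D_\alpha\subseteq B_1\setminus\Lambda_u$ and $\partial_\tau u$ is genuinely harmonic on all of $\hat D_\alpha$ after the even reflection. The price is that $w=\partial_\tau u$ no longer vanishes on all of $\hat\Lambda_\alpha$, only on $\Lambda_u$; but an inspection of the proof of Lemma~\ref{cor:hopf1} shows that only $w\ge 0$ on $\hat\Lambda_\alpha$ is used there, and since $d(x,\hat\Lambda_\alpha)\le d(x,\Lambda_u)$ the hypothesis \eqref{eq:non2} still follows from \eqref{eq:non1}. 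With this fix, the remaining points of your write-up --- harmonicity of $\partial_\tau u$ in the slit domain, the $C^{0,1/2}$ regularity, the $r^{1/2}$ scaling of the nondegeneracy constant, and the uniformity in $\tau$ and $x_0$ --- all match the paper's argument.
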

\begin{proof}
For each $x_0\in \Gamma_u\cap B_1'$, we can rotate a coordinate system
in $\R^{n-1}$ so that $\nu'_{x_0}=e_{n-1}$. Then we have
\begin{equation*}
B_1(x_0)\setminus \tilde\Lambda_{x_0,M}\subset B_1(x_0)\setminus \Lambda_u,
\end{equation*}
where 
\begin{align*}
\tilde\Lambda_{x_0,M}=\{(x'',x_{n-1})\in B'_1(x_0):
x_{n-1}-(x_0)_{n-1}\leq M |x''-(x_0)''|^{1+\alpha}\}&\\\quad\text{with }
M=\|f\|_{C^{1,\alpha}(B''_2)}.&
\end{align*}
Since $u$ is harmonic in $B_1(x_0)\setminus \tilde{\Lambda}_{x_0,M}$ and
$\partial _\tau u$ satisfies the nondegeneracy condition
\eqref{eq:non1}, then $\partial_\tau u$ satisfies the assumptions of
Lemma~\ref{cor:hopf1}, with a small difference that there is constant
$M$ in the definition of the set $\tilde \Lambda_{x_0,M}$ above. However, by a
simple scaling, we can make $M=1$. Thus, there exists $\rho=\rho(n,\alpha, c_\eta,
M)>0$ and $\epsilon_0=\epsilon_0(n,\alpha, c_\eta, M)>0$ such that
\eqref{eq:sighopf} holds.  
\end{proof}

\subsection{$3/2$-homogeneous blowups}
\label{sec:blowups} 
For our further study, we need to consider the following rescalings
\begin{equation}\label{eq:homgen-rescal}
u_{r,x_0}(x)=\frac{u(x_0+rx)}{r^{3/2}},\quad r>0
\end{equation}
with $x_0\in \Gamma_u$. Note that these are different form rescalings
\eqref{eq:blowups} in the sense that the $L^2$ norm of $u$ is not preserved under the rescaling, but it is better suited for the study of regular
points. First, from the growth estimate (see e.g. \cite{AC})
\begin{equation}\label{eq:growth-est}
\sup _{B_r^+(x_0)} |u| \leq C_ur^{3/2}
\end{equation}
for $x_0\in \Gamma_u\cap B_1'$, where $C_u=C(n,\|u\|_{L^2(B_2^+)})$ and $0<r<1$, we know that the family
$\{u_{r,x_0}\}_r$ is locally uniformly bounded. Moreover, by the interior $C^{1,1/2}$ estimate (see e.g. \cite{AC})
\begin{equation}\label{eq:r}
\|u\|_{C^{1,1/2}(\overline{B_r^+(x_0)})}\leq C_u, \quad 0<r<1
\end{equation}
we get that $\{u_{r,x_0}\}_r$ is uniformly bounded in $C^{1,1/2}_{\loc}(B_{1/r}^+\cup
B'_{1/r})$. Thus there
exists $u_{0}\in C^{1,1/2}(\R^n_+\cup\{x_n=0\}) $ such that
$u_{r,x_0}\rightarrow u_{0}$ in $C^{1,\alpha}_{\loc}$, for any $\alpha \in
(0,1/2)$ over a certain subsequence $r=r_j\rightarrow 0$. It is also
immediate to see that $u_{0}$ is a global solution of the
Signorini problem, i.e., a solution of \eqref{eq:u-harm}--\eqref{eq:compl-cond} in $\R^n_+$. Furthermore, it is important
to note that $u_{0}$ is nonzero, because of the nondegeneracy provided
by Proposition~\ref{prop:hopf}. Sometimes we will refer to the function
$u_0$ as the \emph{$3/2$-homogeneous blowup} to indicate the way it
was obtained.

The following Weiss-type monotonicity formula, whose proof can be found
in \cite{GP}, implies that $u_{0}$ is a homogeneous global solution
of the Signorini problem of degree $3/2$.  

\begin{lemma}[Weiss-type monotonicity formula]\label{lem:weiss}
\pushQED{\qed}
Let $u$ be a nonzero solution of the Signorini problem \eqref{eq:u-harm}--\eqref{eq:compl-cond} in $B^+_R$. For any $x_0\in\Gamma_u$ and $0<r<R-|x_0|$
define
$$
W_{x_0}(r,u)=\frac{1}{r^{n+1}}\int_{B_r^+(x_0)}|\nabla
u|^2-\frac{3/2}{r^{n+2}}\int_{(\partial B_r(x_0))^+} u^2.
$$
Then $r\mapsto W_{x_0}(r,u)$
is nondecreasing in $r\in (0,R-|x_0|)$. Moreover, for a.e.\
$r\in(0,R-|x_0|)$ we have
$$
\frac{d}{dr}W_{x_0}(r,u)=\frac{2}{r^{n+3}}\int_{(\partial B_r(x_0))^+}
((x-x_0)\cdot\nabla u-(3/2)u)^2.
$$
Furthermore, $W_{x_0}(r,u)\equiv const$  for $r_1<r<r_2$ if and only if
$u$ is homogeneous of degree $3/2$ with respect to $x_0$ in
$B_{r_2}(x_0)\setminus \overline {B_{r_1}(x_0)}$, i.e.
\[
(x-x_0)\cdot \nabla u-(3/2)u=0\quad \text{in } B_{r_2}(x_0)\setminus \overline {B_{r_1}(x_0)}.\qedhere
\]
\popQED
\end{lemma}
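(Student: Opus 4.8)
The plan is to translate so that $x_0=0$, abbreviate $W(r)=W_0(r,u)$, $D(r)=\int_{B_r^+}|\nabla u|^2$ and $H(r)=\int_{(\partial B_r)^+}u^2$, so that $W(r)=r^{-n-1}D(r)-\tfrac32 r^{-n-2}H(r)$, and then differentiate $W$ explicitly. Everything rests on two elementary identities for $D'$ and $H'$ obtained from the divergence theorem on $B_r^+$, the only place the Signorini structure being used to kill the boundary integrals over the flat piece $B'_r$. First I would establish the ``no flux through $B'_r$'' identity: integrating $\operatorname{div}(u\nabla u)=|\nabla u|^2+u\Delta u$ over $B_r^+$ and splitting the boundary as $\partial B_r^+=(\partial B_r)^+\cup B'_r$ (with outer normal $-e_n$ on $B'_r$), the flat contribution is $-\int_{B'_r}u\,\partial_{x_n}u$, which vanishes identically by the complementarity condition \eqref{eq:compl-cond}; since $\Delta u=0$ this gives $D(r)=\int_{(\partial B_r)^+}u\,\partial_\nu u$, where $\partial_\nu$ is the radial derivative. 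Rescaling $H$ to the unit hemisphere and differentiating then yields $H'(r)=\tfrac{n-1}{r}H(r)+2D(r)$.

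Next I would compute $D'(r)=\int_{(\partial B_r)^+}|\nabla u|^2\,d\sigma$ by a Rellich--Pohozaev identity: apply the divergence theorem to the field $F=(x\cdot\nabla u)\nabla u-\tfrac12|\nabla u|^2 x$ on $B_r^+$, for which a direct computation using $\Delta u=0$ gives $\operatorname{div}F=\tfrac{2-n}{2}|\nabla u|^2$. The flux of $F$ through $(\partial B_r)^+$ equals $r(\partial_\nu u)^2-\tfrac r2|\nabla u|^2$, while its flux through $B'_r$ reduces (since $x_n=0$ there) to $-(x'\cdot\nabla' u)\,\partial_{x_n}u$, which is $0$ almost everywhere on $B'_r$: by \eqref{eq:compl-cond}, at a.e.\ point of $B'_r$ one has either $\partial_{x_n}u=0$ (on $\Omega_u$) or $\nabla' u=0$ (a.e.\ on $\Lambda_u$, as $\nabla u=0$ a.e.\ on the set $\{u=0\}$), and $\Gamma_u$ is $(n-1)$-negligible. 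This produces $D'(r)=\tfrac{n-2}{r}D(r)+2\int_{(\partial B_r)^+}(\partial_\nu u)^2$.

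Finally I would substitute $H'$ and $D'$ into $\tfrac{d}{dr}W(r)$: the $D$-terms combine to $-6r^{-n-2}D(r)$ and the $H$-terms to $\tfrac92 r^{-n-3}H(r)$ (this is where the specific exponent $3/2$ is crucial), so that, after rewriting $D(r)=\int_{(\partial B_r)^+}u\,\partial_\nu u$ and $H(r)=\int_{(\partial B_r)^+}u^2$ and completing the square using $x\cdot\nabla u=r\,\partial_\nu u$ on $\partial B_r$, one arrives at
$$\frac{d}{dr}W(r)=\frac{2}{r^{n+3}}\int_{(\partial B_r)^+}\bigl(x\cdot\nabla u-\tfrac32 u\bigr)^2\ge 0.$$
Translating back to $x_0$ gives both the monotonicity and the stated derivative formula. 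For the rigidity statement: $W_{x_0}(\cdot,u)$ is constant on $(r_1,r_2)$ iff this derivative vanishes for a.e.\ $r$ there, i.e.\ iff $(x-x_0)\cdot\nabla u-\tfrac32 u\equiv 0$ on $(\partial B_r(x_0))^+$ for a.e.\ such $r$, which by the Euler relation is exactly $3/2$-homogeneity of $u$ about $x_0$ in $B_{r_2}(x_0)\setminus\overline{B_{r_1}(x_0)}$.

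I expect the main obstacle to be purely technical: justifying the integrations by parts — in particular the Rellich identity with the quadratic-in-$\nabla u$ field $F$ — up to the flat boundary with only the $C^{1,1/2}_{\loc}(B_2^+\cup B_2')$ regularity of $u$, and carefully handling the measure-theoretic splitting $B'_r=\Omega_u\cup\Gamma_u\cup(\Lambda_u\setminus\Gamma_u)$ to see that all flat boundary terms genuinely vanish. A clean way around the regularity issue is to first prove the two identities on $B_\rho^+$ for $\rho<r$ and then let $\rho\uparrow r$, or to integrate the differential identity over an interval and pass to the limit via an approximation argument, exactly as in the standard treatment of Almgren's and Weiss's monotonicity formulas.
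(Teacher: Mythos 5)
Your computation is correct and is precisely the standard Weiss argument: the paper itself gives no proof of Lemma~\ref{lem:weiss} (it defers to \cite{GP}), and your route via $D(r)=\int_{(\partial B_r)^+}u\,\partial_\nu u$, the Rellich--Pohozaev identity for $D'$, and the identity for $H'$ — with the flat boundary terms killed by the complementarity condition (note that on $\Lambda_u\cap B_r'$ every point is a minimum of $u|_{B_r'}$, so $\nabla'u=0$ there pointwise, not merely a.e.) — is exactly the proof found in that reference. The coefficient bookkeeping ($-6r^{-n-2}D$, $\tfrac92 r^{-n-3}H$) and the completion of the square check out, and the approximation $\rho\uparrow r$ you propose is the standard way to justify the integrations by parts under $C^{1,1/2}$ regularity up to $B_R'$.
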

\begin{remark} \label{rem:weiss}
The Weiss-type monotonicity formula above is specifically
  adjusted to work with rescalings \eqref{eq:homgen-rescal}. Namely,
  by a simple change of variables, one can show that
$$
W_0(\rho,u_{r,x_0})=W_{x_0}(r\rho,u).
$$
Besides, by the definition of regular points, we also have that 
$$
W_{x_0}(0_+,u)=(N_{x_0}(0_+,u)-3/2)\frac{1}{r^{n+2}}\int_{(\partial B_r(x_0))^+} u^2=0,\quad \text{if }x_0\in\cR_u,
$$
since $N_{x_0}(0_+,u)=\kappa(x_0)=3/2$.

\end{remark}

\begin{proposition}[Unique type of $3/2$-homogeneous blowup]\label{prop:unique}
Let $u$ be a solution of the Signorini problem
\eqref{eq:u-harm}--\eqref{eq:compl-cond} in $B_2^+$, satisfying the assumptions
in Section~\ref{sec:assumptions}. Then there exist two positive
constants $c_u$ and $C_u$, depending only on $u$ such that if $x_0\in
\Gamma_u\cap B_1'$ and that 
$$
u_{r,x_0}(x)=\frac{u(x_0+rx)}{r^{3/2}}\rightarrow u_0(x)\quad\text{in }
C^{1,\alpha}_{\loc}(\R^{n}_+\cup \R^{n-1})
$$
over a sequence $r=r_j\to 0_+$, then 
$$
u_0(x)=C_0\Re (x'\cdot\nu'_{x_0}+ix_n)^{3/2}  
$$
with a constant $C_0$ satisfying
$$
c_u<C_0<C_u.
$$
\end{proposition}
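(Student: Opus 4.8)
The plan is to combine the known classification of $3/2$-homogeneous global solutions with the two quantitative ingredients already at our disposal: the Weiss-type monotonicity formula (Lemma~\ref{lem:weiss} and Remark~\ref{rem:weiss}) to pin down the shape of the blowup, and the Hopf-type nondegeneracy (Proposition~\ref{prop:hopf}) together with the growth estimate \eqref{eq:growth-est} to control its amplitude from both sides uniformly in $x_0$.

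First I would establish that any blowup $u_0$ obtained as in the statement is a nonzero, $3/2$-homogeneous global solution of the Signorini problem. Nonvanishing is immediate from Proposition~\ref{prop:hopf}: along $x_0 + t\nu'_{x_0}$ one has $\partial_\tau u(x_0+t\nu'_{x_0}) \ge \epsilon_0 t^{1/2}$ for $t\in(0,\rho)$, which after rescaling by $r_j^{3/2}$ and passing to the limit in $C^{1,\alpha}_{\loc}$ yields $\partial_\tau u_0(t\nu'_{x_0}) \ge \epsilon_0 t^{1/2}$ for all $t>0$; in particular $u_0 \not\equiv 0$. Homogeneity of degree $3/2$ follows from the Weiss formula: by Remark~\ref{rem:weiss}, $W_0(\rho, u_{r_j,x_0}) = W_{x_0}(r_j\rho, u) \to W_{x_0}(0_+,u) = 0$ as $j\to\infty$ for each fixed $\rho$ (using $x_0\in\cR_u$ and monotonicity of $r\mapsto W_{x_0}(r,u)$), and by $C^{1,\alpha}_{\loc}$ convergence the left side converges to $W_0(\rho,u_0)$, so $W_0(\rho,u_0)\equiv 0$ is constant in $\rho$; the last part of Lemma~\ref{lem:weiss} then forces $x\cdot\nabla u_0 - (3/2)u_0 = 0$, i.e.\ $u_0$ is $3/2$-homogeneous. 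Now invoke the classification of $3/2$-homogeneous global solutions of the Signorini problem quoted in the introduction: such a solution must be of the form $u_0(x) = C_0 \Re(x'\cdot e + ix_n)^{3/2}$ for some unit vector $e\in\R^{n-1}$ and some $C_0\ge 0$. Since $\partial_{x_n}u = 0$ on $\Omega_u\cup\Gamma_u$ and $\partial_{x_{n-1}}u > 0$ in $B_2^+\cup\Omega_u$ by \eqref{eq:non22}--\eqref{eq:non33}, passing these to the limit identifies the direction: the coincidence set of $u_0$ is $\{x'\cdot e \le 0, x_n=0\}$, and the half-space $\{x_{n-1}-(x_0)_{n-1}\le f(\cdot)\} = \Lambda_u$ rescales to $\{x\cdot \nu'_{x_0}\le 0\}$, so we must have $e = \nu'_{x_0}$. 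This gives the stated form with nonnegative $C_0$.

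It remains to produce the uniform two-sided bound $c_u < C_0 < C_u$. For the upper bound, evaluate the growth estimate \eqref{eq:growth-est}: $\sup_{B_1^+}|u_{r_j,x_0}| = r_j^{-3/2}\sup_{B_{r_j}^+(x_0)}|u| \le C_u$ with $C_u = C(n,\|u\|_{L^2(B_2^+)})$ independent of $x_0\in\Gamma_u\cap B_1'$ and of $j$; letting $j\to\infty$ gives $\sup_{B_1^+}|u_0| \le C_u$, and since $\sup_{B_1^+}|\Re(x'\cdot\nu'_{x_0}+ix_n)^{3/2}| = 1$, this yields $C_0 \le C_u$. For the lower bound, take $\tau = \nu'_{x_0}$ in Proposition~\ref{prop:hopf} and rescale: $\partial_\tau u_{r_j,x_0}(t\nu'_{x_0}) = r_j^{-1/2}\partial_\tau u(x_0 + r_j t\,\nu'_{x_0}) \ge r_j^{-1/2}\epsilon_0 (r_j t)^{1/2} = \epsilon_0 t^{1/2}$ for $t\in(0,\rho/r_j)$; passing to the limit, $\partial_{\nu'_{x_0}}u_0(t\nu'_{x_0}) \ge \epsilon_0 t^{1/2}$ for all $t>0$. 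On the other hand $\partial_{\nu'_{x_0}}u_0(t\nu'_{x_0}) = C_0\,\tfrac{3}{2} t^{1/2}$, so $C_0 \ge \tfrac{2}{3}\epsilon_0 =: c_u > 0$. Crucially, the $\epsilon_0$ in Proposition~\ref{prop:hopf} depends only on $n,\alpha,c_\eta,\|f\|_{C^{1,\alpha}(B''_2)}$ — in particular not on the individual point $x_0\in\Gamma_u\cap B_1'$ — so $c_u$ is a genuine constant depending only on $u$, as required.

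The main obstacle I anticipate is purely bookkeeping-about-uniformity rather than a new analytic difficulty: one must be careful that every constant entering the argument ($C_u$ in the growth estimate, $\epsilon_0$ and $\rho$ in the Hopf lemma, the constants in the Weiss formula) is uniform over the whole regular set $\Gamma_u\cap B_1'$, so that the resulting $c_u, C_u$ depend only on $u$. The growth and $C^{1,1/2}$ estimates \eqref{eq:growth-est}--\eqref{eq:r} are stated uniformly, and Proposition~\ref{prop:hopf} explicitly records the dependence of $\epsilon_0$ on $n,\alpha,c_\eta,\|f\|_{C^{1,\alpha}}$ only; taking, say, $\eta = 1/2$ fixes $c_{1/2}$ and $r_{1/2}$ from \eqref{eq:non1}, and then $\epsilon_0,\rho$ in \eqref{eq:sighopf} become functions of $u$ alone. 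A secondary technical point is the correct identification of the blowup direction $e$ with $\nu'_{x_0}$; here the monotonicity of $u$ in the $x_{n-1}$-direction (after the rotation making $\nu'_{x_0}=e_{n-1}$) and the structure of $\Lambda_u$ as a subgraph over $B''_2$ make the matching of coincidence sets unambiguous in the limit.
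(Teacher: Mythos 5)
Your proposal is correct and follows essentially the same route as the paper: nonvanishing via Proposition~\ref{prop:hopf}, $3/2$-homogeneity via the Weiss formula and Remark~\ref{rem:weiss}, the classification of homogeneous global solutions, and the two-sided bound on $C_0$ from the growth estimate \eqref{eq:growth-est} and the Hopf-type nondegeneracy. The only (harmless) variation is in identifying $e'=\nu'_{x_0}$: you match the blowup of the coincidence set $\Lambda_u$ (using differentiability of $f$ at $x_0''$) with $\{x'\cdot\nu'_{x_0}\le 0\}$, whereas the paper passes the cone monotonicity \eqref{eq:non1} to the limit to conclude $\tau\cdot e'\ge 0$ for all $\tau\in C'_\eta(\nu'_{x_0})$ and every $\eta>0$; both arguments are valid.
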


\begin{proof} We have already noticed at the beginning of
  Section~\ref{sec:blowups} that $u_0$ is a nonzero global solution of the
  Signorini problem.
Besides, by the Weiss-type 
monotonicity formula, we will have
$$
W_0(\rho,u_0)=\lim _{r_j\rightarrow 0_+} W_0(\rho,u_{x_0,r_j})=\lim
_{r_j\rightarrow 0_+}W_{x_0}(\rho r_j,u)=W_{x_0}(0_+,u)=0,
$$
for any $\rho>0$. Hence, by Lemma~\ref{lem:weiss}, $u_0$ is a
homogeneous of degree $3/2$ in $\R^{n}_+\cup\R^{n-1}$. Then, by
Proposition~9.9 in \cite{AHN}, we must have the form
$$
u_{0}=C_0\Re (x'\cdot e'+ix_n)^{3/2}
$$
for some $C_0>0$ and a tangential unit vector $e'\in \partial
B'_1$. We claim that $e'=\nu'_{x_0}$. Indeed, we have
$$
\partial _\tau u_{r_j,x_0}\geq 0\quad \text{in $\overline{B_{R}^+}$,
 for any }\tau\in
C'_{\eta}(\nu'_{x_0}),
$$
provided $j\geq j_{R,h,x_0}$ so that $r_\eta(x_0)/r_j\geq R$. 
Passing to the limit $r_j\rightarrow 0$, we therefore have
\begin{equation*}
\partial _\tau u_{0} \geq 0 \quad \text{ in $\R^n_+\cup \R^{n-1}$ for any }\tau\in
C'_{\eta}(\nu'_{x_0}). 
\end{equation*}
Hence, for any $\eta>0$
\begin{equation}\label{eq:U0}
\tau\cdot e'\geq 0,\quad\text{whenever }\tau \in C'_{\eta}(\nu'_{x_0}). 
\end{equation}
This is possible only if $e'=\nu'_{x_0}$. Thus, we have the claimed
representation 
$$
u_0(x)=C_0\Re (x'\cdot\nu'_{x_0}+ix_n)^{3/2}.
$$
The estimates on constant $C_0$ now follow from the growth estimate
\eqref{eq:growth-est} and the nondegeneracy
Proposition~\ref{prop:hopf}, which are preserved under the
$3/2$-homogeneous blowup. 
\end{proof}

\begin{remark}\label{rem:unique}
This proposition also holds for the rescaling family with varying centers:
$$u_{r_j,x_j}(x)=\frac{u(x_j+r_jx)}{r_j^{3/2}},$$
where $x_j\in \Gamma_u\cap B_{1/2}'$, $x_j\rightarrow x_0\in \Gamma_u$ and $r_j\in (0,1/2)$.

Indeed, by Lemma~\ref{lem:weiss} we have
$$
x\mapsto W_x(r,u)\searrow 0\quad \text{pointwise on } \Gamma_u\cap \overline{B_{1/2}'} \text{ as } r\searrow 0_+.$$
Hence applying Dini's theorem form the classical analysis to the family of monotone continuous
functions $\{x\mapsto W_x(r,u)\}_{0<r<1/2}$ on the compact set
$\Gamma_u\cap \overline{B_{1/2}'}$, we have that the above convergence is
uniform on $\Gamma_u\cap \overline{B_{1/2}'}$. Hence passing to the
limit $j\rightarrow \infty$, we obtain
$$
W_0(\rho,u_{0})=\lim _{r_j\rightarrow 0_+} W_0(\rho,u_{x_j,r_j})=\lim
_{r_j\rightarrow 0_+}W_{x_j}(\rho r_j,u)=0,
$$
for any $\rho>0$. Arguing as in Proposition~\ref{prop:unique}, we
conclude that
$u_{0}(x)=C_0\Re (x'\cdot\nu'_{x_0}+ix_n)^{3/2}$.
\end{remark}

In order to get the uniqueness of the blowup limit, we need to show that the constant $C_0$ in Proposition~\ref{prop:unique} does not depend on the subsequence $r_j$ but only depends on $x_0$. 
This is a consequence of the following Monneau-type monotonicity
formula \cite{GP}. Without apriori knowledge on the  free boundary, this
formula is known to hold only at so-called singular points, i.e.,
$x_0\in \Gamma_u$ with $\kappa(x_0)=2m$, $m\in\N_+$. However, using the
$C^{1,\alpha}$ regularity of the free boundary, we will be able to
establish this result also at regular points.

\begin{lemma}[Monneau-type monotonicity formula]\label{lem:monneau}
Let $u$ be a solution of the Signorini problem
\eqref{eq:u-harm}--\eqref{eq:compl-cond} in $B_2^+$, satisfying the assumptions
in Section~\ref{sec:assumptions}. For any $x_0\in \Gamma_u\cap B'_1$,
$0<r<1$, and a positive constant  $c_0$, we define
$$M_{x_0}(r,u, c_0)=\frac{1}{r^{n+2}}\int_{(\partial B_r(x_0))^+}(u-c_0u_{x_0})^2,$$
where 
$$u_{x_0}=\Re(x'\cdot \nu'_{x_0}+ix_n)^{3/2}.$$
Then there exists a constant $\tilde C$ which depends on the
$C^{1,\alpha}$ norm of $f$, $C_u$ in \eqref{eq:r}, $c_0$, and
$\alpha$, such that 
$$
r\mapsto M_{x_0}(r,u,c_0)+\tilde C r^\alpha
$$
is monotone nondecreasing for $r\in (0,1)$.
\end{lemma}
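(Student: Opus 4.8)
The plan is to mimic the classical proof of the Monneau monotonicity formula (as at singular points), but carry along the extra error terms that arise because $u_{x_0}$ is only an approximate solution of the Signorini problem near $x_0$ (it solves it exactly only in the limit). Concretely, I would first reduce to $x_0=0$ and $\nu'_{x_0}=e_{n-1}$ by translation and rotation, writing $p=u_{x_0}=\Re(x_{n-1}+ix_n)^{3/2}$, so that $p$ is harmonic in $B_1\setminus\{x_{n-1}\le 0,\,x_n=0\}$, homogeneous of degree $3/2$, and satisfies the Signorini conditions on $B_1'$. Set $w=u-c_0 p$. Differentiating $M_{x_0}(r,u,c_0)=r^{-n-2}\int_{(\partial B_r)^+}w^2$ and using the homogeneity of $p$ together with the standard computation relating the radial derivative of such integrals to Weiss-type energies, one gets
\begin{equation*}
\frac{d}{dr}M_{0}(r,u,c_0)=\frac{2}{r^{n+3}}\int_{(\partial B_r)^+}w\,\bigl(x\cdot\nabla w-\tfrac32 w\bigr)+\text{(boundary terms on }B_r'\text{)}.
\end{equation*}
The first term is handled exactly as in the singular-point case: $x\cdot\nabla w-\frac32 w=(x\cdot\nabla u-\frac32 u)-c_0(x\cdot\nabla p-\frac32 p)=x\cdot\nabla u-\frac32 u$ since $p$ is $3/2$-homogeneous, and an integration by parts over $B_r^+$ using $\Delta w=\Delta u=0$ in $B_r^+$ turns $\int w(x\cdot\nabla w-\frac32 w)$ into a nonnegative quantity (a multiple of $W_0(r,u)\ge 0$, which vanishes in the limit by Remark~\ref{rem:weiss}) plus the contribution from the thin part $B_r'$.

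The heart of the matter is the boundary integral on $B_r'$. On $\Omega_u$ we have $u=0$'s complement; more precisely, using the Signorini conditions for $u$ and the analogous conditions for $p$, the only surviving boundary term is of the form $\int_{B_r'} (\text{something})\,\partial_{x_n}$-type trace, and it is supported on the symmetric difference of the contact sets of $u$ and of $p$ — that is, on the region between the true free boundary $\Gamma_u=\{x_{n-1}=f(x'')\}$ and the flat free boundary $\{x_{n-1}=0\}$ of $p$. Here I would invoke the assumptions of Section~\ref{sec:assumptions}: since $f\in C^{1,\alpha}$ with $f(0)=|\nabla_{x''}f(0)|=0$, one has $|f(x'')|\le M|x''|^{1+\alpha}$, so this bad region is contained in $\{|x_{n-1}|\le M|x''|^{1+\alpha}\}\cap B_r'$, which has $(n-1)$-dimensional measure $O(r^{n-1+\alpha})$; combined with the $C^{1,1/2}$ bound \eqref{eq:r} on $u$ and the explicit bounds on $p$ and $\nabla p$ near the origin (both of order $r^{1/2}$ on $\partial B_r'$ restricted to that strip, using also $p\ge 0$ and $\partial_{x_n}p\le 0$ on $B_1'$), the integrand is $O(r^{\cdot})$ and after dividing by $r^{n+3}$ and integrating one finds the boundary term is bounded below by $-\tilde C' r^{\alpha-1}$ for a constant $\tilde C'$ depending on $\|f\|_{C^{1,\alpha}}$, $C_u$, $c_0$, and $\alpha$. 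Therefore $\frac{d}{dr}M_0(r,u,c_0)\ge -\tilde C' r^{\alpha-1}=-\frac{\tilde C'}{\alpha}\frac{d}{dr}r^\alpha$, i.e. $r\mapsto M_0(r,u,c_0)+\tilde C r^\alpha$ is nondecreasing with $\tilde C=\tilde C'/\alpha$.

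The step I expect to be the main obstacle is making the boundary-term estimate rigorous: one must carefully justify the integration by parts in the thin domain $B_r^+$ with the slit (the $C^{1,1/2}$ regularity of $u$ up to $B_1'$ and the known asymptotics of $p$ make this legitimate, but the bookkeeping of which traces survive the Signorini complementarity conditions for $u$ versus $p$ is delicate), and then show that the leftover term is genuinely supported only on the strip $\{|x_{n-1}|\le M|x''|^{1+\alpha}\}$ and estimate it with the right power of $r$. The key geometric input is precisely the $C^{1,\alpha}$ regularity of $\Gamma_u$ from Section~\ref{sec:assumptions} together with the normalization $f(0)=|\nabla_{x''}f(0)|=0$, which is what forces the flat blowup $p$ to agree with $u$ to high enough order near $x_0$; everything else is a perturbation of the standard Monneau argument.
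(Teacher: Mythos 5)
Your proposal follows essentially the same route as the paper's proof: reduce $\frac{d}{dr}M$ to the Weiss energy $W_0(r,u)\ge 0$ plus boundary terms on $B_r'$ via integration by parts, discard the term with a favorable sign using the Signorini complementarity conditions, and bound the remaining term (supported on the strip between $\Gamma_u$ and $\{x_{n-1}=0\}$, of measure $O(r^{n-1+\alpha})$ by the normalization $f(0)=|\nabla_{x''}f(0)|=0$) below by $-\tilde C' r^{\alpha-1}$. Apart from a harmless slip ($p$ is $O(r^{3/2})$, not $O(r^{1/2})$, though the product estimate still yields the stated power), this is correct and matches the paper.
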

\begin{proof}
For simplicity we assume $x_0=0$ and write $u_0=c_0\Re(x_{n-1}+ix_n)^{3/2}$, $M_0(r,u)=M_0(r,u,c_0)$. 

Letting $w=u-u_0$ and using the scaling properties of $M_0$, we have 
\begin{equation}\label{eq:M0}
\frac{d}{dr}M_{0}(r,u)=\frac{2}{r^{n+3}}\int_{(\partial B_r)^+} w\left(x\cdot \nabla w-\frac{3}{2}w\right).
\end{equation}
Next, we compute the Weiss energy functional
\begin{align*}
W_0(r,u)=\frac{1}{r^{n+1}}\int_{B_r^+}|\nabla(w+u_0)|^2 -\frac{3/2}{r^{n+2}}\int_{(\partial B_r)^+} (w+u_0)^2.
\end{align*}
By Remark~\ref{rem:weiss},
\begin{equation*}
W_0(0_+,u)=W_0(r,u_0)=0,
\end{equation*}
hence
\begin{equation}\label{eq:M1}
W_0(r,u)=\frac{1}{r^{n+1}}\int_{B_r^+}\left(|\nabla w|^2+2\nabla w\cdot\nabla u_0\right) -\frac{3/2}{r^{n+2}}\int_{(\partial B_r)^+}\left(w^2+2wu_0\right).
\end{equation}
An integration by parts gives
\begin{equation*}
\int_{B_r^+}\nabla w\cdot \nabla u_0=\int_{(\partial B_r)^+} w \partial_\nu u_0 + \int_{B'_r} w (-\partial_{x_n} u_0) -\int_{B_r^+} w \Delta u_0
\end{equation*}
Noticing that 
\begin{equation*}
\partial_\nu u_0=\frac{x\cdot \nabla u_0 }{r} \text{ on } (\partial B_r)^+\quad\text{and}\quad  \Delta u_0=0 \text{ in } B_r^+,
\end{equation*}
we have
\begin{equation}\label{eq:M2}
\frac{2}{r^{n+1}}\int_{B_r^+}\nabla w\cdot \nabla u_0=\frac{2}{r^{n+2}}\int_{(\partial B_r)^+} w(x\cdot \nabla u_0) +\frac{2}{r^{n+1}}\int_{B'_r} w (-\partial_{x_n} u_0).
\end{equation}
Similarly, integrating by parts and using $\Delta w=\Delta u-\Delta u_0=0$ in $B_r^+$, we obtain
\begin{equation}\label{eq:M3}
\frac{1}{r^{n+1}}\int_{B_r^+}|\nabla w|^2 =\frac{1}{r^{n+2}}\int_{(\partial B_r)^+} w(x\cdot \nabla w)+\frac{1}{r^{n+1}}\int_{B'_r} w(-\partial_{x_n}w).
\end{equation}
Combining \eqref{eq:M1}--\eqref{eq:M3}, we have 
\begin{multline*}
W_0(r,u)=\frac{1}{r^{n+2}}\int_{(\partial B_r)^+}w\left(x\cdot \nabla w-\frac{3}{2} w\right)+\frac{2}{r^{n+2}}\int_{(\partial B_r)^+} w\left( x\cdot u_0-\frac{3}{2} u_0\right)\\
 +\frac{1}{r^{n+1}}\int_{B'_r}w(-\partial_{x_n} w)+2w(-\partial_{x_n} u_0).
\end{multline*}
Since $u_0$ is homogeneous of degree $3/2$, the second integral above is zero. Moreover, using $u(\partial_{x_n}u)=u_0(\partial_{x_n}u_0)=0$ on $B'_r$, we have the third integral is equal to 
$$\frac{1}{r^{n+1}}\int_{B'_r}u(-\partial_{x_n}u_0)+u_0(\partial_{x_n}u).$$
Recalling \eqref{eq:M0}, we have
\begin{align*}
\frac{d}{dr}M_0(r,u)=\frac{2W_0(r,u)}{r}-\frac{2}{r^{n+2}}\int_{B'_r}\left(u(-\partial_{x_n}u_0)+u_0\partial_{x_n}u\right).
\end{align*}
Noticing that $W_0(r,u)>0$ for $0<r<1$ by Lemma~\ref{lem:weiss} and
$\partial_{x_n}u\leq 0$, $u_0\geq 0$ on $B'_1$, we have
\begin{equation*}
\frac{d}{dr}M_0(r,u)\geq \frac{2}{r^{n+2}}\int_{B'_r} u(\partial_{x_n}u_0).
\end{equation*}
Using the growth estimate \eqref{eq:r} for $u$ and explicit expression for $u_0$, we have
$$\frac{d}{dr}M_0(r,u)\geq - \frac{2}{r^{n+2}}\cdot C_uc_0r^{2} H^{n-1}\left(B'_r\cap \{u>0, x_{n-1}<0\}\right)$$
Since the free boundary $\Gamma_u=\partial_{B'_2}\{u>0\}$ is a
$C^{1,\alpha}$ graph 
and $0\in \Gamma_u$, we have
$$H^{n-1}(B'_r\cap\{u>0,x_{n-1}<0\})\leq c_1 r^{n-1+\alpha},$$
where $c_1$ is a constant depending on $\|f\|_{C^{1,\alpha}}$.
Hence
$$\frac{d}{dr}M_0(r,u)\geq -2c_1C_uc_0 r^{-1+\alpha}, $$
which implies the claim of the lemma with $\tilde C= 2c_1C_uc_0/\alpha$.
\end{proof}

We can now establish the main result of this section.

\begin{theorem}[Uniqueness of $3/2$-homogeneous blowup]\label{prop:uni}
Let $u$ be a solution of the Signorini problem in $B_2^+$, satisfying the assumptions in Section~\ref{sec:assumptions}. Then for $x_0\in \Gamma_u\cap B'_1$, there exists a constant $C_{x_0}>0$ such that 
$$\lim_{r\rightarrow 0_+}u_{r,x_0}(x)= C_{x_0}u_{x_0}(x) \text{ in } C^{1,\alpha}_{\loc}(\R^n_+\cup \R^{n-1}),$$
where
$$u_{x_0}(x)=\Re (x'\cdot \nu'_{x_0}+ix_n)^{3/2}.$$
Moreover, the function $x_0\mapsto C_{x_0}$ is continuous on $\Gamma_u\cap B'_1$. Furthermore, for any $\alpha\in(0,1/2)$ and $R>0$,
\begin{equation}\label{eq:uniformity0}
\sup_{x_0\in \Gamma_u\cap B_{1/2}'}\|u_{r,x_0}-C_{x_0}u_{x_0}\|_{C^{1,\alpha} (B_{R}^+\cup B'_R)}\rightarrow 0\quad \text{ as }r\rightarrow 0_+.
\end{equation}
\end{theorem}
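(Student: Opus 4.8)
The plan is to combine the Monneau-type monotonicity formula (Lemma~\ref{lem:monneau}) with the already-established facts that every blowup sequence has a subsequential limit of the form $C_0 u_{x_0}$ with $c_u<C_0<C_u$ (Proposition~\ref{prop:unique}) to upgrade subsequential convergence to full convergence, and then to promote this to uniformity in $x_0$. First I would fix $x_0\in\Gamma_u\cap B_1'$ and consider the function $r\mapsto M_{x_0}(r,u,C_0)$ for a candidate constant $C_0$; by Remark~\ref{rem:weiss} and the scaling $M_{x_0}(r,u,c_0)=\int_{(\partial B_1)^+}(u_{r,x_0}-c_0 u_{x_0})^2$, the limit $\lim_{r\to 0_+}M_{x_0}(r,u,C_0)$ exists for every $c_0$ because $r\mapsto M_{x_0}(r,u,c_0)+\tilde C r^\alpha$ is monotone nondecreasing and bounded below. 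If $u_{r_j,x_0}\to C_0 u_{x_0}$ along one subsequence, then $M_{x_0}(r_j,u,C_0)\to 0$, so by existence of the limit $M_{x_0}(r,u,C_0)\to 0$ as $r\to 0_+$ along the \emph{full} sequence; this forces $u_{r,x_0}\to C_0 u_{x_0}$ in $L^2((\partial B_1)^+)$, and the uniform $C^{1,1/2}$ bound \eqref{eq:r} together with Arzel\`a--Ascoli promotes this to $C^{1,\alpha}_{\loc}$ convergence of the whole family. In particular the limit constant is independent of the subsequence; set $C_{x_0}:=C_0$, which by Proposition~\ref{prop:unique} satisfies $c_u<C_{x_0}<C_u$.

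Next I would establish continuity of $x_0\mapsto C_{x_0}$. Here the key observation is that $C_{x_0}$ is determined by $C_{x_0}^2=\lim_{r\to 0_+}\frac{1}{r^{n+2}}\int_{(\partial B_r(x_0))^+}u\,u_{x_0}\big/\ (\text{normalization})$, or more robustly via the Monneau functional: for fixed small $r$, $x_0\mapsto M_{x_0}(r,u,c_0)$ is continuous (the integrand depends continuously on $x_0$ through both the domain $(\partial B_r(x_0))^+$ and the comparison function $u_{x_0}$, which in turn depends continuously on $\nu'_{x_0}$, hence on $x_0$, since $f\in C^{1,\alpha}$). The family $\{x_0\mapsto M_{x_0}(r,u,c_0)+\tilde C r^\alpha\}_r$ is monotone in $r$ and converges pointwise as $r\to 0_+$ to $M_{x_0}(0_+,u,c_0)$, which is a continuous function of $c_0$ with a strict minimum (equal to $0$) at $c_0=C_{x_0}$; by Dini's theorem (as in Remark~\ref{rem:unique}) the convergence is uniform on $\Gamma_u\cap\overline{B_{1/2}'}$, so $M_{x_0}(0_+,u,c_0)$ is continuous in $x_0$ jointly, and then $C_{x_0}$, being the unique minimizer, depends continuously on $x_0$ as well. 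Alternatively one can argue by contradiction using Remark~\ref{rem:unique}: if $x_j\to x_0$ but $C_{x_j}\to C_*\neq C_{x_0}$, extract a blowup of the varying-center family converging to $C_* u_{x_0}$, contradicting that the limit must be $C_{x_0}u_{x_0}$.

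Finally, for the uniform estimate \eqref{eq:uniformity0} I would argue by contradiction: suppose there exist $\epsilon>0$, $R>0$, points $x_j\in\Gamma_u\cap B_{1/2}'$ and radii $r_j\to 0_+$ with $\|u_{r_j,x_j}-C_{x_j}u_{x_j}\|_{C^{1,\alpha}(B_R^+\cup B_R')}\geq\epsilon$. By compactness $x_j\to x_0\in\Gamma_u\cap\overline{B_{1/2}'}$ (up to a subsequence), and by the uniform $C^{1,1/2}$ bound \eqref{eq:r} the rescalings $u_{r_j,x_j}$ subconverge in $C^{1,\alpha}_{\loc}$ to some $\tilde u_0$; by Remark~\ref{rem:unique} (the uniform-in-center version of the blowup analysis, which uses Dini's theorem applied to the Weiss functional) we have $\tilde u_0=C_0 u_{x_0}$ for some admissible $C_0$. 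But then $C_{x_j}u_{x_j}\to C_{x_0}u_{x_0}$ by continuity of $C_{x_0}$ and of $\nu'_{x_0}$, and the uniqueness of the blowup (applied with varying centers, again Remark~\ref{rem:unique}) forces $C_0=C_{x_0}$, so $u_{r_j,x_j}-C_{x_j}u_{x_j}\to 0$ in $C^{1,\alpha}_{\loc}$, contradicting the lower bound $\epsilon$ on $B_R^+\cup B_R'$. I expect the main obstacle to be making the varying-center blowup analysis of Remark~\ref{rem:unique} fully rigorous --- in particular, verifying that the Monneau monotonicity constant $\tilde C$ can be chosen uniformly for all base points $x_0\in\Gamma_u\cap B_{1/2}'$ (which it can, since it depends only on $\|f\|_{C^{1,\alpha}}$, $C_u$, $c_0$, $\alpha$), and checking that the continuity of $x_0\mapsto u_{x_0}$ in $C^{1,\alpha}$ on compact sets is quantitative enough to pass to limits; the rest is soft compactness combined with the already-proven pointwise statements.
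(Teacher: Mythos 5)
Your Step 1 (independence of the constant from the subsequence via the Monneau functional of Lemma~\ref{lem:monneau}) is exactly the paper's argument and is fine. The problems are in your continuity and uniformity steps, both of which as written have genuine gaps.

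First, the Dini argument for continuity of $x_0\mapsto C_{x_0}$ is circular: Dini's theorem \emph{requires} continuity of the pointwise limit as a hypothesis, and here the limit is $M_{x_0}(0_+,u,c_0)=(C_{x_0}-c_0)^2\int_{(\partial B_1)^+}u_{x_0}^2$, whose continuity in $x_0$ is precisely what you are trying to prove. What monotonicity gives you for free is only upper semicontinuity of the infimum of continuous functions. The correct (and the paper's) mechanism is to \emph{freeze} the constant at $C_{x_0}$: pick $r_\epsilon$ with $M_{x_0}(r_\epsilon,u,C_{x_0})+\tilde C r_\epsilon^\alpha<\epsilon$, use continuity of the center-dependence at the fixed scale $r_\epsilon$ to get $M_{x_1}(r_\epsilon,u,C_{x_0})<2\epsilon$ for $x_1$ near $x_0$, then push down with the Monneau monotonicity to $r\to 0_+$ and read off $(C_{x_1}-C_{x_0})^2\int u_{x_1}^2<3\epsilon$, using the uniform lower bound $\int_{(\partial B_1)^+}u_{x_1}^2\geq c_n>0$. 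Your ``alternative'' contradiction argument does not repair this, because of the second gap below.

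Second, in both the alternative continuity argument and in Step 3 you invoke Remark~\ref{rem:unique} to conclude that a varying-center blowup limit equals $C_{x_0}u_{x_0}$, i.e.\ to force $C_0=C_{x_0}$. Remark~\ref{rem:unique} only identifies the \emph{form} of the limit, $C_0\Re(x'\cdot\nu'_{x_0}+ix_n)^{3/2}$ for \emph{some} admissible $C_0$; it does not identify the constant (the statement that it does is Corollary~\ref{cor:varicenter}, which the paper derives \emph{from} the theorem you are proving). To pin down $C_0=C_{x_0}$ you again need the frozen-constant Monneau comparison, $M_{x_j}(r_j,u,C_{x_0})\leq M_{x_j}(\rho,u,C_{x_0})+\tilde C\rho^\alpha$, letting $j\to\infty$ and then $\rho\to 0_+$. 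Once continuity of $C_{x_0}$ is in hand, the cleaner route to \eqref{eq:uniformity0} is the paper's: apply Dini legitimately to the now-continuous functions $x_0\mapsto M_{x_0}(r,u,C_{x_0})+\tilde C r^\alpha$ decreasing to the continuous limit $0$ on the compact set $\Gamma_u\cap\overline{B'_{1/2}}$, which gives uniform $L^2$ smallness, and then upgrade to $L^\infty$ via subharmonicity of $(u_{r,x_0}-C_{x_0}u_{x_0})^\pm$ and to $C^{1,\alpha}$ by interpolation against the uniform bound \eqref{eq:r}; your compactness/contradiction scheme also works once the constant-identification step is supplied, but it is not self-contained as written.
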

\begin{proof}
We first show $C_{x_0}$ does not depend on the converging sequences. 

Given $x_0\in \Gamma_u\cap B_1'$, let $u_{r_j,x_0}$ be a converging sequence such that $u_{r_j,x_0}\rightarrow C_{x_0}u_{x_0}$ in $C^{1,\alpha}_{\loc}$ for some $C_{x_0}$ satisfying $c_u<C_{x_0}<C_u$. 
By Lemma~\ref{lem:monneau}, the mapping $r\mapsto M_{x_0}(r,u,C_{x_0})+\tilde Cr^\alpha$ is nonnegative, monotone nondecreasing on $(0,1)$, hence 
\begin{align*}
\lim_{r\rightarrow 0_+} M_{x_0}(r,u,C_{x_0})+\tilde Cr^\alpha&=\lim_{j\rightarrow \infty}M_{x_0}(r_j,u,C_{x_0})+\tilde Cr_j^\alpha\\
&=\lim_{j\rightarrow\infty} M_{x_0}(1,u_{r_j,x_0},C_{x_0})+\tilde Cr_j^\alpha =0.
\end{align*}
This implies $C_{x_0}$ does not depend on the converging sequences $u_{r_j,x_0}$.

Next we show that $C_{x_0}$ depends continuously on $x_0\in \Gamma_u\cap
B_1'$. Fix $x_0\in \Gamma_u\cap B_1'$. For any $\epsilon>0$, let
$r_\epsilon\in (0,1/2)$ be such that
$$M_{x_0}(r_\epsilon,u, C_{x_0})+\tilde Cr_\epsilon^\alpha=\int_{(\partial B_1)^+} (u_{r_\epsilon,x_0}-C_{x_0}u_{x_0})^2+\tilde Cr_\epsilon^\alpha<\epsilon.$$
For $r_\epsilon$ fixed, by the continuity of $u$ we have $x\mapsto \int_{(\partial B_1)^+} u_{r_\epsilon, x}$ is continuous on $\Gamma_u\cap B_1'$. Moreover, from the explicit formulation of $u_{x}$ as well as the $C^{0,\alpha}$ continuity of $x\mapsto \nu'_{x}$, the function $x\mapsto \int_{(\partial B_1)^+} u_x$ is continuous. 
Therefore, there exists a positive $\delta_\epsilon$ small enough, such that for all $x_1\in \Gamma_u$ with $|x_1-x_0|<\delta_\epsilon$, 
\begin{equation}\label{eq:uu1}
M_{x_1}(r_\epsilon, u, C_{x_0})=\int_{(\partial B_1)^+} (u_{r_\epsilon, x_1}-C_{x_0}u_{x_1})^2<2\epsilon.
\end{equation}
For $0<r<r_\epsilon$, by Lemma~\ref{lem:monneau} 
\begin{equation}\label{eq:Mu1}
M_{x_1}(r, u,C_{x_0})\leq M_{x_1}(r_\epsilon, u, C_{x_0})+\tilde Cr_\epsilon^\alpha<3\epsilon.
\end{equation}
Let $r\rightarrow 0_+$ in \eqref{eq:Mu1}, then
\begin{align}
\lim_{r\rightarrow 0_+} M_{x_1}(r,u,C_{x_0})&=\lim_{r\rightarrow 0_+}\int_{(\partial B_1)^+} (u_{r,x_1}-C_{x_0}u_{x_1})^2\label{eq:cont-c}\\
&=(C_{x_1}-C_{x_0})^2\int_{(\partial B_1)^+} (u_{x_1})^2 <3\epsilon.\notag
\end{align}
By the explicit expression of $u_{x}$, there is a constant $c_n>0$ such that $\int_{(\partial B_1)^+} (u_{x})^2\geq c_n>0$ for any $x\in \Gamma_u\cap B_1'$. This together with \eqref{eq:cont-c} gives
$$|C_{x_1}- C_{x_0}|< \sqrt{\frac{3\epsilon}{c_n}}, \quad \text{ for }x_1\in B_{\delta_\epsilon}'(x_0)\cap \Gamma_u.$$
This shows the  continuity of   $x_0\mapsto C_{x_0}$.

Finally, we show \eqref{eq:uniformity}. In fact, $x_0\mapsto M_{x_0}(r,u,C_{x_0})+\tilde Cr^\alpha$ is continuous on the compact set $\Gamma_u\cap \overline{B_{1/2}'}$ and it monotonically decreases to zero as $r$ decreases to zero for each $x_0$. Hence by Dini's theorem, $M_{x_0}(r,u,C_{x_0})+\tilde Cr^\alpha\rightarrow 0$ as $r\rightarrow 0_+$ uniformly on $\Gamma_u\cap \overline{B_{1/2}'}$. Thus one has
$$\sup_{x_0\in \Gamma_u\cap B_{1/2}'}\int_{(\partial B_1)^+}(u_{r,x_0}-C_{x_0}u_{x_0})^2\rightarrow 0, \quad r\rightarrow 0_+.$$

It is not hard to see that $(u_{r,x_0}-C_{x_0}u_{x_0})^\pm$ are subharmonic in $B_1$ (after the even reflection of $u$ about $x_n$). Hence by the $L^\infty-L^2$ estimates for the subharmonic functions we have \begin{equation}\label{eq:uniformity}
\sup_{x_0\in \Gamma_u\cap B_{1/2}'}\|u_{r,x_0}-C_{x_0}u_{x_0}\|_{L^\infty (B_{1/2}^+)}\rightarrow 0\quad \text{ as }r\rightarrow 0_+.
\end{equation}
By an interpolation of H\"older spaces, i.e.\ for some absolute constant $C>0$, 
$\|g\|_{C^{1,\alpha}}\leq C \|g\|^{\lambda}_{L^\infty}\|g\|_{C^{1,1/2}}^{1-\lambda}$ for $\alpha\in (0,1/2)$ and $\lambda=\lambda(\alpha)\in (0,1)$,
as well as a rescaling argument we obtain \eqref{eq:uniformity0}.
\end{proof}

The continuous dependence on $x_0$ of $C_{x_0}$ gives the uniqueness of the blowups with varying centers. 
\begin{corollary}\label{cor:varicenter}
Let $u$ be a solution of the Signorini problem
\eqref{eq:u-harm}--\eqref{eq:compl-cond} in $B_2^+$, satisfying the assumptions
in Section~\ref{sec:assumptions}. Let $x_j, x_0\in \Gamma_u\cap B_{1/2}'$, such that $x_j\rightarrow x_0$ as $j\rightarrow \infty$. Let $r_j\rightarrow 0$ as $j\rightarrow \infty$. Then for any $\alpha\in (0,1/2)$ and $u_{x_0}$ defined in Theorem~\ref{prop:uni},
$$u_{r_j,x_j}\rightarrow 	C_{x_0}u_{x_0}\text{ in } C^{1,\alpha}_{\loc}(\R^n_+\cup \R^{n-1}).$$
\end{corollary}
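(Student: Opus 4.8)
The plan is to reduce the statement with varying centers to the uniform convergence statement \eqref{eq:uniformity0} of Theorem~\ref{prop:uni} together with the continuity of $x_0\mapsto C_{x_0}$. First I would fix $\alpha\in(0,1/2)$ and $R>0$, and write
$$
u_{r_j,x_j}-C_{x_0}u_{x_0}
=\bigl(u_{r_j,x_j}-C_{x_j}u_{x_j}\bigr)+\bigl(C_{x_j}u_{x_j}-C_{x_0}u_{x_0}\bigr).
$$
For the first term, since $x_j\in\Gamma_u\cap B_{1/2}'$ and $r_j\to 0_+$, the uniform bound \eqref{eq:uniformity0} gives
$$
\|u_{r_j,x_j}-C_{x_j}u_{x_j}\|_{C^{1,\alpha}(B_R^+\cup B_R')}\le \sup_{x\in\Gamma_u\cap B_{1/2}'}\|u_{r_j,x}-C_xu_x\|_{C^{1,\alpha}(B_R^+\cup B_R')}\longrightarrow 0
$$
as $j\to\infty$. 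Thus the whole difficulty is concentrated in the convergence of the explicit ``profile'' term $C_{x_j}u_{x_j}\to C_{x_0}u_{x_0}$.

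For the second term I would use that $x_0\mapsto\nu'_{x_0}$ is continuous on $\Gamma_u\cap B_1'$ (this is part of the standing $C^{1,\alpha}$-regularity of $\Gamma_u$, explicitly invoked in the proof of Theorem~\ref{prop:uni}), so $x_j\to x_0$ forces $\nu'_{x_j}\to\nu'_{x_0}$, and consequently
$$
u_{x_j}(x)=\Re\bigl(x'\cdot\nu'_{x_j}+ix_n\bigr)^{3/2}\longrightarrow \Re\bigl(x'\cdot\nu'_{x_0}+ix_n\bigr)^{3/2}=u_{x_0}(x)
$$
uniformly on $\overline{B_R^+}$, and in fact in $C^{1,\alpha}(B_R^+\cup B_R')$ on this fixed compact set, since the map $e'\mapsto\Re(x'\cdot e'+ix_n)^{3/2}$ depends smoothly on the unit vector $e'$ away from the degeneracy and these functions are uniformly bounded in $C^{1,1/2}(\overline{B_R^+})$ (so one may interpolate just as at the end of the proof of Theorem~\ref{prop:uni}, or argue directly). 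Combining this with the continuity of $x_0\mapsto C_{x_0}$ established in Theorem~\ref{prop:uni}, we get $C_{x_j}u_{x_j}\to C_{x_0}u_{x_0}$ in $C^{1,\alpha}(B_R^+\cup B_R')$. Adding the two estimates and letting $R\to\infty$ through an exhaustion yields $u_{r_j,x_j}\to C_{x_0}u_{x_0}$ in $C^{1,\alpha}_{\loc}(\R^n_+\cup\R^{n-1})$, as claimed.

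The only genuinely delicate point is the uniform-in-center statement \eqref{eq:uniformity0}, but that is already available from Theorem~\ref{prop:uni}, so the corollary is essentially a packaging argument; the remaining care needed is simply to check that the $C^{1,\alpha}$ convergence of the profiles $u_{x_j}\to u_{x_0}$ on each fixed ball $B_R$ survives the $3/2$-power nonlinearity near $\{x_n=0,\ x'\cdot\nu'_{x_0}\le 0\}$, which follows from the uniform $C^{1,1/2}$ bound for these explicit functions together with Hölder interpolation.
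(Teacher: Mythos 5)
Your proposal is correct and follows essentially the same route as the paper: the paper's own proof is a one-line appeal to the uniform convergence \eqref{eq:uniformity0} from Theorem~\ref{prop:uni} together with the continuous dependence of the profile $C_{x_0}u_{x_0}$ on $x_0$, which is exactly the two-term decomposition you spell out. Your additional care about passing the convergence $\nu'_{x_j}\to\nu'_{x_0}$ through the $3/2$-power via the uniform $C^{1,1/2}$ bound and interpolation is a valid filling-in of details the paper leaves implicit.
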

\begin{proof}
This follows from the uniform continuity of $x_0\mapsto \int_{(\partial B_1)^+} C_{x_0}u_{x_0}$ and Theorem~\ref{prop:uni}.
\end{proof}

\section{Partial hodograph transform}\label{sec:hodograph}
 Let $u$ be a solution of the
Signorini problem in $B_2^+$ satisfying the assumptions in
Section~\ref{sec:assumptions}. Following the idea in the classical obstacle problem \cite{KN}, we
would like to use the method of partial hodograph-Legendre
transforms to study the higher regularity of the free boundary in the
Signorini problem. Since $\Gamma_u$ has codimension two, the most
natural hodograph transformation to consider is the one with
respect to variable $x_{n-1}$ and $x_n$: 
\begin{align}
& T=T^u: B^+_1\cup B'_1\rightarrow \R^n,\label{eq:def-hod-1}\\
& y=T(x)=(x'',\partial_{x_{n-1}}u, \partial_{x_n}u).\label{eq:def-hod-2}
\end{align}
(The reader can easily check that if we do the partial hodograph
transform in $x_{n-1}$ variable only, it will still straighten
$\Gamma_u$, however, the image of $B^+$ will not have a flat boundary
and this will render this transformation rather useless.)

By doing so, we hope that there exists a small neighborhood $B_\rho$
of the origin, such that $T$ is one-to-one on $B_\rho ^+\cup
B'_\rho$. However, due to the $C^{1,1/2}$ regularity of $u$,
the mapping $T$ is only $C^{0,1/2}$ near the origin. Hence the simple inverse
function theorem (which typically requires the transformation $T$ to
be from class $C^1$) cannot be applied here. Instead,  we will make
use of the blowup profiles, which contain enough information to catch
the behavior of the solution near the free boundary points. 

Before stating the main results, we make several observations.

\begin{enumerate}[(i)]
\item By the assumptions in Section~\ref{sec:assumptions}, $\partial _{x_{n-1}}u>0$ in $B_1^+\cup \Omega_u$ and $\partial _{x_n}u \leq 0$ on $B'_1$. This together with the complementary boundary condition gives us
\begin{align*}
& T(\Lambda_u\cap B'_1)\subset \{y\in \R^n: y_{n-1}=0, y_n\leq 0\};\\
& T(\Omega_u\cap B'_1)\subset \{y\in \R^n: y_{n-1}> 0, y_n=0\};\\
& T(\Gamma_u\cap B'_1)\subset \{y\in \R^n: y_{n-1}=y_n=0\};\\
& T(B_1^+)\subset\{y\in \R^n: y_{n-1}>0, y_n<0\}.
\end{align*}

\definecolor{lightblue}{rgb}{0.945,0.895,0.95}%

\definecolor{lightpink}{rgb}{0.95,0.77,0.762}%

\begin{figure}[t]
\centering
\begin{picture}(300,163)(25,0)
\put(0,0){\includegraphics[height=163pt]{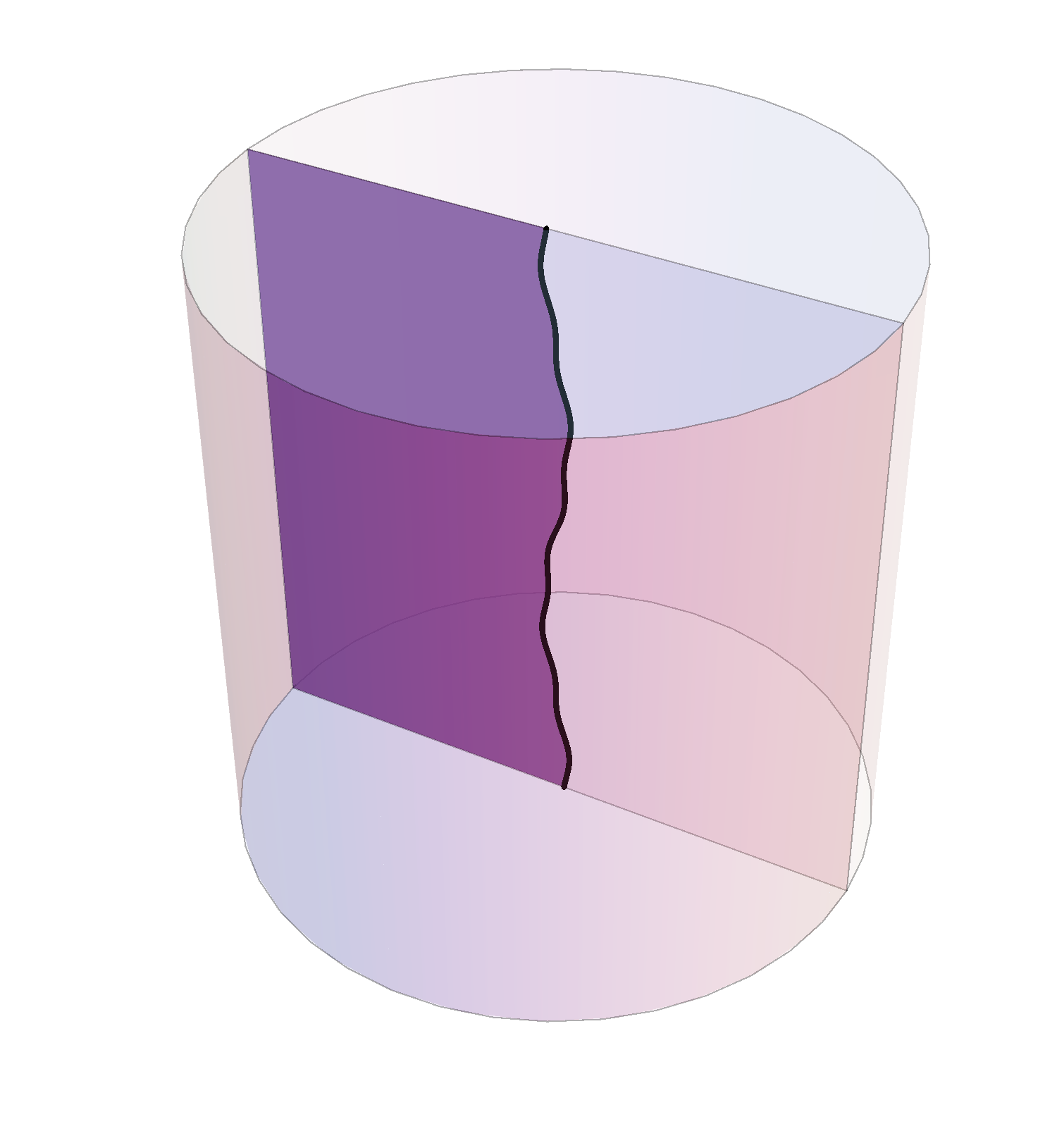}}
\put(55,90){\footnotesize \color{lightpink}{$\Lambda_u$}}
\put(90,85){\footnotesize $\Omega_u$}
\put(188,0){\includegraphics[height=163pt]{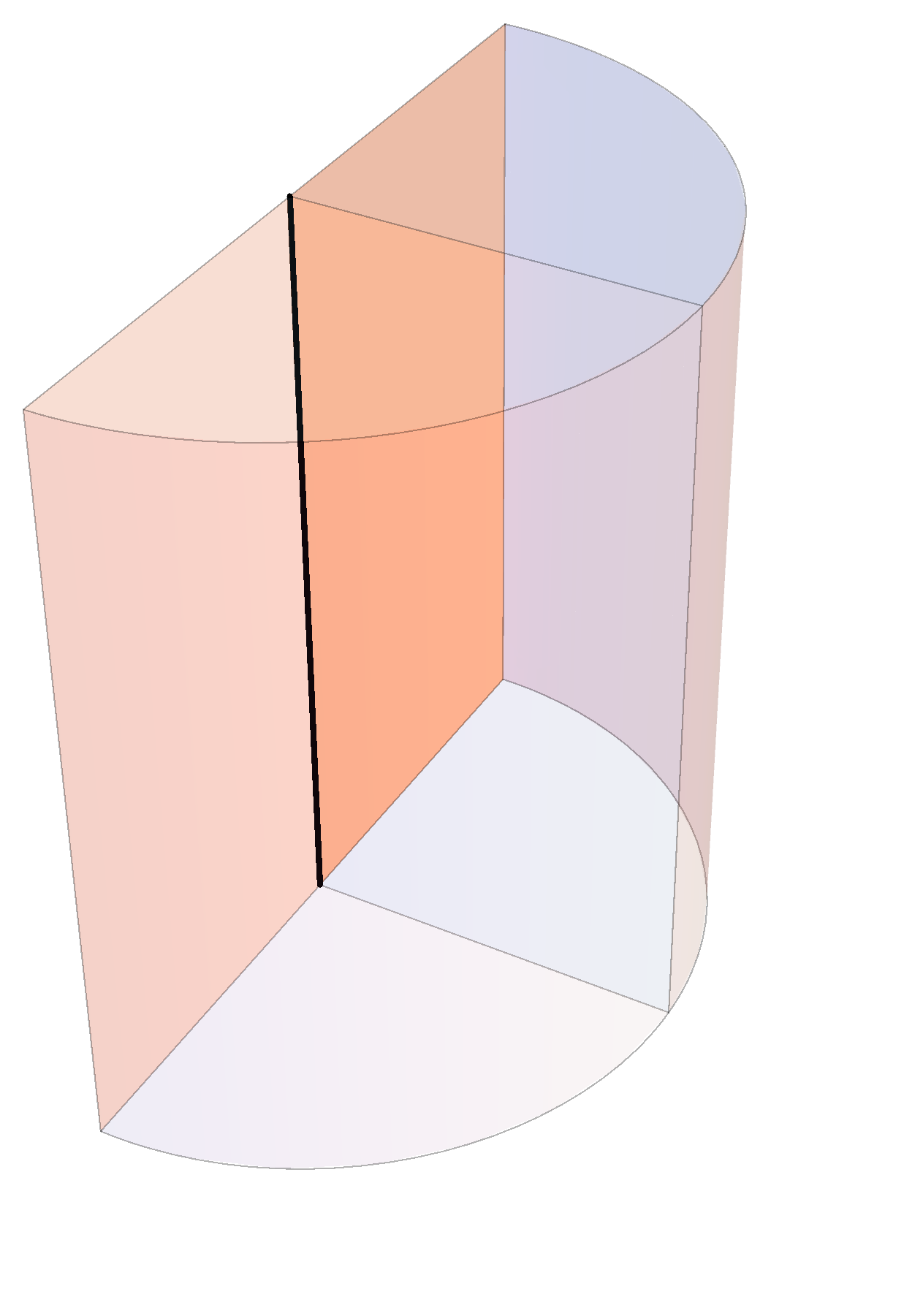}}
\put(226,120){\footnotesize $T(\Lambda_u)$}
\put(240,85){\footnotesize $T(\Omega_u)$}
\put(130,120){$\xymatrix{\ar@/^/[rr]^{T}_{\substack{y''=x''\\y_{n-1}=u_{x_{n-1}}\\y_n=u_{x_n}}} & &}$}
\end{picture}
\caption{The partial hodograph transform $T$. Shown for even
  extension of $u$ in $x_n$ variable.}
\end{figure}

\item If we extend $u$ across $\{x_n=0\}$ to $B_1$ by the even symmetry or odd symmetry in $x_n$, then the resulting function will satisfy
\begin{align*}
&  \Delta u =0 \text{ in } B_1\setminus \Lambda_u,\quad \text{for even
  extension in }x_n;\\
&  \Delta u=0 \text{ in } B_1\setminus \overline{\Omega_u},\quad \text{for odd extension in }x_n.
\end{align*} 
Hence $u$ is analytic in $(B_1^+\cup B'_1)\setminus \Gamma_u$. From \eqref{eq:def-hod-2}, $T$ is also analytic in $(B_1^+\cup B'_1)\setminus \Gamma_u$.

\item To
  better understand the nature of $T$, consider the solution
  $u_0(x)=\Re(x_1+ix_2)^{3/2}$ of the Signorini problem and
  find an explicit formula for $T^{u_0}$. A simple computation shows that using
  complex notations the mapping $T^{u_0}$ is given by
$$
x_1+ix_2\mapsto \frac32(x_1-i x_2)^{1/2},
$$ 
where by the latter we understand the appropriate branch. Loosely
speaking, this tells that $T$ behaves like $\sqrt{z}$ function in the
last two variables.

The observation above also suggests us to compose $T$ with the mapping
\begin{align*}
\psi: \R^n&\rightarrow \R^n;\\
 (y'',y_{n-1},y_n)&\mapsto (y'',
y_{n-1}^2-y_n^2, -2y_{n-1}y_n),
\end{align*}
which can be expressed, by using complex notations (denote $\psi(y)$ by $w$), as
$$
 w''=y'', \quad w_{n-1}+iw_n=(y_{n-1}-i y_n)^2.
$$
More explicitly, alongside $T$, we consider the transformation $T_1=T_1^u=\psi\circ T$
\begin{align}
& T_1: B_1^+\cup B'_1\rightarrow \R^n,\label{eq:define_T1}\\
& w=T_1(x)=(x_1,\ldots, x_{n-2}, (\partial _{x_{n-1}}u)^2-(\partial _{x_n}u)^2, -2(\partial _{x_{n-1}}u)(\partial _{x_n}u)).\notag
\end{align}
Now $T_1$ maps $B_1^+$ to the upper half space $\{w_n> 0\}$ and $B'_1$ to the hyperplane $\{w_n=0\}$. Moreover, $T_1$ straightens the free boundary $\Gamma_u$ as $T$ and satisfies
\begin{align*}
& T_1(\Lambda_u)\subset\{w\in \R^n: w_{n-1}\leq 0, w_n=0\}=\Lambda_0\\
& T_1(\Omega_u)\subset \{w\in \R^n: w_{n-1}> 0, w_n=0\}=\Omega_0,\\
& T_1(\Gamma_u)\subset \{w\in \R^n: w_{n-1}= 0, w_n=0\}=\Gamma_0.
\end{align*} 

\item The advantage of $T_1$ is that by
  arguing as in (ii) above and making odd and even extensions of $u$ in
  $x_n$, we can extend it to a mapping on $B_1$. Moreover, this
  extension will be the same in both cases (unlike for the mapping
  $T$). In particular, this makes $T_1$ a single-valued mapping on
  $B_1$, which is real analytic in $B_1\setminus \Gamma_u$.
\end{enumerate}

 In what follows, we will prove the injectivity of $T_1$ in a
 neighborhood of the origin. For that we will need the make the
 following direct computations.

\definecolor{lightblue}{rgb}{0.945,0.895,0.95}
\definecolor{lightpink}{rgb}{0.95,0.77,0.762}

\begin{figure}[t]
\centering
\begin{picture}(338,163)(25,0)
\put(0,0){\includegraphics[height=163pt]{DDf.png}}
\put(55,90){\footnotesize \color{lightpink}{$\Lambda_u$}}
\put(90,85){\footnotesize $\Omega_u$}
\put(188,0){\includegraphics[height=163pt]{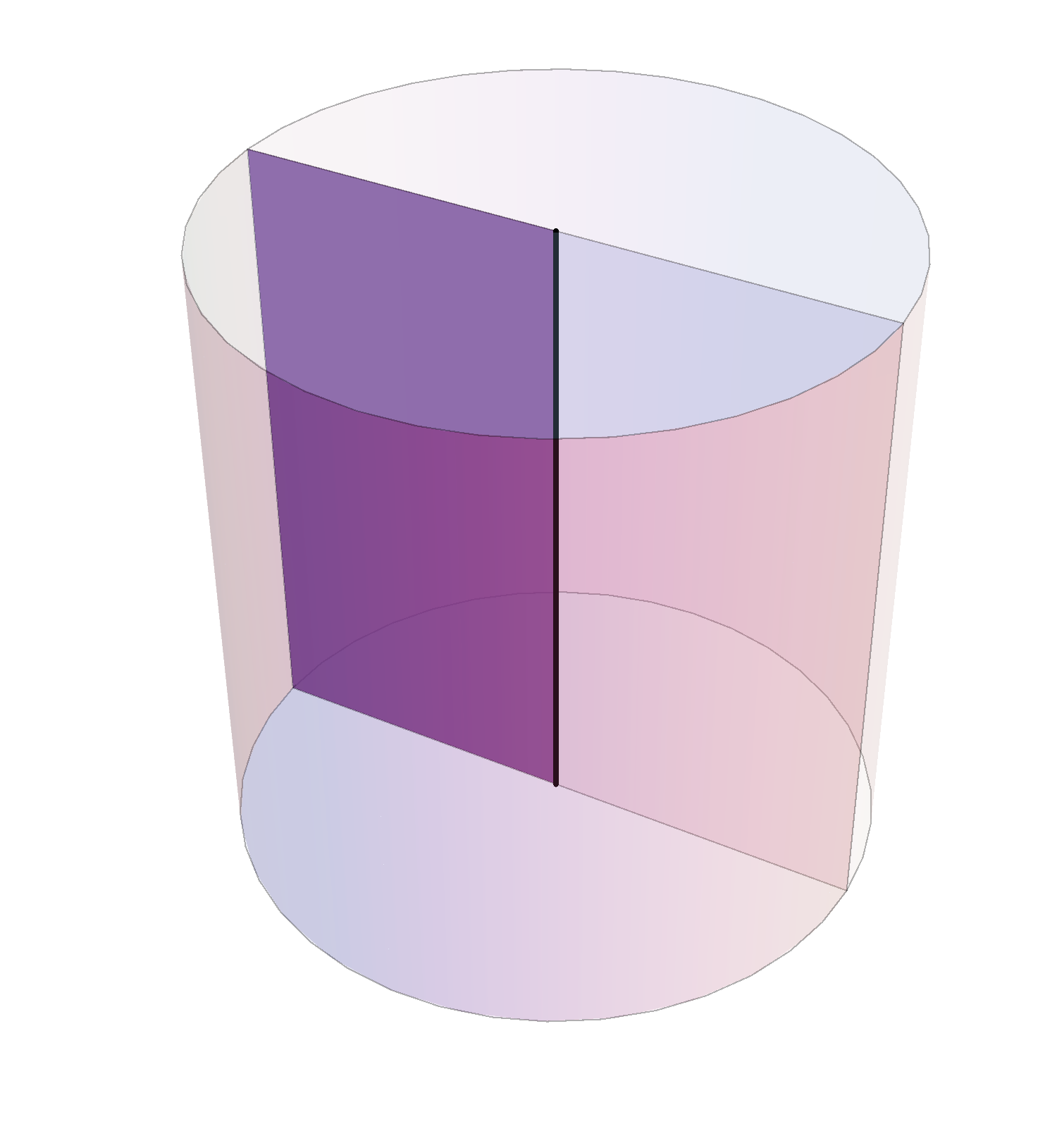}}
\put(243,90){\footnotesize \color{lightpink}{$\Lambda_0$}}
\put(278,85){\footnotesize $\Omega_0$}
\put(138,120){$\xymatrix{\ar@/^/[rr]^{T_1}_{\substack{w''=x''\\w_{n-1}=u_{x_{n-1}}^2-u_{x_n}^2\\w_n=-2u_{x_{n-1}}
      u_{x_n}}} & &}$}
\end{picture}
\caption{The modified partial hodograph transform $T_1=\psi\circ T$}
\end{figure}

\begin{proposition}\label{prop:computeu0}
Let $\hat u_0:\R^2\rightarrow \R$ be given by 
\begin{align*}
\hat u_0(x_1,x_2)=\Re(x_1+ix_2)^{3/2}=r^{3/2}\cos\left(3\theta/2\right),
\end{align*}
for $x_1+ix_2=r e^{i\theta}$, $\theta\in(-\pi,\pi]$. Then we have the
following identities
\begin{align*}
(\partial _{x_1} \hat u_0, \partial _{x_2} \hat u_0)&=(3r^{1/2}/2)(\cos (\theta/2), -\sin (\theta/2))\\
D^2\hat u_0(x_1,x_2)&=(3r^{-1/2}/4)\left(\begin{array}{cc}
\cos(\theta/2)& \sin(\theta/2)\\
\sin(\theta/2)& -\cos(\theta/2)\end{array}\right)\\
T_1^{\hat u_0}(x_1,x_2)&=(9/4)(x_1,x_2).
\end{align*}
\end{proposition}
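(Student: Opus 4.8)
The plan is to use that $\hat u_0=\Re F$ with $F(z)=z^{3/2}$ holomorphic on $z=x_1+ix_2=re^{i\theta}$, $\theta\in(-\pi,\pi]$, so that every derivative of $\hat u_0$ is obtained by differentiating $F$ and taking real or imaginary parts. Throughout I would fix the branch of $z^{1/2}$ (hence of $z^{3/2}$) given by $re^{i\theta}\mapsto r^{1/2}e^{i\theta/2}$ for $\theta\in(-\pi,\pi]$; this is precisely the branch that makes all the expressions below real analytic on $\R^2$ away from the negative $x_1$-axis.

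First I would compute the gradient. For a holomorphic $F=P+iQ$ the Cauchy--Riemann equations give $\partial_{x_1}P=\Re F'$ and $\partial_{x_2}P=-\Im F'$, i.e.\ in complex notation $\partial_{x_1}\hat u_0+i\,\partial_{x_2}\hat u_0=\overline{F'(z)}$. Since $F'(z)=\tfrac32 z^{1/2}=\tfrac32 r^{1/2}e^{i\theta/2}$, this yields at once $(\partial_{x_1}\hat u_0,\partial_{x_2}\hat u_0)=\tfrac32 r^{1/2}\bigl(\cos(\theta/2),-\sin(\theta/2)\bigr)$. Differentiating once more and using that $F'$ is again holomorphic, the second derivatives are $\partial_{x_1 x_1}\hat u_0=\Re F''$, $\partial_{x_1 x_2}\hat u_0=-\Im F''$, and $\partial_{x_2 x_2}\hat u_0=-\Re F''$; substituting $F''(z)=\tfrac34 z^{-1/2}=\tfrac34 r^{-1/2}e^{-i\theta/2}$ and expanding $e^{-i\theta/2}=\cos(\theta/2)-i\sin(\theta/2)$ produces exactly the stated Hessian matrix.

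For $T_1^{\hat u_0}$ note that when $n=2$ there is no $x''$ component and $x_{n-1}=x_1$, $x_n=x_2$, so by the definition $T_1=\psi\circ T$ one has, in complex notation, $w_{n-1}+iw_n=(\partial_{x_1}\hat u_0-i\,\partial_{x_2}\hat u_0)^2=\bigl(F'(z)\bigr)^2=\tfrac94 z$, that is, $T_1^{\hat u_0}(x_1,x_2)=\tfrac94(x_1,x_2)$. (Equivalently, one substitutes the explicit gradient into $(\partial_{x_1}\hat u_0)^2-(\partial_{x_2}\hat u_0)^2$ and $-2(\partial_{x_1}\hat u_0)(\partial_{x_2}\hat u_0)$ and uses the double-angle identities $\cos^2(\theta/2)-\sin^2(\theta/2)=\cos\theta$ and $2\sin(\theta/2)\cos(\theta/2)=\sin\theta$ together with $(x_1,x_2)=r(\cos\theta,\sin\theta)$; this also matches the formula $T^{\hat u_0}\colon z\mapsto \tfrac32\bar z^{1/2}$ recorded in observation (iii).) I do not expect any genuine obstacle in this proposition; the only points requiring care are the consistent choice of the branch of $z^{1/2}$ and the bookkeeping of signs coming from the Cauchy--Riemann relations.
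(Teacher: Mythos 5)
Your computation is correct and is exactly the ``direct computation'' the paper invokes: the gradient, Hessian, and $T_1^{\hat u_0}$ all follow from $\partial_{x_1}\hat u_0+i\partial_{x_2}\hat u_0=\overline{F'(z)}$ with $F(z)=z^{3/2}$ on the stated branch, and your sign bookkeeping via the Cauchy--Riemann relations checks out. Nothing further is needed.
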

\begin{proof} Direct computation.
\end{proof}

From now on, we will use a slightly different notation from
Theorem~\ref{prop:uni} to denote the blowup limit, i.e.\ for $x_0\in
\Gamma_u$, we let $$u_{x_0}(x)=C_{x_0}\Re(x'\cdot
\nu'_{x_0}+ix_n)^{3/2}.$$ The following proposition is a consequence
of Theorem~\ref{prop:uni} and the $C^{1}$ regularity of $f$.

\begin{proposition}\label{prop:uniformity2}
For any $\epsilon>0$, there exists $\delta>0$ depending on $\epsilon$ and $u$, such that for all $0<r<\delta$ and $x_0\in \Gamma_u\cap B_\delta' $, we have:
\begin{enumerate}[\em (i)]
\item $ \|u_{r,x_0}-u_0\|_{C^1(B_2^+\cup B'_2)}<\epsilon$;\\
\item $u_{r,x_0}$ has a harmonic extension at any $x\in B'_2$ with $|x_{n-1}|>1/4$ (by making even or odd reflection about $x_n$). Hence if we let
$$A^{1/2,1}_+:=\{(x'',x_{n-1},x_n):|x''|\leq 1,\ 1/2\leq \sqrt{x_{n-1}^2+x_n^2}\leq 1,\  x_n\geq 0\},$$
then for any multi-index $\alpha\in \Z_+^n$ with $|\alpha|=k\geq 2$ we have
$$ \|\partial^{\alpha} u_{r,x_0}-\partial^\alpha u_0\|_{L^\infty
  (A^{1/2,1}_+)}< C(n,k)\epsilon.$$
\end{enumerate}
\end{proposition}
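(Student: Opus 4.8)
The plan is to deduce Proposition~\ref{prop:uniformity2} directly from the uniform blowup convergence in Theorem~\ref{prop:uni}, upgrading the $C^{1,\alpha}$ closeness on balls of a fixed radius to the stronger statements (i) and (ii) by exploiting harmonicity away from the free boundary. The starting point is \eqref{eq:uniformity0}: for any fixed $R>0$ and $\alpha\in(0,1/2)$,
$$
\sup_{x_0\in\Gamma_u\cap B_{1/2}'}\|u_{r,x_0}-C_{x_0}u_{x_0}\|_{C^{1,\alpha}(B_R^+\cup B_R')}\to 0\quad\text{as }r\to0_+.
$$
To pass from $C_{x_0}u_{x_0}$ to the fixed profile $u_0(x)=\Re(x_{n-1}+ix_n)^{3/2}$ one uses two facts about the map $x_0\mapsto C_{x_0}$ and $x_0\mapsto\nu'_{x_0}$: the former is continuous by Theorem~\ref{prop:uni} and the latter is $C^{0,\alpha}$ by the $C^{1,\alpha}$ regularity of $f$; moreover $C_0=C_{0}$ and $\nu'_0=e_{n-1}$ because $0\in\Gamma_u$ and $f(0)=|\nabla_{x''}f(0)|=0$ by \eqref{eq:f}. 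Hence for $x_0\in\Gamma_u\cap B_\delta'$ with $\delta$ small, $\|C_{x_0}u_{x_0}-u_0\|_{C^1(B_2^+\cup B_2')}$ is as small as we wish, and combining with the display above (taken with $R=2$) gives (i), after possibly shrinking $\delta$ further so that the supremum over $r\in(0,\delta)$ of the blowup error is also small.

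For part (ii), the first claim — that $u_{r,x_0}$ extends harmonically across $B_2'\cap\{|x_{n-1}|>1/4\}$ — is a consequence of the structure of the coincidence set. After the even (resp.\ odd) reflection in $x_n$, $u$ is harmonic off $\Lambda_u$ (resp.\ off $\overline{\Omega_u}$), and the rescaled coincidence set $\Lambda_{u_{r,x_0}}$ is the dilation by $1/r$ of $\Lambda_u-x_0$; since $\Gamma_u$ is a $C^{1,\alpha}$ graph with $0\in\Gamma_u$ and $\nu'_0=e_{n-1}$, for $\delta$ small the portion of $\Lambda_u-x_0$ inside $B_{2\delta}'$ lies in a thin cusp around $\{x_{n-1}\le0\}$, so after dilation by $1/r$ with $r<\delta$ the rescaled coincidence set (and its complement $\overline{\Omega_{u_{r,x_0}}}$) stays within, say, $\{|x_{n-1}|<1/4\}$ in $B_2'$. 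Thus on $B_2'\cap\{|x_{n-1}|>1/4\}$ both reflections of $u_{r,x_0}$ are harmonic near that portion of the hyperplane, which gives the harmonic extension. The same is true for $u_0$ itself, whose singularity is confined to $\{x_{n-1}=0\}$.

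Given the harmonic extension, the derivative bound in (ii) follows from interior estimates for harmonic functions. The set $A^{1/2,1}_+$ lies at a fixed positive distance (comparable to $1/4$) from $\{x_{n-1}=0,x_n=0\}$ once we stay in the region $|x_{n-1}|>1/4$, except near the portion of $A^{1/2,1}_+$ meeting $\{x_{n-1}\le1/4\}$; but on that piece one is at distance $\ge 1/2$ from the origin, hence from $\Gamma_{u_{r,x_0}}$ after the reflections, so in either case the reflected functions are harmonic in a fixed neighborhood of each point of $A^{1/2,1}_+$. Applying the standard estimate $\|\partial^\alpha h\|_{L^\infty(K)}\le C(n,|\alpha|,\operatorname{dist}(K,\partial\Omega))\|h\|_{L^\infty(\Omega)}$ to $h=u_{r,x_0}-u_0$ (with $\Omega$ a fixed slightly larger neighborhood of $A^{1/2,1}_+$ contained in $B_2^+\cup B_2'$ and its reflection), and using that $\|u_{r,x_0}-u_0\|_{L^\infty}$ on that neighborhood is controlled by the $C^1$ bound from part (i) (or directly by \eqref{eq:uniformity}), we get $\|\partial^\alpha u_{r,x_0}-\partial^\alpha u_0\|_{L^\infty(A^{1/2,1}_+)}\le C(n,k)\epsilon$ for all $|\alpha|=k\ge2$. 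The main obstacle in the argument is the second bullet's harmonicity claim: one must carefully track how the $C^{1,\alpha}$ graph $\Gamma_u$ near $x_0$, after translation by $x_0$ and dilation by $1/r$ with $r<\delta$, keeps the coincidence set and its closure confined to $\{|x_{n-1}|<1/4\}$ uniformly in $x_0\in\Gamma_u\cap B_\delta'$ and $r\in(0,\delta)$; this is where the normalization \eqref{eq:f} and the quantitative $C^{1,\alpha}$ bound on $f$ enter, and it is the only place the precise geometry of $\Gamma_u$ (as opposed to mere convergence of blowups) is used.
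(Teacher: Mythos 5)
Your argument is correct and follows essentially the same route as the paper: part (i) combines the uniform convergence \eqref{eq:uniformity0} with the continuity of $x_0\mapsto C_{x_0}$ and the H\"older continuity of $x_0\mapsto \nu'_{x_0}$, and part (ii) confines the rescaled free boundary graph $f_{r,x_0}$ to the slab $\{|x_{n-1}|<1/4\}$ using \eqref{eq:f} and then invokes interior derivative estimates for the harmonically extended functions. One small imprecision worth fixing: it is the rescaled \emph{free boundary} (not the coincidence set and its complement) that stays in that slab, and at a given point of $B_2'\cap\{|x_{n-1}|>1/4\}$ only \emph{one} of the two reflections is harmonic --- the even one where $x_{n-1}>1/4$ (positivity set), the odd one where $x_{n-1}<-1/4$ (interior of the coincidence set) --- which is all the proposition asserts and all your interior estimates require.
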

\begin{proof}
(i) Given any $\epsilon>0$, by \eqref{eq:uniformity0} there is a positive constant $\delta_1$ depending on $u$ such that for any $0<r<\delta_1$,
$$\sup_{x_0\in \Gamma_u\cap B_{1/2}'}\|u_{r,x_0}-u_{x_0}\|_{C^1(B_2^+\cup B'_2)}<\epsilon/2.$$ On the other hand, there exists $\delta_2>0$ depending on $\epsilon$, modulus of continuity of $C_{x_0}$ and $C^{1,\alpha}$ norm of $f$ such that  $$\sup_{x_0\in \Gamma_u\cap B_{\delta_2}'}\|u_{x_0}-u_0\|_{C^1(B_2^+\cup B'_2)}< \epsilon/2.$$ Taking $\delta=\min\{\delta_1,\delta_2,1/2\}$ we proved (i).

(ii) We observe that for small enough $r$ and $|x_0|$, the rescaled
free boundary $$\Gamma_{u_{r,x_0}}=\{(x'', x_{n-1},0):
x_{n-1}=f_{r,x_0}(x'')\}, \quad
f_{r,x_0}(x''):=\frac{f(x''_0+rx'')-f(x''_0)}{r}$$ satisfies
$|f_{r,x_0}(x'')|<1/4$ when $|x''|<2$.  This follows immediately from
the assumption $f(0)=|\nabla'' f(0)|=0$.
The rest of (ii) then follows from the estimates for the higher order derivatives of harmonic functions.
\end{proof}

The next proposition is a consequence of Proposition~\ref{prop:computeu0} and Proposition~\ref{prop:uniformity2}, which is useful to understand the transformation $T$. Since $T$ fixes the first $n-2$ coordinates, it is more convenient to work on a ``tubular''  neighborhood of $\Gamma_u$ defined as follows: First we consider the projection map 
$$p:B_1\rightarrow \Gamma_u, \quad x\mapsto (x'', f(x''),0)$$
Since $f\in C^{1,\alpha}(B''_2)$, $p$ is continuous in $B_1$. Moreover, it is easy to verify that for some constant $c_1=c_1(n,\|f\|_{C^{1}})\geq 1$,
\begin{equation}\label{eq:compare}
d(x,\Gamma_u)\leq |x-p(x)|\leq c_1 d(x,\Gamma_u). 
\end{equation}
Next for $\delta\in (0,1/2)$, we let 
$$\mathcal{O}_\delta:=\{x: p(x)\in B_\delta'\cap \Gamma_u,\ |x-p(x)|<\delta\};$$
$$\mathcal{O}_\delta^+=\mathcal{O}_\delta \cap \{x_n>0\};\quad \mathcal{O}'_\delta=\mathcal{O}_\delta\cap \{x_n=0\}.$$
By the continuity of $p$ and \eqref{eq:compare}, $\mathcal{O}_\delta$ is a tubular neighborhood of the part of the free boundary $\Gamma_u$ lying in $B_\delta$.
\begin{proposition}\label{prop:square}
Given any $\epsilon>0$, let $\delta=\delta_\epsilon$ be the same constant as in Proposition~\ref{prop:uniformity2}. Then for any $x\in (\mathcal{O}^+_\delta\cup \mathcal{O}'_\delta)\setminus \Gamma_u$, we have
\begin{align*}
\textup{(i)}&\quad \left|\frac{\left| T(x)-T(p(x))\right|^2}{|x-p(x)|}-\frac{9C_0^2}{4}\right|
<2C_u\epsilon ;\\
\textup{(ii)}
&\quad  \left| |x-p(x)|\det(DT)(x)-\left(-\frac{9C_0^2}{16}\right)\right|<2C_0n\epsilon.
\end{align*}
\end{proposition}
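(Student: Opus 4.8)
The plan is to reduce both estimates to the asymptotic information contained in Proposition~\ref{prop:uniformity2} by a scaling argument centered at $x_0 = p(x)\in\Gamma_u$. Write $x_0 = p(x)$, let $r = |x - p(x)| = d_{\Gamma}$ (comparable to $d(x,\Lambda_u)$) and set $\xi = (x - x_0)/r$, so that $|\xi| = 1$ and $\xi$ lies in the closed upper half ball (or on $B_1'$). By the assumptions in Section~\ref{sec:assumptions} (in particular $f(0) = |\nabla_{x''} f(0)| = 0$), for $\delta$ small the point $x_0$ is $o(r)$-close to lying ``directly below'' $x$ along $e_{n-1}$, so that in fact $\xi$ is within $\epsilon$ of the idealized direction; more importantly, $\partial_{x_{n-1}} u(x_0) = \partial_{x_n} u(x_0) = 0$ because $x_0 \in \Gamma_u$. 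Hence $T(x) - T(x_0) = (0'', \partial_{x_{n-1}} u(x), \partial_{x_n} u(x))$, and by the chain rule $\nabla u(x) = r^{1/2}\,\nabla u_{r,x_0}(\xi)$ while $D^2 u(x) = r^{-1/2}\, D^2 u_{r,x_0}(\xi)$. Thus
\begin{align*}
\frac{|T(x) - T(x_0)|^2}{|x - p(x)|} &= \bigl|\nabla_{(x_{n-1},x_n)} u_{r,x_0}(\xi)\bigr|^2,\\
|x - p(x)|\det(DT)(x) &= r^{1/2}\det\bigl(DT\bigr)(x) = \det\bigl(DT^{u_{r,x_0}}\bigr)(\xi),
\end{align*}
where in the last line one uses that $T$ fixes the first $n-2$ coordinates and the only nontrivial $2\times 2$ block is $D^2_{(x_{n-1},x_n)} u$, which scales by exactly $r^{-1/2}$, cancelling the $r^{1/2} = |x-p(x)|$ prefactor.

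Next I would compare $u_{r,x_0}$ with the model $u_0(x) = \Re(x_{n-1} + i x_n)^{3/2}$ (with the constant $C_0 = C_{x_0}$ absorbed, or rather the computation of Proposition~\ref{prop:computeu0} multiplied by $C_0$). Since $|\xi| = 1$ sits in the annular region where Proposition~\ref{prop:uniformity2} applies — note $\sqrt{\xi_{n-1}^2 + \xi_n^2}$ is comparable to $1$ once $x_0 = p(x)$ is chosen, because the $x''$-components of $x - x_0$ are controlled by $d(x,\Gamma_u)\cdot$(Lipschitz constant of $f$) and can be made a small fraction of $r$ — part (i) of Proposition~\ref{prop:uniformity2} gives $\|u_{r,x_0} - u_0\|_{C^1} < \epsilon$ and part (ii) gives control of the second derivatives, $\|D^2 u_{r,x_0} - D^2 u_0\|_{L^\infty(A_+^{1/2,1})} < C(n)\epsilon$. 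By Proposition~\ref{prop:computeu0}, for the model one has $|\nabla_{(x_{n-1},x_n)} u_0(\xi)|^2 = 9 C_0^2/4$ (independent of the unit point $\xi$, by $0$-homogeneity of $\nabla u_0$ restricted to the unit sphere — indeed $|\nabla u_0| = (3C_0 r^{1/2}/2)$ at radius $r$, so at $r=1$ it is exactly $3C_0/2$) and $\det(DT^{u_0})(\xi) = \det D^2_{(x_{n-1},x_n)} u_0(\xi) = -9C_0^2/16$ (again constant on the unit sphere since $D^2 u_0$ is $(-1/2)$-homogeneous and the determinant of the $2\times 2$ block is $-(3C_0/4)^2 r^{-1}$). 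Then (i) follows from $|\,|a|^2 - |b|^2\,| \le |a - b|(|a| + |b|) \le C\epsilon \cdot C_u$ with $|\nabla u_{r,x_0}(\xi)|$ bounded by $C_u$ via \eqref{eq:r}, and (ii) follows from continuity of the determinant on the relevant bounded set of matrices together with the $L^\infty$ bound on $D^2 u_{r,x_0} - D^2 u_0$.

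The main technical point to get right is the bookkeeping for the first $n-2$ coordinates: strictly speaking $x - p(x)$ has nonzero $x''$-part in general (it equals $(0, 0)$ only to first order), so $\xi$ is not exactly $(0'', *, *)$, and one must verify that the perturbation in the $x''$-directions is $O(\epsilon \cdot r)$ rather than $O(r)$ — this is where $f \in C^{1,\alpha}$ with $\nabla_{x''} f(0) = 0$ is used, ensuring that within the tubular neighborhood $\mathcal{O}_\delta$ the free boundary is nearly flat and nearly horizontal, so $|x'' - p(x)''| \le \|\nabla_{x''} f\|_{L^\infty(B_\delta'')}\, d(x,\Gamma_u) = o(1)\, r$. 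Consequently $\xi$ lands in the set $A_+^{1/2,1}$ (after shrinking $\delta$), and the bounds of Proposition~\ref{prop:uniformity2} genuinely apply at $\xi$. Once this is established the rest is the elementary algebra of comparing $|\nabla u_{r,x_0}|^2$ and $\det D^2 u_{r,x_0}$ with their model values, with constants traced through to give the stated $2C_u\epsilon$ and $2C_0 n\epsilon$ (the factor $n$ in (ii) accounting for the expansion of the $2\times 2$ determinant and the identity block in $DT$). I expect the only real obstacle to be making the ``$\xi$ lies in the good annulus'' claim fully rigorous uniformly in $x \in (\mathcal{O}_\delta^+ \cup \mathcal{O}_\delta') \setminus \Gamma_u$; everything else is the chain rule and a continuity/triangle-inequality estimate.
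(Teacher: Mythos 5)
Your proposal is correct and follows essentially the same route as the paper: rescale at $x_0=p(x)$ by $d=|x-p(x)|$, evaluate $\nabla u_{d,x_0}$ and $D^2u_{d,x_0}$ at $\xi=(x-p(x))/d\in\partial B_1$, and compare with the model $u_0$ via Proposition~\ref{prop:uniformity2} together with the explicit values from Proposition~\ref{prop:computeu0}. Two small points: the ``main technical obstacle'' you flag is vacuous, because the paper's $p$ is the vertical projection $x\mapsto (x'',f(x''),0)$, so $x-p(x)$ has identically zero $x''$-part and $\xi$ lies exactly in $A^{1/2,1}_+$; and in the determinant computation $|x-p(x)|=r$ (not $r^{1/2}$) --- the $2\times 2$ block scales entrywise like $r^{-1/2}$, so its determinant scales like $r^{-1}$ and is cancelled by the factor $r$, which is precisely the rescaling identity $\det(DT^{u_{r,x_0}})(\xi)=r\,\det(DT)(x_0+r\xi)$ used in the paper.
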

\begin{proof}
(i) First we observe that
\begin{equation}\label{eq:ob1}
|T(x)-T(p(x))|^2=(\partial_{x_{n-1}}u)^2(x)+(\partial_{x_n}u)^2(x).
\end{equation}
For $x\in (\mathcal{O}^+_\delta\cup \mathcal{O}'_\delta)\setminus \Gamma_u$, let 
$$d=|x-p(x)|\in (0,\delta), \quad \xi =\frac{x-p(x)}{d}\in \partial B_1.$$
Applying Proposition~\ref{prop:uniformity2}(i) to $u_{d,p(x)}$ we have, 
\begin{equation}\label{eq:temp1}
|\nabla u_{d,p(x)}(\xi)- \nabla u_0(\xi)|<\epsilon.
\end{equation}
On the other hand, by the computation in Proposition~\ref{prop:computeu0}, 
\begin{equation}\label{eq:temp2}
(\partial_{x_{n-1}} u_0)^2(\zeta)+(\partial_{x_n}u_0)^2(\zeta)=\frac{9C_0^2}{4}, \quad \text{for any }\zeta\in \partial B_1.
\end{equation}
\eqref{eq:temp1} together with \eqref{eq:temp2} implies
$$\left|(\partial_{x_{n-1}}u_{d,p(x)})^2(\xi)+(\partial_{x_n}u_{d,p(x)})^2(\xi)- \frac{9C_0^2}{4}\right|\leq 2C_u \epsilon.$$ Rescaling back to $u$ and using \eqref{eq:ob1}, we obtain (i).

(ii) The proof of (ii) is similar to that of (i).  In $(\mathcal{O}^+_\delta\cup \mathcal{O}'_\delta)\setminus \Gamma_u$, $T$ is a smooth mapping, and
$$\det(DT)(x)=\partial_{x_{n-1}x_{n-1}}u(x)\partial_{x_nx_n}u(x)-(\partial_{x_{n-1}x_n}u)^2(x).$$
Given $x\in  (\mathcal{O}^+_\delta\cup \mathcal{O}'_\delta)\setminus \Gamma_u$,  consider $u_{d,p(x)}$ and rescaled point $\xi\in \partial B_1$ as above. By Proposition~\ref{prop:uniformity2}(ii), 
$$|D^2 u_{d,p(x)}(\xi)-D^2u_0(\xi)|\leq n\epsilon.$$
This together with the explicit expression for $D^2u_0$ in Proposition~\ref{prop:computeu0} gives 
\begin{equation}\label{eq:temp3}
\left|\det DT^{u_{d,p(x)}}(\xi)-\left(-\frac{9C_0^2}{16}\right)\right|<2C_0n\epsilon.
\end{equation}
It is easy to check the following rescaling property
$$\det (DT^{u_{r,x_0}})(\xi)=r \det (DT )(x_0+r\xi).$$
This combined with \eqref{eq:temp3} gives (ii).
\end{proof}

Now we are ready to prove the main theorem of the section. Let $$\mathcal{Q}=\{y\in \R^n: y_{n-1}\geq 0, y_n\leq 0\}.$$

\begin{theorem}\label{thm:injective}
There exists a small constant $\delta=\delta_u>0$, such that $T$ is a homeomorphism from $\mathcal{O}^+_\delta\cup \mathcal{O}'_\delta$ to $T(\mathcal{O}^+_\delta\cup \mathcal{O}'_\delta)$, which is relatively open in $\mathcal{Q}$. Moreover, if we extend $T$ to $B_1^-$ via the even (odd) reflection of $u$ about $x_n$, then it is a diffeomorphism from $\mathcal{O}_\delta\setminus \Lambda_u$ ($\mathcal{O}_\delta\setminus \overline{\Omega_u}$) onto an open subset in $\R^n$.
\end{theorem}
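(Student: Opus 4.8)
The plan is to work with the modified map $T_1=\psi\circ T$ rather than $T$ directly, since $T_1$---unlike $T$, which is only $C^{0,1/2}$---extends across $\Gamma_u$ to a map that is real analytic off $\Gamma_u$ and Lipschitz near it. The crucial quantitative input is that near the free boundary $T_1$ is $C^1$-close to a fixed invertible linear map. For $z\in\mathcal{O}_\delta\setminus\Gamma_u$ set $p=p(z)$, $d=|z-p|$, $\xi=(z-p)/d\in\partial B_1$; a scaling computation gives the identity $DT_1(z)=DT_1^{u_{d,p}}(\xi)$ (the $d^{-1}$ from rescaling $T_1$ and the $d$ from the chain rule cancel, just as $\det DT^{u_{r,x_0}}(\xi)=r\det DT(x_0+r\xi)$ in the proof of Proposition~\ref{prop:square}), and combining it with Proposition~\ref{prop:uniformity2} (so that $u_{d,p}\to u_0$ in $C^2$ near $\xi$, \emph{uniformly} in $p$ and $d$) and with the explicit formula $DT_1^{\hat u_0}=\tfrac94\,\mathrm{id}$ from Proposition~\ref{prop:computeu0}, one obtains, for $\delta$ small enough,
$$
\|DT_1-A_0\|_{L^\infty(\mathcal{O}_\delta\setminus\Gamma_u)}<\eta ,
$$
with $\eta$ as small as we please. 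Here $A_0$ is the fixed matrix equal to the identity in the $x''$-variables and to $\tfrac94 C_0^2\,\mathrm{id}$ in the $(x_{n-1},x_n)$-plane, $C_0$ being the blowup constant at the origin; passing from the $p$-dependent limiting matrices to the single matrix $A_0$ uses the continuity of $x_0\mapsto C_{x_0}$ and $x_0\mapsto\nu'_{x_0}$ together with $f(0)=|\nabla''f(0)|=0$. In particular $\det DT_1\neq0$ on $\mathcal{O}_\delta\setminus\Gamma_u$, and the same computation---essentially Proposition~\ref{prop:square}(ii)---gives $\det DT\neq0$ on $\mathcal{O}_\delta\setminus\Lambda_u$ for the even extension of $u$ and on $\mathcal{O}_\delta\setminus\overline{\Omega_u}$ for the odd one, so $T$ is a local diffeomorphism on those sets.

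Next I prove that $T_1$ is injective on $\mathcal{O}_\delta$. Since $T_1$ fixes the first $n-2$ coordinates, $T_1(z)=T_1(\tilde z)$ forces $z''=\tilde z''$, so $z$ and $\tilde z$ lie in a common fiber $\{z''\}\times\R^2$. The slice of $\mathcal{O}_\delta$ by that fiber is an open Euclidean disk centered at the unique point $p=(z'',f(z''),0)$ of $\Gamma_u$ in the fiber; in particular it is convex, so the segment $\gamma$ joining $\tilde z$ to $z$ stays inside it and meets $\Gamma_u$ in at most that one point. Thus $T_1\circ\gamma$ is Lipschitz on $[0,1]$ and $C^1$ off at most one parameter value, so the fundamental theorem of calculus gives $T_1(z)-T_1(\tilde z)=\int_0^1 DT_1(\gamma(t))(z-\tilde z)\,dt$; together with the bound above (choosing $\eta<\tfrac12\|A_0^{-1}\|^{-1}$) this yields $|T_1(z)-T_1(\tilde z)|\geq\tfrac12\|A_0^{-1}\|^{-1}\,|z-\tilde z|$, hence injectivity. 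Since $\mathcal{O}_\delta$ is open in $\R^n$ and $T_1$ is a continuous injection, invariance of domain makes $T_1$ a homeomorphism of $\mathcal{O}_\delta$ onto an open set $V_1\subset\R^n$, restricting to a diffeomorphism on $\mathcal{O}_\delta\setminus\Gamma_u$.

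It remains to transfer these facts to $T$ through $\psi$. From \eqref{eq:non22}--\eqref{eq:non33} and the complementarity condition one has $T(\mathcal{O}^+_\delta\cup\mathcal{O}'_\delta)\subset\mathcal{Q}$ and, for the even (resp.\ odd) extension, $T(\mathcal{O}_\delta)\subset\{y_{n-1}\geq0\}$ (resp.\ $\subset\{y_n\leq0\}$); moreover the only locus where $\psi$ fails to be injective on that half-space---$\{y_{n-1}=0>y_n\}$ in the even case, $\{y_{n-1}<0=y_n\}$ in the odd case---is disjoint from $T(\mathcal{O}_\delta)$, because $\partial_{x_{n-1}}u>0$ off $\Lambda_u$ and $\partial_{x_n}u\neq0$ off $\overline{\Omega_u}$. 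Hence $T$ is injective on $\mathcal{O}_\delta$, and the restrictions $T|_{\mathcal{O}_\delta\setminus\Lambda_u}$ (even) and $T|_{\mathcal{O}_\delta\setminus\overline{\Omega_u}}$ (odd), being injective, real analytic and of non-vanishing Jacobian, are diffeomorphisms onto open subsets of $\R^n$. For the homeomorphism statement, note that $\psi|_{\mathcal{Q}}:\mathcal{Q}\to\{w_n\geq0\}$ is a homeomorphism (it is $z\mapsto z^2$ on a closed quadrant in the last two variables) and that $T_1(\mathcal{O}^+_\delta\cup\mathcal{O}'_\delta)=V_1\cap\{w_n\geq0\}$ (the sign of $w_n=-2\,\partial_{x_{n-1}}u\,\partial_{x_n}u$ separates $\mathcal{O}^+_\delta$, $\mathcal{O}'_\delta$ and $\mathcal{O}^-_\delta$); therefore $T|_{\mathcal{O}^+_\delta\cup\mathcal{O}'_\delta}=(\psi|_{\mathcal{Q}})^{-1}\circ T_1$ is a homeomorphism onto $(\psi|_{\mathcal{Q}})^{-1}(V_1\cap\{w_n\geq0\})$, which is relatively open in $\mathcal{Q}$.

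The heart of the matter is the first paragraph: establishing that $T_1$ is $C^1$-close to a single invertible linear map \emph{uniformly up to} $\Gamma_u$. This is precisely what the asymptotic results of Section~\ref{sec:nond-prop} (uniqueness of the $3/2$-homogeneous blowup, Theorem~\ref{prop:uni}, and the uniform convergence of the rescalings) were built to deliver; once it is in hand, injectivity is a soft consequence of convexity along fibers plus invariance of domain, and the passage back from $T_1$ to $T$ is bookkeeping with signs.
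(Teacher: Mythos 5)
Your proposal is correct, and its skeleton coincides with the paper's (pass to $T_1=\psi\circ T$, exploit the blowup asymptotics of Propositions~\ref{prop:computeu0}--\ref{prop:square}, invoke invariance of domain, and transfer back through $\psi$ using the sign information from \eqref{eq:non22}--\eqref{eq:non33}). The one step you execute genuinely differently is the injectivity of $T_1$. The paper never controls $DT_1$ pointwise up to $\Gamma_u$: it first shows (Step~1) that each rescaled map $T_1^{u_{r,x_0}}$ is injective on the fixed compact annulus $A^{1/2,1}$ by $C^1$-closeness to the linear map $T_1^{u_0}$ there, and then (Step~2) uses Proposition~\ref{prop:square}(i) to show that any two points with the same $T_1$-image and the same projection $p$ have comparable distances to $\Gamma_u$ ($\tfrac12\le d_2/d_1\le 1$), so after rescaling by $d_1$ they both land in $A^{1/2,1}$, contradicting Step~1. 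You instead upgrade the same asymptotic input to the uniform bound $\|DT_1-A_0\|_{L^\infty(\mathcal{O}_\delta\setminus\Gamma_u)}<\eta$ via the scaling identity $DT_1(z)=DT_1^{u_{d,p}}(\xi)$, and then integrate along the segment joining the two points, which lies in the convex disk $\mathcal{O}_\delta\cap(\{z''\}\times\R^2)$ and meets $\Gamma_u$ at most once, so the fundamental theorem of calculus applies to the Lipschitz curve $T_1\circ\gamma$. This buys you a quantitative bi-Lipschitz lower bound $|T_1(z)-T_1(\tilde z)|\ge\tfrac12\|A_0^{-1}\|^{-1}|z-\tilde z|$, which is slightly stronger than bare injectivity, at the cost of having to justify that the closeness of $DT_1^{u_{d,p}}(\xi)$ to the $p$-dependent limit matrices can be replaced by closeness to the single matrix $A_0$ (which you correctly reduce to the continuity of $C_{x_0}$ and $\nu'_{x_0}$ and the normalization $f(0)=|\nabla''f(0)|=0$). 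One cosmetic imprecision: in the last paragraph the even (resp.\ odd) extension of $T$ is only defined and single-valued on $\mathcal{O}_\delta\setminus\Lambda_u$ (resp.\ $\mathcal{O}_\delta\setminus\overline{\Omega_u}$), so the injectivity claim for $T$ should be phrased on those sets rather than on all of $\mathcal{O}_\delta$; your argument (strict sign of $\partial_{x_{n-1}}u$, resp.\ $\partial_{x_n}u$, keeping the image away from the non-injectivity locus of $\psi$) already proves exactly that.
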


\begin{proof} By observation (iii) and (iv), instead of $T$, we consider the map 
$$T_1=\psi\circ T: B_1\rightarrow \R^n, $$
which is first defined in $B_1^+\cup B'_1$ as \eqref{eq:define_T1} and
then extended to $B_1$ via even or odd reflection of $u$ in $x_n$ variable. We will first show $T_1$ is a homeomorphism from $\mathcal{O}_\delta$ to $T_1(\mathcal{O}_\delta)$ for sufficiently small $\delta$. This is divided into three steps.

\textsl{Step 1.} There exists $\delta=\delta_u>0$, such that for any $0<r<\delta$ and $x_0\in \Gamma_u\cap B_\delta'$, $T_1 ^{u_{r,x_0}}$ is injective in $A^{1/2,1}$. Here 
$$A^{1/2,1}:=\{(x'',x_{n-1},x_n): |x''|\leq 1,\ 1/2\leq \sqrt{x_{n-1}^2+x_n^2}\leq 1\}.$$

In fact, by Proposition~\ref{prop:uniformity2} and the definition of $T_1$, for any $\epsilon>0$, there exists $\delta=\delta_{\epsilon,u}>0$ such that
$$\|T_1^{u_{r,x_0}}-T_1^{u_0}\|_{C^{1}(A^{1/2,1})}<\epsilon, \quad 0<r<\delta,\ x_0\in \Gamma_u\cap B_\delta'.$$
From Proposition~\ref{prop:computeu0}, 
$$T_1^{u_0}(x)=\left(x'', \frac{9C_0}{4}x_{n-1}, \frac{9C_0}{4}x_n\right)$$
is a nondegenerate linear map. Hence if we take $\epsilon=\epsilon(C_0)$ sufficiently small,  $T_1^{u_{r,x_0}}$ will be injective on the compact set $A^{1/2,1}$.

\textsl{Step 2.} $T_1$ is injective in $\mathcal{O}_\delta$ for $\delta$ chosen as above. 

It is clear that $T_1$ is injective on $\Gamma_u$, since it maps
$(x'',f(x''),0)$ to $(x'',0,0)$. Thus, it will be sufficient if we show the injectivity of $T_1$ in $\mathcal{O}_\delta\setminus \Gamma_u$. 

Indeed, suppose there exist $x_1, x_2\in \mathcal{O}_\delta\setminus \Gamma_u$ such that $T_1(x_1)=T_1(x_2)$. Necessarily $p(x_1)=p(x_2)$. Let $x_0=p(x_i)$, $d_i=|x_i-p(x_i)|$, $i=1,2$ and w.l.o.g  assume $d_1\geq d_2$. A simple rescaling gives $$T^{u_{d_1,x_0}}_1(\xi_1)=T^{u_{d_1,x_0}}_1(\xi_2), \quad \xi_i=\frac{x_i-x_0}{d_1}\in \overline{B_1}, \ i=1,2.$$
On the other hand, $T_1(x_1)=T_1(x_2)$ implies $$(\partial_{x_{n-1}}u)^2(x_1)+(\partial_{x_n}u)^2(x_1)=(\partial_{x_{n-1}}u)^2(x_2)+(\partial_{x_n}u)^2(x_2).$$ Then recalling \eqref{eq:ob1}, by Proposition~\ref{prop:square}(i) for small enough $\epsilon$ (by choosing $\delta$ small in step 1) one has
\begin{equation}\label{eq:ratio}
\frac{1}{2}\leq \frac{d_{2}}{d_{1}}\leq 1.
\end{equation}
Note that $d_2/d_1=|x_2-x_0|/d_1=|\xi_2|$ and $\xi_1, \xi_2$ have first $n-2$ coordinates equal to zero, thus $\xi_1, \xi_2\in A^{1/2,1}$. But this contradicts the injectivity of $T_1^{u_{d_1,x_0}}$ on $A^{1/2,1}$ obtained in step 1.

\textsl{Step 3.} $T_1$ is a homeomorphism from $\mathcal{O}_\delta$ to $T_1(\mathcal{O}_\delta)$. 

Now $T_1:\mathcal{O}_\delta\rightarrow \R^n$ is an injective map. Moreover, it is continuous because of the continuity of $Du$. Thus by the Brouwer invariance of domain theorem $T_1(\mathcal{O}_\delta)$ is open and $T_1$ is a homeomorphism between $\mathcal{O}_\delta$ and $T_1(\mathcal{O}_\delta)$. 

Now we proceed to show $T:\mathcal{O}^+_\delta\cup \mathcal{O}'_\delta\rightarrow T(\mathcal{O}^+_\delta\cup \mathcal{O}'_\delta)$ is a homeomorphism for $\delta$ chosen as above. Indeed, recalling $T_1=\psi\circ T$ we obtain the injectivity of $T$ from the injectivity of $T_1$. Next we note that $\psi$ is injective in $\mathcal{Q}$, which contains $T(B_1^+\cup B'_1)$ by observation (i), thus $T(U)=\psi^{-1}(T_1(U))\cap \mathcal{Q}$ for any subset $U\subset B_1^+\cup B'_1$. Thus, $T$ is an open map from $\mathcal{O}_\delta^+\cup \mathcal{O}'_\delta$ to $\mathcal{Q}$, since $T_1$ is open and $\psi$ is continuous. Combining with the continuity of $T$, we obtain that $T$ is an homeomorphism from $\mathcal{O}_\delta^+\cup \mathcal{O}'_\delta$ to  $T(\mathcal{O}_\delta^+\cup \mathcal{O}'_\delta)\subset \mathcal{Q}$.

Finally, we notice that $T$ is smooth on $\mathcal{O}_\delta\setminus \Lambda_u$ (or $\mathcal{O}_\delta\setminus \overline{\Omega}_u$) after even (or odd) extension of $u$ about $x_n$. Moreover, by Proposition~\ref{prop:square}(ii), $\det(DT)$ is nonvanishing there for sufficiently small $\delta$. Hence $T$ is a diffeomorphism by the implicit function theorem.
\end{proof}

\begin{figure}[t]
  \centering
\begin{picture}(200,200)
  \put(0,134){\includegraphics[height=66pt]{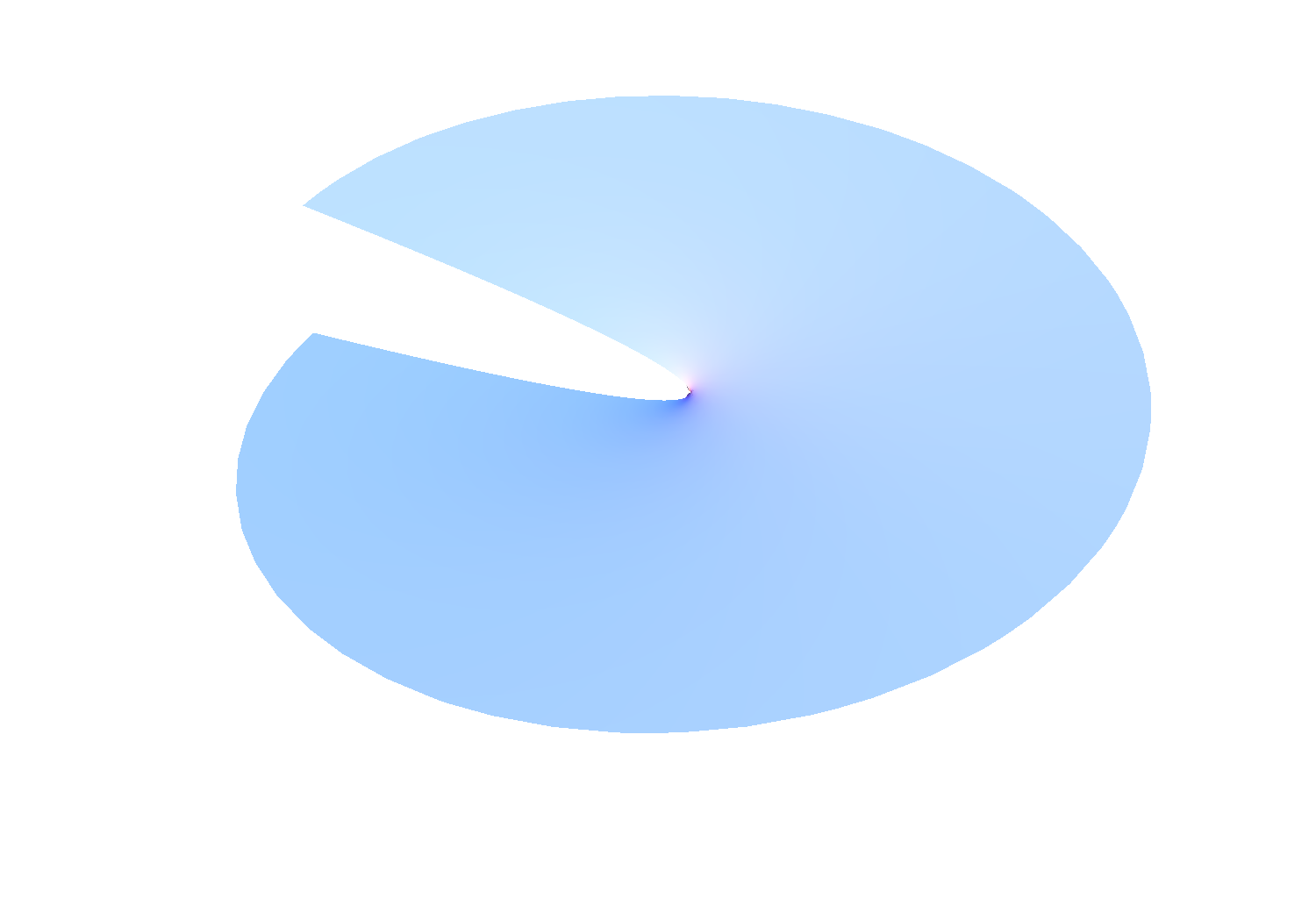}}
  \put(134,134){\includegraphics[height=66pt]{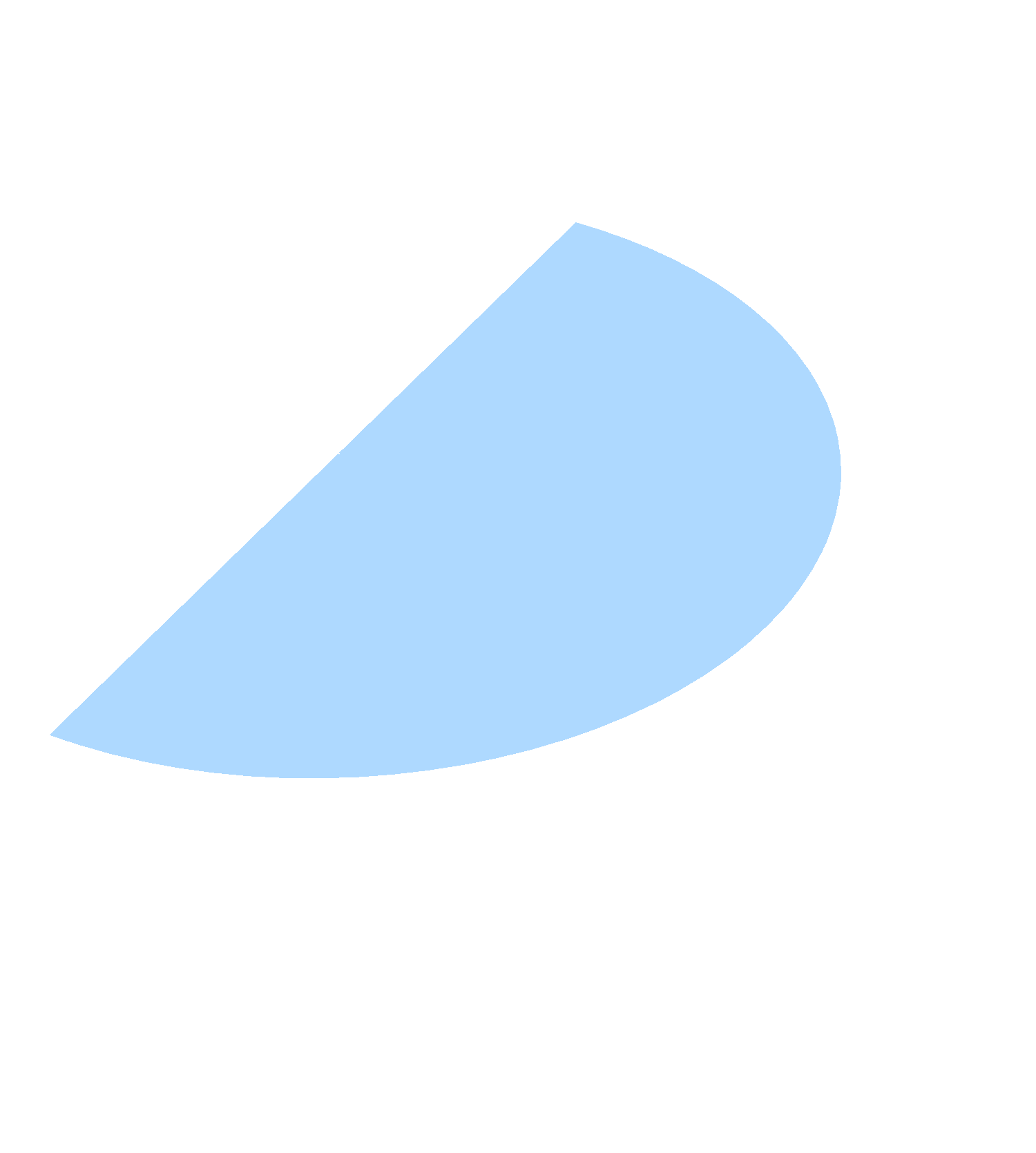}}
  \put(90,180){$\xymatrix{\ar@/^/[r]^T &}$}
  \put(90,160){$\xymatrix{& \ar@/^/[l]^{T^{-1}}}$}
  \put(-25,170){\small $\overline{\mathcal{O}^1\sqcup \mathcal{O}^2}$}
  \put(0,67){\includegraphics[height=66pt]{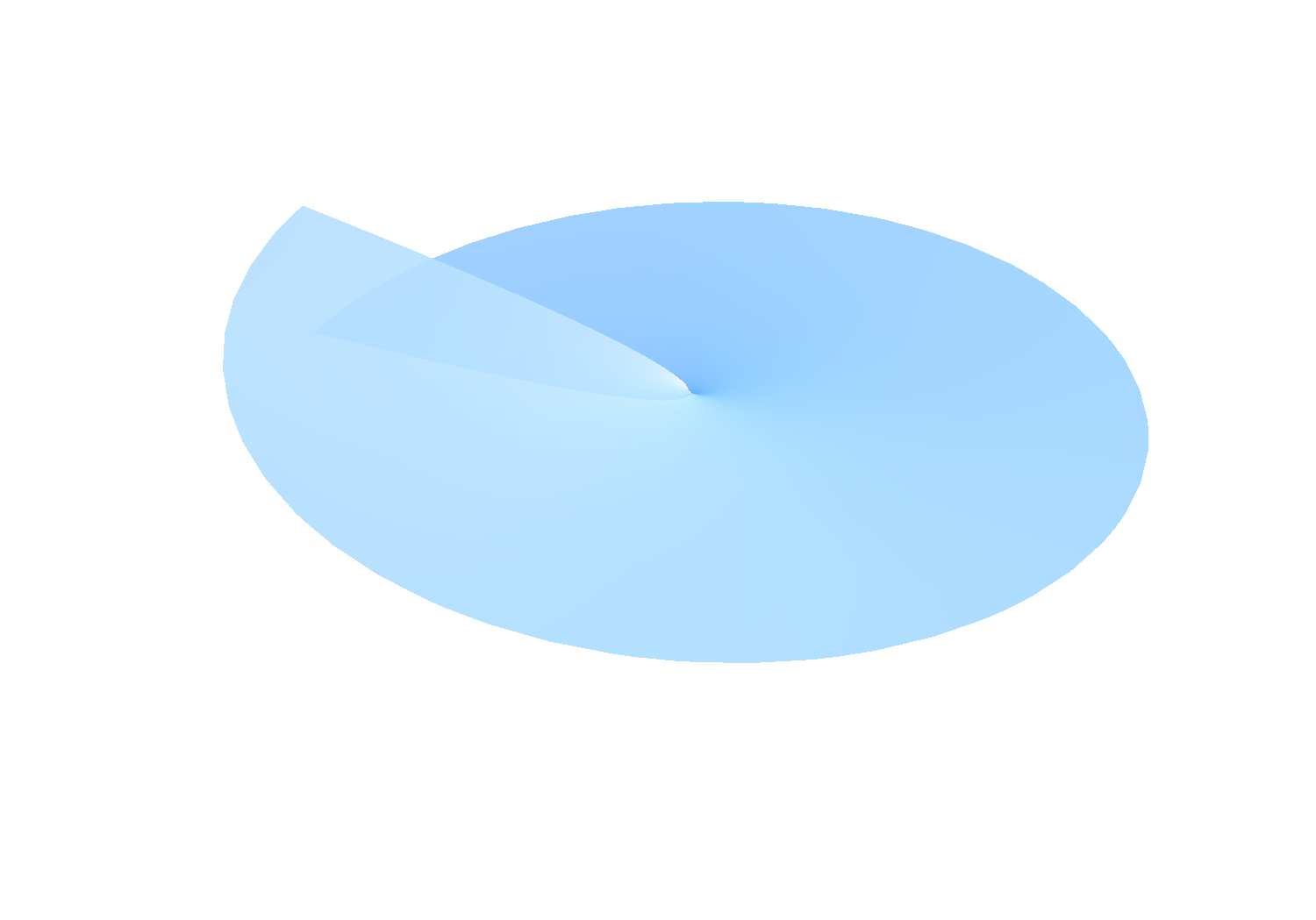}}
  \put(120,67){\includegraphics[height=66pt]{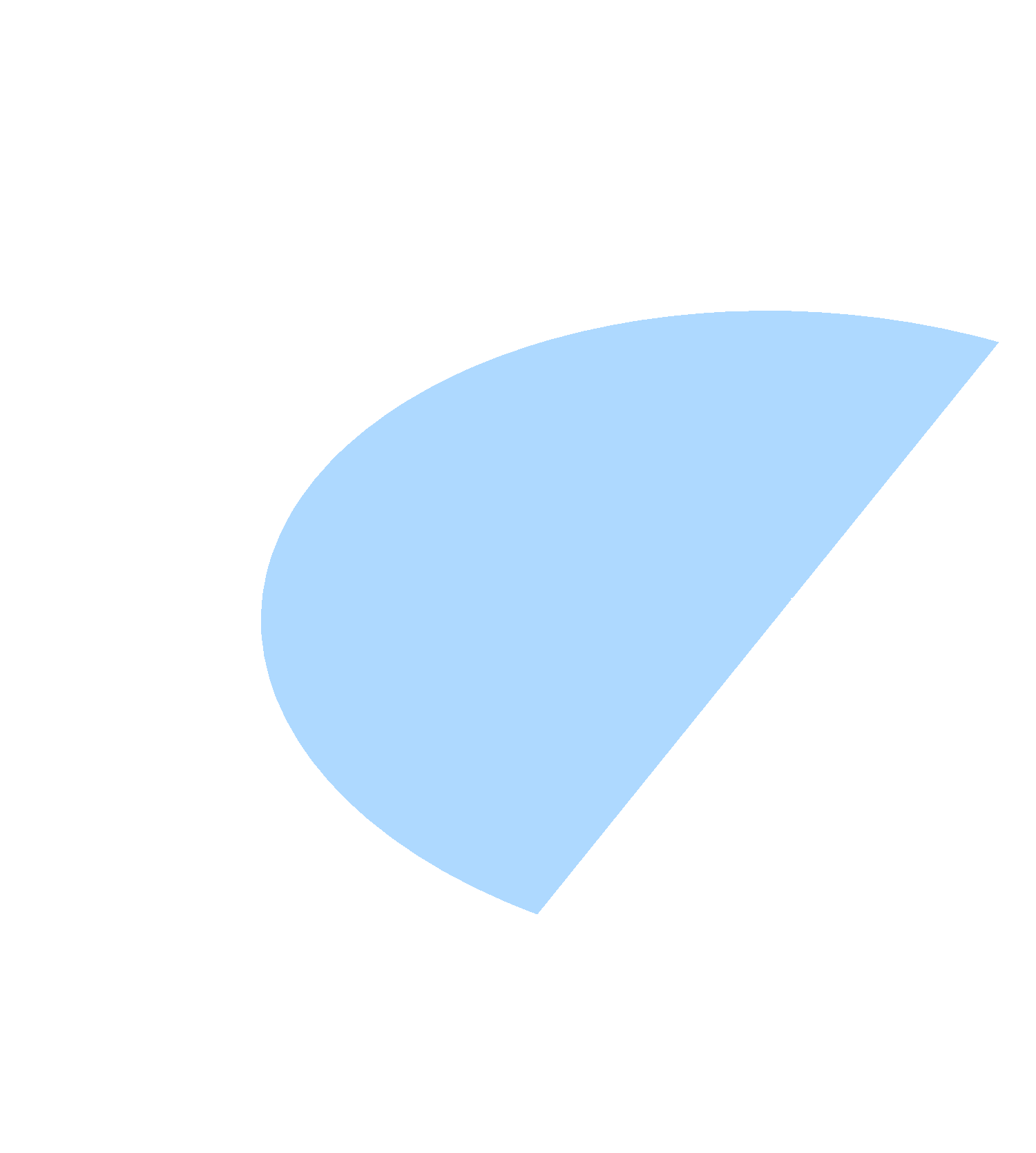}}
  \put(90,110){$\xymatrix{\ar@/^/[r]^T &}$}
  \put(90,90){$\xymatrix{& \ar@/^/[l]^{T^{-1}}}$}
  \put(-25,100){\small $\overline{\mathcal{O}^3\sqcup \mathcal{O}^4}$ }
  \put(0,0){\includegraphics[height=66pt]{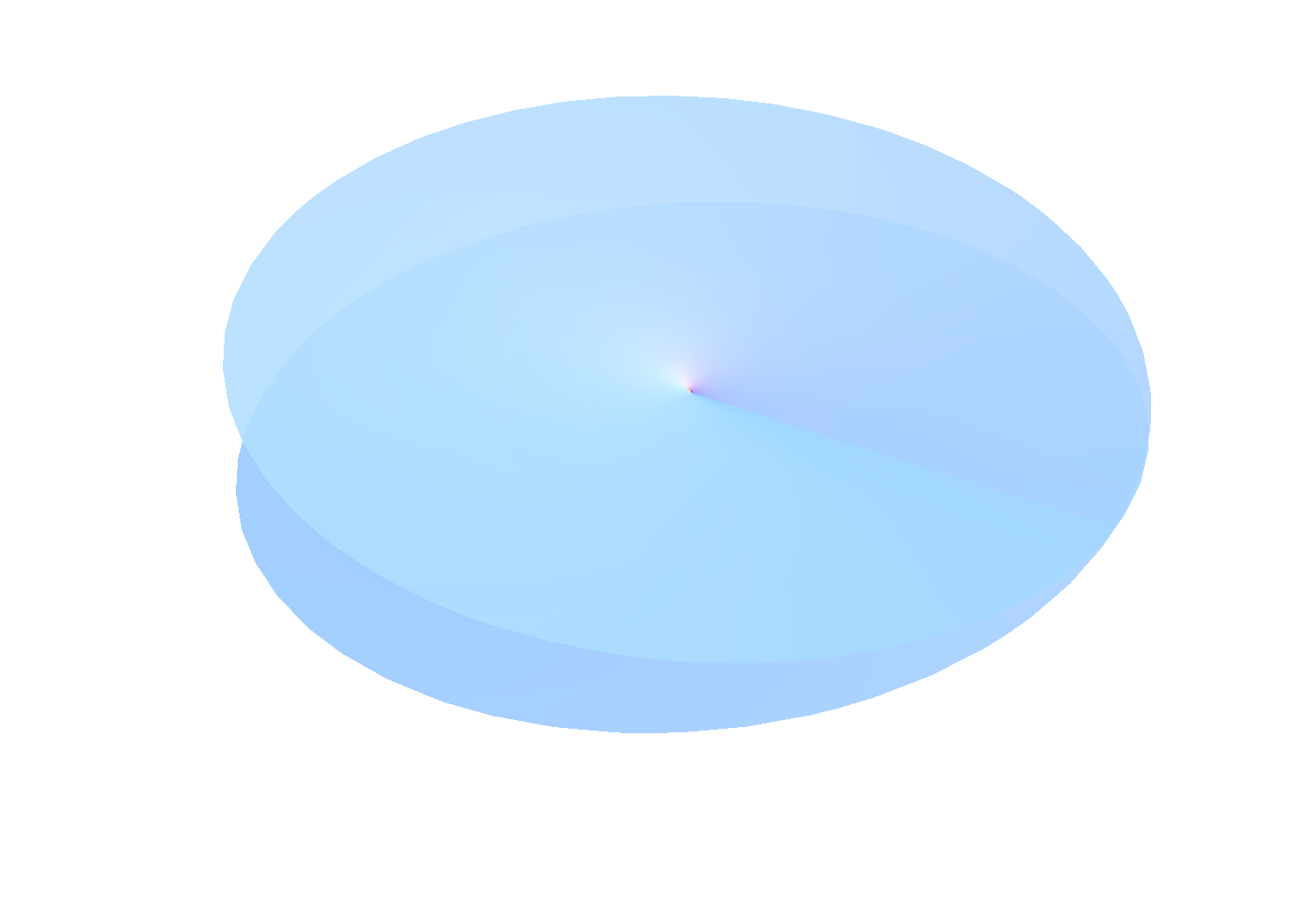}}
  \put(120,0){\includegraphics[height=66pt]{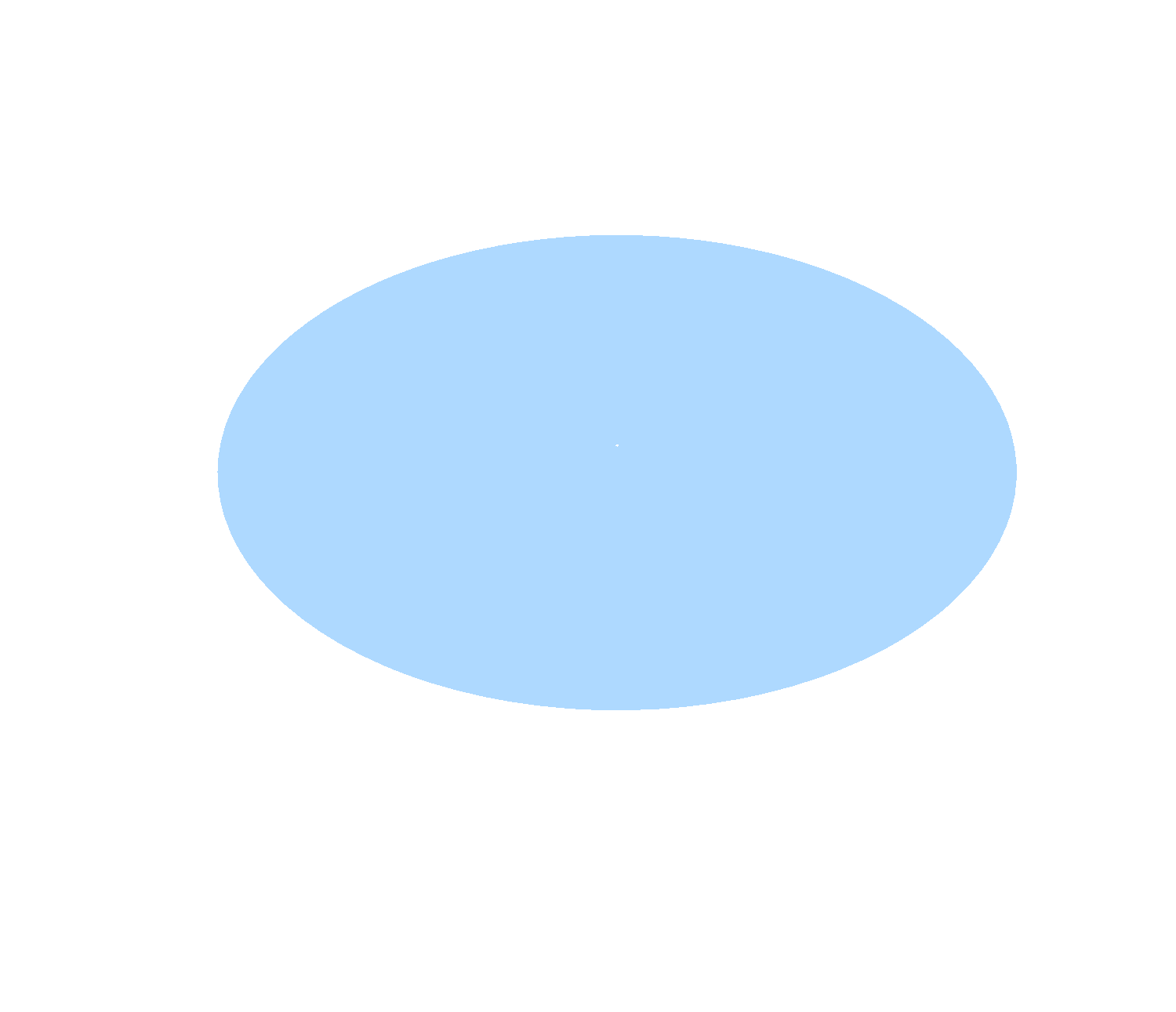}}
  \put(90,40){$\xymatrix{\ar@/^/[r]^T &}$}
  \put(90,20){$\xymatrix{& \ar@/^/[l]^{T^{-1}}}$}
  \put(-7,30){$\mathcal{M}$}
  \put(200,30){$\mathcal{U}$}
\end{picture}
  \caption{Manifold $\mathcal{M}$ and extension of $T$.}
  \label{fig:M}
\end{figure}

Next we construct a space $\mathcal{M}$ by gluing two copies of $\mathcal{O}_\delta^+\cup \mathcal{O}'_\delta$, $\mathcal{O}_\delta^-\cup \mathcal{O}'_\delta$ together properly and extend $u$ as well as $T$ to $\mathcal{M}$. The advantage of doing this is, the straightened free boundary $T(\Gamma_u)$ will be contained in $T(\mathcal{M})$, which is an open neighborhood of the origin in $\R^n$. This transfers the boundary singularity into the interior singularity, which will be easier for us to deal with. 

More precisely, we fix the $\delta$ chosen in Theorem~\ref{thm:injective} and consider $\{\mathcal{O}^i; i=1,2,3,4\}$, a family of subsets in $\R^n$, where
$$\mathcal{O}^1=\mathcal{O}^3:=\mathcal{O}^+_\delta\cup \mathcal{O}'_\delta, \quad \mathcal{O}^2=\mathcal{O}^4:=\mathcal{O}^-_\delta\cup \mathcal{O}'_\delta.$$
Define $u^i: \mathcal{O}^i\rightarrow \R$ as follows: $u^1=u$, where $u$ is the solution to the Signorini problem satisfying the assumptions in the introduction; $u^2$ is the even reflection of $u$ about $x_n$; $u^3=-u^1$ and $u^4=-u^2$. Let $T^i=T^{u^i}:\mathcal{O}^i\rightarrow \R^n$ be the corresponding partial hodograph-Legendre transform defined in \eqref{eq:def-hod-2}.

Consider the disjoint union of $\mathcal{O}^i$: $\sqcup_{i=1}^4\mathcal{O}^i$,  and denote the elements of it by $(x,i)$ for $x\in \mathcal{O}^i$, $i=1,\cdots, 4$. Define 
\begin{alignat*}{2}
\sqcup _{i=1}^4 u^i &: \sqcup^4_{i=1}\mathcal{O}^i\rightarrow \R,&\quad& (\sqcup _{i=1}^4 u^i)((y,j))=u^j(y);\\
\sqcup^4_{i=1}T^i&: \sqcup^4_{i=1}\mathcal{O}^i\rightarrow \R^n,&&(\sqcup^4_{i=1}T^i)((y,j))=T^j(y).
\end{alignat*} 

Now we define an equivalence relation on $\sqcup _{i=1}^4 \mathcal{O}^i$ as follows: $$(x,i)\sim (y,j)\quad\text{iff}\quad (\sqcup^4_{i=1}T^i)((x,i))=(\sqcup^4_{i=1}T^i)((y,j)).$$ It is easy to check from the definition of $T^i$ and Theorem~\ref{thm:injective} that this equivalence relation identifies the points $(x,i)$ and $(x,j)$ if (i) $x\in \Lambda_u$, $i=2,j=3$ or $i=4,j=1$; (ii) $x\in \overline{\Omega_u}$, $i=1,j=2$ or $i=3,j=4$. In particular, for $x\in \Gamma_u$, $(x,i)$ and $(x,j)$ are identified for all $i,j=1,2,3,4$.  

Let $\mathcal{M}:=\sqcup _{i=1}^4 \mathcal{O}^i \slash \sim$ denote the quotient space. Consider on $\mathcal{M}$: 
\begin{alignat*}{2}
\overline{u}&:=\overline{\sqcup _{i=1}^4 u^i} : \mathcal{M}\rightarrow \R,& \quad&\overline{u}(\overline{(y,j)})=u^j(y);\\
\overline{T}&:=\overline{\sqcup^4_{i=1}T^i}: \mathcal{M}\rightarrow \R^n,&&\overline{T}(\overline{(x,i)})=T^i(x).
\end{alignat*} 
It is immediate that $\overline{T}$ is continuous and injective.  Moreover, it is open from $\mathcal{M}$ to $\R^n$ by Theorem~\ref{thm:injective} and the special way we glue $\mathcal{O}^i$.  Hence we obtain that $\overline{T}$ is a homeomorphism from $\mathcal{M}$ to $\overline{T}(\mathcal{M})$. In particular, $\overline{T}(\mathcal{M})$ is an open neighborhood of the origin in $\R^n$, which contains the straightened free boundary $T(\Gamma_u)$.

We still denote the set $\{\overline{(x,i)}\in \mathcal{M}:x\in \Gamma_u\}$ by $\Gamma_u$. It is not hard to observe that $\mathcal{M}\setminus \Gamma_u$ is a double cover of $\mathcal{O}_\delta\setminus \Gamma_u\subset \R^n$ with the covering map $\phi: \overline{(x,i)}\mapsto x$. Hence $\mathcal{M}\setminus \Gamma_u$ can be given a smooth structure which makes $\phi$ into a local diffeomorphism. In the local coordinate charts $\phi_\alpha:U_\alpha\rightarrow \mathcal{O}_\delta\setminus \Gamma_u$, we have $(\overline{u}\circ\phi^{-1}_\alpha)(x)=u(x)$, where $u$ is the extended function via the even or odd reflection about $\Lambda_u$ or $\Omega_u$, hence $\overline{u}$ is continuous on $\mathcal{M}$, smooth in $\mathcal{M}\setminus \Gamma_u$ and $\Delta \overline{u}=0$ there. Similarly, one can compute $(\overline{T}\circ \phi^{-1}_\alpha)(x)=(x'', \partial_{n-1} u(x), \partial_n u(x))$, which is a diffeomorphism on $\phi_\alpha(U_\alpha)$ by Theorem~\ref{thm:injective} (apply Theorem~\ref{thm:injective} for the extended $u$). This shows that $\overline{T}: \mathcal{M}\setminus \Gamma_u \rightarrow \overline{T}(\mathcal{M}\setminus \Gamma_u)$ is a diffeomorphism.

From now on, with slight abuse of the notation we will still denote $\overline{u}$ by $u$ and $\overline{T}$ by $T$.
In the following, we will simply write $\partial_k u$, $DT$, etc.\ while having in mind that we are taking the derivatives in the local coordinates.

\section{Partial Legendre transform and a nonlinear PDE}
\label{sec:legendre-funct-nonl}
In this section we study the partial Legendre transform of $u$ and the fully nonlinear PDE it satisfies. We let $$\U:=T(\mathcal{M}),$$ which is an open neighborhood of the origin and 
$$\cP:=T(\Gamma_u)$$
be the straightened free boundary.

\subsection{Partial Legendre transform}
\label{sec:legendre-transf-its}
For $y\in \U$, we define the partial Legendre transform of $u$ by the identity
\begin{equation}
\label{eq:legendrefun}
v(y)=u(x)-x_{n-1}y_{n-1}-x_ny_n,
\end{equation}
where $x=T^{-1}(y)\in \mathcal{M}$. It is immediate to check the following properties of $v$:
\begin{enumerate}[(i)]
\item $v$ is odd about $y_{n-1}$ and even about $y_n$.

\item $v$ is continuous in $\U$, smooth in $\U\setminus \mathcal{P}$ and $v=0$ on $\mathcal{P}$.

\item A direct computation shows that in ${\U}\setminus \mathcal{P}$
\begin{equation}\label{eq:derivative}
\begin{split}
&\partial_{y_i} v=\partial_{x_i} u, \quad i=1,\ldots,n-2\\
&\partial_{y_{n-1}} v=-x_{n-1},\quad \partial_{y_n} v=-x_n.
\end{split}
\end{equation}
Hence $T^{-1}$ can be written as
\begin{align*}
x=T^{-1}(y)=(y_1,\ldots,y_{n-2},-\partial_{y_{n-1}} v, -\partial_{y_n} v).
\end{align*}
The Jacobian matrix of $v$ is then
\begin{equation}\label{eq:DT-1}
DT^{-1}(y)=\begin{pmatrix}
I_{n-2} & 0 \\
A & B
\end{pmatrix},\quad
DT=\begin{pmatrix}
I_{n-2} & 0 \\
-B^{-1}A & B^{-1}
\end{pmatrix},
\end{equation}
where
\begin{align*}
A&=\begin{pmatrix}
-\partial_{y_{n-1}y_1}v &-\partial_{y_{n-1}y_2} v & \ldots & -\partial_{y_{n-1}y_{n-2}} v\\
-\partial_{y_ny_1}v & -\partial_{y_ny_2}v& \ldots & -\partial_{y_ny_{n-2}}v
\end{pmatrix},\\
B&=\begin{pmatrix}
-\partial_{y_{n-1}y_{n-1}}v &-\partial_{y_{n-1}y_n}v \\
-\partial_{y_ny_{n-1}}v & -\partial_{y_ny_n}v
\end{pmatrix}.
\end{align*}
Since $u\in C^1(\mathcal{M}\setminus \Gamma_u)$ and its differential has an continuous extension to $\Gamma_u$, this together with the continuity of $T^{-1}$ and \eqref{eq:derivative} imply that $v\in C^1(\U)$. 

\item The restriction of $T^{-1}$ to $\cP$ is given by 
$$
T^{-1}(y'',0,0)=(y'',-\partial_{y_{n-1}}v(y'',0,0),0)\in \Gamma_u,
$$
which gives a local parametrization of the free boundary $\Gamma_u$. 
Thus, the
regularity of the free boundary is directly related to the regularity
of $\partial_{y_{n-1}}v$, restricted to $\cP$.
\end{enumerate}

\subsection{Fully nonlinear equation for $v$}
A direct computation using \eqref{eq:derivative} and \eqref{eq:DT-1} shows that 
\begin{multline*}
\partial_{x_ix_i}u=\partial_{y_iy_i}v-(\partial_{y_{n-1}y_i}v, \partial_{y_ny_i}v)\left(\begin{array}{@{}cc@{}}
\partial_{y_{n-1}y_{n-1}}v& \partial_{y_{n-1}y_n}v \\
\partial_{y_{n-1}y_n}v & \partial_{y_ny_n}v
\end{array}
\right)^{-1}\binom{\partial_{y_iy_{n-1}}v}{\partial_{y_iy_n}v}\\
i=1,\cdots, n-2
\end{multline*}
\begin{equation*}
\begin{pmatrix}
\partial_{x_{n-1}x_{n-1}} u & \partial_{x_{n-1}x_n}u\\
\partial_{x_n x_{n-1}}u & \partial_{x_nx_n} u
\end{pmatrix} = 
-\begin{pmatrix}
\partial_{y_{n-1}y_{n-1}} v & \partial_{y_{n-1}y_n} v\\
\partial_{y_ny_{n-1}}v &\partial_{y_ny_n} v
\end{pmatrix}^{-1}.
\end{equation*}
Since $\Delta u=0$ in $\mathcal{M}\setminus
\Gamma_u$, the Legendre function $v$
satisfies the following fully nonlinear equation in $\U\setminus \cP$ 
\begin{multline}\label{eq:mainequation}
\tilde{F}(D^2_yv)=\sum_{i=1}^{n-2}\partial_{y_iy_i}v-\operatorname{tr}\left(\begin{array}{@{}cc@{}}
\partial_{y_{n-1}y_{n-1}}v & \partial_{y_{n-1}y_n}v \\
\partial_{y_{n-1}y_n}v & \partial_{y_ny_n}v
\end{array}
\right)^{-1}\\-\sum_{i=1}^{n-2}(\partial_{y_iy_{n-1}}v, \partial_{y_iy_n}v) \left(\begin{array}{@{}cc@{}}
\partial_{y_{n-1}y_{n-1}}v& \partial_{y_{n-1}y_n}v \\
\partial_{y_{n-1}y_n}v & \partial_{y_ny_n}v
\end{array}
\right)^{-1}
\binom{\partial_{y_iy_{n-1}}v}{\partial_{y_iy_n}v}=0.
\end{multline}
Multiplying both sides of \eqref{eq:mainequation} by 
$$-J(v)=-\det \left(\begin{array}{@{}cc@{}}
\partial_{y_{n-1}y_{n-1}}v & \partial_{y_{n-1}y_n}v \\
\partial_{y_{n-1}y_n}v & \partial_{y_ny_n}v
\end{array}
\right),$$
we can write it in the form
\begin{multline*}
F(D^2v)=-J(v)\tilde{F}(D^2v)=\partial_{y_{n-1}y_{n-1}}v+\partial_{y_ny_n}v-J(v)\sum_{i=1}^{n-2}\partial_{y_iy_i} v\\+\sum_{i=1}^{n-2}((\partial_{y_iy_{n-1}} v)^2\partial_{y_ny_n}v-2\partial_{y_iy_{n-1}}v\partial_{y_iy_n}v\partial_{y_{n-1}y_n}v+(\partial_{y_iy_n}v)^2\partial_{y_{n-1}y_{n-1}}v)=0, 
\end{multline*}
which can be further rewritten as
\begin{equation}\label{eq:mainequation2}
F(D^2v)=\partial_{y_{n-1}y_{n-1}}v+\partial_{y_ny_n}v-\sum_{i=1}^{n-2} \det (V^i)=0,
\end{equation}
where $V^i$, $i=1,\ldots,n-2$, is the $3\times 3$ matrix
$$
V^i=\left(
    \begin{array}{@{}lll@{}}
      \partial_{y_iy_i}v & \partial_{y_iy_{n-1}}v & \partial_{y_iy_n}v \\
      \partial_{y_{n-1}y_i}v &\partial_{y_{n-1}y_{n-1}}v & \partial_{y_{n-1}y_n}v \\
     \partial_{y_ny_i}v & \partial_{y_ny_{n-1}}v & \partial_{y_ny_n}v \\
    \end{array}
  \right)_{3\times 3}.
$$

\subsection{Blowup of $v$ at $\mathcal{P}$}
In order to study the asymptotic of higher derivatives of $v$ at the straightened free boundary $\cP$, we study the blowup of $v$ at points on $\mathcal{P}$.

Let $v_{y_0}$ be the Legendre function of $u_{x_0}$ as in \eqref{eq:legendrefun}, where $y_0=T(x_0)=(x_0'',0,0)\in \cP$. It is not hard to compute that
\begin{equation}\label{eq:computev}
v_{y_0}(y)=-\frac{4}{27C_{x_0}^2}\left (\left(\frac{y_{n-1}}{(\nu'_{x_0})_{n-1}}\right)^3-3\left(\frac{y_{n-1}}{(\nu'_{x_0})_{n-1}}\right)y_n^2\right)+(\nu''_{x_0}\cdot y'') \left(\frac{y_{n-1}}{(\nu'_{x_0})_{n-1}}\right)
\end{equation}
where
\begin{equation}\label{eq:nu}
\nu'_{x_0}=(\nu''_{x_0}, (\nu'_{x_0})_{n-1})=\frac{(-\nabla f (x''_0), 1)}{\sqrt{1+|\nabla f(x''_0)|^2}}
\end{equation}
is the unit outer normal of $\Lambda_u$ at $x_0$. In particular, at the origin, 
$$v_0(y)=\left(-\frac{4}{27C_0^2}\right)\left(y_{n-1}^3-3y_{n-1}y_n^2\right).$$
For $y=(y'',y_{n-1},y_n)\in \R^n$ and $r>0$, we consider the non-isotropic dilation 
\begin{equation}\label{eq:nonisotropic}
\delta_r y=(r^2 y'', ry_{n-1},ry_n)
\end{equation}
and the rescaling at $y_0$ (note $v(y_0)=0$)
\begin{equation}\label{eq:rescale_v}
v_{r,y_0}(y)=\frac{v(y_0+\delta_r y)}{r^3}.
\end{equation}
From \eqref{eq:legendrefun}, \eqref{eq:rescale_v} and \eqref{eq:homgen-rescal}, one can easily check that
\begin{equation}\label{eq:Legendre_vr}
v_{r,y_0}(y)=u_{r^2,x_0}(x)-x_{n-1}y_{n-1}-x_ny_n, \quad x=(T^{u_{r^2,x_0}})^{-1}(y).
\end{equation}
Here rescaling family $u_{r^2,x_0}$ and $T^{u_{r^2,x_0}}$ are defined on $\mathcal{M}_{r^{-2}, x_0}$, where $\mathcal{M}_{r^{-2},x_0}$ are the topological spaces obtained by gluing four rescaled copies $$\mathcal{O}^i_{r^{-2},x_0}:=(\mathcal{O}^i-x_0)/r^2$$ together as in the construction of $\mathcal{M}$.

To study the convergence of $v_{r,y_0}$ to  $v_{y_0}$, we first show a lemma which concerns about in the local coordinate charts the uniform convergence of $(T^{u_{r^2,x_0}})^{-1}$ to $(T^{u_{x_0}})^{-1}$. The following two facts are easy to verify:

\begin{enumerate}[(i)]
\item $T^{u_{r^2,x_0}}$ is bijective and $T^{u_{r^2,x_0}}(\mathcal{M}_{r^{-2},x_0})=\delta_{r^{-1}}(\U)$, where $\delta_{r^{-1}}$ is the non-isotropic dilation in \eqref{eq:nonisotropic}. In particular, we have $T^{u_{x_0}}(\mathcal{M}_{\infty,x_0})=\R^n$. 

\item Let 
$$\mathcal{Q}^1:=\{y\in \R^n:y_{n-1}\geq 0, y_n\leq 0\},\ \mathcal{Q}^2:=e^{i\pi/2}\mathcal{Q}^1, \ \mathcal{Q}^3:=e^{i\pi/2}\mathcal{Q}^2, \ \mathcal{Q}^4:=e^{i\pi/2}\mathcal{Q}^3$$
and let 
$$\mathcal{O}^{i,\ast}_{r^{-1},x_0}:=\{\overline{(x,i)}\in \mathcal{M}_{r,x_0}: x\in \mathcal{O}^i_{r^{-1},x_0}\}, \quad i=1,2,3,4.$$
Then for any sufficiently small $r$ and $x\in \Gamma_u\cap \mathcal{O}_\delta$, we have $T^{u_{r^2,x_0}}(\mathcal{O}^{i,\ast}_{r^{-2},x_0})\subset \mathcal{Q}^i$, i.e. $T^{u_{r^2,x_0}}$ always maps the copy $\mathcal{O}^{i,\ast}_{r^{-2},x_0}$ into the corresponding ``quarter'' domain $\mathcal{Q}^i$. 
\end{enumerate}

Let $\phi^i: \mathcal{O}^{i,\ast}_{r^{-1},x_0}\rightarrow \mathcal{O}^{i}_{r^{-1},x_0}$ with $\phi^i(\overline{(x,i)})=x$ denote the restriction of the covering map to $\mathcal{O}^{i,\ast}_{r^{-1},x_0}$. 

\begin{lemma}\label{lem:dilation}
Let $y_0\in \cP$ and $x_0=T^{-1}(y_0)$. Then
$$\phi^i\circ (T^{u_{r^2,x_0}})^{-1}\rightarrow \phi^i\circ(T^{u_{x_0}})^{-1}, \quad r\rightarrow 0_+$$
uniformly in any compact subset $K\subset \mathcal{Q}^i$, $i=1,2,3,4$. Moreover, this convergence is also uniform for $y_0$ varying in a compact subset of $\cP$.
\end{lemma}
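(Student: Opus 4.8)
The plan is to deduce the convergence of the inverse hodograph maps from the convergence of the forward maps plus a \emph{non-escape} property for the inverses — the latter being the real point, since $T$ is only $C^{0,1/2}$ and no inverse function theorem is available. Write $S_r:=T^{u_{r^2,x_0}}$ and $S_0:=T^{u_{x_0}}$ for the glued partial hodograph maps, so that by Theorem~\ref{thm:injective} and the gluing construction $S_r$ is a homeomorphism of $\mathcal{M}_{r^{-2},x_0}$ onto $\delta_{r^{-1}}(\U)$, $S_0$ a homeomorphism of $\mathcal{M}_{\infty,x_0}$ onto $\R^n$, and in each chart $\phi^i$ the map $S_r$ reads $x\mapsto (x'',\partial_{x_{n-1}}u_{r^2,x_0}(x),\partial_{x_n}u_{r^2,x_0}(x))$. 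Since every rescaling of $u$ is again a rescaling of $u$, all of these maps and all constants below will be uniform in $r\in(0,r_0)$ and in $x_0$ ranging over $T^{-1}(K')$ for a fixed compact $K'\subset\cP$. Fix a compact $K\subset\mathcal{Q}^i$; for $r$ small $K\subset\delta_{r^{-1}}(\U)$, so $\phi^i\circ S_r^{-1}$ is defined on $K$. By Theorem~\ref{prop:uni} (in particular the uniform estimate \eqref{eq:uniformity0}) and Corollary~\ref{cor:varicenter}, $u_{r^2,x_0}\to u_{x_0}$ in $C^{1,\alpha}_{\loc}(\R^n_+\cup\R^{n-1})$ as $r\to0_+$, uniformly in $x_0$; since $\nabla u$ is continuous up to $\Gamma_u$, this gives $S_r\to S_0$ locally uniformly in the charts $\phi^i$ (and in $C^1_{\loc}$ away from $\Gamma_u$), uniformly in $x_0$.

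The main work, and the step I expect to be the hard part, is to show that $\{\phi^i(S_r^{-1}(y)):y\in K,\ 0<r<r_0\}$ is a bounded subset of $\R^n$. The first $n-2$ coordinates of $x_r:=\phi^i(S_r^{-1}(y))$ equal $y''$, so only the last two are at issue, i.e.\ the distance $L_r:=|x_r-p_r(x_r)|$ of $x_r$ to the rescaled free boundary, where $p_r$ is the nearest-point projection (note $|p_r(x_r)|$ is bounded, its $x''$-component being $y''$ and $f\in C^{1,\alpha}$). Suppose $L_{r_j}\to\infty$ along some $r_j\to0_+$ with $y^{(j)}\to y^\ast\in K$. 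I would rescale $u_{r_j^2,x_0}$ about the free-boundary point $z_j:=p_{r_j}(x_{r_j})$ by the factor $L_j:=L_{r_j}$: using the semigroup property of the dilations this produces $w_j=u_{t_j,\,x_0+r_j^2z_j}$ with $t_j=r_j^2L_j$ bounded and $x_0+r_j^2z_j\to x_0$, together with a rescaled point $\xi_j=(x_{r_j}-z_j)/L_j\in\partial B_1$ satisfying $d(\xi_j,\Gamma_{w_j})\asymp1$. Passing to a subsequence, $t_j\to t_\ast\in[0,\infty)$ and $\xi_j\to\bar\xi$; by Corollary~\ref{cor:varicenter} (if $t_\ast=0$) or continuity of $u$ (if $t_\ast>0$), $w_j$ converges in $C^1_{\loc}$ to a nonzero solution $w_\infty$ of the Signorini problem (namely $C_{x_0}u_{x_0}$, resp.\ $u_{t_\ast,x_0}$) enjoying the strict signs \eqref{eq:non22}--\eqref{eq:non33}, while $\bar\xi$ lies at positive distance from $\Gamma_{w_\infty}$; hence $|\partial_{x_{n-1}}w_\infty(\bar\xi)|^2+|\partial_{x_n}w_\infty(\bar\xi)|^2>0$ (using \eqref{eq:non22}--\eqref{eq:non33}, together with Proposition~\ref{prop:computeu0} and the fact that $(\nu'_{x_0})_{n-1}$ is bounded away from $0$ in the first case). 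Scaling back, the last two coordinates of $y^{(j)}=S_{r_j}(x_{r_j})$ satisfy $(y^{(j)}_{n-1})^2+(y^{(j)}_n)^2=L_j\bigl(|\partial_{x_{n-1}}w_j(\xi_j)|^2+|\partial_{x_n}w_j(\xi_j)|^2\bigr)\to+\infty$, contradicting $y^{(j)}\in K$. (Near $\cP$ this is exactly the quantitative content of Proposition~\ref{prop:square}(i).)

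Granted non-escape, the convergence follows by a compactness argument. Fix $y\in K$ and a sequence $r_j\to0_+$; then $x_{r_j}=\phi^i(S_{r_j}^{-1}(y))$ is bounded, so $x_{r_{j_k}}\to\bar x$ along a subsequence. From $(x_{r_{j_k}}'',\partial_{x_{n-1}}u_{r_{j_k}^2,x_0}(x_{r_{j_k}}),\partial_{x_n}u_{r_{j_k}^2,x_0}(x_{r_{j_k}}))=y$ and the local uniform convergence of the first paragraph (using convergence of the arguments as well) one obtains $S_0(\bar x)=y$ with $\bar x$ in the $i$-th copy, so $\bar x=\phi^i\circ(T^{u_{x_0}})^{-1}(y)$ by injectivity of $S_0$. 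Since the limit is independent of the subsequence, $\phi^i\circ(T^{u_{r^2,x_0}})^{-1}(y)\to\phi^i\circ(T^{u_{x_0}})^{-1}(y)$ as $r\to0_+$.

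Finally, to upgrade to convergence uniform on $K$ and uniform for $y_0$ in a compact subset $K'\subset\cP$, I would argue by contradiction: a failure yields $\varepsilon_0>0$, $r_j\to0_+$, $y^{(j)}\in K$ and $y_0^{(j)}\to y_0^\ast\in K'$ with the corresponding error $\ge\varepsilon_0$; after passing to subsequences so that $y^{(j)}\to y^\ast$ and $x_0^{(j)}=T^{-1}(y_0^{(j)})\to x_0^\ast$, the uniformity in $x_0$ of the previous two paragraphs lets the argument run with moving data, forcing $\phi^i\circ(T^{u_{r_j^2,x_0^{(j)}}})^{-1}(y^{(j)})\to\phi^i\circ(T^{u_{x_0^\ast}})^{-1}(y^\ast)$, whereas $\phi^i\circ(T^{u_{x_0^{(j)}}})^{-1}(y^{(j)})\to\phi^i\circ(T^{u_{x_0^\ast}})^{-1}(y^\ast)$ by continuity of $x_0\mapsto(T^{u_{x_0}})^{-1}$ (evident from the explicit formula \eqref{eq:computev} and the continuity of $x_0\mapsto C_{x_0},\nu'_{x_0}$) — a contradiction. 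The technical heart is the non-escape step of the second paragraph: because $T$ degenerates like a square root at the free boundary, nothing but the precise asymptotics of $u$ near regular points (Corollary~\ref{cor:varicenter} and Proposition~\ref{prop:square}(i)), together with the non-degeneracy of $\nabla u$ away from the free boundary, keeps $S_r^{-1}(y)$ from running off to infinity; everything else is soft.
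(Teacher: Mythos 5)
Your proof is correct, and its overall architecture is the same as the paper's: (a) locally uniform convergence of the forward maps, (b) a ``non-escape'' bound showing $\phi^i\circ(T^{u_{r^2,x_0}})^{-1}(K)$ stays in a fixed bounded set, (c) a soft argument converting forward convergence into convergence of the inverses, and (d) uniformity in $y_0$ via continuity of the limit data. The one place where you genuinely diverge is step (b). The paper disposes of it in one line by invoking the rescaled version of Proposition~\ref{prop:uniformity2}(i) — equivalently the quantitative asymptotics of Proposition~\ref{prop:square}(i), which give $y_{n-1}^2+y_n^2\approx\tfrac{9C_0^2}{4}\,|x-p(x)|$ uniformly throughout the rescaled tubular neighborhood, so bounded $y$ forces bounded $|x-p(x)|$ directly. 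You instead re-derive the bound by contradiction via a secondary blowup at the scale $L_j$, splitting into the cases $t_*=0$ and $t_*>0$ and using the strict sign conditions \eqref{eq:non22}--\eqref{eq:non33} together with Proposition~\ref{prop:computeu0} to see that the gradient cannot vanish away from the free boundary. Your route is longer but needs only qualitative non-vanishing of $(\partial_{x_{n-1}}u,\partial_{x_n}u)$ rather than the precise constant $9C_0^2/4$; as you note yourself, near $\cP$ it reduces to Proposition~\ref{prop:square}(i). In step (c) you argue by subsequence extraction plus injectivity of the limit map, whereas the paper uses continuity of $\phi^i\circ(T^{u_{x_0}})^{-1}$ on $\mathcal{Q}^i$ together with uniform convergence of the forward maps; these are interchangeable. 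No gaps.
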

\begin{proof}
Given $y_0\in \cP$ and a compact set $K\subset\mathcal{Q}^i$, by (i) and (ii) above, there is a positive $r_1=r_1(y_0,K)$, such that $K\subset T^{u_{r^2,x_0}}(\mathcal{O}^{i,\ast}_{r^{-2},x_0})$ for any $r<r_1$. 

By the rescaled version of Proposition~\ref{prop:uniformity2}(i), there exists $r_0\in (0,r_1)$ small such that $\phi^i\circ (T^{u_{r^2,x_0}})^{-1}(K)\subset \mathcal{O}^{i}_{r_0^{-2},x_0}$, which is a bounded subset in $\R^n$, for any $0<r<r_0$.

We know from the $C^1_{\loc}$ convergence of $u_{r^2,x_0}$ to $u_{x_0}$ that
$$T^{u_{r^2,x_0}}\circ (\phi^i)^{-1}\rightarrow T^{u_{x_0}}\circ (\phi^i)^{-1} \text{ uniformly on } \mathcal{O}^{i}_{r_0^{-2},x_0}.$$ Moreover, the limit $\phi^i\circ (T^{u_{x_0}})^{-1}$ is  continuous  on $\mathcal{Q}^i$ by a direct computation. Therefore, $\phi^i\circ(T^{u_{r^2,x_0}})^{-1}\rightarrow \phi^i\circ (T^{u_{x_0}})^{-1}$ uniformly in $K$. 

By Theorem~\ref{prop:uni} and the continuous dependence of $(T^{u_{x_0}})^{-1}$ on $x_0$, we have the above convergence is uniform for $y_0$ in any compact subset of $\cP$.
\end{proof}

Next we show the following compactness results.
\begin{proposition}\label{prop:compact-v}  Let $K$ be a compact subset in $\R^n\setminus \{y_{n-1}=y_n=0\}$. Then for any multi-index $\alpha$, $$\partial^\alpha_y v_{r,y_0} \rightarrow \partial^\alpha_y v_{y_0}, \quad r\rightarrow 0_+$$ uniformly in $K$.
Moreover, the above convergence is also  uniform for $y_0$ varying in a compact subset of $\cP$. 
\end{proposition}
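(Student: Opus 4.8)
The plan is to deduce the convergence of $v_{r,y_0}$ from that of $u_{r^2,x_0}$ via the identity \eqref{eq:Legendre_vr}, using that away from the straightened free boundary $\cP\subset\{y_{n-1}=y_n=0\}$ everything is smooth and that the rescaled harmonic functions $u_{r^2,x_0}$ carry scale-invariant $C^{1,1/2}$ bounds. \emph{Step 1 ($C^0$-convergence).} Fix a compact $K\subset\R^n\setminus\{y_{n-1}=y_n=0\}$ and write $K=\bigcup_{i=1}^4 K_i$ with $K_i:=K\cap\mathcal{Q}^i$ compact in $\mathcal{Q}^i$. By Lemma~\ref{lem:dilation}, $\phi^i\circ(T^{u_{r^2,x_0}})^{-1}\to\phi^i\circ(T^{u_{x_0}})^{-1}$ uniformly on $K_i$; in particular the points $x^r(y):=(T^{u_{r^2,x_0}})^{-1}(y)$ stay, for small $r$, in a fixed compact subset $L$ of the relevant chart of $\mathcal{M}_{r_0^{-2},x_0}$ and converge to $x^0(y):=(T^{u_{x_0}})^{-1}(y)$. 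Since $u_{r^2,x_0}\to u_{x_0}$ in $C^1_{\loc}$ (rescaled Proposition~\ref{prop:uniformity2}(i), Theorem~\ref{prop:uni}), passing to the limit in $v_{r,y_0}(y)=u_{r^2,x_0}(x^r(y))-x^r_{n-1}(y)\,y_{n-1}-x^r_n(y)\,y_n$ gives $v_{r,y_0}(y)\to u_{x_0}(x^0(y))-x^0_{n-1}(y)\,y_{n-1}-x^0_n(y)\,y_n=v_{y_0}(y)$, uniformly on $K$, and this limit agrees with the explicit polynomial \eqref{eq:computev}.

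\emph{Step 2 (uniform $C^k$ bounds on $K$).} Because $\cP\subset\{y_{n-1}=y_n=0\}$, the compact set $(T^{u_{x_0}})^{-1}(K)$ lies at a positive distance from $\Gamma_{u_{x_0}}$; and since the free boundaries $\Gamma_{u_{r^2,x_0}}$ are graphs $f_{r,x_0}$ converging to the flat limiting configuration (Proposition~\ref{prop:uniformity2}(ii)) while $(T^{u_{r^2,x_0}})^{-1}\to(T^{u_{x_0}})^{-1}$ uniformly (Lemma~\ref{lem:dilation}), there exist $\eta_0>0$ and $r_0>0$, independent of $r<r_0$ and of $y_0$ in a fixed compact subset of $\cP$, with $(T^{u_{r^2,x_0}})^{-1}(K)\subset L$ and $d\big((T^{u_{r^2,x_0}})^{-1}(K),\,\Gamma_{u_{r^2,x_0}}\big)\ge\eta_0$. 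On $\{d(\cdot,\Gamma_{u_{r^2,x_0}})>\eta_0\}\cap L$ the function $u_{r^2,x_0}$ is harmonic, and the scale-invariant estimate \eqref{eq:r} gives $\|u_{r^2,x_0}\|_{C^{1,1/2}}\le C_u$ there, so interior estimates for harmonic functions yield $\|u_{r^2,x_0}\|_{C^k}\le C(n,k,\eta_0)$ on a neighborhood of $(T^{u_{r^2,x_0}})^{-1}(K)$. Together with the lower bound $|\det DT^{u_{r^2,x_0}}|\ge c>0$ there (Proposition~\ref{prop:square}(ii)), the inverse function theorem produces uniform $C^k$ bounds for $(T^{u_{r^2,x_0}})^{-1}$ on $K$, whence, composing, $\|v_{r,y_0}\|_{C^k(K)}\le C(n,k,K)$, uniformly in $r<r_0$ and in $y_0$.

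\emph{Step 3 (conclusion).} Combining the uniform $C^{k+1}(K)$ bounds of Step 2 with the $C^0(K)$ convergence of Step 1: any sequence $r_j\to 0_+$ has a subsequence along which $v_{r_j,y_0}\to v_{y_0}$ in $C^k(K)$ (Arzel\`a--Ascoli), the limit being forced by Step 1; since $k$ and the subsequence are arbitrary, $\partial^\alpha v_{r,y_0}\to\partial^\alpha v_{y_0}$ uniformly on $K$ for every multi-index $\alpha$. As every estimate above is uniform for $y_0$ ranging over a compact subset of $\cP$ (Lemma~\ref{lem:dilation}, the rescaled Proposition~\ref{prop:uniformity2}, and Theorem~\ref{prop:uni} all carry this uniformity), so is the final convergence.

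The main obstacle is Step 2: one must transfer the a priori $C^{1,1/2}$ bound on $u$ into uniform smooth bounds for $(T^{u_{r^2,x_0}})^{-1}$ on $K$, and this hinges on showing that the preimages $(T^{u_{r^2,x_0}})^{-1}(K)$ remain in a fixed compact set \emph{and} at a uniform positive distance from the moving free boundaries $\Gamma_{u_{r^2,x_0}}$ — precisely the place where Lemma~\ref{lem:dilation} and the convergence of the rescaled free boundaries to a flat configuration are used. Everything else is soft (compactness together with identification of the limit).
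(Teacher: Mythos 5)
Your proposal is correct and follows essentially the same route as the paper: both reduce to the rescalings $u_{r^2,x_0}$ via \eqref{eq:Legendre_vr}, invoke Lemma~\ref{lem:dilation} for the uniform convergence of the inverse hodograph maps, and exploit the key geometric fact that the preimages of $K$ stay in a fixed compact set at positive distance from the rescaled free boundaries, where harmonicity upgrades everything to all orders. The only (cosmetic) difference is at the end: the paper concludes by directly passing to the limit in the chain-rule expression $\partial^\alpha v_{r,y_0}=f_\alpha(\partial^{\alpha'}u_{r^2,x_0})\circ(T^{u_{r^2,x_0}})^{-1}$ using uniform convergence of all derivatives of the harmonic functions, whereas you establish uniform $C^{k+1}$ bounds and invoke Arzel\`a--Ascoli together with the uniqueness of the $C^0$ limit.
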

\begin{proof} Given $y_0\in \cP$, let $x_0=T^{-1}(y_0)\in \Gamma_u$. For $|\alpha|=0$ and $1$, using \eqref{eq:Legendre_vr} and \eqref{eq:derivative}, we can easily conclude from Lemma~\ref{lem:dilation} together with the uniform convergence of $u_{r^2,x_0}$ to $u_{x_0}$ that, $\partial^\alpha v_{r,y_0}$ converges to $\partial^\alpha v_{y_0}$ uniformly in $K$. 

For $|\alpha|\geq 2$, using \eqref{eq:derivative} and \eqref{eq:DT-1} one can express $\partial^\alpha v_{r,y_0}$ in terms of $\partial^{\alpha' }u_{r^2,x_0}$ with $|\alpha'|\leq |\alpha|$, i.e.\ for fixed $\alpha$, 
\begin{equation}\label{eq:v_in_u}
\partial^\alpha_y v_{r,y_0}(y)=f_\alpha(\partial^{\alpha'}_x u_{r^2,x_0}(x))\big|_{x=(T^{u_{r^2,x_0}})^{-1}(y)},
\end{equation}
where $f_\alpha$ is some polynomial. 
For $K\subset \R^n\setminus\{y_{n-1}=y_n=0\}$ compact, $(T^{u_{x_0}})^{-1}(K)$ is also compact and $(T^{u_{x_0}})^{-1}(K)\cap \Gamma_{u_{x_0}}=\emptyset$.  By the local uniform convergence of $(T^{u_{r^2,x_0}})^{-1}$ to $(T^{u_{x_0}})^{-1}$ (Lemma~\ref{lem:dilation}) as well as the flatness of the free boundary $\Gamma_{u_{x_0}}$ (i.e.\ the Hausdorff distance between $\Gamma_{u_{r^2,x_0}}$ and $\Gamma_{u_{x_0}}$ goes to zero as $r$ goes to zero), there exists $K'\subset \R^n$ compact and $r_0=r_0(K, K')$ small, such that for all $r<r_0$, we have 
\begin{equation}\label{eq:separation1}
\left(\phi^i\circ (T^{u_{r^2,x_0}})^{-1}\right)(K\cap \mathcal{Q}^i)\subset K', \quad i=1,2,3,4
\end{equation}
and 
\begin{equation}\label{eq:separation2}
K' \cap \Gamma_{u_{r^2,x_0}}=\emptyset.
\end{equation}
Note that \eqref{eq:separation2} implies that for any $r<r_0$ and $i=1,2,3,4$, $u_{r^2,x_0}\circ (\phi^i)^{-1}$  are harmonic in $K'$. Thus for any multi-index $\alpha$, we have $\partial^\alpha (u_{r^2,x_0}\circ (\phi^i)^{-1})\rightarrow \partial^\alpha (u_{x_0}\circ (\phi^i)^{-1})$ uniformly in $K'$. This combined with \eqref{eq:v_in_u} ,\eqref{eq:separation1} and Lemma~\ref{lem:dilation} gives the conclusion.

Due to Theorem~\ref{prop:uni} and Lemma~\ref{lem:dilation}, the above convergence is uniform in $y_0$ varying the compact subset of $\cP$.
\end{proof}

From Proposition~\ref{prop:compact-v} one can get continuous extension of higher order (properly weighted) derivatives  of $v$ at $\cP$. 
\begin{corollary}\label{cor:extension}
For each $y_0\in \cP$, $x_0=T^{-1}(y_0)$, we extend the following functions to $y_0$ by setting  
\begin{align*}
&\textup{(i)}\quad \sqrt{y_{n-1}^2+y_n^2}\partial_{ij} v(y_0)= 0, \quad i, j=1, \ldots,n-2;\\
&\textup{(ii)}\quad \partial_{i,n-1} v (y_0)=\frac{(\nu_{x_0})_i}{(\nu_{x_0})_{n-1}}=\partial_{i}f(x''_0), \quad \partial_{i, n} v(y_0)=0,\quad i=1,\ldots,n-2;\\
&\textup{(iii)}\quad \partial_{\alpha, \beta} v(y_0)= 0, \quad \alpha, \beta=n-1,n;\\
&\textup{(iv)}\quad\partial_{n-1,n-1,n-1} v(y_0)=  -\frac{8}{9(\nu_{x_0})_{n-1}^3C_{x_0}^2}, \quad \partial_{n-1,n,n} v(y_0)= \frac{8}{9(\nu_{x_0})_{n-1}C_{x_0}^2};\\
&\qquad \ \partial_{n-1,n-1,n} v(y_0)= 0, \quad \partial_{n,n,n} v(y_0)= 0.
\end{align*}
Then after such extension the above functions are continuous on $\U$. 
\end{corollary}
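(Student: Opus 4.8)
\emph{Proof proposal.} The plan is to read off every extension value directly from the explicit blow-up $v_{y_0}$ of \eqref{eq:computev}, combined with the convergence in Proposition~\ref{prop:compact-v}, exploiting that $v_{r,y_0}$ is the natural rescaling with respect to the non-isotropic dilations $\delta_r$ of \eqref{eq:nonisotropic}.

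First, differentiating \eqref{eq:rescale_v} and using $(y_0)_{n-1}=(y_0)_n=0$ for $y_0\in\cP$, one obtains for any multi-index $\alpha=(\alpha'',\alpha_{n-1},\alpha_n)$ and $r>0$
\begin{equation*}
(\partial^\alpha v)(y_0+\delta_r y)=r^{\,d(\alpha)}\,\partial^\alpha v_{r,y_0}(y),\qquad d(\alpha):=3-2|\alpha''|-\alpha_{n-1}-\alpha_n .
\end{equation*}
One checks $d(\alpha)=-1$ for the second derivatives in (i), $d(\alpha)=0$ for those in (ii) and for the third derivatives in (iv), and $d(\alpha)=1$ for the pure second derivatives in (iii). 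On the other hand, by \eqref{eq:computev} the blow-up $v_{y_0}$ is a cubic polynomial that is affine in $y''$: hence $\partial_{ij}v_{y_0}\equiv0$ for $i,j\le n-2$; $\partial_{i,n-1}v_{y_0}$ is the constant read off from \eqref{eq:computev}--\eqref{eq:nu} and $\partial_{i,n}v_{y_0}\equiv0$ for $i\le n-2$; the pure second derivatives $\partial_{\alpha\beta}v_{y_0}$ with $\alpha,\beta\in\{n-1,n\}$ are affine (hence bounded on compacts); and the pure third derivatives $\partial^\alpha v_{y_0}$ are exactly the constants listed in (iv). The decisive structural fact is that whenever $d(\alpha)=0$ the limiting value $\partial^\alpha v_{y_0}$ is a \emph{constant}, so the candidate extension is independent of the direction of approach and is therefore well defined.

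Next I reduce to convergence along sequences and take care of the degeneracy. Let $y_0\in\cP$ and $y^{(k)}\to y_0$ with $y^{(k)}\notin\cP$, so that $r_k:=\bigl((y^{(k)}_{n-1})^2+(y^{(k)}_n)^2\bigr)^{1/2}>0$. \emph{Rather than rescaling about the fixed point $y_0$}, set $\tilde y_0^{(k)}:=(y^{(k)\prime\prime},0,0)$ (which lies in $\cP$ for $k$ large) and $\zeta^{(k)}:=(0,\,y^{(k)}_{n-1}/r_k,\,y^{(k)}_n/r_k)$, so that $y^{(k)}=\tilde y_0^{(k)}+\delta_{r_k}\zeta^{(k)}$, $\tilde y_0^{(k)}\to y_0$, $r_k\to0$, and $\zeta^{(k)}$ lies on the fixed compact circle $K=\{\zeta:\zeta''=0,\ \zeta_{n-1}^2+\zeta_n^2=1\}\subset\R^n\setminus\{y_{n-1}=y_n=0\}$. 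Applying the scaling identity about the moving base point $\tilde y_0^{(k)}$ and Proposition~\ref{prop:compact-v} — whose conclusion is uniform both over $K$ and over base points in a compact neighbourhood of $y_0$ in $\cP$ — gives $(\partial^\alpha v)(y^{(k)})=r_k^{\,d(\alpha)}\bigl(\partial^\alpha v_{\tilde y_0^{(k)}}(\zeta^{(k)})+o(1)\bigr)$. Since $T^{-1}$ is continuous, $x_0^{(k)}:=T^{-1}(\tilde y_0^{(k)})\to x_0$, and $C_{x_0^{(k)}}$, $\nu'_{x_0^{(k)}}$ depend continuously on $x_0^{(k)}$ by Theorem~\ref{prop:uni} and the $C^{1}$ regularity of $f$. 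Inserting the values of the blow-up and distinguishing the three cases: in (i) the weight $\sqrt{y_{n-1}^2+y_n^2}$ at $y^{(k)}$ equals $r_k$ and exactly cancels the factor $r_k^{-1}$, leaving $\partial_{ij}v_{\tilde y_0^{(k)}}(\zeta^{(k)})+o(1)\to 0$; in (iii) the factor $r_k^{1}$ kills a bounded quantity; in (ii) and (iv) one has $r_k^{0}=1$ and uses the constancy of $\partial^\alpha v_{\tilde y_0^{(k)}}$ together with its continuous dependence on $x_0^{(k)}$. In each case the listed expression at $y^{(k)}$ converges to precisely the value prescribed in (i)--(iv), and that value is continuous in $y_0$. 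Hence each of the functions extends continuously to $\cP$, and therefore to all of $\U$.

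The one genuinely delicate point is this re-centering. Rescaling naively about $y_0$ fails: when $y^{(k)}\to y_0$ predominantly along the tangential $y''$-directions, the normalized point $\delta_{r}^{-1}(y^{(k)}-y_0)$ escapes every compact subset of $\R^n\setminus\{y_{n-1}=y_n=0\}$, so Proposition~\ref{prop:compact-v} cannot be applied. Re-centering at the nearby free-boundary point $\tilde y_0^{(k)}=(y^{(k)\prime\prime},0,0)\in\cP$ forces the normalized point onto the fixed circle $K$, and it is exactly the uniformity of Proposition~\ref{prop:compact-v} with respect to the base point that legitimizes this maneuver. (For (iii) there is also a softer route: by the computation after \eqref{eq:DT-1} the $(y_{n-1},y_n)$-block of $D^2v$ equals $-(D^2_{x_{n-1},x_n}u)^{-1}$, which tends to $0$ as $x\to\Gamma_u$ because $D^2u$ blows up with a nondegenerate limiting Hessian; but establishing the required uniformity still goes through the argument above.)
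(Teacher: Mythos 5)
Your proposal is correct and follows essentially the same route as the paper's proof: the scaling identities for $\partial^\alpha v_{r,y_0}$, the explicit form of the blow-up $v_{y_0}$, and — crucially — re-centering at the foot point $(y''^{(k)},0,0)\in\cP$ so that the normalized point lies on the fixed circle $\{\zeta''=0,\ \zeta_{n-1}^2+\zeta_n^2=1\}$, with the uniformity of Proposition~\ref{prop:compact-v} in the base point doing the work. The "delicate point" you single out is exactly the mechanism the paper uses.
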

\begin{proof} 
The proof is based on the following two facts (a) (b) and a blowup argument.

(a) For $i,j\in\{1,\ldots, n-2\}$, $\alpha, \beta,\gamma\in \{n-1,n\}$,
\begin{align*}
&\partial_{i,j} v_{r,y_0}(y)=r(\partial_{i,j}v)(y_0+ry),\quad \partial_{i,\alpha} v_{r,y_0}(y)=(\partial_{i,\alpha} v)(y_0+ry),\\
&\partial_{\alpha, \beta} v_{r,y_0}(y)=r^{-1}(\partial_{\alpha,\beta}v)(y_0+ry),\quad \partial_{\alpha, \beta, \gamma} v_{r,y_0}(y)=(\partial_{\alpha,\beta,\gamma}v)(y_0+ry).
\end{align*}

(b)
 From the $C^{1,\alpha}$ regularity of $\Gamma_u$ and the explicit expression of $v_{y_0}$, we have the map $y_0\mapsto \partial^\alpha v_{y_0}$ is continuous from $\cP$ to $C_0(K)$, where $K\subset \R^n\setminus \{y_{n-1}=y_n=0\}$ compact. This together with Proposition~\ref{prop:compact-v} gives that for any multi-index $\alpha$
$$\lim_{\substack{\hat{y_0}\in \cP, \hat{y_0}\rightarrow y_0\\r\rightarrow 0_+}}\|\partial^\alpha v_{r,\hat{y_0}}-\partial^\alpha v_{y_0}\|_{C_0(K)}=0$$ 

We proceed to show the extended functions are continuous at $y_0\in \cP$. First they are continuous on $\cP$ from the $C^{0,\alpha}$ dependence of $\nu'_{x_0}$ on $x_0$.  Next for $y\notin \cP$, we use $y''$ to denote $(y'',0,0)$ and let
$$r=|y-y''|=\sqrt{y_{n-1}^2+y_n^2},\quad \eta=\frac{y-y''}{r}=\frac{(0,y_{n-1},y_n)}{r}.$$
Then 
$$v(y)=v_{r,y''}(\eta), \quad \eta\in \{y''=0, y_{n-1}^2+y_n^2=1\}.$$
As $y\rightarrow y_0$, we have $r\rightarrow 0_+$ and $y''\rightarrow y_0$. Thus by (b) above, for a fixed $K\subset \R^n\setminus \{y_{n-1}=y_n=0\}$, which is compact and contains the set $\{y''=0, y_{n-1}^2+y_n^2=1\}$, we have
\begin{equation}\label{eq:converge-v}
\partial^\alpha v_{r,y''}\rightarrow \partial^\alpha v_{y_0} \text{ in } C_0 (K) \text{ as } y\rightarrow y_0.
\end{equation}

From the explicit expression of $v_{y_0}$ (see \eqref{eq:computev}) we have
\begin{align*}
&D^2 v_{y_0}(y)=\begin{pmatrix}
0&(\nu''_{x_0})^t/(\nu'_{x_0})_{n-1}& 0\\
\nu''_{x_0}/(\nu'_{x_0})_{n-1} &-8y_{n-1}/9(\nu'_{x_0})^3_{n-1}C_{x_0}^2 &  8y_n/9(\nu'_{x_0})_{n-1}C_{x_0}^2\\
0& 8y_n/9(\nu'_{x_0})_{n-1}C_{x_0}^2& 8y_{n-1}/9(\nu'_{x_0})_{n-1}C_{x_0}^2
\end{pmatrix}\\
&\partial_{n-1,n-1,n-1} v_{y_0}\equiv-\frac{8}{9(\nu'_{x_0})_{n-1}^3C_{x_0}^2},\quad \partial_{n-1,n,n} v_{y_0}\equiv\frac{8}{9(\nu'_{x_0})_{n-1}C_{x_0}^2}\\
&\partial_{n-1,n-1,n}v_{y_0}=\partial_{n,n,n}v_{y_0}\equiv 0
\end{align*}
This together with (a) and \eqref{eq:converge-v} gives (i)-(iv).
\end{proof}

\section{Smoothness}\label{sec:smoothness}
\subsection{Subelliptic structure}\label{sec:subelliptic-coeff}
In this section we show the fully nonlinear equation  $F(D^2v)=0$ in \eqref{eq:mainequation2} has a subelliptic structure in $\mathcal{U}$. 

Let $\mathcal{S}^{n\times n}$ be the space of $n\times n$ symmetric matrices and we may consider $F$ as a smooth function on $\mathcal{S}^{n\times n}$. Let $F_{ij}(P)=\frac{\partial F}{\partial p_{ij}}(P)$ for $P=(p_{ij})\in \mathcal{S}^{n\times n}$ be the linearization of $F$ at $P$. A direct computation shows that for $i,j=1,\ldots, n-2$ and $y\in \U\setminus \cP$, the linearization of $F$ at $D^2v$ has the form
\begin{align*}
F_{ij}(D^2v)&=0, \quad  i\neq j\\
F_{ii}(D^2 v)&=-(\partial_{n-1,n-1} v\partial_{nn}v-(\partial_{n-1,n}v)^2)\\
& =-\left( \frac{\partial_{n-1,n-1} v}{y_{n-1}}\frac{\partial_{nn} v}{y_{n-1}}\right) y_{n-1}^2 +\frac{(\partial_{n-1,n} v)^2}{y_n^2} y_n^2\\
F_{i, n-1}(D^2v)&=\partial_{n-1, i} v\partial_{nn}v-\partial_{n-1,n}v\partial_{n,i}v\\
&=\left(\partial_{n-1,i} v \frac{\partial_{nn} v}{y_{n-1}}\right) y_{n-1}-\left(\partial_{n,i} v\frac{\partial_{n-1,n}v}{y_n}\right) y_n\\
F_{i,n}(D^2v)&=-(\partial_{n-1,i} v\partial_{n,n-1}v-\partial_{n-1,n-1}v\partial_{n,i}v)\\
&=\left(\partial_{n-1,i}v\frac{\partial_{n,n-1}v}{y_{n} }\right) y_{n}-\left(\frac{\partial_{n-1,n-1}v}{y_{n-1}} \partial_{n,i}v\right) y_{n-1}\\
F_{n-1,n-1}(D^2 v)&=1-\sum_{i=1}^{n-2}\left(\partial_{ii}v\partial_{nn}v-\partial_{i,n}v\partial_{n,i}v\right)\\
F_{n-1,n}(D^2 v)&=\sum_{i=1}^{n-2}\left(\partial_{ii}v\partial_{n,n-1}v-\partial_{i,n-1}v\partial_{n,i}v\right)\\
F_{n,n}(D^2 v)&=1-\sum_{i=1}^{n-2}\left(\partial_{ii}v\partial_{n-1,n-1}v-\partial_{i,n-1}v\partial_{n-1,i} v\right).
\end{align*}
Observe that one can write $\{F_{ij}(D^2v)\}_{i,j}=ABA^t$, with $A$ and $B$  symmetric matrices of the following forms:
\begin{align*}
A=\begin{pmatrix}
Y & 0 & 0&\cdots & 0\\
0 & Y & 0& \cdots & 0\\
\vdots & \vdots & \ddots &\ddots & \vdots\\
0 & 0 & \cdots & Y & 0\\
0 & 0 & 0&\cdots & I_2
\end{pmatrix}_{n\times 2(n-1)}
\end{align*}
\begin{align*}
B=(b_{ij})=\begin{pmatrix}
B_0&0& 0&\cdots & B_{1}\\
0& B_0 &0&\cdots & B_{2}\\
\vdots &\vdots& \ddots &\ddots & \vdots\\
0& 0 & \cdots & B_0 & B_{n-2}\\
(B_{1})^t&(B_{2})^t &\cdots &\cdots &B_{n-1}
\end{pmatrix}_{2(n-1)\times 2(n-1)}
\end{align*}
where 
\begin{align*}
Y&=(y_{n-1},y_n), \\
B_0&=\begin{pmatrix}
b_0(y) & 0\\
0& \widetilde{b_0}(y)
\end{pmatrix} , \quad B_{i}=\begin{pmatrix} b_{i,1}(y)&b_{i,2}(y)\\ \widetilde{b_{i,1}}(y)&\widetilde{b_{i,2}}(y)\end{pmatrix}, \quad i=1,\ldots, n-1
\end{align*}
with 
\begin{alignat*}{2}
b_0(y)&=- \frac{\partial_{n-1,n-1} v}{y_{n-1}}\frac{\partial_{nn} v}{y_{n-1}},& \quad \widetilde{b_0}(y)&=\frac{(\partial_{n-1,n} v)^2}{y_n^2};\\
b_{i,1}(y)&=\partial_{n-1,i} v \frac{\partial_{nn} v}{y_{n-1}},&\quad \widetilde{b_{i,1}}(y)&=-\partial_{n,i} v\frac{\partial_{n-1,n}v}{y_n},\\
b_{i,2}(y)&=-\partial_{n,i}v\frac{\partial_{n-1,n-1}v}{y_{n-1}} ,&\quad \widetilde{b_{i,2}}(y)&=\partial_{n-1,i}v\frac{\partial_{n,n-1}v}{y_{n} },
\end{alignat*}
for $i=1,\ldots, n-2$,
and for $i=n-1$,
\begin{align*}
B_{n-1}=\begin{pmatrix}
F_{n-1,n-1}& F_{n-1,n}\\
F_{n,n-1}& F_{n,n}
\end{pmatrix}.
\end{align*}
Note that $B(y)$ is smooth in $\mathcal{U}\setminus \mathcal{P}$ due to the smoothness of $v$ there. Moreover, by Corollary~\ref{cor:extension}(iii)(iv) and the intermediate value theorem,
\begin{align*}
&\frac{\partial_{n-1,n-1} v(y)}{y_{n-1}}\rightarrow -\frac{8}{9(\nu'_{x_0})_{n-1}^3C_{x_0}^2};\quad
\frac{\partial_{n,n}v(y)}{y_{n-1}}\rightarrow \frac{8}{9(\nu'_{x_0})_{n-1}C_{x_0}^2};\\
&\frac{\partial_{n-1,n}v(y)}{y_n}\rightarrow \frac{8}{9(\nu'_{x_0})_{n-1}C_{x_0}^2},
\end{align*} 
thus $B(y)$ has a continuous extension on $\mathcal{P}$. In particular, 
\begin{equation}\label{eq:B_0}
B(0)=a^2I_{2(n-1)}, \quad a=\frac{8}{9C_0^2}.
\end{equation}
Hence, $B(y)$ is positive definite in a small neighborhood of the origin,  which implies that the linearized operator $F_{ij}(D^2v)\partial_{ij}$ has a subelliptic structure near the origin. 
Moreover, using \eqref{eq:B_0} we have $$AB(0)A^t=\begin{pmatrix}
a^2(y_{n-1}^2+y_n^2) I_{n-2} & 0\\
0& I_2
\end{pmatrix}$$
which is the coefficient matrix for the \emph{Baouendi-Grushin type operator}:
$$\mathcal{L}_a=a^2(y_{n-1}^2+y_n^2)\sum_{i=1}^{n-3}\partial^2_{i, i} +\partial^2_{n-1,n-1}+\partial^2_{n,n}.$$
This indicates us to view the linearization of $F$ in a neighborhood
of the origin as a perturbation of Baouendi-Grushin type operator.

\subsection{$L^p$ estimates}
We first recall the classical $L^p$ estimate for the Baouendi-Grushin operator. For $(x,t)\in \R^{m+n}$, $x\in \R^m$, $t\in \R^n$, the Baouendi-Grushin operator is
$$\mathcal{L}_0= \Delta_x + |x|^2 \Delta_t, \quad |x|=\sqrt{|x_1|^2+\cdots+|x_m|^2}.$$
In order to study the weak solution associated with $\mathcal{L}_0$, it is natural to consider the following function space associated with the H\"ormander vector fields
\begin{align*}
X_i&=\partial_{x_i}\quad i=1,\ldots, m;\\
X_{m+ij}&=x_i\partial_{t_j}\quad i=1,\ldots, m; \ j=1,\ldots, n.
\end{align*}
For $k,p\geq 1$, $\Omega\subset \R^{m+n}$ a bounded open subset, we define
\begin{multline}
M^{k,p}(\Omega)=\{u\in L^p(\Omega): X_{j_1}\cdots X_{j_s} u\in L^p(\Omega),\\ 1\leq s\leq k,\ j_1,\ldots, j_s\in \{1,\ldots, m+mn\}\}.
\end{multline}
By Theorem 1 in \cite{Xu}, $M^{k,p}(\Omega)$ is a separable Banach space for $1\leq p<\infty$ with the norm
$$\|u\|_{M^{k,p}(\Omega)}=\|u\|_{p,\Omega}+\sum _{1\leq s\leq k}\|X_{j_1}\cdots X_{j_s} u\|_{p,\Omega}.$$
Denote by $M_0^{k,p}(\Omega)$ the closure of $C_0^\infty(\Omega)$ in $M^{k,p}(\Omega)$. It is not hard to prove by using mollifier that if $u\in M^{k,p}(\Omega)$ and has compact support in $\Omega$, then $u\in M^{k,p}_0(\Omega)$, $1\leq p<\infty$. 

In this paper only the spaces with $k=1,2$ are involved. We list them below separately:
\begin{align*}
M^{1,p}(\Omega)=\{u\in L^p(\Omega)&: \partial_{x_i} u,\ |x|\partial_{t_j}u\in L^p(\Omega)\};\\
M^{2,p}(\Omega)=\{u\in L^p(\Omega)&: \partial_{x_i} u,\ \partial_{t_j} u \in L^p(\Omega); \\
&\quad \partial^2_{x_ix_j} u,\ |x|^2 \partial^2_{t_it_j}u,\ |x|\partial^2_{t_ix_j}u \in L^p(\Omega)\}.
\end{align*}
To simplify the notation, we will denote
\begin{align*}
\|\partial^1_w u\|_{p,\Omega}&:=\|\nabla _x u\|_{p,\Omega}+\||x|\nabla_t u\|_{p,\Omega}\\
\|\partial^2_w u\|_{p,\Omega}&:=\sum_{i,j=1,\ldots,m}\|\partial^2_{x_ix_j} u\|_{p,\Omega}+\sum_{\substack{i=1,\ldots,m\\j=1,\ldots,n}}\||x|\partial^2_{x_it_j}u\|_{p,\Omega}\\&\qquad +\sum_{i,j=1,\ldots,n}\||x|^2 \partial^2_{t_it_j}u\|_{p,\Omega}
+\sum_{j=1,\ldots,n}\|\partial_{t_j}u\|_{p,\Omega}.
\end{align*}

We will need the Sobolev embedding theorem for $M^{1,p}_0(\Omega)$ and the $L^p$ estimate for $\mathcal{L}_0$. Similar results for more general subelliptic operators can be found in lots of literature like \cite{Xu} and \cite{SCA}. Since our case is much simpler, we provide a relatively shorter and self-contained proof in the Appendix.

\begin{lemma}[Sobolev embedding for $M_0^{1,p}(\Omega)$]\label{lem:embedding}
Let $\Omega $ be a bounded domain in $\R^{m+n}$.
\begin{enumerate}[\em (i)]
\item If $1\leq p< m+2n$, then $M_0^{1,p}(\Omega) \hookrightarrow L^q(\Omega)$ for $p, q$ satisfying $\frac{1}{q}+\frac{1}{m+2n}=\frac{1}{p}$, i.e.\ there exists $C=C(p)>0$ such that for all $u\in M_0^{1,p}(\Omega)$ 
$$ \|u\|_{q,\Omega}\leq C \left (\|\nabla _x u\|_{p, \Omega}+\| |x|\nabla_t u\|_{p,\Omega}\right).$$ 
\item  If $p>m+2n$, then $M_0^{1,p}(\Omega)\hookrightarrow L^\infty(\Omega)$, i.e.\ there exists $C=C(p, n, m)>0$ such that for all $u\in M^{1,p}_0(\Omega)$, 
$$\|u\|_{\infty, \Omega}\leq C\left(\|\nabla _x
  u\|_{p,\Omega}+\||x|\nabla_t u\|_{p,\Omega}\right).$$
\end{enumerate}
\end{lemma}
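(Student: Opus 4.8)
The plan is to reduce everything to a single representation formula for $u\in C_0^\infty(\Omega)$ in terms of the vector fields $X_1,\dots,X_{m+mn}$, and then run the classical Sobolev argument adapted to the homogeneous dimension $Q=m+2n$ of the Baouendi-Grushin geometry. First I would recall that the group structure underlying $\mathcal L_0$ is the Heisenberg-type group $\R^{m}_x\times\R^{n}_t$ with dilations $\delta_\lambda(x,t)=(\lambda x,\lambda^2 t)$, so that the homogeneous dimension is exactly $Q=m+2n$; the relevant Carnot--Carath\'eodory distance $d(\cdot,\cdot)$ is $\delta_\lambda$-homogeneous of degree one and the associated balls satisfy $|B_r|\sim r^{Q}$. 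The key analytic input is the existence of a fundamental solution $\Gamma$ for $\mathcal L_0$ that is homogeneous of degree $2-Q$ with respect to $\delta_\lambda$ (this is classical for Grushin/Heisenberg-type operators and is proved in the Appendix in our simple case); from it one gets, for $u\in C_0^\infty(\Omega)$, the pointwise bound
\begin{equation*}
|u(\xi)|\le C\int_{\Omega}\frac{|X u(\zeta)|}{d(\xi,\zeta)^{Q-1}}\,d\zeta,
\end{equation*}
where $|Xu|=(\sum_j |X_j u|^2)^{1/2}$, obtained by differentiating $u=\Gamma*(\mathcal L_0 u)$, integrating by parts once, and estimating the homogeneous-degree-$(1-Q)$ kernel $X_j\Gamma$.

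Second, with this representation in hand, I would invoke the fractional-integration (Hardy--Littlewood--Sobolev) estimate for the Riesz-type potential $I_1 g(\xi)=\int d(\xi,\zeta)^{1-Q}g(\zeta)\,d\zeta$ on the space of homogeneous type $(\R^{m+n},d,dx\,dt)$, which has homogeneous dimension $Q$. The standard dyadic decomposition of the kernel into the regions $d(\xi,\zeta)\sim 2^{k}$, combined with the doubling estimate $|B_r|\sim r^{Q}$, gives $\|I_1 g\|_{q}\le C\|g\|_{p}$ whenever $\tfrac1q=\tfrac1p-\tfrac1Q$ and $1<p<Q$; the endpoint $p=1\to q=Q/(Q-1)$ follows from the usual weak-type $(1,1)$ bound for $I_1$ plus Marcinkiewicz interpolation, which covers the claimed range $1\le p<m+2n=Q$. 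Applying this with $g=|Xu|$ and noting $\|{|Xu|}\|_{p,\Omega}\le C(\|\nabla_x u\|_{p,\Omega}+\||x|\nabla_t u\|_{p,\Omega})$ yields part (i) for $u\in C_0^\infty(\Omega)$, and then for all $u\in M_0^{1,p}(\Omega)$ by density (which is exactly how $M_0^{1,p}$ was defined).

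For part (ii), when $p>Q$ I would again start from the representation formula but now estimate directly: H\"older's inequality gives
\begin{equation*}
|u(\xi)|\le C\|{|Xu|}\|_{p,\Omega}\Big(\int_{B_R}d(\xi,\zeta)^{(1-Q)p'}\,d\zeta\Big)^{1/p'},
\end{equation*}
and the last integral is finite precisely because $(Q-1)p'<Q$ is equivalent to $p>Q$; by the dyadic decomposition it is bounded by $C R^{Q/p'-(Q-1)}=C R^{1-Q/p}$, which is finite since $\Omega$ is bounded. This gives the uniform $L^\infty$ bound and hence part (ii), again extended to all of $M_0^{1,p}(\Omega)$ by density.

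The main obstacle is establishing the representation formula rigorously, i.e.\ constructing the $\delta_\lambda$-homogeneous fundamental solution $\Gamma$ and showing $X_j\Gamma$ is a kernel of homogeneous degree $1-Q$ with the right size and cancellation; once that is available the Sobolev argument is the textbook one for spaces of homogeneous type. In our setting $\mathcal L_0$ is a sum-of-squares operator on a stratified group of step two, so $\Gamma$ can be obtained quite explicitly (or abstractly from Folland's theory), and this is precisely the self-contained computation deferred to the Appendix; I would simply cite that construction here. A minor technical point to check is that the global homogeneous estimates localize correctly to a bounded $\Omega$, but this is immediate since all integrals above are taken over $\Omega$ (or a fixed large ball containing it) and the kernels are locally integrable for the stated exponent ranges.
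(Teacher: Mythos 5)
Your proposal reaches the right conclusion but by a genuinely different and heavier route than the paper. The paper's proof is elementary and self-contained: for $u\in C_0^1(\Omega)$ it writes $u(x,t)=-\int_0^\infty \partial_s u(x+\sigma_1 s,\,t+\sigma_2\gamma(s))\,ds$ along curves with $\dot\gamma(s)=|x+\sigma_1 s|$, averages over $\sigma_1\in S^{m-1}$ and $|\sigma_2|\le 1$, applies Young's convolution inequality in the $t$-variable together with the lower bound $\gamma(s)\ge s^2/4$, and thereby reduces the estimate to a classical Riesz potential acting in the $x$-variable alone, concluding with the Euclidean Hardy--Littlewood--Sobolev inequality on $\R^m$; part (ii) is the same computation with H\"older's inequality in place of HLS. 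Your route instead goes through a homogeneous fundamental solution of $\mathcal{L}_0$ and fractional integration on the space of homogeneous type $(\R^{m+n},d,dx\,dt)$ of homogeneous dimension $Q=m+2n$. That is a standard and workable path, but two of your supporting claims need repair before it is rigorous. First, $\R^m_x\times\R^n_t$ with the Grushin fields is not a group (the fields $x_i\partial_{t_j}$ are not translation invariant), so $\mathcal{L}_0$ has no convolution structure; it is only the projection of a sub-Laplacian on a higher-dimensional Heisenberg--Reiter group, which is exactly how the paper's Appendix treats it. Consequently $\Gamma$ is a two-point kernel, $u(\xi)=-\int \sum_j X_j^\zeta\Gamma(\xi,\zeta)\,X_j u(\zeta)\,d\zeta$, and the size estimate $|X_\zeta\Gamma(\xi,\zeta)|\le C\,d(\xi,\zeta)^{1-Q}$ must be imported from the subelliptic literature (S\'anchez-Calle/Nagel--Stein--Wainger, or the explicit Grushin formula); it is \emph{not} contained in the paper's Appendix, which constructs a fundamental solution only on the lifted group and only for the $L^p$ estimate for $\mathcal{L}_0$ via transference, so your citation of the Appendix for this step would be circular. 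What the paper's curve-averaging argument buys is precisely the avoidance of all of this machinery, at the cost of being tailored to the special structure of the Grushin fields; your argument, once the kernel estimates are properly sourced, generalizes more readily.
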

\begin{proof}
See Appendix.
\end{proof}

\begin{theorem}[$L^p$ estimates for $\mathcal{L}_0$]
\label{thm:Grushin}
Let $\Omega $ be a domain in $\R^{m+n}$, $m\geq 2$ and $m$ is even. Let $u\in M^{2,p}_0(\Omega)$ with $1<p<\infty$. Then there is a positive constant $C_p$ which only depends on $p$ such that
\begin{equation}\label{eq:Grushin_a}
\|\partial_w^2 u\|_{p,\Omega}\leq C_p\|\mathcal{L}_0 u\|_{p,\Omega}.
\end{equation}
\end{theorem}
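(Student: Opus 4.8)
The plan is to prove the $L^p$ estimate for $\mathcal{L}_0$ via the Fourier transform in the $t$-variable, which reduces the problem to an estimate for a family of twisted Hermite-type operators on $\R^m$, and then to sum back up using Calder\'on--Zygmund / Fourier multiplier theory. First I would observe that it suffices to establish the estimate for $u\in C_0^\infty(\Omega)$, and moreover, extending $u$ by zero, for $u\in C_0^\infty(\R^{m+n})$; the general case then follows by the density of $C_0^\infty(\Omega)$ in $M_0^{2,p}(\Omega)$. The core of the argument is to show that each of the operators
$$
\partial^2_{x_ix_j}\mathcal{L}_0^{-1},\qquad |x|\partial^2_{x_it_j}\mathcal{L}_0^{-1},\qquad |x|^2\partial^2_{t_it_j}\mathcal{L}_0^{-1},\qquad \partial_{t_j}\mathcal{L}_0^{-1}
$$
is bounded on $L^p(\R^{m+n})$ for $1<p<\infty$.

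The key step is the partial Fourier transform $u(x,t)\mapsto \hat u(x,\xi)$ in $t$. Under this transform $\mathcal{L}_0$ becomes, for each frequency $\xi\in\R^n$, the operator $H_\xi = \Delta_x - |\xi|^2|x|^2$ acting in $x$, which (up to the scaling $x\mapsto |\xi|^{1/2}x$) is the shifted harmonic oscillator; the hypothesis that $m$ is even and $m\geq 2$ is exactly what guarantees that $0$ is not in the spectrum of $\Delta_x-|x|^2$ (the eigenvalues are $-(m+2k)$, $k\geq 0$, which never vanish — here the parity of $m$ and $m\geq 2$ matter so that no eigenvalue is zero), so $H_\xi$ is boundedly invertible with good control. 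After this reduction, each of the second-order operators above becomes, on the Fourier side, a composition of the form $P_\xi H_\xi^{-1}$ where $P_\xi$ is a second-order differential operator in $x$ with polynomial coefficients of the appropriate homogeneity; the scaling $x\mapsto |\xi|^{-1/2}x$ makes each such symbol $|\xi|$-homogeneous of degree $0$, and one checks that these operators are bounded on $L^p(\R^m)$ uniformly in $\xi$, with the kernel bounds required to invoke the vector-valued H\"ormander--Mihlin multiplier theorem (or, alternatively, one directly verifies that the full operator on $\R^{m+n}$ has a Calder\'on--Zygmund kernel adapted to the anisotropic dilations $(x,t)\mapsto (\lambda x,\lambda^2 t)$ and the associated homogeneous dimension $m+2n$). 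This gives $L^p$-boundedness of each reduced operator, and hence \eqref{eq:Grushin_a}.

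I expect the main obstacle to be the uniform-in-$\xi$ control of the operator norms and of the derivatives of the symbols/kernels needed for the multiplier theorem: one must show that $H_\xi^{-1}$ and its composition with the polynomial-coefficient differential operators behave well as $|\xi|\to 0$ and $|\xi|\to\infty$, and that differentiating in $\xi$ does not destroy the bounds. This is where the spectral gap of the Hermite operator (the non-vanishing of its eigenvalues, guaranteed by $m$ even, $m\geq 2$) is essential: it provides a resolvent bound $\|H_\xi^{-1}\|\lesssim |\xi|^{-1}$ and smooth dependence on $\xi$ away from $\xi=0$, with the degenerate scaling absorbing the singularity at the origin. Since the paper defers the detailed argument to the Appendix and our setting is the elementary one of a constant-coefficient model operator, the plan is to present this reduction cleanly and carry out the multiplier estimates there; the Sobolev embedding of Lemma~\ref{lem:embedding} is proved in parallel by the same Fourier-analytic reduction combined with the Hardy--Littlewood--Sobolev inequality adapted to the homogeneous dimension $m+2n$.
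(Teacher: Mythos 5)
Your route — partial Fourier transform in $t$, reduction to the family of scaled Hermite operators $H_\xi=\Delta_x-|\xi|^2|x|^2$, and then operator-valued multiplier/Calder\'on--Zygmund theory adapted to the dilations $(x,t)\mapsto(\lambda x,\lambda^2t)$ — is genuinely different from the paper's. The paper instead lifts $\mathcal{L}_0$ to a sub-Laplacian $\Delta_{\mathcal H}$ on a Heisenberg--Reiter group $G=\R^m\times\R^{n\times m}\times\R^n$, invokes Folland's homogeneous fundamental solution and the singular-integral theory on homogeneous groups to get $\|P(X,Y)u\|_p\le C_p\|\Delta_{\mathcal H}u\|_p$ there, and then descends to the quotient $H\backslash G\cong\R^{m+n}$ by the Coifman--Weiss transference method; the weighted second derivatives $\partial^2_{x_ix_j}$, $x_k\partial^2_{x_it_j}$, $|x|^2\partial^2_{t_it_j}$, $\partial_{t_j}$ are exactly the push-downs of quadratic expressions in the horizontal fields. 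The transference route buys you the kernel estimates for free (they are Folland's, on the group), at the cost of some Lie-group bookkeeping; your route is more self-contained in spirit but shifts all of the analytic weight onto the multiplier step.

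Two points need attention. First, your explanation of the hypothesis ``$m\ge2$ even'' is wrong: the eigenvalues of $-\Delta_x+|x|^2$ on $\R^m$ are $m+2k$, $k\ge0$, which are nonzero for \emph{every} $m\ge1$, so neither the parity of $m$ nor $m\ge 2$ has anything to do with a spectral gap for $H_\xi$; the invertibility of $H_\xi$ for $\xi\neq0$ holds regardless. (In the paper the evenness enters only through the construction of the group $G$.) Second, and more substantively, the crux of your argument is left unproved: uniform-in-$\xi$ boundedness of the fiber operators $P_\xi H_\xi^{-1}$ on $L^p(\R^m)$ is \emph{not} by itself enough to ``sum back up'' to an $L^p(\R^{m+n})$ bound — one needs either $R$-boundedness of the multiplier family together with control of its $\xi$-derivatives (an operator-valued Mihlin theorem), or direct Calder\'on--Zygmund kernel estimates for $\partial_w^2\mathcal{L}_0^{-1}$ with respect to the anisotropic quasi-distance. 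You correctly identify this as the main obstacle, but it is precisely the content of the theorem, so as written the proposal is a plausible program rather than a proof. If you carry out that step (the needed kernel bounds for the Hermite semigroup/resolvent are available in the literature on Riesz transforms for the Grushin operator), the argument goes through.
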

\begin{proof}
See Appendix.
\end{proof}

Next we state the local $L^p$ estimates for the perturbed operator
\begin{equation}\label{eq:defL}
\mathcal{L}:=\sum_{i,j=1,\ldots, m+n} \ell_{ij}(x,t)\partial_{ij}.
\end{equation}
where $\{\ell_{ij}(x,t)\}_{(m+n)\times (m+n)}$ can be decomposed into the form $ABA^t$ with
\begin{align*}
A=\begin{pmatrix}
I_m & 0 &\cdots &0\\
0& X & \ddots &\vdots\\
\vdots & \ddots & \ddots & 0\\
0 & \cdots & 0 & X
\end{pmatrix}_{(m+n)\times (m+mn)}, \quad  X=(x_1,\ldots, x_m)_{1\times m}
\end{align*}
and $B=(b_{ij})_{(m+mn)\times (m+mn)}$ a positive definite matrix with continuous entries and for some small positive $\delta_0$ it satisfies 
\begin{equation}
|b_{ij}(x,t)-\delta_{ij}|\leq \delta_0 \label{eq:defL3}
\end{equation}
where $\delta_{ij}=1$ if $i=j$ and $\delta_{ij}=0$ if $i\neq j$.

From now on, we will work on the following scale-invariant ``cylinder'' (w.r.t $\mathcal{L}_0$) centered at the origin: for $r>0$
$$\C_r=\{(x,t)\in \R^{m+n}: |x|<r, |t|< r^2 \}.$$
 
\begin{proposition}\label{prop:perturb-Grushin}
Let $u\in M^{2,p}(\C_{r})$ satisfy 
\begin{equation}\label{eq:perturb-Grushin1}
\mathcal{L} u=f \text{ a.e.\ in } \C_{r}, \quad f\in L^p(\C_{r}),\quad 1<p<\infty,
\end{equation}
where $\mathcal{L}$ is a perturbed Baouendi-Grushin operator given by \eqref{eq:defL}.
Then if \eqref{eq:defL3} is satisfied for sufficiently small $\delta_0=\delta_0(p,m,n)>0$, there exists $C=C(p)>0$ such that for any $\sigma\in (0,1)$,
\begin{equation}\label{eq:perturb-Grushin2}
\|\partial^2_w u\|_{p,\C_{\sigma r}}\leq \frac{C}{(1-\sigma)^2r^2}\left(r^2\|f\|_{p,\C_{r}}+\|u\|_{p,\C_{r}}\right).
\end{equation}
\end{proposition}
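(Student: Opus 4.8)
The plan is to reduce the perturbed estimate \eqref{eq:perturb-Grushin2} to the constant-coefficient estimate for $\mathcal{L}_0$ in Theorem~\ref{thm:Grushin} by a combination of a cutoff/localization argument, a freezing-the-coefficients argument, and an absorption of the lower-order terms. First I would fix $\sigma\in(0,1)$ and, by rescaling via the non-isotropic dilation $(x,t)\mapsto(rx,r^2t)$ (under which $\mathcal{L}_0$ and the class of operators \eqref{eq:defL}--\eqref{eq:defL3} are invariant, and which scales $\C_r$ to $\C_1$), reduce to the case $r=1$; the powers of $r$ and the factor $(1-\sigma)^{-2}$ in the final bound are exactly the weights produced by this rescaling together with the derivatives falling on the cutoff. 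So it suffices to prove $\|\partial^2_w u\|_{p,\C_\sigma}\le C\big(\|f\|_{p,\C_1}+\|u\|_{p,\C_1}\big)$.

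Next I would choose a cutoff $\zeta\in C_0^\infty(\C_1)$ with $\zeta\equiv 1$ on $\C_\sigma$, $0\le\zeta\le1$, and $|\partial^1_w\zeta|+|\partial^2_w\zeta|\le C(1-\sigma)^{-2}$, and apply the $\mathcal{L}_0$-estimate to $w:=\zeta u\in M_0^{2,p}(\C_1)$. Writing $\mathcal{L}_0 w=\zeta\,\mathcal{L}_0 u+[\mathcal{L}_0,\zeta]u$, where the commutator involves only $\partial^1_w\zeta\cdot\partial^1_w u$ and $\partial^2_w\zeta\cdot u$ type terms (with the subelliptic weights $|x|$ appearing in the right slots), Theorem~\ref{thm:Grushin} gives
\begin{equation*}
\|\partial^2_w u\|_{p,\C_\sigma}\le\|\partial^2_w w\|_{p,\C_1}\le C_p\|\mathcal{L}_0 w\|_{p,\C_1}\le C_p\|\zeta\,\mathcal{L}_0 u\|_{p,\C_1}+C\|\partial^1_w u\|_{p,\C_1}+C\|u\|_{p,\C_1}.
\end{equation*}
The first term on the right is handled by writing $\mathcal{L}_0=\mathcal{L}+(\mathcal{L}_0-\mathcal{L})$, so $\zeta\,\mathcal{L}_0 u=\zeta f+\zeta(\mathcal{L}_0-\mathcal{L})u$. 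Since $\mathcal{L}-\mathcal{L}_0$ is, in the $A B A^t$ representation, the operator with matrix $A(B-I)A^t$, and $A$ carries precisely the subelliptic weights $|x|$ in front of the second $t$-derivatives and second mixed derivatives, the bound \eqref{eq:defL3} gives $|(\mathcal{L}_0-\mathcal{L})u|\le C\delta_0|\partial^2_w u|$ pointwise. Hence $\|\zeta(\mathcal{L}_0-\mathcal{L})u\|_{p,\C_1}\le C\delta_0\|\partial^2_w(\zeta u)\|_{p,\C_1}+C\|\partial^1_w u\|_{p,\C_1}+C\|u\|_{p,\C_1}$ after commuting $\zeta$ back in.

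Collecting terms, for $\delta_0=\delta_0(p,m,n)$ small enough the term $C_p C\delta_0\|\partial^2_w(\zeta u)\|_{p,\C_1}$ is absorbed into the left side, leaving
\begin{equation*}
\|\partial^2_w u\|_{p,\C_\sigma}\le C\big(\|f\|_{p,\C_1}+\|\partial^1_w u\|_{p,\C_1}+\|u\|_{p,\C_1}\big).
\end{equation*}
The final step is to dispose of the intermediate gradient term $\|\partial^1_w u\|_{p,\C_1}$ by an interpolation inequality of Gagliardo--Nirenberg type adapted to the vector fields $X_i$: for any $\epsilon>0$, $\|\partial^1_w u\|_{p}\le\epsilon\|\partial^2_w u\|_{p}+C_\epsilon\|u\|_{p}$ on a slightly larger cylinder, obtained by the standard cutoff-and-integrate-by-parts trick now done with the $X_i$'s in place of ordinary derivatives (here one uses that $[X_i,X_j]$ is again a first-order field, so no uncontrolled terms appear). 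Feeding this in with $\epsilon$ small and re-absorbing yields the claim for $r=1$, and undoing the rescaling gives \eqref{eq:perturb-Grushin2}.

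The main obstacle I anticipate is making the absorption step genuinely rigorous: one must know a priori that $\|\partial^2_w(\zeta u)\|_{p,\C_1}$ is \emph{finite} before absorbing it (which is why the hypothesis $u\in M^{2,p}(\C_r)$, not merely $M^{1,p}$, is essential), and one must track carefully that every term produced by the commutators and by $A(B-I)A^t$ really does carry the correct subelliptic weight $|x|$ — a term like $\partial^2_{t_it_j}u$ \emph{without} the weight $|x|^2$ would not be controlled by $\|\partial^2_w u\|_p$ and would break the argument. Verifying this weight bookkeeping, which hinges on the explicit structure of $A$ in \eqref{eq:defL}, is the delicate point; the interpolation inequality for the H\"ormander fields, while also requiring care, is by now classical.
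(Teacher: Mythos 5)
Your proposal is correct and follows essentially the same route as the paper: apply the constant-coefficient estimate of Theorem~\ref{thm:Grushin}, absorb the $A(B-I)A^t$ perturbation using the smallness $\delta_0$, remove the compact-support restriction with a cutoff and commutator estimate, and eliminate the intermediate term $\|\partial^1_w u\|_p$ by interpolation plus re-absorption (the paper does the absorption first on $M^{2,p}_0$ and cuts off afterwards, and uses classical Gagliardo--Nirenberg in the $x$- and $t$-slices separately together with a supremum-over-$\sigma$ argument to make the final re-absorption on nested cylinders rigorous, but these are only organizational differences).
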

\begin{proof}
We write 
$$\mathcal{L}_0 u= (\mathcal{L}_0-\mathcal{L})u +\mathcal{L}u.$$
If $\supp u\Subset \C_{r}$, then by \eqref{eq:Grushin_a}
\begin{align*}
\|\partial^2_w u\|_{p, \C_{r}}&\leq C_p\left(\|\mathcal{L}u\|_{p, \C_{r}}+\|(\mathcal{L}_0-\mathcal{L})u\|_{p,\C_{r}}\right)\\
&\leq C_p \left (\|\mathcal{L}u\|_{p, \C_{r}}+\sup_{\C_{r}} | b_{ij}(x,t) - b_{ij}(0,0)| \|\partial^2_w u\|_{p, \C_{r}}\right)\\
&\leq C_p\left( \|\mathcal{L}u\|_{p,\C_{r}}+\delta_0\|\partial^2_w u\|_{p,\C_{r}}\right).
\end{align*}
Hence if we choose $\delta_0=1/(2C_p)$ in \eqref{eq:defL3}, for any $u\in M^{2,p}_0(\mathcal{C}_r)$ we have
\begin{equation}
\|\partial_w^2u\|_{p,\C_{r}}\leq \frac{C_p}{1-C_p\delta_0}\|\mathcal{L}u\|_{p,\C_{r}}\leq 2C_p\|\mathcal{L}u\|_{p,\C_{r}}.\label{eq:grushin1}
\end{equation}

Now we remove the compact support condition. Let $\sigma\in (0,1)$ be fixed and let $\sigma'=(1+\sigma)/2$. Let $\eta(x,t)=\eta_1(|x|)\eta_2(|t|)$ be a smooth cut-off function in $\C_r$, where $0\leq \eta _i\leq 1$ satisfy $\eta_1=1$ when $ |x|\leq \sigma r$, $\eta_1=0$ when $|x|\geq \sigma' r$; $\eta_2=1$ when $|t|<\sigma r^2$, $\eta_2=0$ when $|t|>\sigma' r^2$. Moreover, $|\eta'_1|\leq 2/(1-\sigma')r$, $|\eta'_2|\leq 2/(1-\sigma')r^2$, $|\eta''_1|\leq 4/(1-\sigma')^2r^2$, $|\eta''_2|\leq 4/(1-\sigma')^2r^4$. Let $v= u\eta^2$, then
\begin{align*}
\mathcal{L}v= \eta \mathcal{L}u + [\mathcal{L}, \eta^2] u.
\end{align*}
By \eqref{eq:grushin1} we have
\begin{equation}\label{eq:perturb2}
\|\partial^2_w u\|_{p,\C_{\sigma r}}\leq 2C_p\left(\|\mathcal{L}u\|_{p, \C_{\sigma' r}}+\|[\mathcal{L},\eta^2]u\|_{p,\C_{\sigma' r}}\right).
\end{equation}
Compute $[\mathcal{L},\eta^2]u$ directly. Using the estimates for the coefficient matrix $b_{ij}$ with $\delta_0$ chosen less than 1, as well as the cut-off functions, we obtain the following (for simplicity we write $\|\cdot\|_p=\|\cdot\|_{p,\C_{\sigma' r}}$)
\begin{multline}\label{eq:perturb_a}
\quad \|[\mathcal{L},\eta^2]u\|_{p}
\leq C\left(\frac{1}{(1-\sigma')r}\|\nabla_x u\|_p+\frac{1}{(1-\sigma')r^2}\||x|^2 \nabla_t u\|_p\right)+\\
+\frac{C}{(1-\sigma')^2r^2}\|u\|_p+\frac{C}{(1-\sigma')r}\||x|\nabla_t u\|_p,
\end{multline}
where $C$ is some absolute constant.

Now using the interpolation between the classical Sobolev spaces (for $r=1$) and Young's inequality we have for any $\epsilon>0$,
\begin{align*}
(1-\sigma)\|\nabla_x u\|_{p,\C_\sigma}&\leq  \epsilon(1-\sigma)^2  \|D^2_{x,x} u\|_{p,\C_\sigma}+\frac{C}{\epsilon}\|u\|_{p,\C_\sigma},\\
(1-\sigma)\||x|^2\nabla_t u\|_{p,\C_\sigma}&\leq  \epsilon(1-\sigma)^2\||x|^2D^2_{t,t} u\|_{p,\C_\sigma}+\frac{C}{\epsilon}\||x|^2u\|_{p,\C_\sigma},\\
(1-\sigma)\||x|\nabla_t u\|_{p, \C_\sigma}&\leq \epsilon(1-\sigma)^2\||x|^2 D^2_{t,t} u\|_{p, \C_\sigma}+\frac{C}{\epsilon}\|u\|_{p, \C_\sigma}.
\end{align*}
Hence by rescaling and then taking the supreme in $\sigma$, we have                                                    
\begin{multline}\label{eq:inter11}
\sup_{0<\sigma<1} (1-\sigma)r \|\nabla _x u\|_{p,\C_{\sigma r}}\\
\leq \epsilon \sup_{0<\sigma<1} (1-\sigma)^2r^2 \|D^2_{x,x} u\|_{p,\C_{\sigma r}}+ \frac{C}{\epsilon}\sup_{0<\sigma<1}\|u\|_{p,\C_{\sigma r}};
\end{multline}
\begin{multline}\label{eq:inter22}
\sup_{0<\sigma<1} (1-\sigma) \||x|^2\nabla _t u\|_{p,\C_{\sigma r}}\\
\leq \epsilon \sup_{0<\sigma<1} (1-\sigma)^2r^2 \||x|^2\nabla^2_t u\|_{p,\C_{\sigma r}}+ \frac{C}{\epsilon r^2}\sup_{0<\sigma<1}\||x|^2 u\|_{p,\C_{\sigma r}}.
\end{multline}
\begin{multline}\label{eq:inter33}
\sup_{0<\sigma<1}(1-\sigma)r\||x|\nabla_t u\|_{p,\C_{\sigma r}}\\\leq \epsilon \sup_{0<\sigma<1} (1-\sigma)^2r^2 \||x|^2\nabla^2_t u\|_{p,\C_{\sigma r}}+\frac{C}{\epsilon}\sup_{0<\sigma<1}\|u\|_{p,\C_{\sigma r}}.
\end{multline}
Combining \eqref{eq:perturb2}-\eqref{eq:inter33} and choosing $\epsilon$, $\delta_0<1/(2C_p)$ small enough, depending on $p$, we obtain
the inequality \eqref{eq:perturb-Grushin2}.
\end{proof}

\subsection{Smoothness at the free boundary points}\label{sec:smoothness-free-bound}
In this section we show that the Legendre function $v$ which satisfies the fully nonlinear PDE \eqref{eq:mainequation2} is smooth in a neighborhood of the origin. 

We will work on the non-isotropic cylinder at the origin:
\begin{align*}
\C_r=\{(y'',y_{n-1},y_n):|y''|<r^2, \sqrt{y_{n-1}^2+y_n^2}<r\}, \quad n\geq 3, r>0.
\end{align*}
Before proving the main theorem, we make the following two remarks:
\begin{enumerate}[(i)]
\item By Corollary~\ref{cor:extension} and the discussion in
  Section~\ref{sec:subelliptic-coeff}, there is $r_0>0$ small enough
  such that $v\in M^{2,p}(\C_{r_0})$ for any $1\leq p<\infty$ and the
  linearized operator $F_{ij}(D^2v)\partial_{ij}$ can be viewed as a
  perturbation of the Baouendi-Grushin type operator in $\C_{r_0}$. 

\item We note the following rescaling property: if $v$ solves \eqref{eq:mainequation2} in $\C_{r_0}$, then $\tilde{v}(y)=c_0v(\delta_r y)/r^3$ with $c_0>0$ and $\delta_r$ the non-isotropic dilation in \eqref{eq:nonisotropic} will solve
\begin{equation*}
\partial_{y_{n-1}y_{n-1}}\tilde{v}+\partial_{y_ny_n}\tilde{v}-(c_0)^{-2}\sum_{i=1}^{n-2} \det (\tilde{V}^i)=0\quad  \text{ in } \C_{r_0/r}.
\end{equation*}
Hence by multiplying a nonzero constant, we may assume that the coefficient matrix $F_{ij}(D^2v)$ is of the form $ABA^t$ in $\C_{r_0}$ with $B$ continuous and satisfying \eqref{eq:defL3} for sufficiently small $\delta_0$, where $\delta_0$ is chosen such that the $L^p$ estimate (Proposition~\ref{prop:perturb-Grushin}) applies.  
\end{enumerate}

The idea to show the smoothness is then to apply iteratively \eqref{eq:perturb-Grushin2} to the first order difference quotient of $\partial^\alpha v$, but each step we need to be careful that the non-homogeneous RHS coming from differentiation is bounded in $L^p$. 

For notation simplicity, in what follows we will discuss the case when $n=3$. Then equation \eqref{eq:mainequation2} is simply
\begin{equation}\label{eq:equ-for-v}
F(D^2v)=\partial_{22}v+\partial_{33}v-\det(D^2v)=0.
\end{equation}
The arguments for $n>3$ are the same.

\begin{theorem}\label{thm:smoothness-v}
Let $v$ be the Legendre function of $u$ defined in \eqref{eq:legendrefun}. Let $r_0>0$ such that in $\C_{r_0}$, Corollary~\ref{cor:extension} holds for $v$ and $F_{ij}(D^2v)$ can be written in the form of \eqref{eq:defL} with $b_{ij}$ satisfying \eqref{eq:defL3} for sufficiently small $\delta_0$. Then $v$ is smooth at the origin. 
\end{theorem}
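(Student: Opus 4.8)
\textbf{Proof plan for Theorem~\ref{thm:smoothness-v}.}

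The plan is to bootstrap regularity of $v$ starting from the fact, guaranteed by Corollary~\ref{cor:extension} and the discussion in Section~\ref{sec:subelliptic-coeff}, that $v\in M^{2,p}(\C_{r_0})$ for every $p<\infty$ and that the linearization $F_{ij}(D^2v)\partial_{ij}$ is a perturbation of the Baouendi-Grushin operator $\mathcal{L}_a$ in the sense required by Proposition~\ref{prop:perturb-Grushin}. The key mechanism is the following: we want to show that if a derivative $w=\partial^\alpha v$ (taken in the ``good'' subelliptic directions, to be made precise) lies in $M^{2,p}_{\loc}(\C_{r_0})$, then differentiating the equation \eqref{eq:equ-for-v} produces an equation $\mathcal{L}w = g$ with $\mathcal{L}=F_{ij}(D^2 v)\partial_{ij}$ and a right-hand side $g$ that is again in $L^p_{\loc}$, so that Proposition~\ref{prop:perturb-Grushin} upgrades $w$ to $M^{2,p}_{\loc}$ on a slightly smaller cylinder. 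Iterating and using the Sobolev embedding Lemma~\ref{lem:embedding} to turn $M^{2,p}$ control into $M^{1,q}$ (and then $L^\infty$) control of lower-order derivatives, one climbs to all orders.

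Concretely, I would proceed as follows. First, set up the difference-quotient machinery adapted to the non-isotropic dilations $\delta_r$: for the tangential directions $e_1,\dots,e_{n-2}$ the natural increment is $h^2$ in the $y''$-variable (consistent with the Grushin scaling), while for $e_{n-1},e_n$ it is a plain increment $h$; correspondingly the admissible ``weighted'' first derivatives are $\partial_{y''}$ weighted by $(y_{n-1}^2+y_n^2)^{1/2}$ together with $\partial_{y_{n-1}},\partial_{y_n}$, exactly the vector fields defining $M^{1,p}$. Second, differentiate \eqref{eq:equ-for-v} (or \eqref{eq:mainequation2} for $n>3$) once: since $F(D^2v)$ is a polynomial in the entries of $D^2v$, the derivative $w=\partial_k v$ satisfies $F_{ij}(D^2v)\,\partial_{ij}w=0$ in $\U\setminus\cP$ when $k\in\{1,\dots,n-2\}$ (no explicit $y$-dependence in $F$), and for $k=n-1$ or $k=n$ one also gets a homogeneous equation. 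The point, however, is that a priori $w$ is only in $M^{1,p}$; to apply Proposition~\ref{prop:perturb-Grushin} one needs $w\in M^{2,p}$, i.e.\ one needs the difference quotient $\Delta_h w$ to satisfy $\mathcal{L}(\Delta_h w)=[\mathcal{L},\Delta_h]w$ with the commutator bounded in $L^p$ uniformly in $h$. Because the coefficients $F_{ij}(D^2v)=ABA^t$ have the factored structure with $A$ depending only (linearly) on $(y_{n-1},y_n)$ and $B$ built from the ratios $\partial_{n-1,n-1}v/y_{n-1}$ etc.\ which by Corollary~\ref{cor:extension} are \emph{bounded and continuous up to $\cP$}, the commutator terms are controlled by $\|\partial^2_w w\|_{p}$ plus lower-order terms, and after absorbing (choosing $\delta_0$ small, exactly as in Proposition~\ref{prop:perturb-Grushin}) one gets a uniform bound on $\|\partial^2_w \Delta_h w\|_{p,\C_{\sigma r}}$, hence $w\in M^{2,p}_{\loc}$. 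Third, having promoted every first derivative of $v$ to $M^{2,p}_{\loc}$, repeat: the second derivatives of $v$ again solve, after differentiating the equation twice, a subelliptic equation whose inhomogeneity is a polynomial in $D^2v$ and $D^3v$ times weights; using the now-available $M^{2,p}$ bounds on $D v$ and the Sobolev embedding to put $D^2v$ (weighted) and $D v$ in $L^\infty_{\loc}$, the right-hand side is in $L^p_{\loc}$, so Proposition~\ref{prop:perturb-Grushin} applies again. An induction on the order of differentiation then yields $\partial^\alpha v\in M^{2,p}_{\loc}(\C_{r_0})$ for all $\alpha$ and all $p<\infty$; by Lemma~\ref{lem:embedding}(ii) with $p$ large this gives $\partial^\alpha v\in L^\infty_{\loc}$, and since this holds for every $\alpha$, $v\in C^\infty$ near the origin.

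The main obstacle is the inductive step's bookkeeping: one must verify that at each stage the differentiated equation has an $L^p$ right-hand side, and for this the \emph{weights matter}. When one differentiates the nonlinear term $\det(V^i)$ the extra derivative can land on a factor like $\partial_{n-1,n-1}v$, producing $\partial^3 v$ which is \emph{not} by itself bounded near $\cP$; one must use that it always appears in a combination carrying a compensating factor of $y_{n-1}$ or $y_n$ (reflecting the $3/2$-homogeneity of $u$, equivalently the $3$-homogeneity of $v_0$, and made quantitative by Corollary~\ref{cor:extension}), so that the weighted quantities controlled by $M^{2,p}$ suffice. In other words, the structural identity $F_{ij}(D^2v)=ABA^t$ with $B$ continuous up to $\cP$ must persist — in an appropriately weighted form — after each differentiation, and checking this compatibility between the algebraic form of $F$ and the Grushin weights is the delicate point. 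Everything else (the cutoff/absorption argument, the interpolation inequalities, the passage from $M^{2,p}$ to $L^\infty$) is supplied by Proposition~\ref{prop:perturb-Grushin} and Lemma~\ref{lem:embedding}. I would present the case $n=3$ in detail, where $F(D^2v)=\partial_{22}v+\partial_{33}v-\det(D^2v)$ and the combinatorics are transparent, and remark that the general case is identical with $\sum_i\det(V^i)$ in place of $\det(D^2v)$.
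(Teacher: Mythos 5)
Your overall architecture --- difference quotients, the perturbed Baouendi--Grushin $L^p$ estimate of Proposition~\ref{prop:perturb-Grushin}, the subelliptic Sobolev embedding of Lemma~\ref{lem:embedding}, and induction on the order of differentiation --- is exactly the paper's strategy, and your first step (the homogeneous equation $\tilde F_{ij}(D^2v,h)\partial_{ij}\Delta_1^h v=0$ for a tangential difference quotient, giving $\partial_1 v\in M^{2,p}$) is sound. The genuine gap is at the second tangential derivative, precisely the point you flag as ``delicate'' but then resolve by asserting that the right-hand side is in $L^p_{\loc}$. It is not, on the first pass. Writing the equation for $\Delta_1^h\partial_1 v$ (say for $n=3$) one gets the inhomogeneity $f=-\Delta_1^h(F_{ij})\,\partial_{ij}\partial_1 v(\cdot+he_1)$, which contains terms such as $\partial_{111}v\,\partial_{\beta'\gamma'}v\,\Delta_1^h\partial_{\beta\gamma}v$. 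Step 1 controls $\partial_{111}v$ only with the weight $(y_2^2+y_3^2)$, and Corollary~\ref{cor:extension} controls $\partial_{\beta'\gamma'}v/(y_2^2+y_3^2)^{1/2}$ in $L^\infty$; the leftover factor $(y_2^2+y_3^2)^{-1/2}$ lies only in $L^s$ of the two-dimensional slices for $s<2$, so H\"older gives $f\in L^q$ only for some $q<2$, strictly below the exponent you need. The paper closes this loop by a bootstrap \emph{within} the step: the resulting $L^q$ bound on $\partial_w^2\partial_{11}v$ feeds into the embedding $M^{1,q}_0\hookrightarrow L^{4q/(4-q)}$ (with $4$ the homogeneous dimension), raising the exponent finitely many times until it exceeds $4$, whereupon $(y_2^2+y_3^2)^{1/2}\partial_{111}v$ and $\partial_{11\beta}v$ become $L^\infty$ and the $L^p$ estimate can be re-run for every $p<\infty$. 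Without this iteration your induction does not get off the ground.

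Two smaller points. First, your commutator scheme ``$\mathcal{L}(\Delta_h w)=[\mathcal{L},\Delta_h]w$ with the commutator \dots absorbed'' conflates the first-order step (where the difference-quotiented equation is genuinely homogeneous with averaged coefficients and nothing needs absorbing) with the second-order step (where the commutator is exactly the problematic $f$ above and cannot be absorbed into $\|\partial_w^2\Delta_h w\|_p$, since it involves unweighted third derivatives of $v$ itself rather than second derivatives of $\Delta_h w$). Second, difference quotients in the $y_{n-1},y_n$ directions translate the degeneracy locus $\{y_{n-1}=y_n=0\}$, so they should only be taken once the pure tangential derivatives are under control (as in the paper's Step~3, where the right-hand side is then bounded in $L^p$ because all third derivatives already are); your symmetric treatment with increments $h$ versus $h^2$ is unnecessary and would need separate justification.
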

\begin{proof}
We let 
$$\Delta ^h_i v:=\frac{v(\cdot+he_i)-v}{h} , \quad  h\neq 0,\quad i=1,2,3$$
denote the first difference quotient of $v$ in $e_i$ direction.  
\step{Step 1:}  Show $\partial_i v\in M^{2,p}(\C_{r})$ for any $1<p<\infty$, $0<r<r_0$. By Corollary~\ref{cor:extension}, it is enough to show it for $\partial_1 v$.

In fact, $v\in M^{2,p}(\C_{r_0})$ implies that $\Delta_1^h v\in M^{2,p}(\C_{r_0-h})$ and $\Delta_1^h v=0$ on $\C'_{r_0-h}$ for $0<h<r<<r_0$. Moreover, by taking $\Delta^h_1$ on both sides of \eqref{eq:equ-for-v} we get $\Delta_1^h v$ satisfies
\begin{equation*}
\tilde F_{ij}(D^2v , h)\partial_{ij}\Delta^h_1 v=0
\end{equation*}
with 
\begin{equation*}
\tilde F_{ij}(D^2v, h)=\int^1_0 F_{ij}(D^2v+ t(D^2v(\cdot+he_1)-D^2v))\ dt.
\end{equation*}
Since a translation in $e_1$ direction does not change the subelliptic structure of the operator (by Corollary~\ref{cor:extension} and the $C^{0,\alpha}$ dependence of $\nu_{x_0}$ on $x_0$), i.e. $\tilde F_{ij}( D^2v,h)\partial_{ij}$ is still a perturbed Baouendi-Grushin operator in the form of \eqref{eq:defL}, with $b_{ij}$ satisfying \eqref{eq:defL3} in $\C_{r_0-h}$. Then by Proposition~\ref{prop:perturb-Grushin} there exists $C=C(p)>0$ such that
for any $\sigma \in(0,1)$
\begin{equation}\label{eq:st1}
\|\partial^2_w\Delta_1^h v\|_{p, \C_{\sigma (r_0-h)}}\leq \frac{C}{(1-\sigma)^2(r_0-h)^2}\|\Delta_1^h v\|_{p,\C_{r_0-h}}.
\end{equation}
Note that the RHS of \eqref{eq:st1} is uniformly bounded in $h$. Moreover, $\|\partial^2_w \Delta_1^h v\|_p=\|\Delta_1^h (\partial^2_w v)\|_p$ and $\|\partial^2_w v\|_{p,\C_{r_0}}<\infty$ (here we slightly abuse of the notation to let $\partial^2_w v$ denote the weighted second order derivatives and first order derivatives w.r.t. $y_{n-1}$ and $y_n$). Thus $\partial_1 v\in M^{2,p}(\C_r)$ for any $0<r<r_0$ with $\|\partial_1 v\|_{M^{2,p}(\C_r)}$ depending on $r_0-r$, $r_0$ and $p$.

\step{Step 2:} Show $\partial_{11} v\in M^{2,p}(\C_r)$, $0<r<r_0$ . 

Take $\partial_1$ to both sides of \eqref{eq:equ-for-v}. From step 1, $\partial_1v\in M^{2,p}(\C_{r})$ and it satisfies
\begin{equation}\label{eq:partial-v}
F_{ij}(D^2v)\partial_{ij}\partial_1 v=0.
\end{equation}
Applying $\Delta_1^h$ to \eqref{eq:partial-v} with $0<h<<r_0/8$, then $\Delta^h_1 \partial_1 v\in M^{2,p}(\C_{r -h})$ and it satisfies
\begin{equation}\label{eq:differencequotient}
F_{ij}(D^2 v)\partial_{ij}(\Delta^h_1 \partial_1 v)=f\quad \text{in } \C_{r-h}
\end{equation}
with
\begin{equation*}
f=- \Delta ^h_1 (F_{ij}) \partial_{ij}\partial_1 v(\cdot+he_1).
\end{equation*}

To estimate the $\|f\|_{p}$, we first notice that $f$ (up to a translation $\tau_{e_1h}$) is a summation of the following terms:
\begin{align*}
&I_1=\partial_{111}v \partial_{\beta'\gamma '}v \Delta^h_1\partial_{\beta\gamma } v , \\
&I_2=\partial_{11\beta} v \left( \Delta ^h_1 \partial_{1\gamma} v \partial_{\beta'\gamma'} v+ \partial_{1\gamma} v \Delta^h_1 \partial_{\beta'\gamma'} v\right), \\
&I_3= \partial_{1\beta\gamma}v \left(\Delta ^h_1\partial_{11}v \partial_{\beta'\gamma'} v+\partial_{11} v\Delta ^h_1 \partial_{\beta'\gamma'}v \right),\quad \beta, \gamma, \beta', \gamma'=2, 3.
\end{align*}

Next, since $(y_2^2+y_3^2)^{-1/2}\in L^s( \C_r\cap \{y_1=\text{const}\})$ for any $s\in (1,2)$, then by H\"older's inequality, for $q'\in (1,2)$ satisfying $\frac{1}{q'}=\frac{1}{p}+\frac{1}{s}$ we have
\begin{align}
&\|(y_2^2+y_3^2)^{1/2}\partial_{111}v\|_{q',\C_r}\leq \|(y_2^2+y_3^2)^{-1/2}\|_{s,\C_r}\|(y_2^2+y_3^2)\partial_{111}v\|_{p,\C_r}\label{eq:hold1};\\
&\|\partial_{11\beta} v\|_{q', \C_{r}}\leq \|(y_2^2+y_3^2)^{-1/2}\|_{s,\C_r}\|(y_2^2+y_3^2)^{1/2} \partial_{11\beta} v\|_{p, \C_{r}},\quad \beta=2,3 \label{eq:hold2}.
\end{align}

Apply H\"older to estimate $I_1$. For some $q$ satisfying $1/q=1/q'+1/p$ we have
\begin{multline}
\|I_1\|_{q,\C_{r-h}}\leq \label{eq:estimate-I1}\\ \left\|(y_2^2+y_3^2)^{1/2} \partial_{111}v\right\|_{q',\C_{r-h}} \left\|(y_2^2+y_3^2)^{-1/2}\partial_{\beta'\gamma'} v\right\|_{\infty, \C_{r-h}} \left \|\Delta^h_1\partial_{\beta\gamma} v\right\|_{p,\C_{r-h}}.
\end{multline}
By Corollary~\ref{cor:extension}(iv),  the second term on the RHS of \eqref{eq:estimate-I1} is bounded.  From the boundedness of $\|\partial_w^2\partial_1 v\|_{p,\C_r}$ shown in step 1, the third term is uniformly bounded in $h$. Hence combining \eqref{eq:hold1} we have $\|I_1\|_{q,\C_{r-h}}$ is uniformly bounded in $h$. Similarly by using Corollary~\ref{cor:extension}, \eqref{eq:hold2} and step 1, we have $\|I_2\|_{q,\C_{r-h}}$, $\|I_3\|_{q,\C_{r-h}}$ are uniformly bounded in $h$. Therefore,  applying the $L^p$ estimate (Proposition~\ref{prop:perturb-Grushin}) to \eqref{eq:differencequotient}, one can find a constant $C_q$ independent of $h$, such that for any $\sigma\in (0,1)$,
\begin{equation*}
\|\partial^2_w(\Delta ^h_1 \partial_1 v)\|_{q, \C_{\sigma(r-h)}}
\leq \frac{C _q}{(1-\sigma)^2}\|f\|_{q, \C_{r-h}}+\frac{C_q}{(r-h)^2}\|\Delta^h_1 \partial_1 v\|_{q, \C_{r-h}}.
\end{equation*}
Since the RHS is uniformly bounded in $h$, this implies
\begin{equation}\label{eq:st2}
\|\partial^2_w \partial_{11} v\|_{q,\C_{\sigma r}}\leq C_{q,\sigma, r}.
\end{equation}

From \eqref{eq:st2} we know
$(y_2^2+y_3^2)^{1/2}\partial_{111}v$, $\partial_{11\beta}v \in M^{1,q}(\mathcal{C}_{\sigma r})$, $\beta=2,3$.
Multiplying a cut-off function to extend the functions to $\R^n$ and applying the Sobolev embedding lemma~\ref{lem:embedding}(i) we have, for $q_1=4q/(4-q)$ (with $4$ the homogeneous dimension associated with $F_{ij}(D^2v)\partial_{ij}$),
$$\|(y_2^2+y_3^2)^{1/2}\partial_{111}v\|_{q_1, \C_{\sigma r}},~ \|\partial_{11\beta}v\|_{q_1, \C_{\sigma r}}\leq C_{q,\sigma,r}.$$
Repeat the above arguments starting from \eqref{eq:estimate-I1} with $q'$ replaced by $q_1$ (note $q_1>q'$ if $p>4$). After finite steps $m$ (which only depends on the dimension) we will get
$(y_2^2+y_3^2)^{1/2}\partial_{111}v$, $\partial_{11\beta}v\in M^{1,q}(\C_{\sigma^m r})$  with $q$ larger than the homogeneous dimension $4$, and hence by embedding lemma~\ref{lem:embedding}(ii) are in $L^\infty(\C_{\sigma^mr})$. Applying Proposition~\ref{prop:perturb-Grushin} again we obtain $\|\partial^2_w\partial_{11} v\|_{p,\C_{\sigma^m r}}< C_{p,\sigma, r, m}$ for $1<p<\infty$.
Noting that $r\in (0,r_0)$ is chosen arbitrary, we complete the proof for step 2.

\step{Step 3.}  Show $\partial^\alpha v\in M^{2,p}(\C_r)$, $|\alpha|=2$ with $\alpha_2+\alpha_3\geq 1$.

First from step 1, $\|\partial^\alpha v\|_{p, \C_r}<C_{p,r,r_0}$ for $|\alpha|=3$ with $\alpha_2+\alpha_3\geq 2$. This together with the boundedness of $\|\partial^2_w\partial_{11}v\|_{p,\C_{r}}$ obtained in step 2 gives 
\begin{equation}\label{eq:threediff}
\|\partial^{\alpha} v\|_{p, \C_{r}}<C_{p,r,r_0}\quad \text{for all }\alpha \text{ with }|\alpha|=3.
\end{equation}

Next we estimate $\Delta^h_k\partial_\beta v$ with $k=1,2,3$ and $\beta=2,3$. Similar as \eqref{eq:differencequotient}, $\Delta^h_k\partial_\beta v$ satisfies
$$F_{ij}\partial_{ij}(\Delta^h_k\partial_\beta v)= f$$ with
$$f= -\Delta_k^h(F_{ij})\partial_{ij}\partial_\beta v(\cdot+he_k).$$
By \eqref{eq:threediff} we immediately have $\|f\|_{p,\C_{ r-h}}$ is uniformly bounded in $h$ for any $1<p<\infty$. Applying the $L^p$ estimate (Proposition~\ref{prop:perturb-Grushin}) we have $\|\partial^2_w(\Delta^h_k\partial_\beta v)\|_{p,\C_{ r-h}}$ is uniformly bounded in $h$, and therefore completes the proof for step 3.

\step{Step 4.} Show $\partial^\alpha v\in M^{2,p}(\C_{r})$ for $|\alpha|=k>2$, with $\|\partial^\alpha v\|_{M^{2,p}(\C_r)}$ depending on $p,r,r_0,|\alpha|$ and dimension. 

This is done in the same way as for $|\alpha|=2$. More precisely, we first consider the equation of $\Delta ^h_1 \partial_1^{k-1} v$ in $\C_{r}$
$$F_{ij}(D^2v)\Delta^h_1(\partial_1^{k-1} v) + l.o.t. = 0$$
with $\|l.o.t.\|_{q, \C_{r}}$ bounded uniformly in $h$ for some $q\in (1,2)$. Then we apply the Sobolev embedding lemma and the $L^p$ estimate iteratively to obtain the boundedness of $\|l.o.t.\|_{\infty, \C_{r'}}$ with some $r'\in (0,r)$, as well as the boundedness of $\|\partial_w^2\partial^k_1 v\|_{p, \C_{r'}}$ for any $1<p<\infty$. In particular, this combined with the fact that $\partial^\alpha v\in M^{2,p}(\C_r)$ for all $|\alpha|\leq k-1$ gives that $\|\partial^\alpha v\|_{p, \C_{r'}}$ is bounded with $|\alpha|=k+1$ for any $1<p<\infty$. Next, we consider the equation for $\Delta_j^h\partial^\alpha v$, $j=1,2,3$, $|\alpha|=k-1$ and $\alpha_2+\alpha_3\geq 1$. Similar as in step 3, one easily get the uniform boundedness of $\|l.o.t\|_{p,\C_{r'}}$ for any $1<p<\infty$ due to the boundedness of $\| \partial^\alpha v\|_{p,\C_{r'}}$, $|\alpha |=k+1$. Applying $L^p$ estimate again we obtain the conclusion.
\end{proof}

\begin{corollary}\label{cor:freeboundary}
Let $u$ be a solution to the Signorini problem in $B_2^+$, and let $\Gamma_u$ be the regular set of the free boundary. Then $\Gamma_u$ is smooth. 
\end{corollary}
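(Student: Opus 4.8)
The plan is to deduce the corollary directly from Theorem~\ref{thm:smoothness-v} together with the parametrization of $\Gamma_u$ recorded in Section~\ref{sec:legendre-transf-its}(iv). Smoothness of $\Gamma_u$ is a local property and is invariant under translations, rotations of $\R^{n-1}$, and dilations; hence it suffices to show that $\Gamma_u$ is a $C^\infty$ graph in a neighborhood of an arbitrary point $x_0\in\Gamma_u=\cR_u$. As observed at the end of Section~\ref{sec:assumptions}, after such a normalization we may assume $x_0=0$ and that $u$ obeys all the standing hypotheses \eqref{eq:f}--\eqref{eq:non1}; in particular $\Gamma_u$ coincides near the origin with the graph $\{x_{n-1}=f(x'')\}$, where $f\in C^{1,\alpha}(\overline{B''_2})$ and $f(0)=|\nabla_{x''}f(0)|=0$.

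First I would run the hodograph--Legendre construction of Sections~\ref{sec:hodograph}--\ref{sec:legendre-funct-nonl}. By Theorem~\ref{thm:injective} (and the glued manifold $\M$), the partial hodograph transform $T$ is a homeomorphism onto the open neighborhood $\U=T(\M)$ of the origin; the straightened free boundary $\cP=T(\Gamma_u)$ lies in $\{y_{n-1}=y_n=0\}$, and since $T$ fixes the first $n-2$ coordinates while $\partial_{x_{n-1}}u=\partial_{x_n}u=0$ on $\Gamma_u$, the set $\cP$ is a relatively open $(n-2)$-dimensional slice through the origin, namely $\cP=\{(x'',0,0):(x'',f(x''))\in\Gamma_u\}$. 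The Legendre function $v$ from \eqref{eq:legendrefun} is then well defined on $\U$.

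Next I would apply Theorem~\ref{thm:smoothness-v}. By Corollary~\ref{cor:extension} and Section~\ref{sec:subelliptic-coeff}, after multiplying $v$ by a suitable nonzero constant we are in the hypotheses of that theorem, which yields $v\in C^\infty(\C_r)$ for some $r>0$; since $\C_r$ contains the Euclidean ball of radius $r^2$ (for $r<1$), $v$ is $C^\infty$ in an ordinary neighborhood of the origin. Applying the same normalization at each point of $\cR_u$ near $x_0$ --- equivalently, using that the difference-quotient estimates in the proof of Theorem~\ref{thm:smoothness-v} are uniform as the center runs over $\cP$ near the origin and that the subelliptic structure of $F_{ij}(D^2v)\partial_{ij}$ is invariant under translations in the $y''$-variables --- we obtain that $v$ is $C^\infty$ in a neighborhood of every point of $\cP$ close to the origin. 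By Section~\ref{sec:legendre-transf-its}(iv), for $y''$ near the origin,
\[
T^{-1}(y'',0,0)=(y'',-\partial_{y_{n-1}}v(y'',0,0),0)\in\Gamma_u,
\]
so $\Gamma_u$ agrees near $x_0$ with the graph $x_{n-1}=f(x'')$, $f(x'')=-\partial_{y_{n-1}}v(x'',0,0)$. Since $v$ is $C^\infty$ near every point $(x'',0,0)\in\cP$, the trace $f$ is $C^\infty$ on a neighborhood $B''_\delta$ of the origin in $\R^{n-2}$. Hence $\Gamma_u$ is a smooth hypersurface of $B'_2$ near $x_0$, and since $x_0$ was arbitrary, $\Gamma_u$ is smooth.

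I expect the only slightly delicate point to be the one highlighted above: Theorem~\ref{thm:smoothness-v} produces smoothness of $v$ ``at the origin'' of the degenerate cylinder, and one must check that this upgrades to smoothness of the trace $y''\mapsto\partial_{y_{n-1}}v(y'',0,0)$ on a genuine $(n-2)$-dimensional neighborhood inside $\cP$. This is handled either through the translation invariance of the Baouendi--Grushin-type structure in the $y''$-directions, or simply by repeating the normalization of Section~\ref{sec:assumptions} at each nearby regular free boundary point and invoking Theorem~\ref{thm:smoothness-v} there. Everything else is routine bookkeeping with the parametrization $T^{-1}(y'',0,0)=(y'',-\partial_{y_{n-1}}v(y'',0,0),0)$.
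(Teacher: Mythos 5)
Your proposal is correct and follows essentially the same route as the paper: normalize at an arbitrary regular point via the reduction of Section~\ref{sec:assumptions}, invoke Theorem~\ref{thm:smoothness-v} for the Legendre function $v$, and read off the smoothness of $f$ from the parametrization $f(y'')=-\partial_{y_{n-1}}v(y'',0,0)$, repeating at every point of $\cR_u$. The only difference is that you spell out the point-by-point renormalization needed to pass from ``smoothness at the origin'' to smoothness of the trace on a full neighborhood, which the paper leaves implicit.
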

\begin{proof}
Take $x_0\in \Gamma_u$ a regular free boundary point, in a coordinate chart centered at $x_0$, by \cite{ACS} $\Gamma_u$ can be locally expressed as the graph of a $C^{1,\alpha}$ function $y_2=f(y_1)$ with $f(0)=f'(0)=0$. Consider in a neighborhood of $x_0$ the partial hodograph-Legendre transform and the corresponding Legendre function $v$ defined in section~\ref{sec:legendre-transf-its}. By Theorem~\ref{thm:smoothness-v}, $v$ is smooth at the origin. Since
$$f(y_1)=-\partial_2 v(y_1,0,0), \quad f(0)=-\partial_2 v(0).$$
Hence the smoothness of $v$ at the origin implies the smoothness of $f$ at $0$.
\end{proof}

\section{Real Analyticity}
\label{sec:analyticity}

In this section, we show that the Legendre transform $v$ is analytic in a neighborhood of the origin (Theorem~\ref{thm:fb-regul}). 

\begin{theorem}\label{thm:main} Let $v$ be the Legendre transform
  defined in Section~\ref{sec:legendre-funct-nonl}. Then $v$ is real analytic in a neighborhood of the origin.
\end{theorem}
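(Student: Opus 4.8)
The plan is to upgrade the $C^\infty$ regularity of $v$, established in Theorem~\ref{thm:smoothness-v}, to real analyticity by a quantitative bootstrapping argument that tracks the growth of the constants in the $L^p$ estimates as the order of differentiation increases. The scheme follows the classical strategy for proving analyticity of solutions to elliptic (here, subelliptic) equations: one shows that for a suitable cylinder $\C_{r_0}$ around the origin there are constants $C$ and $R$ such that
\begin{equation*}
\|\partial^\alpha v\|_{M^{2,p}(\C_{r_0/2})} \le C\, R^{|\alpha|}\, |\alpha|!
\end{equation*}
for every multi-index $\alpha$, with the understanding that derivatives in the $y''$ directions must be weighted appropriately to respect the non-isotropic (Baouendi-Grushin) scaling, while derivatives in $y_{n-1},y_n$ are unweighted. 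Such a bound, combined with a Sobolev embedding (Lemma~\ref{lem:embedding}) to pass from $M^{2,p}$ to $L^\infty$, immediately gives the convergence of the Taylor series of $v$ at the origin on a small polydisc, hence real analyticity; Theorem~\ref{thm:fb-regul} then follows since $f(y'')=-\partial_{y_{n-1}}v(y'',0,0)$ inherits analyticity from $v$.

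The key steps, in order, would be: (1) Fix $r_0$ as in Theorem~\ref{thm:smoothness-v} and rescale so that $F_{ij}(D^2v) = ABA^t$ with $B$ close to the identity; record that the nonlinearity $F$ in \eqref{eq:mainequation2} is a \emph{polynomial} in the entries of $D^2v$ (of degree $3$), which is crucial because differentiating it many times produces a controlled number of terms. (2) Differentiate the equation $F(D^2v)=0$ by $\partial^\alpha$ and organize the result via the Faà di Bruno / Leibniz formula: one gets $F_{ij}(D^2v)\,\partial_{ij}\partial^\alpha v = g_\alpha$, where $g_\alpha$ is a sum of products of lower-order derivatives of $v$ (with the correct Grushin weights), and the number of such terms and their combinatorial coefficients grow at most like $C^{|\alpha|}$ times binomial factors. (3) Apply the perturbed $L^p$ estimate (Proposition~\ref{prop:perturb-Grushin}) on a shrinking sequence of cylinders, using the standard device of choosing radii $r_k = r_0/2 + r_0/2^{k+1}$ or similar, so that the loss $(1-\sigma)^{-2}$ at each step contributes a geometric factor; one must also absorb the first-order terms via the interpolation inequalities already used in the proof of Proposition~\ref{prop:perturb-Grushin}. (4) Run an induction on $|\alpha|$: assuming the bound $\|\partial^\beta v\|_{M^{2,p}} \le C R^{|\beta|}|\beta|!$ for all $|\beta| < |\alpha|$, estimate $\|g_\alpha\|_{L^q}$ by Hölder (using that $(y_{n-1}^2+y_n^2)^{-1/2}\in L^s$ for $s<2$, exactly as in Step 2 of Theorem~\ref{thm:smoothness-v}), producing a sum $\sum_{\beta+\gamma+\delta = \text{const}} \binom{\cdots}{\cdots} (\text{factorials})$ which, by the standard combinatorial lemma $\sum \binom{k}{j} j!(k-j)! \lesssim (k+1)!$, recombines into $C' R^{|\alpha|} |\alpha|!$; then Proposition~\ref{prop:perturb-Grushin} closes the loop provided $R$ is chosen large enough (depending on $C$, $r_0$, $p$, $n$ but not on $\alpha$). (5) Finally, use Sobolev embedding to get pointwise analytic bounds and conclude.

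The main obstacle I expect is the bookkeeping in step (4): one needs the right weighted norm on the left-hand side (e.g. controlling $\|(y_{n-1}^2+y_n^2)^{1/2}\partial^\alpha\partial_{y_{n-1}y_{n-1}y_{n-1}}v\|_p$, $\|\partial^\alpha\partial_{y_{n-1}y_{n-1}y_i}v\|_p$, etc.) and to verify that differentiating the quadratic/cubic determinant terms $\det(V^i)$ never produces a term whose weights fail to match what Proposition~\ref{prop:perturb-Grushin} can handle. Concretely, each monomial in $g_\alpha$ has the schematic form $(\text{weighted 3rd-order deriv})\times(\text{weighted 2nd-order deriv})^{\le 2}$ with total weight adding up correctly, and one must check this weight-matching holds after arbitrarily many differentiations in \emph{all} directions — the tangential $y''$ derivatives being the delicate ones since they carry the $(y_{n-1}^2+y_n^2)$ factors. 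A secondary subtlety is that the perturbation matrix $B(y)$ itself depends on $v$ and its second derivatives, so differentiating the equation also differentiates the coefficients; one handles this either by freezing coefficients and treating $[\partial^\alpha, F_{ij}\partial_{ij}]$ as part of $g_\alpha$ (which is already how the difference-quotient argument in Theorem~\ref{thm:smoothness-v} proceeds), or by noting that $B$ is itself built from the polynomial $F$ and the same analytic bounds propagate to it. Once the weight-matching is confirmed, the argument is a routine — if lengthy — induction.
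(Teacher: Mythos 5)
Your overall strategy coincides with the paper's: differentiate the polynomial equation $F(D^2v)=0$, run an induction on $|\alpha|$ using the perturbed Baouendi--Grushin $L^p$ estimate (Proposition~\ref{prop:perturb-Grushin}) together with the embeddings of Lemma~\ref{lem:embedding}, and conclude via a Gevrey-$1$ bound. But two of the points you defer to ``routine bookkeeping'' are exactly where the work lies, and the one concrete quantitative choice you do make would fail. The radii $r_k=r_0/2+r_0/2^{k+1}$ give gaps $r_k-r_{k+1}\sim 2^{-k}$, so the factor $(1-\sigma)^{-2}$ at step $k$ is of order $4^{k}$, and the accumulated loss after $k$ steps is $4^{O(k^2)}$ --- super-geometric, hence incompatible with a bound of type $C R^{|\alpha|}|\alpha|!$. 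The classical fix is to take gaps of size $\sim r_0/|\alpha|$ (losing $|\alpha|^2$ per step, which the factorials absorb); the paper instead avoids nested domains altogether via Kato's device \cite{Kato}: a single fixed cutoff $\eta$ raised to the power $|\alpha|-2$, so that Proposition~\ref{prop:perturb-Grushin} is applied once per multi-index and the commutators with $\eta^{|\alpha|-2}$ are controlled by \eqref{eq:remark1}--\eqref{eq:important-remark}.

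Second, the weight-matching you correctly single out as the main obstacle is resolved in the paper by an \emph{anisotropic} induction hypothesis (Proposition~\ref{prop:induction}): pure tangential derivatives $\partial_1^k v$ satisfy a strictly stronger bound, $R^{-(k-4)}k^{k-4}$, than derivatives containing at least one $y_2$- or $y_3$-differentiation, which only satisfy $R^{-(k-3)}k^{k-3}$. This one-power-of-$k$ gap, combined with the $L^\infty$ control of quotients such as $\partial_1^{k-2}\partial_{\beta\gamma}v/y_2$ (Lemma~\ref{lem:embedding2}(iii), resting on the vanishing structure of Corollary~\ref{cor:extension}) and the integrability of $(y_2^2+y_3^2)^{-1/2}$, is what makes the H\"older estimates of the cubic terms recombine, via Stirling, into exactly the quantity needed to close the induction. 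With the uniform hypothesis $\|\partial^\beta v\|\le CR^{|\beta|}|\beta|!$ as you state it, each monomial in $g_\alpha$ overshoots the target by powers of $k$ and the multinomial sum (which has $O(k^2)$ terms) does not close. So the skeleton of your argument is the right one, but it stops short of the two ideas --- a correctly calibrated localization and the two-tier induction hypothesis --- that make the proof actually go through.
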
 

We first make some more assumptions and observations.\\
1. For simplicity we work on $\R^3$. By the scaling invariant property mentioned at the beginning of Section~\ref{sec:smoothness-free-bound} we may assume $v$ is smooth in $\C_2$ and solves the fully nonlinear equation \eqref{eq:equ-for-v} there. Denote
$$M_0:=\sup_{|\alpha|\leq 4}\|\partial^2_w\partial^\alpha v\|_{\infty, \C_2}.$$
From Corollary~\ref{cor:extension}(iii)(iv) and the intermediate value theorem, we have
\begin{equation}\label{eq:ss}
\left\| \frac{\partial_{\beta\beta} v}{y_2}\right\|_{\infty, \C_2},\ \left\| \frac{\partial_{1\beta\beta} v}{y_2}\right\|_{\infty, \C_2},\ \left\| \frac{\partial_{23} v}{y_3}\right\|_{\infty, \C_2},\ \left\| \frac{\partial_{123} v}{y_3}\right\|_{\infty, \C_2}\leq M_0, \quad \beta=2,3.
\end{equation}\\
2. The following multi-index notation will be used: \\For a multi-index $\alpha=(\alpha_j)_{j=1,\ldots, n}\in \Z_+^n$, $$\alpha ! =\prod_{j=1}^n \alpha_j!\quad (0!=1)$$ Let $\alpha=(\alpha_j)$, $\beta=(\beta_j)$ be two multi-index in $\Z_+^n$, we say $\alpha\leq \beta$ iff
$\alpha_j\leq \beta_j$, $j=1,\ldots, n$;  and $\alpha<\beta$ iff $\alpha \leq \beta $, $|\alpha|<|\beta|$, where $|\alpha|=\sum^n_1\alpha_j$.\\
3. The strategy to prove the analyticity is as follows: Given $\alpha \in \Z_+^3$, taking $\partial^\alpha$ on both sides of \eqref{eq:equ-for-v} and using the summation convention on $i, j$s, we obtain that $\partial^\alpha v$ satisfies
\begin{multline}\label{eq:deri-equ}
F_{ij}(D^2 v) \partial_{ij} \partial^\alpha v\\
+\sum_{\substack{\alpha^1+\alpha^2+\alpha^3=\alpha\\\alpha^i<\alpha,i=1,2,3}} \frac{\alpha !}{\alpha^1 !\alpha^2 !\alpha^3 !} \sum_{\sigma\in S_3}\sgn(\sigma)(\partial ^{\alpha^1} \partial_{1\sigma_1} v) (\partial^{\alpha^2} \partial_{2\sigma_2} v) (\partial^{\alpha^3}\partial_{3\sigma _3}v)=0,
\end{multline}
where $S_3$ is the set of all permutations of $\{1,2,3\}$.
We will apply Proposition~\ref{prop:perturb-Grushin} for \eqref{eq:deri-equ} to get a fine estimate of the $L^p$ norm of $\partial^\alpha v$.  In order to do so, usually one needs a sequence of domains with properly shrinking radius as well as the corresponding sequence of cut-off functions. In this paper, we use  the trick introduced in \cite{Kato} to avoid this technical trouble. In the following we take and fix a cut-off function $\eta\in C^\infty(\C_2)$ which satisfies
\begin{align*}
&\eta(y)=\eta^1(y_1)\eta^2(y_2,y_3), \quad 0\leq \eta^i\leq 1,\\
&\eta^1=\eta^2=1 \text{ if } |y_1|\leq 1, \sqrt{y_2^2+y_3^2}\leq 1,\\
&\eta^1=\eta^2=0 \text{ if } |y_1|>\frac{3}{2}, \sqrt{y_2^2+y_3^2}>\frac{3}{2},\\
&|\partial \eta^i|, |\partial ^2 \eta^i|\leq C_\eta, \quad i=1,2.
\end{align*}
We will estimate $\|\eta^{|\alpha|-2}\partial^2_w\partial^\alpha v\|_{p, \C_{3/2}}$ with $|\alpha|=k$ by $\|\eta^{|\alpha|-2}\partial^2_w\partial^\alpha v\|_{p, \C_{3/2}}$ with $|\alpha|<k$. \\
4. From now on we will simply write $\|\cdot\|_p$ if the integral domain is $\C_{3/2}$. We will fix a $p$ larger than the homogeneous dimension $4$. By a universal constant we mean an absolute constant which only depends on $C_\eta$, $M_0$, dimension (which is $3$ in our setting) or $p$ chosen, in particular independent of $k$.\\
5. The following observation will be useful in the proof:  
\begin{align*}
\supp (\partial_j \eta)\subset \{y:y_2^2+y_3^2>1\}\quad \text{ for }j=2,3,
\end{align*}
which implies
$$|\partial_j \eta(y)|\leq (y_2^2+y_3^2)^{1/2} C_\eta\quad \text{ for }j=2,3 . 
$$
Hence for multi-index $\alpha$ with $|\alpha|=k\geq 4$, 
\begin{align}
&\left|\|\partial^1_w(\eta^{|\alpha|-2}\partial ^\alpha u)\|_p-\|\eta^{|\alpha|-2}\partial^1_w \partial^\alpha u\|_p\right|\label{eq:remark1}\\
&\qquad\leq|\alpha|C_\eta \|\eta^{|\alpha|-3}(y_2^2+y_3^2)^{1/2}\partial^\alpha v\|_p \notag\\
&\left|\|\partial ^2_w(\eta^{|\alpha|-2} \partial^\alpha u)\|_p-\|\eta^{|\alpha|-2} \partial^2_w \partial^\alpha u\|_p\right| \label{eq:important-remark}\\
&\qquad\leq 2|\alpha|C_\eta\|\eta^{|\alpha'|-2}\partial^2_w\partial^{\alpha'} u\|_p+|\alpha|^2C_\eta\|\eta^{|\alpha''|-2}\partial^2_w\partial^{\alpha''} u\|_p\notag
\end{align}
for some $\alpha'$, $\alpha''$ with $|\alpha'|=k-1$ and $|\alpha''|=k-2$.

The following proposition is the main proposition of this section.
\begin{proposition}\label{prop:induction}
There exist universal constants $R$, $0<R<1$ and $\overline{C}>2$ such that for any $k\geq 4$
\begin{align*}
 \textup{(i)}\quad & \|\eta^{k-2} \partial^2 _w \partial^k_1 v\|_p\leq R^{-(k-4)}k^{k-4}; \\
 \textup{(ii)}\quad &\|\eta^{k-2}\partial^2_w \partial ^\alpha v\|_p\leq
 R^{-(k-3)}k^{k-3},\quad\text{for all }
\alpha \text{ with } |\alpha|=k  \text{ and } \alpha_2+\alpha_3\geq 1.
\end{align*}
\end{proposition}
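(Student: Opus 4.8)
The plan is to prove (i) and (ii) simultaneously by induction on $k$, following the Kato-type trick already set up in the preamble: instead of shrinking domains we carry the weight $\eta^{k-2}$ inside the estimates. The base case $k=4$ is handled by the smoothness already established in Theorem~\ref{thm:smoothness-v} (Step~4), which gives a finite bound $M_0$ on $\|\partial^2_w\partial^\alpha v\|_{\infty,\C_2}$ for $|\alpha|\le 4$; thus (i) and (ii) hold for $k=4$ provided $\overline C$ is chosen large relative to $M_0$ and the volume of $\C_{3/2}$, and $R$ small. For the inductive step, assume (i), (ii) hold for all indices of order $<k$ and fix $\alpha$ with $|\alpha|=k$. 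Apply $\partial^\alpha$ to the fully nonlinear equation \eqref{eq:equ-for-v}, obtaining \eqref{eq:deri-equ}: the leading term is $F_{ij}(D^2v)\partial_{ij}\partial^\alpha v$ and the remainder is the Leibniz sum of triple products $(\partial^{\alpha^1}\partial_{1\sigma_1}v)(\partial^{\alpha^2}\partial_{2\sigma_2}v)(\partial^{\alpha^3}\partial_{3\sigma_3}v)$ with each $\alpha^i<\alpha$. Multiply by $\eta^{k-2}$, write $\mathcal L_0(\eta^{k-2}\partial^\alpha v)$ as the perturbed operator applied to $\eta^{k-2}\partial^\alpha v$ plus commutator terms $[\mathcal L,\eta^{k-2}]\partial^\alpha v$, and invoke the $L^p$ estimate of Proposition~\ref{prop:perturb-Grushin} (the subelliptic structure is in force in $\C_{r_0}$, hence in $\C_2$ after the rescaling normalization).

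The heart of the argument is then to estimate $\|\eta^{k-2}\cdot(\text{RHS of \eqref{eq:deri-equ}})\|_p$ and $\|[\mathcal L,\eta^{k-2}]\partial^\alpha v\|_p$ by the inductive bounds. For the triple-product terms one uses Hölder's inequality splitting the three factors: one factor (the one with the most derivatives, of order roughly $k-2$ plus two) is kept in $L^p$ and bounded by the inductive hypothesis carrying the weight; a second factor (of low order, $\le$ four derivatives, hence in $L^\infty(\C_{3/2})$ by the smoothness) is put in $L^\infty$; the third factor is also low order and goes in $L^\infty$. A subtlety already flagged in the paper (see \eqref{eq:ss}, Corollary~\ref{cor:extension}) is that pure $y$-derivatives like $\partial_{\beta\beta}v$ or $\partial_{1\beta\beta}v$ ($\beta=2,3$) and $\partial_{23}v$, $\partial_{123}v$ vanish on $\cP$, so that quotients such as $\partial_{\beta\beta}v/y_2$, $\partial_{23}v/y_3$ are bounded; these identities must be used to absorb the degenerate weights $(y_2^2+y_3^2)^{\pm 1/2}$ that appear because $\partial^2_w$ contains weighted derivatives. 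Concretely, whenever a factor produces an unweighted third derivative $\partial_{111}v$ one pays a weight $(y_2^2+y_3^2)^{1/2}$ and uses $(y_2^2+y_3^2)^{-1/2}\in L^s(\C_r\cap\{y_1=\mathrm{const}\})$ for $s\in(1,2)$, exactly as in Step~2 of Theorem~\ref{thm:smoothness-v}, together with one application of the Sobolev embedding Lemma~\ref{lem:embedding} to trade $L^{q'}$ for $L^p$ with $p>4$. Tracking the combinatorial factor $\alpha!/(\alpha^1!\alpha^2!\alpha^3!)$ against the geometric bounds $R^{-(|\alpha^i|-c_i)}|\alpha^i|^{|\alpha^i|-c_i}$ from the inductive hypothesis, one checks that the sum over the decomposition is controlled by a universal constant times $R^{-(k-c)}k^{k-c}$; the classical inequality $\sum \binom{k}{j}\frac{j^{j}(k-j)^{k-j}}{k^{k}}\le C$ (or the analogous multinomial version for three indices) is what makes the powers of $k$ close up, and the factor $R$ is then chosen small enough so that the universal constants coming out of Proposition~\ref{prop:perturb-Grushin}, the Sobolev embedding, and $M_0$ are all absorbed.

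The commutator term $[\mathcal L,\eta^{k-2}]\partial^\alpha v$ is handled by expanding the derivatives of $\eta^{k-2}$: each derivative of $\eta^{k-2}$ produces a factor $(k-2)\eta^{k-3}\partial\eta$ (or $(k-2)(k-3)\eta^{k-4}(\partial\eta)^2$ plus $(k-2)\eta^{k-3}\partial^2\eta$), contributing at most quadratic growth $k^2$ times lower-order weighted norms $\|\eta^{|\alpha'|-2}\partial^2_w\partial^{\alpha'}v\|_p$ with $|\alpha'|\in\{k-1,k-2\}$, exactly as recorded in \eqref{eq:remark1}–\eqref{eq:important-remark}; crucially $\partial_j\eta$ for $j=2,3$ is supported in $\{y_2^2+y_3^2>1\}$ so it supplies the weight $(y_2^2+y_3^2)^{1/2}$ for free, matching the degenerate directions. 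These contributions are again of the form $k^2\cdot R^{-(k-1-c)}(k-1)^{k-1-c}\le \tfrac12 R^{-(k-c)}k^{k-c}$ for $R$ small, so they are absorbed into the target bound. Finally one must also pass from $\|\partial^2_w(\eta^{k-2}\partial^\alpha v)\|_p$ back to $\|\eta^{k-2}\partial^2_w\partial^\alpha v\|_p$, which is \eqref{eq:important-remark} again — a lower-order correction absorbed by induction. Splitting into the case $\alpha=(k,0,0)$ (giving (i), where the remainder sum has one fewer "free" $y$-derivative, hence the exponent $k-4$) and the case $\alpha_2+\alpha_3\ge1$ (giving (ii), exponent $k-3$) completes the induction. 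The main obstacle, as usual in analyticity-by-bootstrap arguments, is the bookkeeping: one must verify that the factorial/multinomial weights genuinely telescope into pure powers of $k$ and that the degenerate weights $(y_2^2+y_3^2)^{\pm1/2}$ are always correctly paired (either with a vanishing pure-$y$ third derivative via Corollary~\ref{cor:extension}, or with a $\partial_{2,3}\eta$, or integrated via the $L^s$ bound), since a single mismatched weight would break the estimate.
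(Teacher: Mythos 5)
Your overall strategy coincides with the paper's: the same decomposition of $F_{ij}\partial_{ij}(\eta^{k-2}\partial^\alpha v)$ into the Leibniz remainder and the commutator term, the same application of Proposition~\ref{prop:perturb-Grushin}, the same pairing of the degenerate weights $(y_2^2+y_3^2)^{\pm1/2}$ with the vanishing pure derivatives from Corollary~\ref{cor:extension} and with the $L^s$ integrability of $(y_2^2+y_3^2)^{-1/2}$, and the same Stirling/multinomial bookkeeping with the cut-off carried as a power $\eta^{k-2}$.

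There is, however, one concrete flaw in how you set up the H\"older splitting of the triple products. You assert that the two factors placed in $L^\infty$ are ``of low order, $\le$ four derivatives, hence in $L^\infty(\C_{3/2})$ by the smoothness.'' This is false: in a decomposition $\alpha^1+\alpha^2+\alpha^3=\alpha$ with $|\alpha|=k$, the two non-maximal pieces can each have order comparable to $k/2$ (for instance $|\alpha^1|=|\alpha^2|=|\alpha^3|=k/3$), so their sup-norms are not controlled by the a priori constant $M_0$; they grow like $C_2R^{-\ell}\ell^{\ell}$ and this growth must be tracked explicitly, or the multinomial sum cannot be closed. What is actually needed --- and what the paper isolates as Lemma~\ref{lem:embedding2} --- is a quantitative $L^\infty$ bound at every intermediate level, obtained by feeding the inductive weighted $L^p$ bounds into the subelliptic Sobolev embedding Lemma~\ref{lem:embedding}(ii), yielding estimates such as $\|\eta^{\ell-2}\partial_1^{\ell}\partial_\beta v\|_\infty\le C_2R^{-(\ell-4)}\ell^{\ell-4}$ and their analogues for the quotients $\partial_{\beta\beta}v/y_2$, $\partial_{\beta\gamma}v/y_3$. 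Your later sentence about tracking ``geometric bounds $R^{-(|\alpha^i|-c_i)}|\alpha^i|^{|\alpha^i|-c_i}$'' suggests you intend something of this sort, but as written the proposal never converts the inductive $L^p$ information into the $L^\infty$ bounds it actually uses, and the explicit claim that those factors are bounded by the smoothness constant is precisely the unjustified step. A related structural point: since the $L^\infty$ bounds at level $k$ are needed for part (ii) at level $k$, one must prove (i) at level $k$ first and only then (ii) at the same level; your outline does not register this ordering.
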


Theorem~\ref{thm:main} will follow from Proposition~\ref{prop:induction}. Indeed, by Proposition~\ref{prop:induction} there exists a universal $R>0$ such that  
$$\|\partial^{\alpha} v\|_{p, \C_1}\leq R^{-|\alpha|} |\alpha|^{|\alpha|}, \quad |\alpha |\geq 4.$$
Hence by the classical Sobolev embedding $W^{1,p}\hookrightarrow L^\infty$ for $p>4$ chosen above, one has 
\begin{equation}\label{eq:Taylor2}
\sup_{y\in \C_1}|\partial^{\alpha} v(y)|\leq C R^{-|\alpha|} |\alpha|^{|\alpha|}.
\end{equation}
Hence $v$ is in Gevrey class $G^1$, which is the same as the class of
real analytic functions.

Before proving Proposition~\ref{prop:induction}, we first show a lemma on the $L^\infty$-norm of $\partial^\alpha v$, which roughly speaking is a consequence of the Sobolev embedding lemma (Lemma~\ref{lem:embedding}(ii)). 

\begin{lemma}\label{lem:embedding2}
For $k\geq 5$, assume Proposition~\ref{prop:induction} holds for $k-1$ and $k$. Then there is a universal constant $C_2>M_0>0$ such that
\begin{align*}
 \textup{(i)} \quad & \|\eta^{k-2} \partial ^{k}_1 v\|_\infty,~ \|\eta^{k-2}\partial ^k_1 \partial_{\beta} v\|_{\infty}\leq C_2R^{-(k-4)}k^{k-4},\quad \beta=2,3;\\
\textup{(ii)}\quad &\|\eta^{k-2}\partial^\alpha v\|_{\infty}\leq C_2R^{-(k-3)}k^{k-3}, \quad |\alpha|=k+1,~ \alpha_2+\alpha_3\geq 2;\\
 \textup{(iii)} \quad &\|\eta^{k-2}\partial_1^{k-2}\partial_{\beta\beta}v/y_2\|_{\infty},\|\eta^{k-2}\partial_1^{k-2}\partial_{\beta\gamma} v / y_3\|_{\infty}\leq C_2R^{-(k-3)}k^{k-3}, \\
 &\text{for }\beta,\gamma=2,3 \text{ and }\beta\neq \gamma.
\end{align*}
\end{lemma}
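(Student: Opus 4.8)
The plan is to derive each of the three $L^\infty$ bounds from the Sobolev embedding Lemma~\ref{lem:embedding}(ii) (available because $p$ exceeds the homogeneous dimension $4$), applied to $\eta^{k-2}$ times a suitable derivative of $v$, estimating the horizontal gradient of that function in $L^p$ by Proposition~\ref{prop:induction} at levels $k-1$ and $k$. The mechanism is that (for $n=3$) $\partial_w^2\partial^\alpha v$, carrying its matching cut-off power $\eta^{|\alpha|-2}$, simultaneously controls the \emph{unweighted} third-order quantities $\partial^2_{y_iy_j}\partial^\alpha v$ with $i,j\in\{2,3\}$, the once-weighted quantities $(y_2^2+y_3^2)^{1/2}\partial^2_{y_1y_i}\partial^\alpha v$, and the first order term $\partial_{y_1}\partial^\alpha v$; hence one further differentiation of $\partial^\alpha v$ in $y_2$ or $y_3$, or one further $\partial_{y_1}$ together with a factor $(y_2^2+y_3^2)^{1/2}$, lands on a quantity already bounded by Proposition~\ref{prop:induction}. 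Every term produced by the Leibniz rule in which a derivative falls on $\eta$ carries a factor $O(k)$ but a \emph{lower}-order quantity $\partial^2_w\partial^\beta v$ with $|\beta|\le k-1$; since $R<1$ and $k\,R^{-(k-5)}(k-1)^{k-5}\ll R^{-(k-4)}k^{k-4}$ (and likewise $k\,R^{-(k-4)}(k-1)^{k-4}\ll R^{-(k-3)}k^{k-3}$), these commutator terms are absorbed, provided that when the derivative hits $\eta^2(y_2,y_3)$ one uses $|\partial_{y_j}\eta|\le C_\eta (y_2^2+y_3^2)^{1/2}$ for $j=2,3$ to supply the weight needed to match a piece of $\partial^2_w\partial^\beta v$.

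For part (i) I would first estimate $\partial_1^k\partial_\beta v$, $\beta\in\{2,3\}$: applying Lemma~\ref{lem:embedding}(ii) to $g=\eta^{k-2}\partial_1^k\partial_\beta v$, the principal terms of $\nabla_x g$ and $(y_2^2+y_3^2)^{1/2}\nabla_t g$ are $\eta^{k-2}\partial^2_{y_jy_\beta}(\partial_1^k v)$ and $\eta^{k-2}(y_2^2+y_3^2)^{1/2}\partial^2_{y_1y_\beta}(\partial_1^k v)$, both pieces of $\partial^2_w\partial_1^k v$, hence $\le R^{-(k-4)}k^{k-4}$ by Proposition~\ref{prop:induction}(i) at level $k$; the commutator terms are absorbed as above, so $\|\eta^{k-2}\partial_1^k\partial_\beta v\|_\infty\le C\,R^{-(k-4)}k^{k-4}$ for a universal $C$. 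The bound on $\partial_1^k v$ follows at once from this: since $v\equiv0$ on $\cP=\{y_2=y_3=0\}$, so is $\partial_1^k v$, whence $\partial_1^k v(y)=\int_0^1\big(y_2\,\partial_2\partial_1^k v+y_3\,\partial_3\partial_1^k v\big)(y_1,ty_2,ty_3)\,dt$; using $(y_2^2+y_3^2)^{1/2}\le 3/2$ on $\C_{3/2}$ and the fact that $\eta^2(y_2,y_3)$ is nonincreasing in $(y_2^2+y_3^2)^{1/2}$, one gets $\|\eta^{k-2}\partial_1^k v\|_\infty\le 3\max_{\beta}\|\eta^{k-2}\partial_1^k\partial_\beta v\|_\infty\le C\,R^{-(k-4)}k^{k-4}$.

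Part (ii) is treated identically: for $|\alpha|=k+1$ with $\alpha_2+\alpha_3\ge2$, applying Lemma~\ref{lem:embedding}(ii) to $\eta^{k-2}\partial^\alpha v$, each principal term of the horizontal gradient has the form $\eta^{k-2}\partial^{\alpha\pm e}v$ from which two tangential derivatives $\partial^2_{y_iy_j}$, $i,j\in\{2,3\}$, can be extracted acting on some $\partial^{\alpha''}v$ with $|\alpha''|=k$ and $\alpha''_2+\alpha''_3\ge1$ — a piece of $\partial^2_w\partial^{\alpha''}v$ — hence bounded by Proposition~\ref{prop:induction}(ii) at level $k$, giving the (slightly larger) growth $R^{-(k-3)}k^{k-3}$; commutators use level $k-1$ and are absorbed. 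Part (iii) follows from part (ii): since $v$ is odd in $y_2$ and even in $y_3$, the derivatives $\partial_1^{k-2}\partial_{\beta\beta}v$ vanish on $\{y_2=0\}$ and $\partial_1^{k-2}\partial_{23}v$ vanishes on $\{y_3=0\}$, so the same fundamental-theorem-of-calculus argument dominates $\eta^{k-2}\partial_1^{k-2}\partial_{\beta\beta}v/y_2$ and $\eta^{k-2}\partial_1^{k-2}\partial_{23}v/y_3$ by the $L^\infty$ norms of the $(k+1)$-st order derivatives $\eta^{k-2}\partial_1^{k-2}\partial_2\partial_{\beta\beta}v$ and $\eta^{k-2}\partial_1^{k-2}\partial_2\partial_{33}v$, which are instances of part (ii). Finally one fixes $C_2$ to be a universal constant larger than $M_0$ and than all the constants produced along the way; since none of those constants depends on $C_2$ there is no circularity, and the hypothesis $k\ge5$ is exactly what makes Proposition~\ref{prop:induction} available at level $k-1$.

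The main obstacle is purely combinatorial: for each of the several terms generated by the Leibniz rule one must check that the power of $\eta$ and the number of factors $(y_2^2+y_3^2)^{1/2}$ attached to it are precisely those occurring in some $\partial^2_w\partial^\beta v$, so that Proposition~\ref{prop:induction} applies at the correct level and the output is exactly of the claimed shape $R^{-(k-4)}k^{k-4}$ or $R^{-(k-3)}k^{k-3}$ — in particular, making sure that the $O(k)$ factors from commutation with $\eta$ are always offset by a drop of one order (hence by a factor $\sim R/k$), so that for $R$ small and universal everything closes.
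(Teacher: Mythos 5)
Your proposal is correct and follows essentially the same route as the paper: apply the Sobolev embedding Lemma~\ref{lem:embedding}(ii) to $\eta^{k-2}$ times the relevant derivative, identify the principal terms of the horizontal gradient as pieces of $\partial^2_w\partial^{\alpha''}v$ controlled by Proposition~\ref{prop:induction} at level $k$, and absorb the cut-off commutators (carrying a factor $O(k)$ but one order lower) using level $k-1$ and $R<1$. The only harmless deviations are that you bound $\|\eta^{k-2}\partial_1^k v\|_\infty$ by a fundamental-theorem-of-calculus argument using $v=0$ on $\cP$ where the paper invokes the classical Sobolev embedding, and that in (iii) you spell out the parity/mean-value argument that the paper compresses into ``use Corollary~\ref{cor:extension} and (ii)''.
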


\begin{proof}
(i) First by Lemma~\ref{lem:embedding2}(ii), there is a universal constant $C$ such that 
$$\|\eta^{k-2}\partial^k_1\partial_\beta v\|_\infty \leq C\|\partial^1_w (\eta^{k-2}\partial^k_1\partial_\beta v)\|_p, \quad \beta=2,3.$$
Applying \eqref{eq:remark1} to the RHS of above inequality, we have 
\begin{align*}
\|\eta^{k-2}\partial^k_1\partial_\beta v\|_\infty &\leq C\left(\|\eta^{k-2}\partial^1_w\partial^k_1\partial_\beta v\|_p+C_\eta k\|\eta^{k-3}\partial^2_w \partial^{k-1}_1 v\|_p\right)\\
&\leq C\left(\|\eta^{k-2}\partial^2_w\partial_1^k v\|_p+C_\eta k\|\eta^{k-3}\partial^2_w \partial^{k-1}_1 v\|_p\right)
\end{align*}
By Proposition~\ref{prop:induction}(i) for $k-1$ and $k$, there exists a universal constant $C_2$, which can be chosen larger than $M_0$ such that
\begin{equation}\label{eq:tem}
\|\eta^{k-2}\partial ^k_1 \partial_{\beta} v\|_{\infty}\leq C_2R^{-(k-4)}k^{k-4}.
\end{equation}
The estimate for $\|\eta^{k-2} \partial ^{k}_1 v\|_\infty$ follows from the classical Sobolev embedding, \eqref{eq:remark1}, \eqref{eq:tem} and the assumption.

(ii) Similar to (i). 

(iii) Use Corollary~\ref{cor:extension} and (ii) above.
\end{proof}

\begin{proof}[Proof of Proposition~\ref{prop:induction}]
This is done by induction. Assume (i) and (ii) hold for $4, \ldots, k-1$. We want to show they hold for $k$.

Let $\alpha$ be a multi-index with $|\alpha|=k$. From \eqref{eq:deri-equ}, $\eta^{k-2} \partial ^\alpha v$ satisfies the following equation
\begin{equation*}
F_{ij} \partial_{ij} (  \eta^{k-2} \partial^\alpha v)= I+II,
\end{equation*}
where
\begin{align*}
I&=-\sum_{\substack{\alpha^1+\alpha^2+\alpha^3=\alpha\\0\leq\alpha^i<\alpha,i=1,2,3}} \frac{\alpha !}{\alpha^1 !\alpha^2 !\alpha^3 !} \sum_{\sigma\in S_3}\sgn(\sigma)(\partial ^{\alpha^1} \partial_{1\sigma_1} v) (\partial^{\alpha^2} \partial_{2\sigma_2} v) (\partial^{\alpha^3}\partial_{3\sigma _3}v)\eta^{k-2}\\
II&=2F_{ij} \partial_i (\eta^{k-2})\partial _j\partial ^\alpha v+ F_{ij}\partial _{ij} (\eta^{k-2})\partial^\alpha v.
\end{align*}
By the $L^p$ estimate for the perturbed Baouendi-Grushin operator (Proposition~\ref{prop:perturb-Grushin}), there is a universal $C_3>0$ such that
\begin{equation}\label{eq:sumI}
\|\partial ^2_w (\eta^{k-2}\partial ^\alpha v)\|_{p}\leq C_3\left (\| I \|_{p}+ \|II \|_{p}+\|\eta^{k-2}\partial^\alpha v\|_p\right).
\end{equation}
 
The estimate of $\|II\|_p$ is standard. In fact, 
\begin{equation*}
\|\sum_{i,j}F_{ij} \partial_i (\eta^{k-2}) \partial _j \partial ^\alpha v\|_p
\leq 2M_0^2 (k-2)C_\eta\|\eta^{k-3}\partial_w^2 (\partial ^{\tilde{\alpha}} v)\|_p,
\end{equation*}
for some $\tilde{\alpha}<\alpha$ with $|\tilde{\alpha}|=k-1$. 
By the induction assumption (i)(ii) for $k-1$, the above RHS is bounded by
\begin{align*}
 2 M_0^2(k-2)C_\eta\begin{cases} 
                              R^{-(k-5)}(k-1)^{k-5} & \text{ if } \alpha'=(k-1,0,0)\\
                              R^{-(k-4)}(k-1)^{k-4} & \text{ otherwise}.
\end{cases}
\end{align*}
Similarly there is some $\tilde{\tilde{\alpha}}<\alpha$ with $|\tilde{\tilde{\alpha}}|=k-2$ such that
\begin{align*}
&\| \sum_{i,j}F_{ij}\partial _{ij}( \eta^{k-2})\partial^\alpha v\|_p
\leq 2M^2(k-2)(k-3)C_\eta\|\eta^{k-4}\partial^2_w(\partial^{\tilde{\tilde{\alpha}}}v)\|_p\\
&\qquad \leq 2M_0^2(k-2)(k-3)C_\eta \begin{cases}
                                        R^{-(k-6)}(k-2)^{k-6} & \text{ if } \alpha''=(k-2,0,0)\\
                                        R^{-(k-5)}(k-2)^{k-5} & \text{ otherwise}.
\end{cases}
\end{align*}
Hence 
\begin{align}
\|II\|_p&\leq \|2\sum_{i,j}F_{ij} \partial_i (\eta^{k-2})\partial _j \partial ^\alpha v\|_p+\| \sum_{i,j}F_{ij}\partial _{ij} (\eta^{k-2})\partial^\alpha v\|_p \notag\\
&\leq 6M_0^2C_\eta \begin{cases} 
                                    R^{-(k-5)} k^{k-4}  & \text{ if } \alpha=(k,0,0)\\
                                    R^{-(k-4)} k^{k-3} & \text{ otherwise}. 
\end{cases}\label{eq:IIp}
\end{align}

Next we estimate $\|I\|_p$.

\step{Proof of (i).}
Let $\ell_1=|\alpha_1|$, $\ell_2=|\alpha_2|$, $\ell_3=|\alpha_3|$. Then
\begin{equation*}
\|I\|_p\leq \sum_{\substack{\ell_1+\ell_2+\ell_3=k\\0\leq \ell_i<k}}\frac{k!}{\ell_1 !\ell_2 ! \ell_3!}\sum_{\sigma\in S_3}\left\|(\partial^{\ell_1}_1\partial_{1\sigma_1}v)(\partial^{\ell_2}_1\partial_{2\sigma_2}v)(\partial^{\ell_3}_1\partial_{3\sigma_3} v)\eta^{k-2}\right\|_p.
\end{equation*}
We discuss the following two cases:
\case {Case 1.} $\ell_1\geq\max\{\ell_2,\ell_3\}$. 

If $\sigma_1=1$, then 
\begin{align*}
&\left\|(\partial^{\ell_1}_1\partial_{1\sigma_1}v)(\partial^{\ell_2}_1\partial_{2\sigma_2}v)(\partial^{\ell_3}_1\partial_{3\sigma_3} v)\eta^{k-2}\right\|_p\\
&\qquad\leq \left\|\eta^{\ell_1-2}(y_2^2+y_3^2)\partial^{\ell_1}_1\partial_{11}v\right\|_p \left\|\eta^{\ell_2} \frac{\partial^{\ell_2}_1\partial_{2\sigma_2} v}{(y_2^2+y_3^2)^{1/2}}\right\|_\infty \left\|\eta^{\ell_3} \frac{\partial^{\ell_3}_1\partial_{3\sigma_3} v}{(y_2^2+y_3^2)^{1/2}}\right\|_\infty\\
&\qquad\leq \left\|\eta^{\ell_1-2}\partial^2_w\partial^{\ell_1}_1 v\right\|_p\left\|\eta^{\ell_2} \frac{\partial^{\ell_2}_1\partial_{2\sigma_2} v}{(y_2^2+y_3^2)^{1/2}}\right\|_\infty \left\|\eta^{\ell_3} \frac{\partial^{\ell_3}_1\partial_{3\sigma_3} v}{(y_2^2+y_3^2)^{1/2}}\right\|_\infty,
\end{align*}
which is by Lemma~\ref{lem:embedding2}(iii) and the induction assumption (i) for $k-1$ bounded by
\begin{equation*}
R^{-(\ell_1-4)}\ell_1^{\ell_1-4} C_2R^{-(\ell_2-1)}\ell_2^{\ell_2-1}C_2R^{-(\ell_3-1)}\ell_3^{\ell_3-1}= (C_2)^2R^{-(k-6)}\ell_1^{\ell_1-4}\ell_2^{\ell_2-1}\ell_3^{\ell_3-1}.
\end{equation*}

If $\sigma_1\neq 1$, then 
\begin{align*}
&\left\|(\partial^{\ell_1}_1\partial_{1\sigma_1}v)(\partial^{\ell_2}_1\partial_{2\sigma_2}v)(\partial^{\ell_3}_1\partial_{3\sigma_3} v)\eta^{k-2}\right\|_p\\
&\qquad\leq\left\|\eta^{\ell_1-2}(y_2^2+y_3^2)^{1/2}\partial^{\ell_1}_1\partial_{1\sigma_1} v\right\|_p\left\|\eta^{\ell_2+\ell_3}\frac{(\partial_1^{\ell_2}\partial_{2\sigma_2}v)(\partial_1^{\ell_3}\partial_{3\sigma_3}v)}{(y_2^2+y_3^2)^{1/2}}\right\|_\infty,
\end{align*}
which is by Lemma~\ref{lem:embedding2}(iii)(i) and the induction assumption (i) for $k-1$ bounded by
\begin{equation*}
(C_2)^2R^{-(k-7)}\ell_1^{\ell_1-4}\ell_2^{\ell_2-1}\ell_3^{\ell_3-1}.
\end{equation*}
Hence 
\begin{align*}
&\sum_{\substack{\ell_1+\ell_2+\ell_3=k\\0\leq \ell_i<k\\\ell_1\geq \max\{\ell_2,\ell_3\}}}\frac{k!}{\ell_1 !\ell_2 ! \ell_3!}\sum_{\sigma\in S_3}\left\|(\partial^{\ell_1}_1\partial_{1\sigma_1}v)(\partial^{\ell_2}_1\partial_{2\sigma_2}v)(\partial^{\ell_3}_1\partial_{3\sigma_3} v)\eta^{k-2}\right\|_p\\
&\qquad \leq \sum_{\substack{\ell_1+\ell_2+\ell_3=k\\0\leq \ell_i<k\\\ell_1\geq \max\{\ell_2,\ell_3\}}}\frac{k!}{\ell_1 !\ell_2 ! \ell_3!}6(C_2)^2R^{-(k-6)}\ell_1^{\ell_1-4}\ell_2^{\ell_2-1}\ell_3^{\ell_3-1}.
\end{align*}
By Stirling's formula and the fact that $\ell_3\geq \max\{\ell_1,\ell_2\}\implies \ell_3\geq k/3$, there is a universal constant $C_4>0$ such that
$$\sum_{\substack{\ell_1+\ell_2+\ell_3=k\\0\leq \ell_i<k\\\ell_1\geq \max\{\ell_2,\ell_3\}}}\frac{k!}{\ell_1 !\ell_2 ! \ell_3!}\ell_1^{\ell_1-4}\ell_2^{\ell_2-1}\ell_3^{\ell_3-1}\leq C_4k^{k-4}.$$
Hence
\begin{align*}
&\sum_{\substack{\ell_1+\ell_2+\ell_3=k\\0\leq \ell_i<k\\\ell_1\geq \max\{\ell_2,\ell_3\}}}\frac{k!}{\ell_1 !\ell_2 ! \ell_3!}\sum_{\sigma\in S_3}\left\|(\partial^{\ell_1}_1\partial_{1\sigma_1}v)(\partial^{\ell_2}_1\partial_{2\sigma_2}v)(\partial^{\ell_3}_1\partial_{3\sigma_3} v)\eta^{k-2}\right\|_p\\
&\qquad\leq 6(C_2)^2C_4R^{-(k-6)}k^{k-4}.
\end{align*}

\case{Case 2.} $\ell_2\geq\max\{\ell_1,\ell_3\}$ or $\ell_3\geq\max\{\ell_1,\ell_2\}$. 

We only discuss when $\ell_2\geq \max\{\ell_1, \ell_3\}$. Similarly we consider $\sigma_2=1$ and $\sigma_2\neq 1$. If $\sigma_2=1$, the estimate is indeed included in Case 1. If $\sigma_2\neq 1$, then we simply have
\begin{align*}
&\left\|(\partial^{\ell_1}_1\partial_{1\sigma_1}v)(\partial^{\ell_2}_1\partial_{2\sigma_2}v)(\partial^{\ell_3}_1\partial_{3\sigma_3} v)\eta^{k-2}\right\|_p\\
&\qquad\leq\left\|\eta^{\ell_1}\partial^{\ell_1}_1\partial_{1\sigma_1}v\right\|_\infty \left\|\eta^{\ell_2}\partial^{\ell_2}_1\partial_{2\sigma_2}v\right\|_p\left\|\eta^{\ell_3-1}\partial^{\ell_3}_1\partial_{3\sigma_3} v\right\|_\infty\\
&\qquad\leq (C_2)^2R^{-(k-8)}\ell_1^{\ell_1-2}\ell_2^{\ell_2-4}\ell_3^{\ell_3-2}.
\end{align*}

Arguing similarly as in Case 1 we have 
\begin{align*}
&\sum_{\substack{\ell_1+\ell_2+\ell_3=k\\0\leq \ell_i<k\\\ell_1\leq \max\{\ell_2,\ell_3\}}}\frac{k!}{\ell_1 !\ell_2 ! \ell_3!}\sum_{\sigma\in S_3}\left\|(\partial^{\ell_1}_1\partial_{1\sigma_1}v)(\partial^{\ell_2}_1\partial_{2\sigma_2}v)(\partial^{\ell_3}_1\partial_{3\sigma_3} v)\eta^{k-2}\right\|_p\\
&\qquad \leq 12(C_2)^2C_4R^{-(k-6)}k^{k-4}.
\end{align*}

Combining the above two cases we have, 
\begin{equation}\label{eq:firstI1}
\|I\|_p\leq 18(C_2)^2 C_4 R^{-(k-6)}k^{k-4}.
\end{equation}

We combine \eqref{eq:sumI}, \eqref{eq:IIp} for $\alpha=(k,0,0)$ and \eqref{eq:firstI1}, and use the induction assumption (i) for $\alpha=(k-1,0,0)$ to estimate $\|\eta^{k-2} \partial^\alpha v\|_p$. Then
\begin{align}
&\|\partial^2_w (\eta^{k-2} \partial^\alpha v)\|_p\label{eq:temp11}\\
&\qquad\leq C_3\left(18(C_2)^2C_4R^{-(k-6)}k^{k-4}+6M_0^2C_\eta R^{-(k-5)}k^{k-4}+(k-1)^{k-5}R^{-(k-5)}\right)\notag\\
&\qquad\leq C_5 R^{-(k-5)}k^{k-4},\notag
\end{align}
where $C_5=C_3(18(C_2)^2C_4+6M_0^2C_\eta+1)$ is a universal constant.
By \eqref{eq:important-remark} for $\alpha=(k,0,0)$,
\begin{align*}
&\|\eta^{k-2}\partial^2_w\partial^k_1 v\|_p \\
&\qquad\leq\|\partial^2_w (\eta^{k-2} \partial^k_1 v)\|_p+2C_\eta k\|\eta^{k-3}\partial^2_w\partial^{k-1}_1v\|_p+ C_\eta k^2\|\eta^{k-4}\partial^2_w\partial^{k-2}_1 v\|_p.
\end{align*}
Thus combining \eqref{eq:temp11} with induction assumption (i) for $k-1$ and $k-2$, we have
\begin{align}
&\|\eta^{k-2}\partial^2_w\partial^k_1 v\|_p \label{eq:estimate1}\\
&\qquad\leq C_5R^{-(k-6)}k^{k-4}+2C_\eta R^{-(k-5)}k(k-1)^{k-5}+C_\eta R^{-(k-6)}k^2(k-2)^{k-6}\notag\\
&\qquad\leq C_6R^{-(k-5)}k^{k-4}\notag,
\end{align}
where $C_6=C_5+3C_\eta$. Choosing $R=C_6^{-1}$ we proved (i).

\step{Proof of (ii).} The key step is to estimate $\|I\|_p$, which is done similarly as for (i). 

More precisely, Lemma~\ref{lem:embedding2}(i) and \eqref{eq:estimate1} (with $R=C_6^{-1}$) imply
$$\|\eta^{k-2}\partial_1^{k}\partial_\ell v\|_p\leq C_2 R^{-(k-4)}k^{k-4}, \quad \ell=1,2,3.$$
This together with the induction assumption (ii) up to $k-1$ yields, for any multi-index $\alpha$, $3\leq |\alpha|\leq k+1$,
\begin{equation}\label{eq:proofii}
\|\eta^{|\alpha|-3}\partial^\alpha v\|_p\leq C_2R^{-(|\alpha|-5)}(|\alpha|-1)^{|\alpha|-5}.
\end{equation}

In the estimate of $\|I\|_p$, we first consider when $|\alpha^1|\geq \max\{|\alpha^2|,|\alpha^3|\}$: by Lemma~\ref{lem:embedding2}(ii) and \eqref{eq:proofii},
\begin{align*}
&\left\|(\partial^{\alpha^1}\partial_{1\sigma_1} v)(\partial^{\alpha^2}\partial_{2\sigma_2}v)(\partial^{\alpha^3}\partial_{3\sigma_2}v)\eta^{k-2}\right\|_p\\
&\qquad\leq  \left\|\eta^{|\alpha^1|-1}\partial^{\alpha_1}\partial_{1\sigma_1}v\right\|_p \left\|\eta^{|\alpha^2|-1}\partial^{\alpha^2}\partial_{2\sigma_2} v\right\|_\infty \left\|\eta^{|\alpha^3|-1}\partial^{\alpha^3}\partial_{3\sigma_3} v\right\|_\infty\\
&\qquad\leq (C_2)^3R^{-(k-7)}\left(|\alpha^1|+1\right)^{|\alpha^1|-3}\left(|\alpha^2|\right)^{|\alpha^2|-2}\left(|\alpha^3|\right)^{|\alpha^3|-2}.
\end{align*}
Hence
\begin{align}
&\sum_{\substack{\alpha^1+\alpha^2+\alpha^3=\alpha\\0\leq |\alpha^i|<k\\|\alpha^1|\geq \max\{|\alpha^2|,|\alpha^3|\}}}\hskip-2em \frac{\alpha !}{\alpha^1 !\alpha^2 !\alpha^3 !} \sum_{\sigma\in S_3}\left\|(\partial^{\alpha^1}\partial_{1\sigma_1} v)(\partial^{\alpha^2}\partial_{2\sigma_2}v)(\partial^{\alpha^3}\partial_{3\sigma_2}v)\eta^{k-2}\right\|_p \notag\\
&\qquad \leq 6(C_2)^3R^{-(k-7)}\hskip-2em\sum_{\substack{\alpha^1+\alpha^2+\alpha^3=\alpha\\0\leq |\alpha^i|<k\\|\alpha^1|\geq \max\{|\alpha^2|,|\alpha^3|\}}}\hskip-2em \frac{\alpha !}{\alpha^1 !\alpha^2 !\alpha^3 !}  \left(|\alpha^1|+1\right)^{|\alpha^1|-3}\left(|\alpha^2|\right)^{|\alpha^2|-2}\left(|\alpha^3|\right)^{|\alpha^3|-2}\label{eq:ii}
\end{align}
Since for $\alpha$ with $|\alpha|=k$ fixed, we have the following identity (e.g. Proposition 2.1 in \cite{Kato}): 
\begin{equation*}
\sum_{\substack{\alpha^1+\alpha^2+\alpha^3=\alpha\\0\leq \alpha^i\leq\alpha}}\frac{\alpha !}{\alpha^1 !\alpha^2 !\alpha^3 !}=\sum_{\substack{\ell_1+\ell_2+\ell_3=k\\0\leq \ell_i\leq k}}\frac{k!}{\ell_1!\ell_2!\ell_3!} ,
\end{equation*}
then if we let $|\alpha^1|=\ell_1$, $|\alpha^2|=\ell_2$, $|\alpha^3|=\ell_3$, the RHS of \eqref{eq:ii} is bounded by 
$$6(C_2)^3R^{-(k-7)}\sum_{\substack{\ell_1+\ell_2+\ell_3=k\\0\leq \ell_i<k\\\ell_1\geq\max\{\ell_2,\ell_3\}}}\frac{k!}{\ell_1 !\ell_2 !\ell_3 !}(\ell_1+1)^{\ell_1-3}\ell_2^{\ell_2-2}\ell_3^{\ell_3-2}.$$
By Stirling's formula and if we still use $C_4$ to denote the universal constant from it, then the above quantity is bounded by
$$6(C_2)^3C_4R^{-(k-7)}k^{k-3}.$$
The arguments for the case $|\alpha^1|\leq\max\{|\alpha^2|,|\alpha^3|\}$ are exactly the same. Hence
$$\|I\|_p\leq 18 (C_2)^3C_4R^{-(k-7)}k^{k-3}.$$

The rest of the proof for (ii) is the same as for (i) and we do not repeat here.
\end{proof}

\section{Appendix}
In the Appendix, we give a short proof for Lemma~\ref{lem:embedding}
(the Sobolev embeddings for $M^{1,p}_0$, see also \cite{Hei}) and
Theorem~\ref{thm:Grushin}  (the $L^p$ estimates for Baouendi-Grushin operator).
Both statements are well known, and available in much greater
generality. Checking that the general results apply, however, requires
familiarity with the theory of subelliptic operators. For completeness
and the convenience of the reader we provide complete proofs.

\begin{proof}[Proof of Lemma~\ref{lem:embedding}]
We first prove (i). Suppose that $u\in C^1_0(\Omega)$ and extend $u$ to be zero outside $\Omega$. For every $\sigma_1\in \R^m$, $\sigma_2\in \R^n$ with $|\sigma_1|, |\sigma_2|\leq 1$,
\begin{equation}\label{eq:embedding}
u(x,t)=-\int_0^\infty \partial_s u( x+ \sigma_1 s, t+\sigma_2\gamma(s)) ds,
\end{equation}
where $\gamma :[0,\infty)\rightarrow \R$ is a $C^1$ function satisfying $\gamma(0)=0$, $\dot{\gamma}(s)=|x+\sigma_1 s|$. Let 
$$f(x,t):=|\nabla_x u(x,t)|+|x||\nabla_t u(x,t)|,$$
then by a direct computation
$$|\partial_s u( x+ \sigma_1 s, t+\sigma_2\gamma(s))|\leq f(x+\sigma_1 s, t+\sigma_2 \gamma (s)).$$
Hence
\begin{equation*}
|u(x,t)|\leq c_n\int_0^\infty \int_{|\sigma_2|\leq 1} f(x+\sigma_1 s, t+\sigma_2 \gamma(s)) d\sigma_2 ds,
\end{equation*}
which by a change of variable $\eta_2=\sigma_2 \gamma(s)$ gives
\begin{equation}\label{eq:embedding1}
 |u(x,t)|\leq c_n\int_0^\infty \frac{1}{|\gamma(s)|^n}\int_{|\eta_2|\leq \gamma(s)} f(x+\sigma_1 s, t+ \eta_2) d\eta_2 ds.
\end{equation}
By Young's inequality for convolution and Fubini, for $1\leq p<q<\infty$, 
\begin{equation*}
\|u(x,\cdot)\|_{q}\leq c_n \int_0^\infty \frac{1}{|\gamma(s)|^{n(1/p-1/q)}}\|f(x+\sigma_1 s, \cdot)\|_{p} ds
\end{equation*}
Observe that for $x\neq 0$,
\begin{align*}
\gamma(s)&=\int_0^s \dot{\gamma}(\tau)d\tau=\int_0^s|x+\sigma_1 \tau| d\tau \\
&\geq \int_0^s \left|x+\frac{-x}{|x|}\tau \right| d\tau \\
&=\int_0^s\left| |x|-\tau\right| d\tau \geq \frac{s^2}{4}, 
\end{align*}
for any $\sigma_1\in \R^m \text{ with } |\sigma_1|=1$, and for $x=0$, $\gamma(s)=s^2/2$. Hence if we let
$$v(x):=\|u(x,\cdot)\|_q, \quad g(x):=\|f(x,\cdot)\|_p$$
after integrating over $\sigma_1 \in S^{m-1}$, by Fubini and a change of variable we have
\begin{align}
v(x)&\leq c_n\int_0^\infty \frac{1}{s^{2n(1/p-1/q)}}g(x+\sigma_1 s) ds\label{eq:embedding2}\\
&= c_nc_m\int_0^\infty \int_{|\sigma_1|=1}\frac{1}{s^{2n(1/p-1/q)}}g(x+\sigma_1 s) d\sigma ds\notag\\
&=c_{n,m}\int_{\R^m} \frac{1}{|\eta_1|^{2n(1/p-1/q)+(m-1)}}g(x+\eta_1)d\eta_1\notag
\end{align}
By the Hardy-Littlewood-Sobolev inequality, for the chosen $q$ satisfying 
$$\frac{1}{q}+\frac{1}{m+2n}=\frac{1}{p}$$
we have
$$\|v\|_q\leq C_{n,m,p}\|g\|_p.$$

Next we prove (ii). Since this property is local in nature, given $(x,t)\in \Omega$ we may assume by multiplying a characteristic function that $u=0$ in $\Omega\setminus B_1(x,t)$. Hence the integration in \eqref{eq:embedding} can be written from $0$ to $M$ for some $M>0$ large enough.

Applying H\"older's inequality to \eqref{eq:embedding1} we have
\begin{equation*}
|u(x,t)|\leq c_n \int_0^M \frac{1}{|\gamma(s)|^{n/p}}\|f(x+\sigma_1 s, \cdot)\|_p ds.
\end{equation*}
Then arguing similarly as in \eqref{eq:embedding2} we have
\begin{equation*}
|u(x,t)|\leq c_{n,m}\int_{\{\eta_1\in \R^m: |\eta_1|\leq M\}}\frac{1}{|\eta_1|^{2n/p + (m-1)}}g(x+\eta_1) d\eta_1.
\end{equation*}
Then (ii) follows from H\"older's inequality because $p>m+2n$. 
\end{proof}

Next we give a short proof for the $L^p$ estimate for Baouendi-Grushin operator $\mathcal{L}_0$. The idea is to treat $\mathcal{L}_0$ as the projected operator of sub-Laplacian on the Heisenberg-Reiter type group onto certain quotient space. The transference method in \cite{Coi} links the $L^p$ estimate on the group to the $L^p$ estimate for $\mathcal{L}_0$ in the quotient space.

\begin{proof}[Proof of Theorem~\ref{thm:Grushin}]
We give a proof for $m=2$. The proof for general $m$ is the same. We extend $u$ to all $\R^{m+n}$ by setting $u=0$ in $\R^{m+n}\setminus \Omega$. 

Consider $\R^2\times \R^{n\times 2}\times \R^n$ equipped with the group law
\begin{align*}
(x,y,t)\circ (\xi, \eta,\tau)=\left(x+\xi, y+\eta, t+\tau+\frac{1}{2}(y\xi-\eta x)\right)
\end{align*}
where $\R^{n\times 2}$ is the space of $n\times 2$ real matrices, and $y\xi, \eta x$ are understood as the matrix multiplication. 
Let $G:=(\R^2\times \R^{n\times 2}\times \R^n,\circ)$. $G$ is an example of Heisenberg-Reiter group, which is by \cite{Reiter} a nilpotent Lie group of step $2$, with dilation $\delta_\lambda(x,y,t)=(\lambda x, \lambda y, \lambda^2 t)$ and homogeneous dimension $Q=2+2n+2n=2+4n$. It is also immediate that the Lebesgue measure $dxdydt$ is a left and right Haar measure on $G$. 

Now we recall several facts about the nilpotent Lie groups with dilations. 
A direct computation shows that the horizontal vector fields in the Lie algebra $\mathfrak{g}$ that agree at the origin with $\partial_{x_i}$ and $\partial_{y_{i,j}}$ are 
\begin{align*}
X_j&=\partial_{x_j}+\frac{1}{2}\sum_{s=1}^n y_{sj}\partial_{t_s}, \quad j=1,2\\
Y_{i,j}&=\partial_{y_{i,j}}-\frac{1}{2}x_j\partial_{t_i}, \quad i=1, \ldots, n;\ j=1,2.
\end{align*}
Consider the sub-Laplacian in $G$
$$\Delta_{\mathcal{H}}:=\sum_{j=1,2}X_j^2+\sum_{\substack{i=1,\ldots, n;\\ j=1,2}} Y_{i,j}^2.$$
It is easy to check that $X_i$, $Y_{i,j}$ are H\"ormander vector fields, then $\Delta_{\mathcal{H}}$ is hypoelliptic. By Theorem 2.1 in \cite{Folland}, there exists a unique fundamental solution of type $2$ (i.e.\ smooth away from $0$ and homogeneous of degree $2-Q$) for $\Delta_{\mathcal{H}}$, which we denote by $\Psi$. For $u\in C^\infty_0(G)$ we have $u=(\Delta_{\mathcal{H}}u)\ast \Psi$. Since $X_j$ and $Y_{i,j}$ are left-invariant, then 
\begin{equation}\label{eq:kap1}
P(X, Y) u= (\Delta_{\mathcal{H}} u)\ast P(X, Y) \Psi,
\end{equation}
where $P$ is a quadratic polynomial in $X_j$ and $Y_{i,j}$. By $L_p$ estimates for the singular integral in homogeneous groups (see for example XIII 5.3 in \cite{Stein}), we have 
\begin{equation}\label{eq:HLpestimate2}
\|w\ast P(X,Y)\Psi\|_p\leq C_p\|w\|_p, \quad \forall w\in L^p(G),
\end{equation}
and in particular, using \eqref{eq:kap1} we have 
\begin{equation}\label{eq:HLpestimate}
\|P(X, Y)u\|_p\leq C_p\|\Delta_{\mathcal{H}} u\|_p.
\end{equation}

Now let $H:= \{(0,y,0): y\in \R^{n\times 2}\}$. One can easily verify that $H$ is a closed subgroup of $G$ with a bi-invariant measure the Lebesgue measure $dh=dy=dy_{1,1}\ldots dy_{n,2}$. 
Let $H\backslash G=\{Hg: g\in G\}$ be the quotient space and $\pi: g\mapsto Hg$ the natural quotient mapping. Given $f\in C^\infty_0(G)$, define
\begin{equation*}
\overline{f}(Hg)=\int_H f(h \circ g) dh
\end{equation*}
By Theorem 15.21 in \cite{HR}, the above correspondence $f\mapsto \overline{f}$ defines a linear mapping of $C^\infty_0(G)$ onto $C^\infty_0(H\backslash G)$. 

We identify $H\backslash G$ with $\R^2\times \{0\}\times \R^{n}$. A direct computation gives
\begin{equation}\label{eq:pi}
\pi\left((x,y,t+\frac{1}{2}yx)\right)=\pi\left(  (0,y,0)\circ (x,0,t)\right)=(x,0,t).
\end{equation}
Since the Lebesgue measure $dg=dxdydt$ and $dh=dy$ are bi-invariant on $G$ and $H$ correspondingly, then by Theorem 15.24 in \cite{HR}, there exists a unique right-invariant measure (up to a constant) on $H\backslash G\cong \R^2\times\{0\}\times\R^n$, which is necessarily the Lebesgue measure $dxdt$.

Consider the following vector fields, which are the `push-down' vector fields of $X_j$, $Y_{i,j}$ on $H\backslash G\cong \R^2\times \{0\}\times \R^n$
$$X^b_1=\partial_{x_1},\quad X^b_2=\partial_{x_2},$$
$$Y^b_{i,j}=-x_j \partial_{t_i},\quad i=1,\ldots, n;\ j=1,2.$$
Note that the Baouendi-Grushin operator can be written as 
$$\mathcal{L}_0=\sum_{j=1,2}(X^b_j)^2+\sum_{\substack{i=1,\ldots, n;\\ j=1,2}} (Y^b_{i,j})^2.$$
To see that they are indeed the push-down vector fields on $H\backslash G$, we notice that for $u\in C^\infty_0(G)$, 
\begin{align*}
\int_{\R^{2n}}X_1 u ((0,y,0)\circ (x,0,t))dy&=\int_{\R^{2n}}\frac{d}{d\tau}u((0,y,0)\circ(x,0,t)\circ(\tau,0,0))|_{\tau=0}dy \\
&=\frac{d}{d\tau}\int_{\R^{2n}}u((0,y,0)\circ(x,0,t)\circ(\tau,0,0)) dy\ |_{\tau=0}\\
&=\frac{d}{d\tau}\overline{u}(x+\tau,0,t)|_{\tau=0}\\
&=X_1^b \overline u((x,0,t)).
\end{align*}
Similarly, using \eqref{eq:pi} one can check that 
\begin{align*}
X_j^b \overline{u}(Hg)&=\overline{X_j u}(Hg),\\
Y_{i,j}^b \overline{u}(Hg)&=\overline{Y_{i,j} u} (Hg),\quad i=1\cdots, n;\ j=1,2,
\end{align*}
hence 
\begin{align}
\mathcal{L}_0 \overline{u}(Hg)&=\overline{\Delta_{\mathcal{H}} u}(Hg),\label{eq:quo-gru}\\
P(X^b,Y^b) \overline{u}(Hg)&=\overline{P(X,Y) u}(Hg).\label{eq:quo-gruP}
\end{align}

Let $R$ be the representation of $G$ acting on $L^p(H\backslash G)$ given by the right translation, i.e.\ given $\overline{f}\in L^p(H\backslash G)$
\begin{equation}\label{eq:repR}
R_{g} (\overline{f})(Hg')=\overline{f}(H g'g).
\end{equation}
It is easy to check that $\{R_g\}$ are bounded from $L^p(H\backslash G)$ to $L^p(H\backslash G)$ by $1$. 
Given $\overline{u}\in C^\infty_0(H\backslash G)$, $1<p<\infty$, we consider the following linear map $T$ on $L^p(H\backslash G)$: 
\begin{equation}\label{eq:defineT}
T \overline{w}:=\int_{G} (R_{g^{-1}}\overline{w})\left (P(X,Y)\Psi(g)\right) dg, \quad \overline{w}\in L^p(H\backslash G).
\end{equation}
Notice that $G$ is locally compact and by \eqref{eq:HLpestimate2} the convolution operator $w\mapsto w\ast P(X,Y)\Psi$ is bounded in $L^p(G)$. Thus the transference method (Theorem 2.4 in \cite{Coi}) applies and we have for $\overline{w}\in C^\infty_0(H\backslash G)$,
\begin{equation}\label{eq:quo-gru2}
\|T\overline{w}\|_p\leq C_p\|\overline{w}\|_p, \quad 1<p<\infty,
\end{equation}
in particular if we take $\overline{w}=\mathcal{L}_0\overline{u}$, then
\begin{equation}\label{eq:quo-gru3}
\|T\mathcal{L}_0\overline{u}\|_p\leq C_p\|\mathcal{L}_0\overline{u}\|_p.
\end{equation}

In the end we show that $T\mathcal{L}_0\overline{u}=P(X^b,Y^b)\overline{u}$. Indeed, by \eqref{eq:repR} and \eqref{eq:defineT} 
\begin{align*}
T\mathcal{L}_0\overline{u}(Hg_0)&=\int_G (\mathcal{L}_0\overline{u})(Hg_0g^{-1}) (P(X,Y)\Psi(g))dg
\end{align*}
By \eqref{eq:quo-gru} and Fubini, then using \eqref{eq:kap1} and \eqref{eq:quo-gruP} we have
\begin{align*}
T\mathcal{L}_0\overline{u}(Hg_0)&=\int_G \left(\int_H \Delta_{\mathcal{H}} u( h\circ g_0\circ g^{-1})dh \right)P(X,Y)\Psi(g)dg \\
&=\int_H \left(\int_G \Delta_{\mathcal{H}} u( h\circ g_0\circ g^{-1}) P(X,Y)\Psi(g)dg\right)dh\\
&=\int_H \left( \Delta_{\mathcal{H}}u\ast P(X,Y)\Psi\right)(h\circ g_0) dh\\
&=\int_{H} P(X,Y) u (h\circ g_0)dh\\
&=P(X^b, Y^b)\overline{u}( Hg_0).
\end{align*}
Observing that $\partial^2_{x_ix_j}$, $|x|^2\partial^2_{t_it_j}$,
$x_k\partial^2_{x_it_j}$, and $\partial_{t_j}$ can be written as
$P(X^b, Y^b)$ for some quadratic polynomial $P$, we complete the proof
of the theorem. 
\end{proof}

\begin{bibdiv}
\begin{biblist}

\bib{Alm}{book}{
   author={Almgren, Frederick J., Jr.},
   title={Almgren's big regularity paper},
   series={World Scientific Monograph Series in Mathematics},
   volume={1},
   note={$Q$-valued functions minimizing Dirichlet's integral and the
   regularity of area-minimizing rectifiable currents up to codimension 2;
   With a preface by Jean E.\ Taylor and Vladimir Scheffer},
   publisher={World Scientific Publishing Co. Inc.},
   place={River Edge, NJ},
   date={2000},
   pages={xvi+955},
   isbn={981-02-4108-9},
   review={\MR{1777737 (2003d:49001)}},
}

\bib{AC}{article}{
   author={Athanasopoulos, I.},
   author={Caffarelli, L. A.},
   title={Optimal regularity of lower dimensional obstacle problems},
   language={English, with English and Russian summaries},
   journal={Zap. Nauchn. Sem. S.-Peterburg. Otdel. Mat. Inst. Steklov.
   (POMI)},
   volume={310},
   date={2004},
   number={Kraev. Zadachi Mat. Fiz.\ i Smezh. Vopr. Teor. Funkts. 35
   [34]},
   pages={49--66, 226},
   issn={0373-2703},
   translation={
      journal={J. Math. Sci. (N. Y.)},
      volume={132},
      date={2006},
      number={3},
      pages={274--284},
      issn={1072-3374},
   },
   review={\MR{2120184 (2006i:35053)}},
   doi={10.1007/s10958-005-0496-1},
}

\bib{ACS}{article}{
   author={Athanasopoulos, I.},
   author={Caffarelli, L. A.},
   author={Salsa, S.},
   title={The structure of the free boundary for lower dimensional obstacle
   problems},
   journal={Amer. J. Math.},
   volume={130},
   date={2008},
   number={2},
   pages={485--498},
   issn={0002-9327},
   review={\MR{2405165 (2009g:35345)}},
   doi={10.1353/ajm.2008.0016},
}

\bib{CSS}{article}{
   author={Caffarelli, L. A.},
   author={Salsa, S.},
   author={Silvestre, L.},
   title={Regularity estimates for the solution and the free boundary of the
   obstacle problem for the fractional Laplacian},
   journal={Invent. Math.},
   volume={171},
   date={2008},
   number={2},
   pages={425--461},
   issn={0020-9910},
   review={\MR{2367025 (2009g:35347)}},
   doi={10.1007/s00222-007-0086-6},
}

\bib{CRS}{article}{
   author={Caffarelli, Luis A.},
   author={Roquejoffre, Jean-Michel},
   author={Sire, Yannick},
   title={Variational problems for free boundaries for the fractional
   Laplacian},
   journal={J. Eur. Math. Soc. (JEMS)},
   volume={12},
   date={2010},
   number={5},
   pages={1151--1179},
   issn={1435-9855},
   review={\MR{2677613 (2011f:49024)}},
   doi={10.4171/JEMS/226},
}

\bib{CS}{article}{
   author={Caffarelli, Luis},
   author={Silvestre, Luis},
   title={An extension problem related to the fractional Laplacian},
   journal={Comm. Partial Differential Equations},
   volume={32},
   date={2007},
   number={7-9},
   pages={1245--1260},
   issn={0360-5302},
   review={\MR{2354493 (2009k:35096)}},
   doi={10.1080/03605300600987306},
}

\bib{Coi}{article} {
    AUTHOR = {Coifman, Ronald R.},
    AUTHOR = {Weiss, Guido},
     TITLE = {Transference methods in analysis},
      NOTE = {Conference Board of the Mathematical Sciences Regional
              Conference Series in Mathematics, No. 31},
 PUBLISHER = {American Mathematical Society},
   ADDRESS = {Providence, R.I.},
      YEAR = {1976},
     PAGES = {ii+59},
      ISBN = {0-8218-1681-0},
   MRCLASS = {43A22 (28A65 42-02 43A85)},
  MRNUMBER = {0481928 (58 \#2019)},
MRREVIEWER = {Alberto Torchinsky},
}

\bib{DS1}{article}{
   author={De Silva, Daniela},
   author={Savin, Ovidiu},
   title={$C^{2,\alpha}$ regularity of flat free boundaries for the thin
   one-phase problem},
   journal={J. Differential Equations},
   volume={253},
   date={2012},
   number={8},
   pages={2420--2459},
   issn={0022-0396},
   review={\MR{2950457}},
   doi={10.1016/j.jde.2012.06.021},
}

\bib{DS2}{article} {
    AUTHOR = {De Silva, Daniela},
    AUTHOR = {Savin, Ovidiu},
     TITLE = {Regularity of Lipschitz free boundaries for the thin one-phase problem},
      STATUS = {preprint},
       YEAR = {2012},
      EPRINT ={arXiv:1205.1755},
}

\bib{DS3}{article} {
    AUTHOR = {De Silva, Daniela},
    AUTHOR = {Savin, Ovidiu},
     TITLE = {$C^\infty$ regularity of certain thin free boundaries},
      STATUS = {preprint},
       YEAR = {2014},
      EPRINT ={arXiv:1402.1098},
}

\bib{DS4}{article}{
  author={De Silva, Daniela},
  author={Savin, Ovidiu},
  title={A note on higher regularity boundary Harnack inequality
},
  date={2014},
  status={preprint},
  eprint={arXiv:1403.2588}
}

\bib{DS5}{article}{
  author={De Silva, Daniela},
  author={Savin, Ovidiu},
  title={Boundary Harnack estimates in slit domains and applications to thin free boundary problems},
  date={2014},
  status={preprint},
  eprint={arXiv:1406.6039}
}

\bib{Folland}{article}{
    AUTHOR = {Folland, G. B.},
     TITLE = {Subelliptic estimates and function spaces on nilpotent {L}ie
              groups},
   JOURNAL = {Ark. Mat.},
  FJOURNAL = {Arkiv f\"or Matematik},
    VOLUME = {13},
      YEAR = {1975},
    NUMBER = {2},
     PAGES = {161--207},
      ISSN = {0004-2080},
   MRCLASS = {58G05 (35H05 44A25)},
  MRNUMBER = {0494315 (58 \#13215)},
MRREVIEWER = {A. S. Dynin},
}

\bib{GP}{article}{
   author={Garofalo, Nicola},
   author={Petrosyan, Arshak},
   title={Some new monotonicity formulas and the singular set in the lower
   dimensional obstacle problem},
   journal={Invent. Math.},
   volume={177},
   date={2009},
   number={2},
   pages={415--461},
   issn={0020-9910},
   review={\MR{2511747 (2010m:35574)}},
   doi={10.1007/s00222-009-0188-4},
}

\bib{GT}{book} {
    AUTHOR = {Gilbarg, David},
    author={Trudinger, Neil S.},
     TITLE = {Elliptic partial differential equations of second order},
    SERIES = {Classics in Mathematics},
      NOTE = {Reprint of the 1998 edition},
 PUBLISHER = {Springer-Verlag},
   ADDRESS = {Berlin},
      YEAR = {2001},
     PAGES = {xiv+517},
      ISBN = {3-540-41160-7},
   MRCLASS = {35-02 (35Jxx)},
  MRNUMBER = {1814364 (2001k:35004)},
}

\bib{Hei}{book}{
  author={Heinonen, Juha},
  title={Lectures on analysis on metric spaces},
  series={Universitext},
  publisher={Springer-Verlag},
  place={New York},
  date={2001},
  pages={x+140},
  isbn={0-387-95104-0},
  review={\MR{1800917 (2002c:30028)}},
  doi={10.1007/978-1-4613-0131-8},
}

\bib{HR}{book}{
    AUTHOR = {Hewitt, Edwin},
    AUTHOR = {Ross, Kenneth A.},
     TITLE = {Abstract harmonic analysis. {V}ol. {I}},
    SERIES = {Grundlehren der Mathematischen Wissenschaften [Fundamental
              Principles of Mathematical Sciences]},
    VOLUME = {115},
   EDITION = {Second},
      NOTE = {Structure of topological groups, integration theory, group
              representations},
 PUBLISHER = {Springer-Verlag},
   ADDRESS = {Berlin},
      YEAR = {1979},
     PAGES = {ix+519},
      ISBN = {3-540-09434-2},
   MRCLASS = {43-02 (22-01)},
  MRNUMBER = {551496 (81k:43001)},
}

\bib{Kato}{article} {
    AUTHOR = {Kato, Keiichi},
     TITLE = {New idea for proof of analyticity of solutions to analytic
              nonlinear elliptic equations},
   JOURNAL = {SUT J. Math.},
  FJOURNAL = {SUT Journal of Mathematics},
    VOLUME = {32},
      YEAR = {1996},
    NUMBER = {2},
     PAGES = {157--161},
      ISSN = {0916-5746},
   MRCLASS = {35J60 (35B65)},
  MRNUMBER = {1431263 (97k:35064)},
MRREVIEWER = {Giuseppe Di Fazio},
}

\bib{KN}{article}{
   author={Kinderlehrer, D.},
   author={Nirenberg, L.},
   title={Regularity in free boundary problems},
   journal={Ann. Scuola Norm. Sup. Pisa Cl. Sci. (4)},
   volume={4},
   date={1977},
   number={2},
   pages={373--391},
   review={\MR{0440187 (55 \#13066)}},
}

\bib{Le}{article}{
   author={Lewy, Hans},
   title={On the coincidence set in variational inequalities},
   note={Collection of articles dedicated to S. S. Chern and D. C. Spencer
   on their sixtieth birthdays},
   journal={J. Differential Geometry},
   volume={6},
   date={1972},
   pages={497--501},
   issn={0022-040X},
   review={\MR{0320343 (47 \#8882)}},
}

\bib{LN}{article}{
   author={Li, YanYan},
   author={Nirenberg, Louis},
   title={On the Hopf lemma},
	 date={2007},
   eprint = {arXiv:0709.3531},
}

\bib{AHN}{book}{
   author={Petrosyan, Arshak},
   author={Shahgholian, Henrik},
   author={Uraltseva, Nina},
   title={Regularity of free boundaries in obstacle-type problems},
   series={Graduate Studies in Mathematics},
   volume={136},
   publisher={American Mathematical Society},
   place={Providence, RI},
   date={2012},
   pages={x+221},
   isbn={978-0-8218-8794-3},
   review={\MR{2962060}},
}

\bib{Reiter}{article}{
    AUTHOR = {Reiter, Hans},
     TITLE = {\"{U}ber den {S}atz von {W}iener und lokalkompakte {G}ruppen},
   JOURNAL = {Comment. Math. Helv.},
  FJOURNAL = {Commentarii Mathematici Helvetici},
    VOLUME = {49},
      YEAR = {1974},
     PAGES = {333--364},
      ISSN = {0010-2571},
   MRCLASS = {43A20},
  MRNUMBER = {0355478 (50 \#7952)},
MRREVIEWER = {P. Muller-Roemer},
}

\bib{SCA}{article}{
    AUTHOR = {S{\'a}nchez-Calle, Antonio},
     TITLE = {{$L^p$} estimates for degenerate elliptic equations},
   JOURNAL = {Rev. Mat. Iberoamericana},
  FJOURNAL = {Revista Matem\'atica Iberoamericana},
    VOLUME = {4},
      YEAR = {1988},
    NUMBER = {1},
     PAGES = {177--185},
      ISSN = {0213-2230},
   MRCLASS = {35J70 (35S05 47G05 58G25)},
  MRNUMBER = {1009124 (90h:35105)},
MRREVIEWER = {Bruno Franchi},
       DOI = {10.4171/RMI/68},
       URL = {http://dx.doi.org/10.4171/RMI/68},
}

\bib{Stein}{book}{
    AUTHOR = {Stein, Elias M.},
     TITLE = {Harmonic analysis: real-variable methods, orthogonality, and
              oscillatory integrals},
    SERIES = {Princeton Mathematical Series},
    VOLUME = {43},
      NOTE = {With the assistance of Timothy S. Murphy,
              Monographs in Harmonic Analysis, III},
 PUBLISHER = {Princeton University Press},
   ADDRESS = {Princeton, NJ},
      YEAR = {1993},
     PAGES = {xiv+695},
      ISBN = {0-691-03216-5},
   MRCLASS = {42-02 (35Sxx 43-02 47G30)},
  MRNUMBER = {1232192 (95c:42002)},
MRREVIEWER = {Michael Cowling},
}

\bib{Xu}{article} {
    AUTHOR = {Xu, Chao Jiang},
     TITLE = {Subelliptic variational problems},
   JOURNAL = {Bull. Soc. Math. France},
  FJOURNAL = {Bulletin de la Soci\'et\'e Math\'ematique de France},
    VOLUME = {118},
      YEAR = {1990},
    NUMBER = {2},
     PAGES = {147--169},
      ISSN = {0037-9484},
     CODEN = {BSMFAA},
   MRCLASS = {49J10 (35H05 35J50 49N60)},
  MRNUMBER = {1087376 (92b:49008)},
MRREVIEWER = {C. S. Sharma},
       URL = {http://www.numdam.org/item?id=BSMF_1990__118_2_147_0},
}

\end{biblist}
\end{bibdiv}

\end{document}